\documentclass[12pt]{amsart}
 
\usepackage{mathtools}
\mathtoolsset{showonlyrefs,showmanualtags}
 
\usepackage{amssymb, amsmath, amsthm, esint}
\usepackage{mathrsfs}
\usepackage{bbm, dsfont}
\usepackage{color}

\usepackage[top=1.5in, bottom=1.5in, left=1.25in, right=1.25in]{geometry}
\usepackage[all]{xy}
\usepackage{amscd}
 
\usepackage[bottom]{footmisc}
 
\usepackage{fancyhdr}
 
\AtBeginDocument{\setlength{\footskip}{30pt}}
\fancypagestyle{plain}{%
  \fancyhf{}%
  \fancyfoot[C]{\thepage}%
}
 
\fancypagestyle{myfancy}{%
  \fancyhf{}%
  \fancyhead[CE]{\scshape Double Recurrence: Primes and Weighted Theory}%
  \fancyhead[CO]{J.\ Fornal and B.\ Krause}%
  \fancyhead[LE,RO]{\thepage}%
}
 
\newcommand{\EE}{\mathbb{E}}

\newcommand{\TT}{\mathbb{T}}

\newcommand{\Z}{\mathbb{Z}}

\numberwithin{equation}{section}

\newtheorem{theorem}{Theorem}[section]
\newtheorem*{theorem*}{Theorem}
\newtheorem{definition}[equation]{Definition}
\newtheorem{proposition}[theorem]{Proposition}
\newtheorem*{proposition*}{Proposition}
\newtheorem{cor}[theorem]{Corollary}
\newtheorem{lemma}[theorem]{Lemma}

\theoremstyle{definition}
\newtheorem{problem}{Problem}
 
\usepackage[
  pagebackref,
  colorlinks=true,
  linkcolor=blue,
  citecolor=magenta,
  filecolor=magenta,
  urlcolor=cyan
]{hyperref}

\makeatletter
\renewcommand*{\backref}[1]{}
\renewcommand*{\backrefalt}[4]{%
  \ifcase #1\relax
  \or
    \space (Page~#2)%
  \else
    \space (Pages~#2)%
  \fi
}
\makeatother

\begin{document}
 
\title{Double Recurrence and Almost Sure Convergence: Primes and Weighted Theory}
 
\author{Jan Fornal}
\address{%
  Department of Mathematics, University of Bristol \\
  Beacon House, Queens Rd, Bristol BS8 1QU}
\email{nc24166@bristol.ac.uk}
 
\author{Ben Krause}
\address{%
  Department of Mathematics, University of Bristol \\
  Beacon House, Queens Rd, Bristol BS8 1QU}
\email{ben.krause@bristol.ac.uk}
 
\date{\today}
 
\begin{abstract}
  Let $(X,\mu)$ be a probability space equipped with an invertible,
  measure-preserving transformation $T\colon X \to X$. We exhibit a wide
  class of weights $w$ so that whenever $f,g \in L^{\infty}(X)$, the
  bilinear ergodic averages
  \begin{align}\label{e:introavg}
    \frac{1}{N} \sum_{n \leq N} w(n)\, T^{an}f \cdot T^{bn}g,
    \qquad a,b \in \mathbb{Z}
  \end{align}
  converge $\mu$-almost surely. This class encompasses the von
  Mangoldt function, resolving Problem~12 from Frantzikinakis' survey on
  open problems in ergodic theory, the divisor function, the sum-of-two-squares representation function, etc., as well as
  their restrictions to lower-density Piatetski-Shapiro sequences of the
  form $\{\lfloor k^{c}\rfloor : k \in \mathbb{N}\}$, $1 \leq c < 7/6$.
 
  Our methods combine combinatorial number theory and higher-order Fourier
  analysis with classical Fourier-analytic/martingale-based methods; the
  role of $U^{3}$ analysis is particularly significant.
\end{abstract}
 
\maketitle
 
\setcounter{footnote}{0} 
 
\pagestyle{myfancy}
 
\setcounter{tocdepth}{1}
\tableofcontents
 
\section{Introduction}
The topic of this paper is bilinear pointwise ergodic theory, a field of study initiated by Bourgain in \cite{B3}, in which he used a brilliant combination of classical Fourier analysis and martingale methods to establish the following theorem; here and below, by a measure-preserving system, we mean a probability space $(X,\mu)$, equipped with an invertible measure-preserving transformation, $T:X \to X$, so that 
\[ \mu(T^{-1} E) = \mu(E) \; \; \; \text{ for all measurable } E \subset X,\]
and for measurable functions, $f: X \to \mathbb{C}$, define $Tf(x) := f(Tx)$.
 
\begin{theorem}[Bourgain's Double Recurrence Theorem]\label{t:BDR}
Let $(X,\mu,T)$ be a measure-preserving system, and suppose that $T_1,T_2$ are powers of $T$.\footnote{Throughout, we allow our powers to be negative, i.e.\ to involve the transformation $T^{-1}$.} Then for $f,g \in L^\infty(X)$, the bilinear ergodic averages
\begin{align}\label{e:introbil}
\frac{1}{N} \sum_{n \leq N} T_1^n f \cdot T_2^n g
\end{align}
converge $\mu$-almost surely.
\end{theorem}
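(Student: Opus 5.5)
Since $T_1=T^a$, $T_2=T^b$, the plan is to follow Bourgain's scheme: transfer to $\mathbb{Z}$, split by frequency, and control each piece by a maximal inequality plus a variational (oscillation) estimate.

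\emph{Reductions.} The cases $ab=0$ or $a=b$ are linear and follow from Birkhoff's theorem, so assume $a\neq b$, both nonzero. By the Calder\'on transference principle, together with a standard density argument (convergence holds trivially on a dense class, e.g.\ $f$ a coboundary $h-Th$ plus an invariant function, as in the mean ergodic decomposition), it suffices to prove:
\begin{itemize}
\item a bilinear maximal inequality, $\|\sup_N |A_N(f,g)|\|_{L^1}\lesssim \|f\|_{L^2}\|g\|_{L^2}$;
\item a quantitative oscillation estimate along lacunary times $N_k=\lfloor (1+\epsilon)^k\rfloor$.
\end{itemize}
Here $A_N(f,g)(x)=\frac1N\sum_{n\le N} f(x+an)g(x+bn)$ on $\mathbb{Z}$. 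The maximal inequality is immediate from Cauchy--Schwarz and the Hardy--Littlewood maximal function: $|A_N(f,g)|\le (M|f|^2)^{1/2}(M|g|^2)^{1/2}$ after rescaling. Passing from lacunary to all $N$ is routine for bounded functions, since the averages are positive in $|f|,|g|$.

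\emph{Fourier decomposition.} Write
\[
A_N(f,g)(x)=\iint \hat f(\alpha)\hat g(\beta)\, m_N(a\alpha+b\beta)\, e((\alpha+\beta)x)\,d\alpha\, d\beta,
\]
where $m_N$ is the Fej\'er-type multiplier. At scale $N$ it localizes to $|a\alpha+b\beta|\lesssim N^{-1}$, with decay $(N|a\alpha+b\beta|)^{-1}$ away from it. Following Bourgain, fix a scale $N$ and decompose $f$ (and symmetrically $g$) via a Littlewood--Paley/martingale structure in frequency: the large Fourier coefficients at resolution $\sim 1/N$ are isolated as a \emph{structured} part, and the remainder has small $\ell^\infty$ Fourier norm. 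For the remainder, $A_N$ is controlled by the $L^2\times L^2\to L^1$ bound $\|A_N(f,g)\|_1\lesssim \|\hat f\|_\infty^{c}\cdots$. This is a $U^2$-type estimate: for a single average, $\|A_N(f,g)\|_{1}$ is dominated by $\min(\|\hat f\|_{\infty}, \|\hat g\|_\infty)$ times $L^2$ norms, up to localization. Summing over lacunary $N$ uses the fact that the number of large coefficients is bounded by Plancherel.

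\emph{Structured part and variation.} When the frequencies $\alpha,\beta$ are restricted to $O(\delta^{-2})$ intervals of width $\sim 1/N$, the bilinear average essentially factors into linear averages modulated by characters $e(\alpha_j n)$. Convergence along lacunary scales then reduces to an oscillation inequality for modulated linear averages. I would handle this with Bourgain's entropy/metric-entropy lemma, in place of the Rademacher--Menshov argument, to control the supremum over a set of $\Lambda$ frequencies with loss $\log^2\Lambda$. The frequencies themselves vary with the scale, and I would organize that variation as a martingale in frequency so the oscillation over $k$ is summable.

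\emph{Main obstacle.} The hard step is the scale-uniform decomposition: the set of large coefficients changes with $N$. Bourgain's solution is to bound the number of scales at which a given function can have a new large coefficient ("energy increment"), which I expect to be the crux. Each new coefficient costs $\delta^2\|f\|_2^2$ of $L^2$ mass, so only $O(\delta^{-2})$ genuine changes occur. Combining this with the oscillation bound yields, for each $\epsilon$, a quantitative bound on the number of $\epsilon$-jumps, and hence almost sure convergence.
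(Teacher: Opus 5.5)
Your reductions fail at the first step, and this hides the missing idea.

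There is no trivially convergent dense class for bilinear averages. For $f=h-Th$ the sum $\frac1N\sum_{n\le N}T^{an}(h-Th)\cdot T^{bn}g$ does not telescope, because of the factor $T^{bn}g$. In the weak-mixing direction no such class is known, which is exactly why the theorem is hard. The only workable density step runs through the Kronecker factor: for an eigenfunction $g$ the average collapses to a linear, Wiener--Wintner-type average. The trivial bound $\sup_N|A_N(f,g)|\le\|f\|_\infty M|g|$ then extends this to all $g\in\mathcal{K}(T)$.

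Separately, your $L^2\times L^2\to L^1$ maximal bound is not immediate. Cauchy--Schwarz gives $(M|f|^2)^{1/2}(M|g|^2)^{1/2}$, but $M$ is unbounded on $L^1$, and the strong-type bound is Lacey's theorem. For bounded functions an $L^4\times L^4\to L^2$ bound suffices and is immediate, so this point is repairable.

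The paper recovers Theorem \ref{t:BDR} as the case $w\equiv 1$ of Theorem \ref{t:main}, where only $Q=1$ occurs, so the argument of \S\ref{s:remainder} applies. For $g\in\mathcal{K}(T)^{\perp}$ it proves no oscillation inequality for the bilinear averages. Instead it combines norm convergence to $0$ with a uniform Wiener--Wintner input: for most orbit points, \emph{every} local Fourier coefficient of $g$ at every sufficiently large scale is at most $\delta_0$ (the sets $X_{\delta_0}$ and $E$ of \S\ref{s:trans}). It then shows the maximal function is $\lesssim\delta_0^{c}$ off a set of relative size $\lesssim\delta_0^{c}$.

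That a priori smallness is what your outline lacks, and without it the scheme cannot close. Your ``remainder with small $\ell^\infty$ Fourier norm'' is small at each single scale by the $U^2$-type bound. But single-scale bounds do not sum over the $\sim\log J$ lacunary scales: Plancherel bounds the number of large coefficients at one scale, not a supremum over scales.

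In the paper there is no negligible remainder. Each dyadic level $g_{\delta,I}$, $\delta\le\delta_0$, is treated as structured; its $\delta$-spectrum at each scale and location has $O(\delta^{-2})$ elements by Corollary \ref{c:sampling}. These levels are organized into trees and branches by energy pigeonholing. That step needs the cross-scale almost orthogonality of Lemma \ref{l:polysplit}, because the frequency sets vary with location as well as scale, so no global count of ``$O(\delta^{-2})$ genuine changes'' is available. Each branch is then handled by a net argument whose cardinality is controlled by L\'{e}pingle-type jump bounds (Corollary \ref{c:sep}).

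Only $g$ is decomposed; $f$ is projected onto neighborhoods of $g$'s distinguished frequencies. This reflects the invariance of $|A_N|$ under $(f,g)\mapsto(\text{Mod}_{b\theta}f,\text{Mod}_{-a\theta}g)$, so decomposing $f$ by its own large coefficients is not meaningful.

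Each level then contributes a threshold and an exceptional set that are positive powers of $\delta$. The sum over levels is small only because every level lies below $\delta_0$. For arbitrary bounded $f,g$ the levels $\delta\approx 1$ contribute $O(1)$, and the same machinery yields only a maximal bound.

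Your multi-frequency lemma and energy-increment heuristic are genuine ingredients. But the quantitative $\epsilon$-jump estimate for arbitrary bounded $f,g$ that your plan ultimately needs is a variational theorem in its own right, proved by time-frequency methods (Do--Oberlin--Palsson). Nothing in your sketch produces it.
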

 
In the representative case where $T_i = T^i$, the averages \eqref{e:introbil} are a special case of the multilinear ergodic averages considered by Furstenberg in his ergodic-theoretic proof \cite{Furst} of Szemer\'{e}di's theorem \cite{Sz},
\begin{align}\label{e:intromul}
\frac{1}{N} \sum_{n \leq N} \prod_{k=1}^K T^{kn} f_k, \; \; \; f_k \in L^\infty(X),
\end{align}
whose norm convergence was established in celebrated work of Host-Kra \cite{HK} and Ziegler \cite{Z}. An important take-away from \cite{HK} is the role of higher-order Fourier analysis in analyzing the limiting behavior of \eqref{e:intromul}, which distinguishes the bilinear case from higher degrees of multi-linearity; their work connects to the Green-Tao Theorem \cite{GT0}, which can be derived from an analysis of the weighted averages
\begin{align}\label{e:intromulprime}
\frac{1}{N} \sum_{n \leq N}  \Lambda(n) \prod_{k=1}^K T^{kn} f_k, \; \; \; f_k \in L^\infty(X),
\end{align}
where $\Lambda$ is the von Mangoldt function,
\begin{align}\label{e:vonMangoldt}
    \Lambda(n) := \begin{cases} \log p & \text{ if } n = p^k \text{ is a power of a prime} \\
0 & \text{ otherwise}. \end{cases}
\end{align}
Indeed, for bounded sequences $\{ a_n \}$,
\begin{align}\label{e:sumbyparts}
    \frac{1}{N} \sum_{n \leq N} a_{p_n} \to L \iff \frac{1}{N} \sum_{n \leq N} \Lambda(n) a_n \to L
\end{align}
by summation by parts; here and throughout, we enumerate the primes
\begin{align}\label{e:primes} \{2 = p_1 < p_2 < \dots \}.
\end{align}
 
While norm convergence of \eqref{e:intromulprime} was established in \cite{WZ}, as far as pointwise convergence is concerned, only the case $K=1$ of \eqref{e:intromulprime} has been addressed, first by Bourgain in \cite{Bp}, with a subsequent (optimal) strengthening due to Wierdl \cite{Wierdl}, see \cite{LV}.
 
Accordingly, in his survey of open problems in ergodic theory \cite{Frant}, Frantzikinakis explicitly calls attention to the issue of pointwise convergence of \eqref{e:intromulprime} in the bilinear $K=2$ case.
 
The goal of this paper is to establish this convergence result as a special case of a broader phenomenon.
 
\begin{theorem}[Special Case]\label{t:prime}
    Let $(X,\mu,T)$ be a measure-preserving system and suppose $T_1,T_2$ are powers of $T$. Then for each $f,g \in L^\infty(X)$, the bilinear ergodic averages
    \begin{align}
        \frac{1}{N} \sum_{n \leq N} T_1^{p_n}   f \cdot T_2^{p_n} g
    \end{align}
    converge $\mu$-almost surely, see \eqref{e:primes}. In particular, the averages \eqref{e:intromulprime} converge almost surely in the case $K=2$.
\end{theorem}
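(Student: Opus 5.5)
By the summation-by-parts equivalence \eqref{e:sumbyparts}, the statement reduces to proving almost sure convergence of
\[ A_N(f,g) := \frac{1}{N}\sum_{n\le N}\Lambda(n)\, T_1^n f\cdot T_2^n g. \]
We may assume $|f|,|g|\le 1$. The plan is to compare $\Lambda$ with a structured model weight for which Bourgain's Theorem \ref{t:BDR} applies, and to show that the error is pointwise negligible along lacunary scales.

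\textbf{Model weight.} Following the Green--Tao $W$-trick and the Heath-Brown/Cram\'er approximants, take $\Lambda_{Q}(n) := \frac{W}{\phi(W)}\mathbf 1_{(n,W)=1}$ with $W=\prod_{p\le Q}p$, or the Siegel-free variant with $Q$ a slowly growing function of $N$. Since $\Lambda_Q$ is periodic, $\frac1N\sum_{n\le N}\Lambda_Q(n) T_1^nf T_2^n g$ splits into finitely many progressions $n\equiv r \pmod W$. Each progression average is a bilinear average for the transformations $T_1^W,T_2^W$ applied to $T_1^rf,T_2^rg$, so it converges almost surely by Theorem \ref{t:BDR}.

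\textbf{Error term.} The key estimate is a $U^3$ inverse-theorem input. By the Green--Tao--Ziegler nilsequence theory, $\|\Lambda-\Lambda_{Q}\|_{U^3[N]}\to 0$ as $Q\to\infty$ with $Q=Q(N)$; in fact one obtains a quantitative $(\log N)^{-c}$ rate from the quantitative Möbius-nilsequence results of Tao--Ter\"av\"ainen and Leng. A generalized von Neumann argument, done pointwise via transference to $\mathbb Z$ with the weight majorized by a pseudorandom majorant, shows that for each $x$
\[ \Bigl|\frac1N\sum_{n\le N}(\Lambda-\Lambda_Q)(n) f(T_1^n x) g(T_2^n x)\Bigr| \lesssim \|\Lambda-\Lambda_Q\|_{U^2[N]}^{c}. \]
This is only $U^2$ for two-term averages, since linear patterns of complexity one need $U^2$ control. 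However, uniformity in $x$ requires working with the local functions $n\mapsto f(T_1^n x)$ on intervals of length $\sim N$, and passing between the scales of $Q$ and $N$ is where the $U^3$ analysis enters. With a power-of-log rate in hand, the error is summable along $N=\lfloor \rho^k\rfloor$ for every $\rho>1$ only if the rate beats $1/k$. Since $(\log N)^{-c}$ does not beat $1/k$ for small $c$, I would instead use the $U^3$-based bound with rate $\exp(-c\sqrt{\log N})$, or the bound from Leng's improved inverse theorem. Then $\sum_k \sup_x|\text{err}_{\rho^k}(x)|<\infty$, giving almost sure convergence along lacunary sequences.

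\textbf{Filling in gaps.} Because $\Lambda\ge 0$ and $f,g$ can be taken nonnegative by splitting, the standard sandwich argument gives $A_N\le (1+O(\rho-1))A_{\rho^{k+1}}$ for $\rho^k\le N<\rho^{k+1}$. Letting $\rho\downarrow 1$ yields full almost sure convergence, and complex $f,g$ follow by linearity.

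\textbf{Main obstacle.} The hard step is obtaining a uniform-in-$x$ quantitative bound on the discorrelation of $\Lambda-\Lambda_Q$ with the bilinear pattern, at a rate summable along lacunary sequences. I would first try $\exp(-c\sqrt{\log N})$-type bounds in the $U^2$ norm, which are classical via Vinogradov/Siegel--Walfisz exponential sum estimates for $\Lambda-\Lambda_Q$ (modulo Siegel zeros, which can be absorbed into the model). I would reserve $U^3$ technology for the case where the two-term von Neumann inequality fails to transfer pointwise without a majorant.
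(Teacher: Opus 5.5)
Your proposal has two genuine gaps, and the second is where essentially all of the paper's work lies.

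\textbf{The error estimate.} The bound you propose cannot hold as stated. No estimate $\bigl|\frac1N\sum_{n\le N}(\Lambda-\Lambda_Q)(n)f(T_1^nx)g(T_2^nx)\bigr|\lesssim\|\Lambda-\Lambda_Q\|_{U^2[N]}^c$ is available uniformly in $x$, or even almost surely. Take the Bernoulli shift on $\{\pm1\}^{\mathbb Z}$ with $f(x)=x_0$ and $g\equiv 1$. The set where $x_n=\mathrm{sgn}((\Lambda-\Lambda_Q)(n))$ for all $n\le N$ has measure $2^{-N}>0$, and on it the average equals $\frac1N\sum_{n\le N}|\Lambda-\Lambda_Q|(n)\gtrsim 1$.

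Generalized von Neumann arguments only control norms in $x$: Lemma \ref{e:gowersUs} bounds the $\ell^{2^s}_x$ norm by $\|w\|_{U^{s+2}([N])}$. Moreover, $U^2$ does not suffice even there. The configuration $\{x,n,x-n,x+n\}$ has true complexity $2$, since $(x-n)^2+(x+n)^2=2x^2+2n^2$. Concretely, on $(y,z)\mapsto(y+\alpha,z+2y+\alpha)$ with $f=g=e(z)$, one has $T^nf\cdot T^{-n}g=e(2z+2\alpha n^2)$. This correlates fully with the $U^2$-small weight $e(-2\alpha n^2)$.

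The mechanism the paper actually uses is to show $\sum_{N\ \mathrm{lacunary}}|\mathrm{err}_N|^{2^s}\in L^1(X)$ via transference. That only requires $\|\mathrm{err}\|_{U^{s+2}([N])}^{2^s}$ to be summable along $N=2^k$. A power of $\log N$ with exponent exceeding $1$ is enough, which is the role of $\nu>4$, so no $\exp(-c\sqrt{\log N})$ rate is needed.

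\textbf{The model term.} More fundamentally, the model term is not covered by Theorem \ref{t:BDR}. For the error to decay in $N$ you need $Q=Q(N)\to\infty$; the paper's approximant is $\sum_{Q\le M(N)}w_Q$ with $M(N)=N^{o(1)}$. But then the weight changes with the scale. Almost sure convergence for each \emph{fixed} periodic weight says nothing about this scale-dependent family.

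If you fix $Q$ instead, $\|\Lambda-\Lambda_Q\|_{U^3([N])}$ does not decay in $N$: the Fourier coefficient at $a/q$ with $q>Q$ prime tends to $-1/(q-1)\neq 0$. The lacunary summation of errors then diverges, and you would need a maximal inequality for the error with a small constant, which is the same problem.

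The missing ingredient is a maximal estimate for bilinear averages weighted by the individual slices $w_Q(n)=\sum_{a/q\in\Gamma_Q}S(a/q)e(na/q)$. Its exceptional set must have measure decaying like a power of $Q$, so that it can be summed over dyadic $Q$. Applying Bourgain's maximal bound frequency by frequency and using the triangle inequality costs $\sum_{a/q\in\Gamma_Q}|S(a/q)|$, which is of order $Q$, so that route is hopeless.

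This estimate is Proposition \ref{p:key0}, and proving it is the bulk of the paper. The steps are:
\begin{enumerate}
\item reduction to $g\perp\mathcal K(T)$;
\item energy pigeonholing into trees and branches;
\item multi-frequency variational estimates;
\item the $U^3$ bound of Lemma \ref{p:U3est} to discard the annular pieces;
\item single-scale estimates along progressions mod $\mathcal Q_i$;
\item bilinear entropy arguments.
\end{enumerate}
Your sandwich step from lacunary to all $N$ is fine, but without this maximal control the argument does not prove convergence.
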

 
More generally, our work establishes pointwise convergence for weighted bilinear ergodic averages, provided that -- informally -- the weights have combinatorial decompositions into type one/type two sums which are Gowers $U^{1+}$ uniform with (sufficiently large) logarithmic savings, with the key point being that \cite{LSS} allows one to upgrade $U^{1+}$ uniformity to $U^s$ uniformity, after subtracting a main term coming from major arc approximations; see the discussion in \S \ref{s:adm} below.
 
More rigorously, this paper will be concerned with essentially $\ell^1$-normalized weights $w: \mathbb{Z} \to \mathbb{C}$ that are $U^s$-approximable by exponential sums with sufficiently rapidly-decaying coefficients; we refer the reader to \S \ref{sss:O} to recall asymptotic notation and the definition of $U^s([N])$.
 
\begin{definition}
    A weight $w : \mathbb{Z} \to \mathbb{C}$ will be said to be \emph{admissible} if it satisfies the following properties:
    \begin{itemize}
    \item \textbf{Upper normalization:} There exists an absolute constant $C<\infty$ so that
\begin{align}
    \limsup_{\substack{|I|\to\infty\\ 0\in 10I}} \frac{1}{|I|} \sum_{n\in I}|w(n)| \leq C,
\end{align}
where $I$ ranges over intervals in $\mathbb Z$, and $10I$ denotes the interval concentric with $I$ whose length is $10|I|$;
    \item \textbf{Heath-Brown-type approximation}: There exists an absolute constant $C < \infty$, a function 
    \[ S := S_w : \mathbb{Q} \to \mathbb{C} \text{ so that } |S(a/q)| \leq C \cdot q^{o(1)-1},\]
    and a constant $\nu > 4$, so that for some $s \geq 3$ and all sufficiently large $N$, there exists a truncation parameter $M = M(N) \leq N^{o(1)}$ so that
    \begin{align}
        \| w(n) - \sum_{(a,q) = 1,\ q \leq M} S(a/q) e^{2 \pi i n a/q} \|_{U^s([N])} \leq C \cdot \log^{-\nu/2^s} N.
    \end{align}
\end{itemize}
\end{definition}

The admissibility criterion should be viewed as a bilinear pointwise analogue of the major/minor arc decompositions used in Bourgain's work pointwise ergodic theorems involving one function, \cite{B0,Bp,B1,B2}, see the discussion below. The ``major arc" model
\[ \sum_{q \leq M} \sum_{(a,q) =1} S(a/q) e^{2 \pi i na /q}\]
captures the structured Fourier contribution of $w$, while the $U^s$-small remainder is negligible for our bilinear averages after standard reductions. Indeed, in analogy with \cite{B0,Bp,B1,B2}, in practice, one expects $S = S_w$ to be recovered via
\begin{align}
    S(a/q) = \lim_N \frac{1}{N} \sum_{n \leq N} w(n) e^{-2 \pi i n a/q}.
\end{align}
And, while we emphasize that we do not assume any lower bound on the averages of $|w|$, many of the arguments below become trivial if $w$ is sufficiently sparse or degenerate, so the most interesting case is when
\begin{align}
    \limsup_{N \to \infty} \frac{1}{N} \sum_{n \leq N} |w(n)| > 0.
\end{align}

With this definition in mind, our main result is as follows.

\begin{theorem}\label{t:main}
    Suppose that $w$ is admissible. Then for any $(X,\mu,T)$ measure-preserving system, whenever $T_1, T_2$ are powers of $T$, for any $f,g \in L^\infty(X)$, the bilinear ergodic averages
    \begin{align}
        \frac{1}{N} \sum_{n \leq N} w(n) T_1^n f  \cdot T_2^n g
    \end{align}
    converge $\mu$-almost surely.
\end{theorem}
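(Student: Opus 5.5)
The plan is to split $w$ into a structured major-arc model plus a $U^s$-small remainder, and to show the bilinear averages converge separately for each piece. Fix $f,g$ with $\|f\|_\infty,\|g\|_\infty\le 1$. By a standard sparsification it suffices to prove convergence along lacunary times $N\in\{\lfloor \lambda^k\rfloor\}$ for every $\lambda>1$. Indeed, upper normalization lets us compare the averages at $N$ and at $\lfloor\lambda^k\rfloor\le N<\lfloor\lambda^{k+1}\rfloor$ up to an error $O(\lambda-1)$, after first reducing to nonnegative $f,g$ and real and imaginary parts of $w$. Alternatively one handles $|w|$ directly via its own admissibility. For $N\sim\lambda^k$ write
\[ w = w_N^{\mathrm{maj}} + w_N^{\mathrm{err}},\qquad w_N^{\mathrm{maj}}(n):=\sum_{q\le M(N)}\sum_{(a,q)=1}S(a/q)e^{2\pi i na/q}. \]

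\textbf{Error term.} For the error term I would apply the generalized von Neumann inequality on the integers. Transfer by the Calder\'on principle: fix $x$, set $F(m)=f(T^m x)$ and $G(m)=g(T^m x)$ on an interval of length $\sim (|a|+|b|)N$. The averages $\frac1N\sum_{n\le N} w_N^{\mathrm{err}}(n)F(m+an)G(m+bn)$, averaged in $m$ over this interval in $L^2$ or $L^1$, are controlled by $\|w_N^{\mathrm{err}}\|_{U^3([N])}$ up to constants. This is the standard Cauchy--Schwarz estimate for two linear forms with distinct coefficients $a\neq b$; the cases $a=b$ and $ab=0$ reduce to the linear theory. Since $\|\cdot\|_{U^3}\le\|\cdot\|_{U^s}$, this gives
\[ \Big\|\frac1N\sum_{n\le N}w_N^{\mathrm{err}}(n)T_1^nf\,T_2^ng\Big\|_{L^1(X)}\lesssim \log^{-\nu/2^s}N\cdot(\text{loss}). \]
To make the bound summable over $N=\lambda^k$ I need a power of $\log N$ better than $k^{-1-\epsilon}$. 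This is exactly where $\nu>4$ is meant to enter: the $U^3$ control loses some power from the Cauchy--Schwarz steps, e.g. $\|\cdot\|_{U^3}^{1/2}$ or a square. I would track the exponents so that the $L^1$ bound is $\lesssim k^{-\nu/4}$ with $\nu/4>1$, after which Borel--Cantelli gives almost sure vanishing along lacunary $N$.

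\textbf{Major arc term.} For the major arc term I would decompose dyadically in $q$. For $2^{j}\le q<2^{j+1}$, the piece $\frac1N\sum_{n\le N}e(na/q)T_1^nf\,T_2^ng$ is a bilinear average twisted by a periodic character. Writing $n=qr+\ell$, it is an average over residues $\ell$ of Bourgain-type averages for the transformations $T_1^q,T_2^q$. Bourgain's Double Recurrence Theorem (Theorem \ref{t:BDR}) applies to each fixed $(a,q)$ and gives almost sure convergence. The limit of the full model is then $\sum_{a/q}S(a/q)\lim_N\frac1N\sum_{n\le N} e(na/q)T_1^nf\,T_2^ng$. The issue is uniformity, since $M(N)\to\infty$ and the number of terms grows. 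Here I would need a quantitative maximal inequality for the twisted bilinear averages. For frequencies $a/q$ with $q\sim 2^j$, the bound I would aim for is
\[ \Big\|\sup_N\Big|\sum_{q\sim 2^j}\sum_{(a,q)=1}S(a/q)\frac1N\sum_{n\le N}e(na/q)T_1^nf\,T_2^ng\Big|\Big\|_{L^1}\lesssim 2^{-j\delta}. \]
I would obtain it from the decay $|S(a/q)|\lesssim q^{o(1)-1}$ together with a square-function/orthogonality gain over the $\sim 2^{2j}$ frequencies. This gain would come from the fact that distinct $a/q$ twists are orthogonal after passing to the Kronecker factor, and from Bourgain's bilinear maximal theorem applied to the system $T^{q}$ with $\ell^2$ summation in residues. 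Summing in $j$ then gives convergence of the truncated models and their tails uniformly.

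\textbf{Main obstacle.} I expect the major arc maximal estimate to be the main obstacle. Naively, $\sum_{q\le M}\sum_a|S(a/q)|$ diverges like $M^{1+o(1)}$, so a genuine gain from the arithmetic structure is needed. The first thing I would try is an Ionescu--Wainger-type multi-frequency projection adapted to bilinear averages, using the $U^2$ (Fourier) structure of Bourgain's argument, with only a logarithmic-in-$M$ loss. Such a loss is acceptable because $M\le N^{o(1)}$ and the error-term savings are only logarithmic anyway.
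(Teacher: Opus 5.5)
The genuine gap is the major-arc step, which is where essentially all of the difficulty of the theorem lies. You assert a maximal bound with a $Q^{-\epsilon}$ gain for the slice $w_Q=\sum_{a/q\in\Gamma_Q}S(a/q)e(na/q)$, and the mechanisms you propose for it do not work.

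The obstruction is modulation invariance. One has $e(na/q)F(x-n)G(x+n)=(\text{Mod}_{-a/(2q)}F)(x-n)\,(\text{Mod}_{a/(2q)}G)(x+n)$, so each twisted average is just Bourgain's untwisted average of modulated inputs. Correspondingly, the bilinear symbol of the $a/q$-twist is concentrated on the strip $\{(\xi,\eta):\xi-\eta\in a/q+O(1/N)\}$, not near a point. Several consequences follow:

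- No Ionescu--Wainger-type projection of $F$ or of $G$ alone isolates an individual twist.
- Whether distinct twists decouple depends on coincidences in the sumset $\text{Spec}(G)+\Gamma_Q$, and these vary with scale and location.
- The fact that the Kronecker factor is characteristic is a statement about $L^2$ limits. It gives no square-function orthogonality between the maximal functions of the $\sim Q^2$ twists.
- Summing per-twist, or per-residue-class, maximal bounds against $|S(a/q)|\lesssim q^{o(1)-1}$ gives $Q^{1+o(1)}$, or at best $Q^{1/2+o(1)}$. Both are worse than the trivial pointwise bound $\mathbb{E}_{n\in[N]}|w_Q(n)|\lesssim Q^{o(1)}$ from Lemma \ref{heath-brown-moments}.

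What is needed is a gain over that trivial bound, uniformly over infinitely many scales. Single-scale smallness, for instance via $U^3$, does not sum over scales when $Q$ is fixed.

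The paper avoids needing a maximal inequality for general $g$, in four steps.

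1. It reduces to $g\in\mathcal{K}(T)^{\perp}$. For eigenfunctions $g$ the problem is linear, and Proposition \ref{p:mfvarprop} gives a $Q^{o(1)-1}$ gain. After transference, $g$ then has local Fourier coefficients $\le\delta_0$ at all large scales along most orbits, and one only has to show the major-arc pieces are pointwise negligible.
2. It uses the $U^3$ decay $\|w_Q\|_{U^3([N])}\lesssim Q^{o(1)-3/52}$ (Lemma \ref{p:U3est}) to discard all $Q\ge(\log N)^5$ outright. Later, through Lemma \ref{l:gowersL2}, the same decay disposes of the annular and boundary-branch contributions. Your plan has no substitute for this.
3. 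For each remaining $Q$ (so $N\ge 2^{Q^{1/5}}$) it proves Proposition \ref{p:key0}. It decomposes $g$ by spectral level $\delta$ and energy-pigeonholes scales and locations into trees and branches. On each branch $g$ acts like a combination of $\lesssim V\delta^{-2}$ characters from a fixed set $\Lambda$. A bilinear entropy argument then discretizes the supremum, using jump-counting bounds for $(\mathbb{E}_I\text{Mod}_{-\theta}g)_{\theta\in\Lambda}$ and variational bounds for the projected $f$-components. This is anchored by single-scale estimates whose gain comes from the multiplicity bound for $m\Lambda+n\Gamma_Q^{(i)}$ (Lemma \ref{l:maximal-multiplicity}) and from the refinement along progressions modulo $\mathcal{Q}_i$ (Lemma \ref{l:1scale0}).
4. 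The regime $Q\le\delta^{-1/1000}$ is handled separately, along Bourgain's lines.

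Two smaller points need fixing.

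- In the error term, passing to $U^3$ and an $L^1$ or $L^2$ bound loses too much. The summable quantity is the $2^{s-2}$-th moment, $\|\cdot\|_{L^{2^{s-2}}}^{2^{s-2}}\lesssim\|w-w_N^{\mathrm{maj}}\|_{U^s([N])}^{2^{s-2}}\lesssim\log^{-\nu/4}N$ (Lemma \ref{e:gowersUs}). This is where $\nu>4$ enters.
- The lacunary reduction should go through smoothed averages as in Lemma \ref{l:lacred}. Neither $|w|$ nor the positive parts of $w$ need be admissible.
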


As we will show below, the following examples are admissible: 
\begin{itemize}
\item The von Mangoldt function $\Lambda(n)$;
    \item The divisor function 
    \begin{align}\label{e:divisor}
    \tau(n) := \sum_{d |n }1
    \end{align}
    (essentially, after re-normalization); and
    \item The sum of two-squares function,
    \begin{align}\label{e:sumofsquares} r_2(n) := |\{ (a,b) : a^2 + b^2 = n \}|.
    \end{align}
\end{itemize}

In particular, we establish the following.
\begin{cor}\label{c:main}
    Let $(X,\mu,T)$ be a measure-preserving system and suppose $T_1,T_2$ are powers of $T$. Then for each $f,g \in L^\infty(X)$, the bilinear ergodic averages
    \begin{align}
        \frac{1}{N \log N} \sum_{n \leq N} \tau(n) T_1^{n} f  \cdot T_2^{n} g
    \end{align}
    and
    \begin{align}
        \frac{1}{\pi N} \sum_{a^2 + b^2 \leq N} T_1^{a^2+b^2} f  \cdot T_2^{a^2 + b^2} g
    \end{align}
    converge $\mu$-almost surely.
\end{cor}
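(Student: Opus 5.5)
The corollary will follow from Theorem~\ref{t:main} once we check that suitably normalized versions of $\tau$ and $r_2$ are admissible. The admissibility check is where the real work lies; the passage from the theorem to the stated averages is bookkeeping.

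\textbf{Reduction to the theorem.} For the second average, write
\[\sum_{a^2+b^2\le N} T_1^{a^2+b^2}f\cdot T_2^{a^2+b^2}g=\sum_{n\le N} r_2(n)\, T_1^n f\cdot T_2^n g,\]
where the term $n=0$ contributes $O(\|f\|_\infty\|g\|_\infty)$ and may be discarded. Since $\sum_{n\le N} r_2(n)=\pi N+O(N^{1/2})$, the weight $w=r_2/\pi$ is upper-normalized, and the average becomes $\frac1N\sum_{n\le N} w(n)\,T_1^nf\cdot T_2^ng$.

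For the divisor function, $\frac{1}{N\log N}\sum_{n\le N}\tau(n)(\cdots)$ is not directly of the form in Theorem~\ref{t:main}, because the normalization grows with $N$. Set $w(n)=\tau(n)/\log n$ for $n\ge 2$ (and $w=0$ otherwise), and sum by parts:
\[\sum_{n\le N}\tau(n)F(n)=\log N\sum_{n\le N}w(n)F(n)-\int_2^N \frac1t\sum_{n\le t}w(n)F(n)\,dt.\]
If $A_N:=\frac1N\sum_{n\le N}w(n)F(n)\to L$ pointwise and $|A_N|\lesssim 1$, then dividing by $N\log N$ shows the second term is $O(1/\log N)$. So the original average converges to $L$ as well. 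The same Abel-summation device handles the negative-index convention for powers of $T$ with no change.

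\textbf{Admissibility.} Upper normalization is immediate from $\sum_{n\le N}\tau(n)\sim N\log N$, since $\log n\approx \log N$ on most of $[N]$; intervals $I$ with $0\in 10I$ behave the same way. The substance is the Heath-Brown-type approximation.

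For $r_2/\pi$, take the singular series $S(a/q)=\pi^{-1}\lim_N\frac1N\sum_{n\le N}r_2(n)e(-na/q)$. This is given by normalized quadratic Gauss sums $q^{-2}G(a,q)^2$, which are $O(q^{-1})$. For $\tau$, the analogous coefficients are $S(a/q)\approx \frac1q$ times a smooth factor; the secondary terms $\log q/\log n$ are absorbed because $q\le M=N^{o(1)}$.

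The key estimate is
\[\Big\| w-\sum_{q\le M}\sum_{(a,q)=1}S(a/q)e(na/q)\Big\|_{U^s([N])}\lesssim \log^{-\nu/2^s}N, \qquad \nu>4.\]
For this I would invoke the quantitative $U^s$ results of Leng--Sah--Sawhney \cite{LSS} and the associated literature on $\tau$ and $r_2$ (the Matomäki--Radziwiłł--Shao--Tao--Teräväinen style decompositions into type I/II sums with $U^{1+}$ savings). For $\tau$, a Heath-Brown/Cramér-type model $\sum_{q\le M}\frac1q c_q(n)\cdot(\text{smooth})$ is known to approximate $\tau$ in $U^s$ with power-of-log savings. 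For $r_2$, one writes $r_2=4(1*\chi_{-4})$ and argues identically, using the quadratic-form parametrization.

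\textbf{Main obstacle.} The hardest step is getting the logarithmic saving exponent large enough, namely $\nu>4$ with the $2^{-s}$ loss, and matching the model's coefficients to the required decay $|S(a/q)|\le Cq^{o(1)-1}$.

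For $\tau$, I would use the hyperbola method: split $\tau(n)=\sum_{d\mid n,\, d\le N^{\epsilon}}+\dots$. The small-$d$ part is a type-I sum, which equals the major-arc model up to an error with arbitrary log savings. The remaining part is handled by the type-II/$U^{1+}$ estimates and then upgraded to $U^s$ via the inverse theorem of \cite{LSS}, whose quasi-polynomial bounds give $\log^{-c}$ savings with $c$ as large as desired after adjusting $M$.

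If the available savings fall short, I would fall back on a Heath-Brown model with a longer truncation $M=\exp(\log^{o(1)}N)$. This is still $N^{o(1)}$, so it remains within the admissibility definition, and it gives arbitrary power-of-log savings.
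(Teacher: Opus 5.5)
Your reduction is correct and takes a different route from the paper's. You divide by $\log n$, so that $w=\tau/\log n$ has $N$-independent coefficients $S(a/q)=1/q$. You then apply Theorem~\ref{t:main} as stated and undo the normalization by Abel summation; that computation is fine.

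The paper keeps $\tau$ unnormalized and compares it in $U^3([N])$ with the $N$-dependent model $\tau_{\le Q;N}$, whose coefficients $\frac1q(\log N+2\gamma-1-2\log q)$ carry the secondary terms. It strips those terms off in $\ell^1$ via the pointwise bound $\tau(n;Q)\log^2 Q$ (see \eqref{e:smallsumtau}). It then runs the machinery behind Theorem~\ref{t:main} on the $N$-dependent weight $\tau_{\le Q(N)}$.

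Your version fits the theorem literally. Its $q=1$ coefficient also matches the local mean $1+2\gamma/\log n$ of $\tau(n)/\log n$. By contrast, the constant $\log N+2\gamma-1$ in $\tau_{\le Q;N}$ matches the total count on $[1,N]$ but not the local mean $\log n+2\gamma$ of $\tau$; that discrepancy is harmless only after dividing by $\log N$. The price is that you must absorb the secondary terms $\frac{2\gamma-2\log q}{q\log n}$ into the $U^s$ error rather than into $\ell^1$. This is true (via Lemma~\ref{p:U3est} they cost $O(1/\log N)$ in $U^3$). But ``$\log q/\log n=o(1)$ since $q\le N^{o(1)}$'' only gives an $o(1)$ saving, not the required $\log^{-\nu/2^s}N$.

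The gap is the admissibility step itself, which is the whole content of the corollary and which you only assert. The mechanism you sketch does not work as written. After the hyperbola split, the large-divisor piece is not Gowers-uniform: it carries a main term of size $\asymp\log N$, just as the small-divisor piece does. So ``small-$d$ part $\approx$ model, remainder uniform'' is false. The inverse theorem with Leng's algorithm must be applied to the difference between the whole weight and the whole model. For that, the paper verifies three inputs your plan does not address:
(i) agreement of $\tau$ with the model on progressions $n\equiv a\bmod q$, $q\le Q$, via the Voronoi asymptotic \eqref{voronoi-summation-consequence} (for $r_2$, the corresponding Gauss-sum asymptotics);
(ii) high-moment bounds for both functions;
(iii) type I/II representations of the model as well as of the weight. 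For $\tau$ the model's representation comes from $c_q(n)=\sum_{d\mid(n,q)}\mu(q/d)\,d$, which works because $S_\tau(a/q)$ does not depend on $a$.

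For $r_2$, ``argues identically'' skips exactly the nontrivial point. When $4\mid q$, $G(a/q)^2=-2i(-1)^{(a-1)/2}/q$ depends on $a$, so the model is not a combination of Ramanujan sums. The paper needs the identity $\sum_{(a,q)=1}(-1)^{(a-1)/2}e(na/q)=-i\,c_q(n+q/4)$ plus a $2$-adic case analysis to put it in type I form. Conversely, the size of the logarithmic saving, which you single out as the main obstacle, is not where the difficulty lies. Once (i)--(iii) hold, the machinery gives a power-of-$\log N$ saving far beyond what $\nu>4$ requires.
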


We remark that one can likely verify the admissibility of 
\[ |\{ (a,b) : a^2 + d b^2 = n\}| \]
for $d \geq 1$, as one may rely on the decomposition of the theta series,
\begin{equation}
    \Theta_d(q) = \sum_{x, y \in \mathbb{Z}} q^{x^2 + dy^2},
\end{equation}
into its Eisenstein part and cusp part. Using the Fourier expansion of the Eisenstein part, one expects to deal with it in the same manner as in the case of the sum of squares, see \S \ref{ss:sumofsquares} below; on the other hand, by the Rankin-Selberg estimates and the sparse support of the cusp coefficients, one can favorably estimate the $\ell^1$ norm of the first $N$ coefficients of the cusp form, and thereby discard this contribution by the triangle inequality.

In fact, the same convergence phenomenon persists when we restrict $w$ to so-called Piatetski-Shapiro sequences,
\begin{align}
    \mathbb{N}_c := \{ \lfloor k^c \rfloor : k \geq 1\},
\end{align}
provided $1 \leq c < 7/6$ is sufficiently close to $1$, and the $U^3([N])$ norms of $w$ do not grow too rapidly.

\begin{proposition}\label{t:sparse}
    Suppose that for any measure-preserving system $(X,\mu,T)$, whenever $T_1,T_2$ are powers of $T$, for any $f,g \in L^\infty(X)$, the bilinear ergodic averages
    \begin{align}
        \frac{1}{N} \sum_{n \leq N} w(n) T_1^n f \cdot T_2^n g
    \end{align}
   converge $\mu$-almost surely. Then the same is true for
   \begin{align}
        \frac{1}{|\mathbb{N}_c \cap [1,N]|} \sum_{n \leq N, \ n \in \mathbb{N}_c} w(n)  T_1^n f \cdot T_2^n g,
    \end{align}
    whenever $1 \leq c < 7/6$, provided that 
    \[ \| w \|_{U^3([N])} \leq \text{Const} \cdot N^{o(1)}. \]
\end{proposition}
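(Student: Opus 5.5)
Write $\gamma := 1/c \in (6/7,1]$. The plan is to compare the restricted average with a weighted average over all $n$ that the hypothesis already covers. The indicator of $\mathbb{N}_c$ has the standard expansion
\[ 1_{\mathbb{N}_c}(n) = \lfloor -n^{\gamma}\rfloor - \lfloor -(n+1)^{\gamma}\rfloor = \big((n+1)^\gamma - n^\gamma\big) + \psi(-(n+1)^\gamma) - \psi(-n^\gamma), \]
where $\psi(x) = x - \lfloor x\rfloor - 1/2$. We also have $|\mathbb{N}_c\cap[1,N]| \sim N^{\gamma}$.

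The main term $\gamma n^{\gamma-1}(1+O(1/n))$ is a smooth, monotone weight. By summation by parts, together with the hypothesis applied to $w$ (which gives a.e.\ convergence of $\frac1N\sum_{n\le N} w(n)T_1^nf\cdot T_2^ng$, with bounded averages thanks to upper normalization), the expression
\[ N^{-\gamma}\sum_{n\le N}\gamma n^{\gamma-1}w(n)T_1^nf\,T_2^ng \]
converges a.e. Indeed, writing $A_n$ for the partial sums, the normalized Abel transform of a convergent Cesàro sequence against the weights $n^{\gamma-1}$ converges to the same limit, and the $O(1/n)$ correction is negligible.

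It then suffices to show that the error
\[ E_N := N^{-\gamma}\sum_{n\le N} w(n)\big(\psi(-(n+1)^\gamma)-\psi(-n^\gamma)\big)T_1^nf\,T_2^ng \]
tends to $0$ a.e. By a standard lacunary-sampling argument we only need this along $N=\lfloor \rho^k\rfloor$ for each $\rho>1$, controlling the gaps using positivity after splitting $w$ and $f,g$ into real nonnegative parts, in the same way as for the main term. Since lacunary sequences are sparse, it is enough to prove $\|E_N\|_{L^1(X)}\lesssim N^{-\delta}$ for some $\delta>0$ and then apply Borel--Cantelli.

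The core estimate comes next. Expand $\psi$ by Vaaler's truncated Fourier series at height $H=N^{1-\gamma+\epsilon}$:
\[ \psi(x)=\sum_{0<|h|\le H} c_h e(hx) + O\Big(\sum_{|h|\le H} b_h e(hx)\Big), \qquad |c_h|,|b_h|\lesssim 1/|h|. \]
After dyadically decomposing in $n$, the error becomes a combination of bilinear averages $\sum_n w(n)\,\phi_h(n)\,T_1^nf\,T_2^ng$, where $\phi_h(n)=e(hn^\gamma)(e(h\gamma n^{\gamma-1}+\dots)-1)$ is a non-polynomial phase of size $\min(1,|h|n^{\gamma-1})$. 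Two ingredients combine here:
\begin{enumerate}
\item[(i)] Bilinear averages are controlled by the $U^3$ norm of the weight: by van der Corput/PET, $\big\|\frac1N\sum_n a(n)T_1^nf\,T_2^ng\big\|_{L^1}\lesssim \|a\|_{U^3([N])}+o(1)$ (via $U^2$ after one differencing, uniformly over systems).
\item[(ii)] For $a(n)=w(n)e(hn^\gamma)$ we apply van der Corput differencing in $n$ to remove $w$ at the cost of $\|w\|_{U^3}\le N^{o(1)}$. Concretely, we bound $\|w\cdot e(hn^\gamma)\|_{U^3}$ by passing to a Gowers–Cauchy–Schwarz inequality that pairs $w$'s $U^3$ dual with the $U^4$-type norm of $e(hn^\gamma)$ on short intervals. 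The latter is bounded via exponential sum estimates (van der Corput's $k$-th derivative test) by a power saving $N^{-\eta(\gamma)}$ when $|h|\le H$.
\end{enumerate}
Summing $\sum_h |h|^{-1}$ costs only $\log N$, and the $b_h$ term is handled the same way, with its $h=0$ part being of size $H^{-1}N^{\gamma}\cdot N^{o(1)}/N^\gamma$, which is small.

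The main obstacle is step (ii): extracting a genuine power saving for the twisted sum uniformly in $|h|\le N^{1-\gamma+\epsilon}$, given only the weak information $\|w\|_{U^3}\le N^{o(1)}$ on $w$. The numerology of the derivative tests, balancing $H$ against the saving, is exactly what forces the restriction $c<7/6$. I would first try differencing three times, which eliminates $w$ through the $U^3$ structure, and then apply the third-derivative test to the remaining phase $h\Delta_{k_1,k_2,k_3}n^\gamma$. From this I would check that the resulting exponent is positive precisely when $\gamma>6/7$.
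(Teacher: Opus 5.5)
\textbf{There is a genuine gap in step (ii), and it cannot be repaired.} The Gowers--Cauchy--Schwarz inequality bounds a $U^3$ inner product of eight functions by the product of their $U^3$ norms. It gives nothing of the form $\|w\,v\|_{U^3([N])}\lesssim\|w\|_{U^3([N])}\cdot(\text{a small norm of }v)$; such a bound already fails for $w=\overline{v}$. Differencing $w\phi_h$ three times only produces $\Delta_{k_1,k_2,k_3}w\cdot\Delta_{k_1,k_2,k_3}\phi_h$. You cannot strip $w$ from this, because nothing in the hypotheses stops $w$ from correlating with $\overline{\phi_h}$, or with $\mathbf{1}_{\mathbb{N}_c}$ itself.

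Concretely, with $\gamma=1/c$, let $w:=u\cdot\mathbf{1}_{\mathbb{N}_c}$ with $|u|\le 1$.
\begin{enumerate}
\item Since $\frac1N\sum_{n\le N}|w(n)|\lesssim N^{\gamma-1}\to0$, the unrestricted averages converge (to $0$) in every system.
\item Since $\|F\|_{U^3(\mathbb{Z})}\le\|F\|_{\ell^2}$, we get $\|w\|_{U^3([N])}\lesssim\|w\|_{L^2([N])}\lesssim N^{(\gamma-1)/2}$.
\item On $\mathbb{N}_c$ your sawtooth difference equals $1-((n+1)^\gamma-n^\gamma)$. So your main term tends to $0$, while $E_N$ carries the entire restricted average. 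For $u\equiv1$ and $f=g=1$ one gets $E_N\to1$, contradicting $\|E_N\|_{L^1}\lesssim N^{-\delta}$.
\item Since $k\mapsto\lfloor k^c\rfloor$ is injective for $c>1$, you may take $u(\lfloor k^c\rfloor)=v(k)$ with $v$ bounded and having divergent Ces\`aro means. Then the restricted averages diverge already for $f=g=1$.
\end{enumerate}
So the statement as formulated is false for every $1<c<7/6$, and no choice of $H$ or derivative test will close step (ii).

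\textbf{The paper's route is different but fails on the same example.} It keeps the exponential sums off $w$. It writes the difference of the two averages as the bilinear average with weight $wW_c$, and foliates $\|wW_c\|_{U^3}^8=\sum_h\|\Delta_h(wW_c)\|_{U^2}^4$. It then applies Young's inequality, $\|\Delta_h(wW_c)\|_{U^2}\le\|\mathcal{F}_{\mathbb{Z}}(\Delta_hW_c)\|_{L^1(\mathbb{T})}\,\|\Delta_hw\|_{U^2}$. All the number theory goes into Lemma~\ref{l:tech}: the sawtooth expansion, van der Corput's second-derivative test, a large-deviation count in $h$, and H\"older over the exceptional $h$.

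Take $w=\mathbf{1}_{\mathbb{N}_c}$. Then $\mathbb{E}_{[N]}(wW_c)\approx-1/2$, so $\|wW_c\|_{U^3([N])}\gtrsim1$ because Gowers norms dominate the mean. Meanwhile $\|w\|_{U^3([N])}\to0$. Hence the asserted bound $\|wW_c\|_{U^3([N])}\lesssim N^{-\epsilon'}\|w\|_{U^3([N])}$ fails. Indeed $\|\mathcal{F}_{\mathbb{Z}}(\Delta_hW_c)\|_{L^1(\mathbb{T})}\ge\|\Delta_hW_c\|_{\ell^\infty}\ge1$, so Young's inequality alone can never produce a gain.

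Both arguments are missing a hypothesis that controls how $w$ interacts with $\mathbb{N}_c$. One example would be a summable power-saving bound for $\|wW_c\|_{U^3([N])}$ itself. For a concrete weight such as $\Lambda$, this is genuine arithmetic input about the weight along Piatetski-Shapiro numbers, not a consequence of $\|w\|_{U^3([N])}\le N^{o(1)}$.
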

We remark that all three of our examples considered satisfy the moment estimate,
\begin{align}
    \frac{1}{N} \sum_{n \leq N} |w(n)|^2 \leq \text{Const} \cdot N^{o(1)},
\end{align}
and so in particular satisfy the requisite $U^3$ bound, by the embedding $U^3 \hookrightarrow L^2$.

Following Bourgain's lead, in what follows we will only focus on the cases in Theorem \ref{t:main}, Corollary \ref{c:main}, and Proposition \ref{t:sparse} where $T_1 = T, \ T_2 = T^{-1}$, as all other cases can be addressed by suitably adjusting notation. Accordingly, we set
\begin{align}\label{e:B1Scale}
    B_N(f,g) := B_{w,N}(f,g) := \frac{1}{N} \sum_{n \leq N} w(n) T^n f  \cdot T^{-n} g.
\end{align}

\medskip

We now give some remarks about these results:
\begin{enumerate}
\item Recent work on bilinear (weighted) pointwise ergodic theorems has focused on the case of polynomial orbits with distinct degrees, e.g.\ averages of the form
\begin{align}
    \frac{1}{N} \sum_{n \leq N} T^n f  \cdot T^{n^2} g \; \; \; \text{ or }   \; \; \;   \frac{1}{N} \sum_{n \leq N} \Lambda(n) T^n f  \cdot T^{n^2} g,
\end{align}
see \cite{KMT} and \cite{KMTT}, respectively. In fact, a very well-developed multi-linear theory exists for these classes of ``distinct-degree" examples, see \cite{KMPWW} and \cite{Wan}. The key ingredient in these works is an appropriate implementation of \emph{Peluse Theory} \cite{Pel}, see also \cite{PP1}, a ``degree-lowering" mechanism that essentially exploits the distinct-degree nature of the pertaining orbits, intimately connected to the fact that the procyclic factor is characteristic for multi-linear polynomial ergodic averages with distinct-degree polynomials. On the other hand, the averages we consider encode a hidden \emph{modulation invariance}, see \eqref{e:modinv00} below, corresponding to the fact that the larger Kronecker factor is characteristic for the averages in Theorem \ref{t:main} and Proposition \ref{t:sparse};
\item By following the arguments of \cite{F+}, one can essentially show that admissible weights satisfy the conclusion of the return times:
\begin{theorem*}[Return Times Theorem for Admissible Weights]
    Suppose that $w$ is admissible. Then for any measure-preserving system $(\Omega,\mu_0,S)$ and any $g \in L^{\infty}(\Omega)$, there exists $\Omega_g \subset \Omega$ with $\mu_0(\Omega_g) = 1$, so that for all $\omega \in \Omega_g$, the following hold: for any measure-preserving system, $(X,\mu,T)$, and any $f \in L^\infty(X)$,
    \begin{align}
        \frac{1}{N} \sum_{n \leq N} w(n) T^n f \cdot g(S^n \omega)
    \end{align}
converges $\mu$-almost surely.
\end{theorem*}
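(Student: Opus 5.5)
The plan follows the strategy of \cite{F+} for the return times theorem: split $w$ into its major arc model plus a $U^s$-small remainder, and treat each piece separately. By admissibility, for each large $N$ write
\[ w = w_{\mathrm{maj},N} + w_{\mathrm{err},N}, \qquad w_{\mathrm{maj},N}(n) := \sum_{(a,q)=1,\ q \le M(N)} S(a/q)e^{2\pi i na/q}, \]
with $\|w_{\mathrm{err},N}\|_{U^s([N])} \lesssim \log^{-\nu/2^s} N$. By a standard sparsification we restrict to lacunary times $N_k = \lfloor \rho^k\rfloor$, $\rho>1$ close to $1$. Interpolating with the trivial oscillation between consecutive $N_k$ uses only upper normalization and boundedness of $f,g$. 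This reduces the problem to convergence along $N_k$ for every $\rho$.

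\emph{Error term.} For fixed $\omega$, the sequence $b(n)=g(S^n\omega)$ is bounded. The first goal is a pointwise bound
\[ \sup_X \Bigl|\frac1N\sum_{n\le N} w_{\mathrm{err},N}(n)T^nf\, b(n)\Bigr| \lesssim \|w_{\mathrm{err},N}\|_{U^2([N])}^{c}\]
for $\omega$ in a full-measure set. This does not follow directly from a $U^2$ bound, since $b$ is arbitrary. Instead I would transfer to $\mathbb Z$ and argue in $L^2(X)$, using the van der Corput inequality once to remove $T^nf$; this leaves correlations $\sum_n w\bar w(\cdot+h)\,b\bar b(\cdot+h)$. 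One more Cauchy--Schwarz/van der Corput step, together with the Wiener--Wintner theorem for $(\Omega,S,g)$, then controls these by $\|w_{\mathrm{err},N}\|_{U^3([N])}$ up to $o(1)$. This is the reason for $s\ge3$, and monotonicity of Gowers norms gives the bound from the admissible $U^s$ control. Taking square functions, the $L^2(\mu)$ norm is bounded by $\log^{-c\nu/2^s}N_k$. Since $\nu>4$, this decay is fast enough that the sum over $k$, in the form $\sum_k k^{-c\nu/2^s}$ after appropriate squaring, converges. Borel--Cantelli then handles the error term along $N_k$. The relevant exponent bookkeeping should be exactly what forces $\nu>4$.

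\emph{Major arc term.} Here the averages become
\[ \sum_{q\le M}\sum_{(a,q)=1} S(a/q)\,\frac1N\sum_{n\le N} e(na/q)\,T^nf\, b(n). \]
For fixed $a/q$, convergence for all $\omega$ in a full-measure set of $\Omega$ follows from Bourgain's return times theorem applied to the product system with a rotation on $\mathbb Z/q$, since modulated averages $\frac1N\sum_n e(n\theta)T^nf\,b(n)$ converge for all $\theta$ simultaneously. The difficulty is the growing truncation $M(N)\le N^{o(1)}$. I would handle it by splitting into dyadic denominator ranges $q\sim 2^j$ and estimating each block by a maximal inequality in the spirit of Bourgain's multi-frequency lemma, with a loss $O(j^{C})$. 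Using $|S(a/q)|\lesssim q^{o(1)-1}$, the total weight of the block is about $2^{j\cdot o(1)}$. This is not summable on its own, so the argument must exploit cancellation in $\frac1N\sum_n e(na/q)b(n)\,T^nf$ uniformly in $q$. Fixing $\omega$, the ergodic theorem reduces this to the spectral measure of $g$ at rationals, and summing over the rationals handles it.

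\emph{Main obstacle.} I expect the main obstacle to be this uniform-in-$q$ control of the major arcs: one needs a bilinear multi-frequency maximal estimate with only logarithmic loss in the number of frequencies. I would first try to adapt Bourgain's $\log^2$ multi-frequency lemma, using a Rademacher--Menshov argument over the $\omega$-dependent sequence.
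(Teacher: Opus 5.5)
Your major-arc step is a genuine gap. The ``main obstacle'' paragraph is a plan to search for an estimate, not an estimate, and the route you sketch cannot supply one. Any argument that fixes $\omega$ and treats $b(n)=g(S^n\omega)$ as an arbitrary bounded sequence gets no power saving in $Q$ from the block $w_Q=\sum_{a/q\in\Gamma_Q}S(a/q)e(na/q)$, even at a single scale. For instance, choose $b=\overline{w_Q}/|w_Q|$, which is realized by the rotation on $\mathbb{Z}/\mathcal{Q}$. Then $\mathbb{E}_{n\le N}w_Q(n)b(n)=\mathbb{E}_{n\le N}|w_Q(n)|$. For $w=\Lambda$ this is $\gtrsim(\log Q)^{-O(1)}$ by H\"older, since $\mathbb{E}|w_Q|^2=\sum_{\Gamma_Q}|S|^2\approx 1$ while $\mathbb{E}|w_Q|^4\lesssim(\log Q)^{O(1)}$ by Lemma \ref{heath-brown-moments}. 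A power saving is exactly what you need, because admissibility only gives $|S(a/q)|\lesssim q^{o(1)-1}$, so every block already costs $Q^{o(1)}$ before the losses of any multi-frequency or maximal upgrade. (Your ``$2^{j\cdot o(1)}$'' is the $\ell^2$ mass $(\sum_{\Gamma_Q}|S|^2)^{1/2}$; the $\ell^1$ mass is $Q^{1+o(1)}$.) Nor does qualitative convergence for each fixed $a/q$ help, whether via Bourgain's return times theorem or the spectral measure of $g$ at rationals. It identifies limits but gives no control of $\sup_N$ over a frequency family that grows with $N$. Moreover, the sum over rationals cannot be controlled by Bessel, since for $w=\Lambda$ one has $\sum_q\sum_{(a,q)=1}|S(a/q)|^2=\sum_q\mu^2(q)/\phi(q)=\infty$.

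The missing idea is to take the saving from the randomness of $\omega$, via a fixed-scale $U^3$ estimate. Averaging over $\omega$, invariance of $\mu_0$ makes the correlations $\int_\Omega\Delta_{h_1,h_2}g\,d\mu_0$ independent of $n$. Two applications of Cauchy--Schwarz then give
\[ \int_\Omega\sup_{\theta\in\mathbb{T}}\Bigl|\mathbb{E}_{n\le N}w_Q(n)g(S^n\omega)e(n\theta)\Bigr|^4\,d\mu_0(\omega)\lesssim\|w_Q\|_{U^3([N])}^4\lesssim Q^{o(1)-3/13}, \]
where the last step is Lemma \ref{p:U3est} (for $N\ge Q^{100}$). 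This is the route the paper indicates: run the argument of \cite{F+}, where the only step that does not transfer formally is this fixed-scale $U^3$ bound, with Lemma \ref{p:U3est} replacing the squarefree estimate of Proposition \ref{p:U3F+}. The single-scale power saving is then upgraded to a maximal one by the machinery of \cite{F+}, which parallels the present paper, so that the sum over dyadic $Q$ converges. Eigenfunctions of $S$ reduce to the linear weighted theorem, and there your multi-frequency bound $Q^{o(1)-1}$ is indeed summable. For $g$ orthogonal to the Kronecker factor, one uses local Fourier uniformity of the orbit at level $\delta$ and plays $Q$ against $\delta$ through energy pigeonholing and entropy.

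The same averaging in $\omega$ also repairs your error term. Wiener--Wintner for fixed $\omega$ handles correlations of $\Delta_h b$ with characters, not with the weights $\Delta_h w_{\mathrm{err},N}$. It would also leave an unquantified $o(1)$ that Borel--Cantelli along $N_k$ cannot absorb. The display with $w_{\mathrm{err},N}$ in place of $w_Q$ gives the needed quantitative bound directly.
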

Indeed, the only non-formal modification needed to pass from \cite{F+} to the current context concerns the so-called fixed scale $U^3$ estimate for admissible weights, see \cite[\S 2]{F+}, which introduces some further number-theoretic technicality due to the fact that $S(a/q)$ need not vanish whenever $q$ is divisible by the square of a prime. This point will be addressed in the forthcoming work of the first author; in our context, where the quantitative dictates are somewhat less stringent, Lemma \ref{p:U3est} below suffices;
\item Given some mild decay of \[ \frac{1}{N} \sum_{n \leq N} w(n) e^{-2 \pi i n \beta} \]
when $\beta$ is ``$N$-far" from rational with ``$N$-small" denominators -- an estimate of the form
\begin{align}\label{e:FTapprox}
    \| \frac{1}{N} \sum_{n \leq N} (w - \sum_{Q \leq M} w_Q)(n) e^{-2 \pi i n \beta}\|_{L^{\infty}(\mathbb{T})} \leq \text{Const} \cdot N^{-c}
\end{align}
for some $c > 0$ more than suffices -- one can argue as in \cite[\S 5]{BKMod} to establish an $L^p(X)$ maximal inequality for each $p > 1$, 
\begin{align}
\| \sup_N |\frac{1}{N} \sum_{n \leq N} w(n) T^n f| \|_{L^p(X)} \leq \text{Const} \cdot \| f \|_{L^p(X)},
\end{align}
which can be interpolated to show that for each $r > 1$,
\begin{align}
    \| \sup_N |B_{w,N}(f,g) | \|_{L^r(X)} \leq \text{Const} \cdot \|f \|_{L^p(X)} \| g\|_{L^q(X)}
\end{align}
whenever $\frac{1}{p} + \frac{1}{q} = \frac{1}{r} < 1$, see \eqref{e:B1Scale}; in this case, Theorem \ref{t:main} accordingly extends to the case where $f \in L^p(X), \ g \in L^q(X)$ provided that $\frac{1}{p} + \frac{1}{q} < 1$;
\item Proposition \ref{t:sparse} is already new in the case where 
\[ w(n) \equiv \mathbf{1}_{\mathbb{Z}}(n) = e(0/1 \cdot n) \]
in the range $23/22 \leq c < 7/6$, although the range $1 < c < 23/22$ was recently addressed in \cite{LD}; 
\item In the case where $w = \mathbf{1}$, there is a close connection between Theorem \ref{t:BDR} and Lacey's work \cite{L} on the (Euclidean) bilinear maximal function
\begin{align} B_{\mathbb{R}}(f,g)(x) := \sup_{r > 0} |\int_0^1 f(x-rt) g(x+rt) \ dt|;
\end{align}
this connection is most neatly explored in \cite{DOP, BD}. On the other hand, no non-trivial estimates are known for singular variants of $B_{\mathbb{R}}$, i.e.\ 
\begin{align}\label{e:Bmu} B_{\mu} (f,g)(x) := \sup_{r > 0} |\int_0^1 f(x-rt) g(x+rt) \ d\mu(t)|, \end{align}
where $\mu$ is a singular measure on $[0,1]$. Contrast this with Theorem \ref{t:main} and Proposition \ref{t:sparse}: from the perspective of density, the weights we consider are analogous to singular measures, often supported on lower-dimensional sets. For instance, there is an analogy between the weight
\begin{align}
   n \mapsto \Lambda(n) \cdot c n^{1-1/c} \cdot \mathbf{1}_{\mathbb{N}_c}(n)
\end{align}
and measures $\mu$ supported on sets $E \subset [0,1]$ with Hausdorff dimension $1/c$, $\text{dim}_H(E) = 1/c$, but $\mathcal{H}^{1/c}(E) = 0$.
\end{enumerate}

\medskip

With the above in mind, we situate our work in its proper historical context.

\subsection{History}
The study of pointwise convergence of ergodic averages dates back to Birkhoff \cite{BI}; our discussion below will focus only on the case of bounded functions.

\begin{theorem*}[Pointwise Ergodic Theorem] 
Suppose $(X,\mu,T)$ is a measure-preserving system, and that $f \in L^\infty(X)$. Then the averages 
\[ \frac{1}{N} \sum_{n \leq N} T^nf\]
converge $\mu$-almost surely.
\end{theorem*}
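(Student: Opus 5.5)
The plan is the classical two-part scheme: prove convergence on a dense class, control oscillation by a maximal inequality, and combine the two with the Banach principle. Since $L^\infty(X) \subset L^2(X)$ and $\mu$ is a probability measure, I will work in $L^2(X)$ and then restrict. Set $A_N f := \frac{1}{N}\sum_{n \le N} T^n f$. The statement reduces to two ingredients:
\begin{itemize}
\item[(i)] pointwise convergence of $A_N h$ for $h$ in a subset $\mathcal{D}$ that is dense in $L^2(X)$;
\item[(ii)] a weak-type maximal inequality of the form $\mu(\{\sup_N |A_N f| > \lambda\}) \le \lambda^{-1}\|f\|_{L^1(X)}$.
\end{itemize}

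For the dense class, I will use the von Neumann splitting. Since $T$ is unitary on $L^2(X)$, we have
\[ L^2(X) = \ker(I - T) \oplus \overline{\{h - Th : h \in L^\infty(X)\}}. \]
This holds because the orthogonal complement of the coboundaries is $\ker(I - T^{*})$, and $\ker(I-T^*) = \ker(I - T)$ for a unitary $T$. On $T$-invariant functions $A_N f = f$ for every $N$, so convergence is trivial. On a coboundary $f = h - Th$ with $h \in L^\infty(X)$, the sum telescopes:
\[ A_N f = \frac{1}{N}\big(Th - T^{N+1}h\big), \]
which is bounded in absolute value by $2\|h\|_\infty/N \to 0$ everywhere. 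Thus $\mathcal{D} := \ker(I-T) + \{h - Th : h \in L^\infty(X)\}$ is dense in $L^2(X)$, and $A_N$ converges pointwise on it.

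For the maximal inequality I would prove the maximal ergodic lemma. Set $F_K := \max_{1 \le k \le K} \sum_{n=0}^{k-1} T^n f$; then
\[ \int_{\{F_K > 0\}} f \, d\mu \ge 0. \]
The standard one-line argument is that $f \ge F_K - T F_K^+$ on the set $\{F_K > 0\}$. Integrating and using the invariance of $\mu$ gives the lemma. Applying it to $f - \lambda$ and letting $K \to \infty$ yields (ii). Alternatively, Calderón transference from the discrete Hardy--Littlewood maximal inequality on $\mathbb{Z}$ gives the same bound.

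Finally, I will run the Banach principle argument. For $f \in L^\infty(X)$ and any $\epsilon > 0$, write $f = h + e$ with $h \in \mathcal{D}$ and $\|e\|_{L^1} \le \|e\|_{L^2} < \epsilon$. The oscillation then satisfies
\[ \limsup_N A_N f - \liminf_N A_N f \le 2 \sup_N |A_N e|, \]
because the oscillation of $A_N h$ vanishes. By (ii) the right-hand side exceeds $\lambda$ only on a set of measure at most $2\epsilon/\lambda$. Letting $\epsilon \to 0$ and then $\lambda \to 0$ shows that the oscillation is zero almost everywhere, so $A_N f$ converges $\mu$-almost surely.

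The only genuinely non-formal step is the maximal ergodic lemma, specifically the pointwise inequality $f \ge F_K - TF_K^+$ on $\{F_K > 0\}$. I expect to verify it directly from the recursion
\[ \sum_{n=0}^{k} T^n f = f + T\Big(\sum_{n=0}^{k-1} T^n f\Big). \]
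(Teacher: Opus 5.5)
The paper does not prove this statement. It is quoted, with a citation to Birkhoff \cite{BI}, purely as historical background, so there is no in-paper argument to match. Your proof is the standard one (von Neumann's splitting for a dense class, the Hopf--Garsia maximal lemma for the weak-type $(1,1)$ bound, and the Banach principle), and it is correct.

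Only cosmetic points need tidying:
\begin{enumerate}
\item The telescoped bound $|A_N(h-Th)|\le 2\|h\|_\infty/N$ holds almost everywhere, not everywhere.
\item Your maximal lemma is stated for $\sum_{n=0}^{k-1}T^nf$, while $A_N$ uses $\sum_{n=1}^{N}$. These differ by one application of $T$, so by invariance they have the same distribution function.
\item To get (ii) with $|A_Nf|$, apply the lemma to $|f|-\lambda$ and use $|A_Nf|\le A_N|f|$.
\item Since the paper allows complex-valued $f$, replace $\limsup-\liminf$ by the Cauchy oscillation $\limsup_{M,N\to\infty}|A_Mf-A_Nf|\le 2\sup_N|A_Ne|$, or treat real and imaginary parts separately.
\end{enumerate}

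For comparison, the toolkit the paper uses for its own theorems never relies on a dense class. Instead it proves quantitative oscillation bounds along the $2$-lacunary sequences $K2^{\mathbb{N}}$. These are jump-counting and $r$-variation estimates from L\'epingle's inequality (Lemma \ref{l:LEP}, Corollary \ref{c:LEP}), transferred to $(X,\mu,T)$ by Calder\'on's principle \cite{C1}. The paper then passes to all times via the reduction of Lemma \ref{l:lacred}, which uses the Lipschitz-in-$N$ bound $|A_Nf-A_Mf|\lesssim \frac{N-M}{N}\|f\|_\infty$ for bounded $f$.

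Specialized to $w\equiv 1$ and a single function, that machinery also yields Birkhoff's theorem for $f\in L^\infty(X)$. Your route is more elementary, and because the maximal inequality is weak $(1,1)$, it extends at once to all $f\in L^1(X)$. The variational route instead gives quantitative control on the number of $\lambda$-fluctuations of the averages. It also removes the need to exhibit a dense class of good functions, which is exactly what is unavailable for the weighted and bilinear averages the paper treats.
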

This result was dramatically strengthened by Wiener-Wintner \cite{WW}:

\begin{theorem*}[Wiener-Wintner Ergodic Theorem] 
    Suppose $(X,\mu,T)$ is a measure-preserving system, and that $f \in L^\infty(X)$. Then $\mu$-almost surely, the averages 
\[ \frac{1}{N} \sum_{n \leq N} T^nf \cdot  e^{2\pi i n \theta}  \]
converge for \emph{all} $\theta \in [0,1]$.
\end{theorem*}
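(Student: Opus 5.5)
The plan is to split $f$ along the Kronecker factor and treat the two pieces separately. First, reduce to the case where $(X,\mu,T)$ is ergodic. The set of $x$ at which the averages converge for every $\theta$ is measurable, since for $\theta$ in a compact set the averages are equicontinuous in $\theta$ at each fixed $N$. So it suffices to prove the statement for almost every ergodic component. In the ergodic case, write $L^2(X) = \mathcal{K} \oplus \mathcal{K}^{\perp}$, where $\mathcal{K}$ is the closed span of the eigenfunctions of $T$. Decompose $f = f_1 + f_2$ with $f_1 \in \mathcal{K}$ and $f_2 \in \mathcal{K}^\perp$; both are bounded, since they are conditional expectations onto a factor.

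For $f_2$ we will show the stronger statement that $\sup_{\theta} |\frac{1}{N}\sum_{n \leq N} T^n f_2(x)\, e^{2\pi i n\theta}| \to 0$ for $\mu$-a.e.\ $x$. This is the main step. Apply the van der Corput inequality to $a_n := f_2(T^n x)e^{2\pi i n \theta}$. For $1 \leq H \leq N$ it gives, uniformly in $\theta$,
\[
\Big|\frac{1}{N}\sum_{n\le N} a_n\Big|^2 \lesssim \frac{\|f_2\|_\infty^2}{H} + \frac{1}{H}\sum_{h=1}^{H}\Big|\frac{1}{N}\sum_{n\le N} f_2(T^n x)\overline{f_2(T^{n+h}x)}\Big| + O\Big(\frac{H}{N}\|f_2\|_\infty^2\Big).
\]
The point is that the phase $e^{2\pi i h\theta}$ disappears in absolute value, so the $\theta$-dependence is gone. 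Next, apply Birkhoff's theorem to each of the countably many functions $f_2 \cdot T^h \overline{f_2}$. This shows that the inner averages converge a.e.\ to $\langle f_2, T^h f_2\rangle$, using ergodicity. Hence
\[
\limsup_N \sup_\theta \Big|\frac{1}{N}\sum_{n \leq N} T^n f_2(x)\, e^{2\pi i n \theta}\Big|^2 \lesssim \frac{1}{H} + \frac{1}{H}\sum_{h\le H}|\langle f_2,T^hf_2\rangle|
\]
a.e., for every $H$. Since $f_2 \perp \mathcal{K}$, its spectral measure $\sigma_{f_2}$ has no atoms. By Wiener's lemma, $\frac1H\sum_{h\le H}|\hat\sigma_{f_2}(h)|^2 \to 0$, and by Cauchy--Schwarz the Cesàro means of $|\hat\sigma_{f_2}(h)|$ then tend to $0$ as well. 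Letting $H\to\infty$ finishes this piece.

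For $f_1$, first treat a finite linear combination $\phi = \sum_j c_j e_j$ of eigenfunctions, with $Te_j = e^{2\pi i \alpha_j} e_j$. Then $T^n\phi(x) e^{2\pi i n\theta} = \sum_j c_j e_j(x) e^{2\pi i n(\alpha_j+\theta)}$, which is a finite sum of geometric progressions. Its averages converge for every $x$ and every $\theta$: the limit is $\sum_{j : \alpha_j + \theta \in \mathbb{Z}} c_j e_j(x)$. For general $f_1 \in \mathcal{K}$, choose such $\phi_k$ with $\|f_1-\phi_k\|_{L^1} \to 0$. Then apply the maximal ergodic theorem, using the bound
\[
\sup_\theta\sup_N \Big|\frac1N\sum_{n\le N}T^n(f_1-\phi_k)(x)\,e^{2\pi i n\theta}\Big| \le \sup_N \frac1N\sum_{n\le N}T^n|f_1-\phi_k|(x),
\]
whose weak-$(1,1)$ bound controls the measure of the set where the oscillation in $N$, taken uniformly in $\theta$, exceeds $\epsilon$. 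Consequently the averages for $f_1$ are uniformly Cauchy in $N$ for all $\theta$, a.e. Adding this to the result for $f_2$ gives the theorem. The only genuinely non-formal input is the van der Corput / Wiener step for $f_2$, and there the essential care is to obtain the bound uniformly in $\theta$ before taking any limits.
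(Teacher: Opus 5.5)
The paper gives no proof of this statement. The Wiener--Wintner theorem is only quoted as background, with the remark that the version for Lebesgue-almost every $\theta$ follows from the pointwise ergodic theorem on a product system, so there is no route in the paper to compare yours with. Your argument is the standard modern proof: split along the Kronecker factor, use van der Corput plus Wiener's lemma on $\mathcal{K}^\perp$, and use eigenfunctions plus the maximal inequality on $\mathcal{K}$. It is correct once two points are repaired.

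\textbf{The conclusion for $f_1$.} Your closing claim that the averages are ``uniformly Cauchy in $N$ for all $\theta$'' is false as stated. Already for a single eigenfunction they are not uniformly Cauchy in $\theta$: $\frac1N\sum_{n\le N}e(n\beta)\to\mathbf{1}_{\beta\in\mathbb{Z}}$, but not uniformly, since at $\beta=1/(2N)$ the average has modulus about $2/\pi$. Using convergence of the $\phi_k$-averages for each fixed $\theta$, your maximal bound actually gives
\[
\sup_{\theta}\,\limsup_{N,N'\to\infty}\Big|\frac1N\sum_{n\le N}T^nf_1(x)\,e(n\theta)-\frac1{N'}\sum_{n\le N'}T^nf_1(x)\,e(n\theta)\Big|\le 2\sup_N\frac1N\sum_{n\le N}T^n|f_1-\phi_k|(x).
\]
Here the supremum in $\theta$ sits outside the $\limsup$. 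The weak-$(1,1)$ inequality then makes the left side vanish a.e., which is all you need.

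\textbf{Measurability in the ergodic reduction.} Your justification does not work. Continuity of $\theta\mapsto\frac1N\sum_{n\le N}T^nf(x)e(n\theta)$ for each fixed $N$ is not equicontinuity in $N$, and it does not reduce ``for every $\theta$'' to countably many $\theta$. The correct reason is that the bad set is the projection to $X$ of a Borel subset of $X\times\mathbb{T}$. It is therefore analytic, hence universally measurable, which is enough to integrate over the ergodic decomposition.

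This reduction is essential, not cosmetic: you use ergodicity to identify the Birkhoff limit of $f_2\cdot T^h\overline{f_2}$ with $\langle f_2,T^hf_2\rangle$. Without ergodicity the $f_2$ step fails. Take $T(t,y)=(t+y,y)$ on $\mathbb{T}\times[0,1]$. Its eigenfunctions are exactly the functions of $y$ alone, so $f(t,y)=e(t)$ is orthogonal to all of them. Yet at $\theta\equiv-y$ the averages equal $e(t)$ for every $N$, so their supremum over $\theta$ does not tend to $0$.

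Your $f_2$ step actually proves the stronger uniform statement $\sup_\theta|\cdot|\to0$ on $\mathcal{K}^\perp$ for ergodic $T$. Applied to forward and backward averages, this is exactly what the paper uses without proof in \S\ref{s:trans} to obtain $\mu(X_{\delta_0})\ge1-\delta_0$ in \eqref{e:Xdelta0}.
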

Note that a statement about Lebesgue-almost every $\theta$ follows directly from the Pointwise Ergodic Theorem in a product system $(X \otimes \mathbb{T}, d\mu\otimes dx)$, so the Wiener-Wintner Theorem is a genuine strengthening.

By the spectral theorem, the Wiener-Wintner Theorem implies that for $\mu$-almost every $x \in X$, for any secondary measure-preserving system $(\Omega,\mu_0,S)$, and any $g \in L^\infty(\Omega)$, the averages 
\begin{align}
    \frac{1}{N} \sum_{n \leq N} f(T^n x) \cdot S^n g
\end{align}
converge in $L^2(\mu_0)$. This result was strengthened to a statement about pointwise convergence by Bourgain-Furstenberg-Katznelson-Ornstein in \cite[Appendix]{B2}.
\begin{theorem*}[Return Times Theorem] 
        For any measure-preserving system $(\Omega,\mu_0,S)$ and any $g \in L^{\infty}(\Omega)$, there exists $\Omega_g \subset \Omega$ with $\mu_0(\Omega_g) = 1$, so that for all $\omega \in \Omega_g$, the following hold: for any measure-preserving system, $(X,\mu,T)$, and any $f \in L^\infty(X)$,
    \begin{align}
        \frac{1}{N} \sum_{n \leq N} T^n f \cdot g(S^n \omega)
    \end{align}
converges $\mu$-almost surely.
\end{theorem*}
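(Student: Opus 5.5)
The plan is to reduce, through a spectral splitting of $g$ and then of $f$, to a single ``uniformly small'' case; that case is where the real difficulty lies.

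\emph{Step 1: the good set $\Omega_g$.} Split $g = g_{\mathcal K} + g^{\perp}$, where $g_{\mathcal K}$ is the projection of $g$ onto the Kronecker factor of $(\Omega,\mu_0,S)$ and $g^\perp$ is orthogonal to every eigenfunction. Choose finite combinations $g_j$ of eigenfunctions with $\|g_{\mathcal K}-g_j\|_{L^2(\mu_0)} \leq 2^{-j}$. Define $\Omega_g$ as the intersection of countably many full-measure sets:
\begin{itemize}
\item points where Birkhoff's theorem holds for $|g_{\mathcal K}-g_j|^2$, $|g^\perp|^2$ and $g^\perp\cdot \overline{S^h g^\perp}$ for all $h\in\mathbb Z$;
\item points where every eigenfunction appearing in every $g_j$ satisfies $\phi(S^n\omega)=\lambda^n\phi(\omega)$;
\item points where the uniform Wiener--Wintner property $\sup_\theta|\frac1N\sum_{n\le N} g^\perp(S^n\omega)e(n\theta)|\to 0$ holds.
\end{itemize}
The last property is the standard strengthening of the Wiener--Wintner theorem for functions orthogonal to the Kronecker factor; it follows from van der Corput's inequality together with the spectral measure of $g^\perp$ being continuous.

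\emph{Step 2: the Kronecker part of $g$.} For $\omega\in\Omega_g$, the sequence $g_j(S^n\omega)$ is a finite sum $\sum_k c_k e(n\theta_k)$. For any $(X,\mu,T)$ and $f\in L^\infty(X)$, each average $\frac1N\sum_{n\le N} T^nf\cdot e(n\theta_k)$ converges $\mu$-a.e., either by the Wiener--Wintner theorem or by Birkhoff in $X\times\mathbb T$ for the single fixed $\theta_k$. To pass from $g_j$ to $g_{\mathcal K}$, use the trivial maximal bound
\[
\sup_N \Bigl|\frac1N\sum_{n\le N} T^nf\,(g_{\mathcal K}-g_j)(S^n\omega)\Bigr| \le \|f\|_\infty \sup_N \frac1N\sum_{n\le N}|g_{\mathcal K}-g_j|(S^n\omega).
\]
Its $\limsup$ in $N$ is at most $\|f\|_\infty 2^{-j}$ by the Birkhoff condition built into $\Omega_g$. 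Hence the oscillation of the averages is at most $O(2^{-j})$ $\mu$-a.e., for every $j$, which gives convergence.

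\emph{Step 3: the part orthogonal to the Kronecker factor, and the main obstacle.} It remains to show that $\frac1N\sum_{n\le N} f(T^nx)\,b_n\to0$ $\mu$-a.e., where $b_n=g^\perp(S^n\omega)$ and $\omega\in\Omega_g$ is fixed. Split $f$ the same way, $f=f_{\mathcal K}+f^\perp$, now inside $X$.
\begin{itemize}
\item For $f_{\mathcal K}$: approximate it by eigenfunction combinations as in Step 2. The averages $\frac1N\sum_{n\le N} e(n\theta)b_n$ tend to $0$ uniformly in $\theta$ by the choice of $\Omega_g$, so this piece vanishes.
\item For $f^\perp$: this is the genuinely hard case, both sequences being ``weak mixing''. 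I would first try van der Corput,
\[
\limsup_N\Bigl|\frac1N\sum_{n\le N} f^\perp(T^nx)b_n\Bigr|^2 \lesssim \limsup_H \frac1H\sum_{h\le H}\limsup_N\Bigl|\frac1N\sum_{n\le N} (f^\perp\overline{T^hf^\perp})(T^nx)\, b_n\overline{b_{n+h}}\Bigr|.
\]
Then apply Steps 1--2 recursively to the pairs $(f^\perp\overline{T^hf^\perp},\ g^\perp\overline{S^hg^\perp})$; only countably many $h$ occur, so $\Omega_g$ still has full measure. This identifies each inner limit, as a spectral pairing of the two Kronecker projections. Summing over $h$, the continuity of the spectral measures should make the Ces\`aro average in $h$ tend to $0$.
\end{itemize}
The obstacle is that this recursion does not close without a quantitative input: the inner limits contain again parts orthogonal to the Kronecker factor. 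I expect to need Bourgain's estimate from \cite[Appendix]{B2} here. It bounds such bilinear averages by $\sup_\theta|\frac1N\sum_{n\le N} b_n e(n\theta)|$ times a maximal quantity in $x$, controlled via an entropy (metric) argument on the set of sequences $\{(f(T^nx))_{n\le N}\}$. With that estimate, the $f^\perp$ term is dominated by the uniform Wiener--Wintner quantity from Step 1, which tends to $0$. Establishing this estimate, or substituting Rudolph's joinings argument for it, is the step I would budget the most effort for.
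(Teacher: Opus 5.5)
The paper does not prove this statement; it quotes it from Bourgain--Furstenberg--Katznelson--Ornstein \cite[Appendix]{B2}. Measured against that, your Steps 1--2 and the $f_{\mathcal K}$ part of Step 3 are the routine reductions. The entire content of the theorem is the case $g\perp\mathcal K$ with $f$ arbitrary, and there your proposal has a genuine gap.

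The van der Corput route is not merely incomplete; it is too lossy to work. Take both systems to be the skew product $(x,y)\mapsto(x+\alpha,y+x)$ on $\mathbb{T}^2$ with $\alpha$ irrational, and set $g=e(y)$, $f=\overline{e(y)}$. Both are orthogonal to the Kronecker factor, which consists of functions of $x$ alone. Then $u_n:=f(T^n(x_1,y_1))\,g(S^n(x_0,y_0))=e\bigl(y_0-y_1+n(x_0-x_1)\bigr)$ is a linear phase. Its averages tend to $0$ for a.e.\ $(x_1,y_1)$, yet every differenced average $\frac1N\sum_n u_n\overline{u_{n+h}}$ has modulus $1$. So the right-hand side of your inequality is of order $1$ while the left-hand side is $0$. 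Equivalently, $f\overline{T^hf}$ and $g\overline{S^hg}$ are eigenfunctions with eigenvalues $e(h\alpha)$ and $e(-h\alpha)$, so the ``spectral pairing'' you expected to average out in $h$ has size $1$ for every $h$. Moreover, in any form of van der Corput, identifying the inner limits requires the theorem for $f\overline{T^hf}$ against the $\mathcal K^\perp$-part of $g\overline{S^hg}$, which is the original problem again.

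What is missing is a maximal inequality, and proving it is the theorem. Write $b_n:=g(S^n\omega)$ and $\epsilon_N:=\sup_\theta|\frac1N\sum_{n\le N}b_ne(n\theta)|$. The single-scale bound comes free from the spectral theorem: $\|\frac1N\sum_{n\le N}b_nT^nf\|_{L^2(\mu)}\le\epsilon_N\|f\|_{L^2(\mu)}$. The whole difficulty is the supremum over scales. Here $\epsilon_N\to0$ at an unquantified, possibly arbitrarily slow rate, so $\sum_k\epsilon_{\lfloor\rho^k\rfloor}^2$ may diverge and the lacunary Borel--Cantelli argument is unavailable.

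You need $\|\sup_{N\ge N_0}|\frac1N\sum_{n\le N}b_nT^nf|\|_{L^2(\mu)}\to0$ as $\sup_{N\ge N_0}\epsilon_N\to0$. This should hold for lacunary $N$ and $|f|\le1$, uniformly over systems via transference. The entropy route to this (Bourgain \cite{B00}, cf.\ \cite{BKMod}) decomposes the local spectrum of $b$ into $\delta$-level sets of $O(\delta^{-2})$ frequencies. It then controls the supremum through multi-frequency variational estimates and entropy/chaining, which is the linear prototype of \S\ref{s:MultiFreq}, \S\ref{s:ENERGY} and \S\ref{ss:entropymethod} here. By contrast, \cite[Appendix]{B2} is the short ``soft'' BFKO argument, not an entropy estimate, and Rudolph's proof uses joinings. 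You name such an estimate but supply none of it, so the proposal stops exactly where the proof has to begin.

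A smaller point: Steps 1--2 tacitly assume $S$ is ergodic, since at a fixed $\omega$ the Birkhoff limits and the van der Corput correlations are conditional expectations on the invariant $\sigma$-algebra. For example, take $S(\alpha,y)=(\alpha,y+\alpha)$ on $\mathbb{T}^2$ and $g=e(y)$. Then $g$ has Lebesgue spectral measure and is orthogonal to every constant-eigenvalue eigenfunction, yet $\sup_\theta|\frac1N\sum_{n\le N}g(S^n\omega)e(n\theta)|=1$ for every $\omega$. Pass to ergodic components first.
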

The phrase ``return times" is explained by taking $g = \mathbf{1}_G$ to be the indicator function of a non-trivial measurable set, $G \subset \Omega$. Informally, the presence of the secondary function, $g$, adds a ``half-degree" of multi-linearity to Birkhoff's Theorem; although Bourgain et.\ al.\ offered a very succinct proof using ``soft" methods, an earlier unpublished argument of Bourgain \cite{B00} revealed a connection to Theorem \ref{t:BDR}, see \cite{BKMod} for a unified approach to both Theorems.

In another direction, motivated by a question of Furstenberg and Bellow, in \cite{B0,B1,B2} Bourgain addressed the issue of pointwise convergence along \emph{sparse} (polynomial) sequences.
\begin{theorem*}[Bourgain's Polynomial Ergodic Theorem]
    Let $P \in \mathbb{Z}[\cdot]$ be a polynomial with integer coefficients. Then for any measure-preserving system, $(X,\mu,T)$, and any $f \in L^{\infty}(X)$, the averages
    \begin{align}
        \frac{1}{N} \sum_{n \leq N} T^{P(n)} f
    \end{align}
    converge $\mu$-almost surely.
\end{theorem*}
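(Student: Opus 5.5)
Since this is the classical theorem of Bourgain, I would follow his $L^2$ strategy. Bounded functions lie in $L^2(X)$, and we may assume $f \geq 0$ by splitting into real/imaginary and positive/negative parts. First I would reduce to lacunary times. Fix $\lambda > 1$ and set $N_k := \lfloor \lambda^k \rfloor$. For $N_k \leq N < N_{k+1}$, positivity gives
\[ \frac{N_k}{N_{k+1}} A_{N_k} f \leq A_N f \leq \frac{N_{k+1}}{N_k} A_{N_{k+1}} f, \qquad A_N f := \frac{1}{N} \sum_{n \leq N} T^{P(n)} f. \]
So almost-sure convergence along $\{N_k\}$ for every $\lambda$ in a sequence tending to $1$ implies convergence along all $N$. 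By Calder\'on transference, it then suffices to prove a quantitative form of convergence on $\ell^2(\mathbb{Z})$. Concretely, I would aim for an oscillation inequality that is uniform in the sequence $K_1 < K_2 < \dots < K_J$:
\[ \Big\| \Big( \sum_{j < J} \sup_{K_j \leq k < K_{j+1}} |A_{N_k} f - A_{N_{K_j}} f|^2 \Big)^{1/2} \Big\|_{\ell^2} \leq o(J^{1/2}) \cdot \| f \|_{\ell^2}. \]
The right-hand side is sublinear in $J$ (the $o(J^{1/2})$ refers to growth as $J \to \infty$), and this rules out non-convergence.

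Next, I would run the circle method on the multiplier $m_N(\xi) := \frac{1}{N} \sum_{n \leq N} e^{2\pi i P(n) \xi}$. On the major arcs, $m_N(\xi)$ is approximated by
\[ L_N(\xi) := \sum_{q} \sum_{(a,q)=1} G(a/q)\, v_N(\xi - a/q)\, \chi_q(\xi - a/q), \]
where $G(a/q)$ are the complete Gauss sums with $|G(a/q)| \lesssim q^{-1/d}$, $v_N$ is the continuous analogue $\int_0^1 e^{2\pi i P(Nt)\xi}\,dt$, and $\chi_q$ are smooth cutoffs to small neighborhoods. On the minor arcs, Weyl's inequality gives $|m_N - L_N| \lesssim N^{-c}$. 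Summing over the lacunary $N_k$ therefore controls the error term by a square function whose $\ell^2$ bound is $O(1)$. I would organize the main term dyadically in the denominator $q \sim 2^s$. Then
\[ L_N = \sum_s L^{(s)}_N, \]
where each $L^{(s)}_N$ has coefficients of size $2^{-s/d}$ and is supported near a set of frequencies $\mathcal{R}_s$ with $|\mathcal{R}_s| \lesssim 2^{2s}$ that are $2^{-2s}$-separated. For scales $N \lesssim 2^{Cs}$ the trivial bound suffices. For larger scales the pieces $v_N(\xi - \theta)$ around distinct $\theta \in \mathcal{R}_s$ decouple.

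The main obstacle will be the uniform oscillation estimate for each $L^{(s)}$, namely Bourgain's multi-frequency lemma. It states that for a family of $R$ separated frequencies $\theta_1, \dots, \theta_R$, the oscillation (or maximal) operator of $\sum_r v_N(\xi - \theta_r)\, \hat f(\xi)$ over $N$ larger than the separation scale is bounded on $\ell^2$ with constant $O(\log^2 R)$, times the same $J$-dependence as in the single-frequency case. I would try to prove this as Bourgain did. First, separate scales that are large relative to the spacing (where $v_N$ localizes and one modulates each frequency to $0$) from the remaining scales. Then pass to a single-frequency variational estimate, which follows from martingale/Rademacher--Menshov arguments and Lépingle-type inequalities, applied to the $R$ modulated pieces. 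The loss of $\log R$ in this passage comes from the Rademacher--Menshov argument for $\ell^2$-sums of maximal functions across frequencies. With $R \sim 2^{2s}$, the loss is polynomial in $s$, and it is beaten by the Gauss sum decay $2^{-s/d}$. Summing in $s$ then yields the oscillation bound, and hence almost-sure convergence for $f \in L^\infty(X)$.
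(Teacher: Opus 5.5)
The paper does not prove this theorem; it only cites Bourgain \cite{B0,B1,B2}. Your outline follows Bourgain's $L^2$ scheme, and those reductions are fine: the positivity sandwich to reach lacunary times, Calder\'on transference, an $o(J^{1/2})$ oscillation inequality, the circle-method approximation $\|m_N - L_N\|_{L^\infty(\mathbb{T})} \lesssim N^{-c}$ with the error absorbed by a square function, and a dyadic split in the denominators whose Gauss-sum decay must beat a multi-frequency loss. The multi-frequency lemma you state, with an $O(\log^2 R)$ loss at scales beyond the separation, is also the right black box. The gap is in how you propose to prove it, which is the step you yourself call the main obstacle.

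As sketched, that proof fails. After modulating, the large-scale operator is $\sum_r e(\theta_r x)\, A_N f_r(x)$, and the supremum (or oscillation) in $N$ does not pass through the sum over $r$. Single-frequency, vector-valued L\'epingle estimates bound $\big(\sum_r \sup_N |A_N f_r|^2\big)^{1/2}$ without loss. Getting back to $\sup_N |\sum_r e(\theta_r x) A_N f_r(x)|$ by Cauchy--Schwarz then costs $R^{1/2} \approx 2^{s}$, which swamps the $2^{-s/d}$ decay for every $d$. Rademacher--Menshov cannot repair this. It controls a supremum over a finite family of $M$ indices with a $\log M$ loss; it does not turn a supremum in $N$ of a sum over $r$ into a square sum over $r$. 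In Bourgain's lemma it only handles at most $R^{O(1)}$ transitional scales where neighbourhoods of distinct frequencies interact (compare the set $E$ in Corollary \ref{c:mfproj}).

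The missing idea is Bourgain's entropy (metric chaining) argument, which the paper reproduces in \S\ref{s:MultiFreq}:
\begin{itemize}
\item Work on intervals $I$ of length comparable to the inverse separation. There the characters $e(\theta_r x)$ are orthogonal, and for $N \gg |I|$ the coefficients $A_N f_r(x)$ are essentially constant in $x \in I$ (Lemma \ref{l:transfreely}), so you may freeze them at a point $c_I$.
\item At each radius $2^{-u}$, cover $\{(A_N f_r(c_I))_r : N\} \subset \ell^2(\Lambda)$ by at most $\vec N_{2^{-u}}(c_I)$ balls, telescope along these nets, and bound the supremum over each net by its square sum.
\item Local orthogonality then gives an $\ell^2(I)$ bound $\lesssim 2^{-u}\vec N_{2^{-u}}(c_I)^{1/2}|I|^{1/2}$, which the vector-valued jump inequality controls.
\item At radii below $R^{-A}$ times the vector-valued maximal function $\mathcal{M}(c_I)$, the crude $R^{1/2}$ bound is harmless (for variational bounds this uses $r>2$).
\end{itemize}
Only $O(\log R)$ radii remain, so the loss is logarithmic rather than $R^{1/2}$; see Lemma \ref{l:localized} and Proposition \ref{p:mfvarprop}. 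Without this step, the Gauss-sum decay cannot be summed against the multi-frequency loss, and the argument does not close.
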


In the course of this work, he also addressed the issue of pointwise convergence along prime times, \cite{Bp}, see also \cite{Wierdl} and \cite{MTZK} for quantitative refinements.
\begin{theorem*}[Bourgain's Ergodic Theorem along the Primes]
    For any measure-preserving system, $(X,\mu,T)$, and any $f \in L^{\infty}(X)$ the averages
    \begin{align}\label{e:primeavg1}
        \frac{1}{N} \sum_{n \leq N} T^{p_n} f
    \end{align}
    converge $\mu$-almost surely, see \eqref{e:primes}.
\end{theorem*}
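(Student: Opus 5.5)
We would follow Bourgain's circle-method strategy, reducing everything to $L^2$ estimates for a Fourier multiplier on $\mathbb{Z}$. First, by \eqref{e:sumbyparts} it suffices to prove almost sure convergence of
\[ A_N f := \frac{1}{N}\sum_{n\le N}\Lambda(n)\,T^nf .\]
Splitting $f$ into real and imaginary parts and then into positive and negative parts, we may take $f\ge 0$ bounded. Since $\Lambda\ge 0$, for $N_k\le N<N_{k+1}$ with $N_k:=\lfloor \rho^k\rfloor$ we have $\frac{N_k}{N}A_{N_k}f\le A_Nf\le \frac{N_{k+1}}{N}A_{N_{k+1}}f$. Letting $\rho\downarrow 1$ along a countable sequence, it therefore suffices to prove convergence along each lacunary sequence $(N_k)$. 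We will establish this through an oscillation inequality: for every increasing sequence $(k_j)$,
\[ \sum_{j\le J}\Big\|\sup_{k_j\le k<k_{j+1}}|A_{N_k}f-A_{N_{k_j}}f|\Big\|_{L^2(X)}^2=o(J)\,\|f\|_{L^2}^2 ,\]
which rules out non-convergence in the standard way. By Calder\'on transference, it is enough to prove the analogous estimate on $\ell^2(\mathbb{Z})$ for convolution with the kernels $\frac1N\Lambda\mathbf 1_{[1,N]}$.

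Passing to the Fourier side, the multiplier is $m_N(\beta)=\frac1N\sum_{n\le N}\Lambda(n)e(n\beta)$. We decompose it via major and minor arcs. On minor arcs, meaning $\beta$ outside the $\log^{A}N/N$-neighborhoods of rationals $a/q$ with $q\le \log^{A}N$, Vinogradov's estimate gives $|m_N(\beta)|\lesssim_A \log^{-A}N$. On the major arcs, Siegel--Walfisz gives $m_N(\beta)=\frac{\mu(q)}{\varphi(q)}\,\widehat{\mathbf 1_{[0,1]}}(N(\beta-a/q))+O(\log^{-A}N)$. Hence
\[ m_N=\sum_{s\ge0}\ \sum_{q\sim 2^s}\ \sum_{(a,q)=1}\frac{\mu(q)}{\varphi(q)}\,\widehat{\mathbf 1_{[0,1]}}\bigl(N(\beta-a/q)\bigr)\,\chi_s(\beta-a/q)+E_N ,\]
where $\chi_s$ is a smooth cutoff to a neighborhood of width roughly $2^{-Cs}$, and the $s$-th piece is present only when $2^s\le \log^A N$. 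The error obeys $\|E_N\|_\infty\lesssim \log^{-A}N\approx k^{-A}$ along the lacunary scales. It is therefore square-summable in $k$, which makes its contribution to the oscillation trivially $O(1)$.

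For each fixed $s$, the main term is a multi-frequency average with about $2^{2s}$ frequencies $a/q$ (which are $2^{-2s}$-separated) and coefficient size about $2^{-s(1-\varepsilon)}$. The key input is Bourgain's multi-frequency lemma. For frequencies $\theta_1,\dots,\theta_K$ that are $\delta$-separated, with cutoffs at scale much smaller than $\delta$, it gives
\[ \Big\|\sup_N\Big|\sum_i \widehat{\mathbf 1_{[0,1]}}\bigl(N(\beta-\theta_i)\bigr)\chi(\beta-\theta_i)\hat f(\beta)\Big|^{\vee}\Big\|_{\ell^2}\lesssim \log^2 K\ \|f\|_{\ell^2},\]
together with the corresponding oscillation version. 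Its proof runs by splitting into scales $N\lesssim \delta^{-1}$, handled by a Rademacher--Menshov/$\ell^2$-variation argument with $\log K$ loss, and scales $N\gtrsim\delta^{-1}$, handled by a transference reduction to the one-frequency maximal function. Applying this with $K\approx 2^{2s}$ bounds the $s$-piece by $s^2\,2^{-s(1-\varepsilon)}$, which is summable in $s$. Combining this with the single-frequency oscillation inequality for the averages $\widehat{\mathbf 1_{[0,1]}}(N\cdot)$ (from the Birkhoff/martingale theory) yields the $o(J)$ bound, since only finitely many $s$ matter up to an arbitrarily small tail.

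The main obstacle is the multi-frequency lemma, specifically obtaining a loss that is only polylogarithmic in $K$, since a polynomial loss would kill summability against the $2^{-s}$ decay. A secondary but genuine difficulty is the bookkeeping for the range of scales where the $s$-th piece is active: the requirement $2^s\le\log^AN$ means small scales must be handled by the crude bound. Here the lacunarity of $N_k$ is what keeps the number of such scales, about $2^{s/A}$, harmless after summation.
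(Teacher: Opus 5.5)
Your outline is Bourgain's original argument, which the paper cites for this theorem rather than reproving. With Bourgain's multi-frequency maximal lemma used as a black box, it goes through. The positivity reduction to lacunary times is fine, and so is the square-summable error $E_N$. The $o(J)$ bookkeeping also works, and it needs only the maximal form of the lemma, applied to the tail in $s$; the finitely many remaining $s$ are handled by single-frequency oscillation or variational bounds. One small slip: off major arcs of height and width $\log^{B}N$, Vinogradov saves only $\log^{4-B/2}N$, so you need $B\approx 2A+8$ to get $\log^{-A}N$.

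What is wrong is your sketch of the lemma itself, at exactly the step you call the main obstacle. For general $\delta$-separated frequencies, the scales $N\gtrsim\delta^{-1}$ cannot be handled by transference to the one-frequency maximal function. After demodulation the operator is $\sum_i e(\theta_i x)A_Ng_i(x)$. Any argument that treats the $K$ pieces separately loses $K^{1/2}\approx 2^{s}$, which exactly cancels the $2^{-s}$ decay.

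In Bourgain's proof the polylogarithmic loss comes from a different mechanism. One freezes the averages at a point $c_I$ of each interval $I$ of length $\approx\delta^{-1}$, where the exponentials $e(\theta_i x)$ are orthogonal. One then runs metric-entropy chaining on the coefficient set $\{(A_Ng_i(c_I))_i : N\}\subset\ell^2$, with covering numbers controlled by L\'epingle's jump inequality. This is what Lemma~\ref{l:localized} and Proposition~\ref{p:mfvarprop} do in the paper, the latter already in the weighted form you need. Your small-scale regime never arises anyway, since the $s$-th piece is only switched on for $N\geq \exp(2^{s/A})\gg 2^{Cs}$.

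A transference argument can be made to work here, but only by using arithmetic structure and with a different threshold. All $a/q$ with $q\sim 2^s$ lie in $\frac{1}{Q_s}\mathbb{Z}$, where $Q_s=\operatorname{lcm}\{q\le 2^{s+1}\}\le e^{O(2^s)}$. For $N\geq Q_s^{C}$, proceed in three steps:
\begin{itemize}
\item factor the weights $\mu(q)/\varphi(q)$ into an $N$-independent multiplier of norm $\lesssim 2^{-s(1-\varepsilon)}$;
\item shrink the cutoffs to width $\approx 1/Q_s$, at cost $O(Q_s/N)$;
\item apply the Magyar--Stein--Wainger sampling principle, which loses nothing in $K$.
\end{itemize}
The $\approx 2^s$ lacunary scales below $Q_s^{C}$ are then handled by Rademacher--Menshov, with loss $\approx s$. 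So the split must be at $Q_s$, not at the separation scale $2^{2s}$.

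The paper's own machinery gives a different route to this theorem. $\Lambda$ is admissible (Section~\ref{s:adm}), and the proof of the first lemma of Section~\ref{s:trans} establishes almost sure convergence of $\frac1N\sum_{n\le N}w(n)T^nf$ for every admissible $w$. Two things differ from your argument:
\begin{itemize}
\item The minor-arc input is not Vinogradov's $L^\infty$ bound but the physical-space model $\|\Lambda-\sum_{Q\le M}w_Q\|_{U^s([N])}\lesssim\log^{-\nu/2^s}N$. Its error is square-summed along lacunary $N$ via Lemma~\ref{e:gowersUs}, which is what the condition $\nu>4$ is for.
\item There are no frequency cutoffs and no oscillation inequality. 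The weighted variational estimate of Proposition~\ref{p:mfvarprop} is applied directly to the trigonometric polynomials $w_Q$ at scales $N\geq Q^{100}$. This gives $\mathcal{V}^r$ bounds of size $Q^{o(1)-1}\|f\|_{L^2}$, summable over dyadic $Q$.
\end{itemize}
Your route is more elementary and fully adequate for the linear problem, where only Fourier ($U^2$) information about $\Lambda$ matters. The paper's Gowers-norm formulation is the one that survives in the bilinear setting, where $L^\infty$ control of the multiplier is no longer enough.
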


To prove his prime ergodic theorem, Bourgain re-parametrized the averages \eqref{e:primeavg1} via the von Mangoldt function \eqref{e:vonMangoldt}; namely, by summation by parts, see \eqref{e:sumbyparts} above, he was able to instead address the issue of pointwise convergence of the \emph{weighted} averages
\begin{align}\label{e:primeavg2}
        \frac{1}{N} \sum_{n \leq N} \Lambda(n) T^{n} f.
    \end{align}
Indeed, the polynomial ergodic theorems can be similarly studied from this perspective, as e.g.\ convergence of the ergodic means along the squares can be recast in terms of weighted averages,
\begin{align}
    \frac{1}{N} \sum_{n \leq N} T^{n^2}f \longrightarrow     \frac{1}{N} \sum_{n \leq N} w(n)  T^{n}f, \; \; \; w(n) = 2 \sqrt{n} \cdot \mathbf{1}_{\{ k^2 : k \in \mathbb{N} \}}(n). 
\end{align}
This line of inquiry was later pursued by Cuny-Weber \cite{CW}, who abstracted Bourgain's prime ergodic theorem to address a class of arithmetic weights that satisfied similar number theoretic statistics to the set of primes. For instance, they addressed the convergence of ergodic averages weighted by the divisor function \begin{align}\label{e:diveavg2}
        \frac{1}{N \log N} \sum_{n \leq N} \tau(n) T^{n} f,
    \end{align}
see \eqref{e:divisor}.\footnote{The additional factor of $\log N$ is for normalization purposes: $\frac{\sum_{n \leq N} \tau(n)}{N \log N} \to 1$, see \eqref{voronoi-summation-consequence} below.}

Recently, in collaboration with Fragkos-Lacey-Mousavi-Sun, the authors established a joint synthesis of Bourgain's Return Times Theorem and Ergodic Theorem along the Primes, connecting weighted ergodic averages with return times phenomena. 

\begin{theorem*}[Return Times along the Primes]
    For any measure-preserving system $(\Omega,\mu_0,S)$ and any $g \in L^{\infty}(\Omega)$, there exists $\Omega_g \subset \Omega$ with $\mu_0(\Omega_g) = 1$, so that for all $\omega \in \Omega_g$, the following holds: for any measure-preserving system, $(X,\mu,T)$, and any $f \in L^\infty(X)$,
    \begin{align}
        \frac{1}{N} \sum_{n \leq N} \Lambda(n) T^n f\cdot  g(S^n \omega),
    \end{align}
    and thus
   \begin{align}
        \frac{1}{N} \sum_{n \leq N} T^{p_n} f \cdot g(S^{p_n} \omega),
    \end{align}
converge $\mu$-almost surely.
\end{theorem*}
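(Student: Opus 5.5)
The plan is to follow Bourgain's strategy for the prime ergodic theorem. We handle the secondary sequence $b_\omega(n) := g(S^n\omega)$ for a fixed generic $\omega$, and split $\Lambda$ into its major arc model and a $U^s$-small remainder.

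\textbf{Reductions.} First, since $\Lambda \geq 0$ and we may take $f,g \geq 0$, a standard sandwiching argument reduces almost sure convergence to convergence along lacunary sequences $N_k = \lfloor \rho^k \rfloor$ for every $\rho > 1$. Second, by the (weighted) maximal inequality for $\frac{1}{N}\sum_{n\le N}\Lambda(n)T^n|f|$, we only need convergence for $f$ in a dense subclass of $L^2(X)$. Third, the passage from the $\Lambda$-weighted averages to averages along $p_n$ is the summation by parts \eqref{e:sumbyparts}. With these reductions, write
\[ \Lambda = \Lambda_{M} + E_N, \qquad \Lambda_M(n) := \sum_{q\le M}\sum_{(a,q)=1} S(a/q)\, e(na/q), \]
where $M = M(N) \le N^{o(1)}$ and $\|E_N\|_{U^s([N])} \lesssim \log^{-\nu/2^s}N$, as in the admissibility definition.

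\textbf{Major arcs.} For each fixed $a/q$, the term $e(na/q)\, g(S^n\omega)$ is the return-times sequence for the function $\tilde g(\omega,t) := g(\omega)e(t)$ on the product system $\Omega\times \mathbb{Z}/q\mathbb{Z}$, with the translation $t \mapsto t + a/q$. The classical Return Times Theorem therefore applies to each of the countably many pairs $(a,q)$. This gives a full-measure set $\Omega_g$ on which every such average converges for every auxiliary system $(X,\mu,T)$ and every $f$. Because $M(N)\to\infty$, one cannot just sum over $q$. Instead, split $q\le Q_0$ (fixed) from $Q_0<q\le M(N)$. The tail is controlled using $|S(a/q)| \lesssim q^{-1+o(1)}$ together with a Wiener–Wintner/Ramanujan-sum estimate: by the spectral theorem the tail is at most $\sup_\theta$ of a twisted average of $b_\omega$. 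One then shows this is $o(1)$ as $Q_0 \to \infty$, uniformly in $N$, after averaging over residues. I expect this step to be a routine adaptation of Bourgain's major arc analysis.

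\textbf{Minor arcs (main obstacle).} We must show that, for a.e.\ $\omega$,
\[ \sup_{(X,\mu,T),\,\|f\|_\infty\le 1} \Big\| \frac{1}{N_k}\sum_{n\le N_k} E_{N_k}(n)\, T^n f\, b_\omega(n) \Big\|_{L^2(X)} \longrightarrow 0 . \]
By the spectral theorem, the left side is bounded by $\sup_\theta \big|\frac{1}{N_k}\sum_{n \le N_k} E_{N_k}(n) b_\omega(n) e(n\theta)\big|$. One application of van der Corput bounds this by a $U^2$-type expression in the product $E_{N_k}\cdot b_\omega$. A further Cauchy–Schwarz in the differencing parameters then passes this to $\|E_{N_k}\|_{U^3([N_k])}$, times averaged correlations of $b_\omega$. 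This is the fixed-scale $U^3$ estimate. The averaged correlations of $b_\omega$ are bounded for a.e.\ $\omega$ by the pointwise ergodic theorem applied to products $S^{h}g\cdot \overline{S^{h'}g}$, outside an $\omega$-exceptional set at each scale. The quantitative savings $\log^{-\nu/2^s}N_k$ then decay like a power of $k$. I would try to make them summable in $k$, via Borel–Cantelli over the lacunary scales, by applying the estimate to a suitable power of the averages and exploiting $\nu>4$. This $U^3$ step is where I expect the real difficulty. Specifically, the fixed-scale estimate must be uniform over all auxiliary systems $X$ (hence the $\sup_\theta$), while the exceptional set in $\omega$ must be independent of $X$ and $f$. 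If summability fails for the given exponent, the fallback is to pass to a sparser lacunary sequence $N_k = \lfloor 2^{k^{A}}\rfloor$ for large $A$. That sparser sequence would then be restored to the full range using an oscillation/variational inequality for the major arc model, which exploits the positivity of $\Lambda$.
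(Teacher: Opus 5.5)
\textbf{Gap: the major-arc tail.} The step that fails is the tail $Q_0<q\le M(N)$, which you call a routine adaptation; it is in fact the substance of the theorem. The paper does not reprove this result; it imports it from \cite{F+}. Its remarks put the work exactly here: a maximal estimate for each slice $w_Q$, built from the fixed-scale $U^3$ bound (Proposition \ref{p:U3F+}) and a multi-scale analysis governed by $Q$ and the spectral parameter $\delta$ of the secondary sequence, parallel to \S\ref{s:trans}--\S\ref{s:remainder}.

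Your sketch fails on two counts. First, the spectral theorem bounds $\|\frac{1}{N}\sum_n w_Q(n)b_\omega(n)T^nf\|_{L^2(X)}$ only one scale at a time. Smallness uniform in $N$ says nothing about $\limsup_N$ pointwise in $x$. You need a maximal (or jump/variational) inequality in $N$, with a bound summable over dyadic $Q>Q_0$. Bourgain's major-arc analysis gets this because $f\mapsto\frac{1}{N}\sum_n w_Q(n)T^nf$ is a multi-frequency Fourier multiplier, with $\approx Q^2$ frequencies and coefficients $Q^{-1+o(1)}$ (compare the $Q^{o(1)-1}$ variational bound in \S\ref{s:trans}). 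The factor $b_\omega(n)=g(S^n\omega)$ destroys that structure.

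Second, even at one scale, $\sup_\theta|\mathbb{E}_{n\in[N]}w_Q(n)b(n)e(n\theta)|$ need not decay in $Q$ for a bounded $b$. Take $b=\overline{w_Q}/|w_Q|$: then it equals $\mathbb{E}_{n\in[N]}|w_Q(n)|\ge Q^{-o(1)}$. This follows from Parseval, since $\sum_{Q/2<q\le Q}\mu^2(q)/\phi(q)\gtrsim 1$, together with Lemma \ref{heath-brown-moments} and H\"older. Wiener--Wintner for $b_\omega$ gives $\sup_\beta|\mathbb{E}_{n\in[N]}b_\omega(n)e(n\beta)|=o(1)$ with no rate. That cannot absorb the loss $\sum_{a/q\in\Gamma_Q}|S(a/q)|\approx Q^{1+o(1)}$ once $M(N)\to\infty$.

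So decay in $Q$ can only come from genericity of $\omega$. Integrating over $\Omega$ gives $\int_\Omega\sup_\theta|\mathbb{E}_{n\in[N]}w_Q(n)b_\omega(n)e(n\theta)|^2\,d\mu_0\lesssim\|w_Q\|_{U^3([N])}^2\lesssim Q^{-c}$. But this bound has no decay in $N$, so Borel--Cantelli over infinitely many lacunary scales fails and yields no universal set $\Omega_g$. The missing idea is a genuinely multi-scale argument that turns a fixed-scale $Q^{-c}$ saving into a maximal statement over all $N$, for a set of $\omega$ independent of $(X,\mu,T)$ and $f$.

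\textbf{The remainder step is the easy part.} The step you flag as the main obstacle is the soft one, provided you integrate in $\omega$ rather than fix it. At fixed $\omega$, no Cauchy--Schwarz separates $\Delta_{h_1,h_2}E_N(n)$ from $\Delta_{h_1,h_2}b_\omega(n)$, and the pointwise ergodic theorem for the correlations of $g$ does not help. By $S$-invariance, $\int_\Omega\Delta_{h_1,h_2}b_\omega(n)\,d\mu_0$ is independent of $n$. Hence $\int_\Omega\sup_\theta|\mathbb{E}_{n\in[N]}E_N(n)b_\omega(n)e(n\theta)|^4\,d\mu_0\lesssim\|E_N\|_{U^3([N])}^4$. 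With savings $\|E_N\|_{U^3([N])}\lesssim\log^{-\nu/8}N$ and $\nu>4$, the squared quantities are summable over $N_k=\lfloor\rho^k\rfloor$. Borel--Cantelli plus the spectral theorem then give $\sum_k\|\cdot\|_{L^2(X)}^2<\infty$ for a universal full-measure set of $\omega$.

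The super-lacunary fallback is therefore unnecessary. It would in any case need an oscillation inequality for the $b_\omega$-twisted major-arc averages, which is exactly what is missing above.
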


Given the close connection between return times theorems and double recurrence, the authors were motivated to begin working towards Theorem \ref{t:prime}, and in light of \cite{CW}, to exhibit a testing condition to address more general arithmetic weights. Once again, the presence of the weights introduce an additional ``half-degree" of multi-linearity -- albeit an arithmetically constrained one -- which situates the current work as an important stepping stone on the way to addressing the issue of triple recurrence, namely the $K=3$ case of \eqref{e:intromul}. Indeed, both averages
\begin{align}
\frac{1}{N} \sum_{n \leq N} w(n) T^n f \cdot T^{2n} g, \; \; \;  \; \; \; \frac{1}{N} \sum_{n \leq N} T^n f \cdot T^{2n} g \cdot T^{3n} h
\end{align}
are controlled at the single scale level by $U^3$ statistics, an important point below.\\

\medskip
With this in mind, we describe our approach.

\subsection{Proof Overview}
Proposition \ref{t:sparse} follows from an elementary argument deriving from \cite{KS}, so we confine our discussion below to Theorem \ref{t:main}.

\medskip

Our proof technique is highly motivated by ergodic-theoretic considerations: namely
\begin{enumerate}
    \item The following estimate holds:
    \[ \| \frac{1}{N} \sum_{n \leq N} w(n) T^n f \cdot T^{-n} g \|_{L^{2^s}(X)} \lesssim \| w \|_{U^{s+2}([N])}, \; \; \; s \geq 1,\]
    whenever $f$ and $g$ are $1$-bounded; and
    \item The Kronecker factor is \emph{characteristic} for weighted bilinear averages:
    \begin{align}
        \| \frac{1}{N} \sum_{n \leq N} w(n) T^n f \cdot T^{-n} g \|_{L^2(X)} \to 0
    \end{align}
    whenever $f$ and $g$ are $1$-bounded and $g \in \mathcal{K}(T)^{\perp}$ is orthogonal to the Kronecker factor.
\end{enumerate}

The above two points tell us that we are free to replace weights
\begin{align}
    w \longrightarrow w_N
\end{align}
provided 
\begin{align}
    \| w - w_N \|_{U^{s+2}([N])} \lesssim \log^{-\nu/2^s} N, \; \; \; \nu > 4,
\end{align}
by standard lacunary reductions, but that our arguments should be highly Fourier analytic; morally speaking, we are motivated to decompose $w$ according to its correlation with rational frequencies according to the size of their denominators. Taken together, our admissibility criterion naturally presents, with the constraint 
\begin{align}
    |S(a/q)| \leq \text{Const} \cdot q^{o(1)-1}
\end{align}
naturally appearing in the course of our arguments.\footnote{In point of fact, one can slightly relax the decay constraint on $S(a/q)$ to $|S(a/q)| \leq \text{Const} \cdot q^{c-1}$ for e.g.\ $c = 2^{-100}$ by optimizing our below arguments; we do not pursue the issue of optimal constants.}

After a brief argument involving multi-frequency analysis, see  \S \ref{s:MultiFreq} below, our proof proper begins by assuming that $g \in \mathcal{K}(T)^{\perp}$ lives in the orthocomplement of the Kronecker factor; we transfer this statement to the dynamical systems setting by arguing similarly to \cite{F+}. As in \cite{F+}, two parameters naturally present
\begin{itemize}
    \item $Q$, which controls the ``height" of our rational frequencies; and
    \item $\delta$, which controls the spectral statistics of $g$.
\end{itemize}
The argument when $Q$ is small compared to $\delta^{-1}$ is of a simpler nature, though already encompasses the work of \cite{B3}, so we focus the remainder of this discussion to the case where
\begin{align}
    Q \geq \delta^{-1/1000}.
\end{align}

In particular, we are interested in the interplay between
\begin{align}
    \Gamma_Q := \{ a/q \text{ reduced} : Q/2 < q \leq Q \}
\end{align}
and the pertaining component of the weight,
\begin{align}
    w_Q(n) := \sum_{a/q \in \Gamma_Q} S(a/q) e^{2\pi i n a/q},
\end{align}
see \eqref{e:GammaQ} and \eqref{e:WQ} below, and the $\delta$-spectrum of $g$ at each scale and location,
\begin{align}
\text{Spec}_\delta(I) := \big\{ \xi \in \mathbb{Z}/|I| : \delta/2<\big| \frac{1}{|I|} \sum_{n \in I} g(n) e^{- 2 \pi i n \xi} \big| \leq \delta \big\},
\end{align}
see \eqref{e:Specdelta}. In the simplest case, where $g$ is a linear combination of characters, after an arithmetic combinatorial argument, see \S \ref{s:AE} below, matters reduce to the setting of \cite{B2}, in which a delicate multi-frequency analysis, similar to that developed in \S \ref{s:MultiFreq}, is the key point. To the extent that these arguments were established via a metric chaining argument -- anchored by entropic considerations and L\'{e}pingle's martingale inequality, see \S \ref{ss:Lep} below -- we expect similar tools to arise in our context. While there is no reason to expect $g$ to have such a simple structure, we appeal to subtle orthogonality methods -- energy pigeon-holing, and Cotlar-Stein based wave-packet analysis, in particular -- to try reduce to this case: roughly speaking, we collect scales and locations (indexed by elements of \emph{dyadic grids}, see \S \ref{ss:dyadicgrids}),
\begin{align}\label{e:branches}
    \mathcal{B}([\Lambda]) \subset \mathcal{D},
\end{align}
so that our weighted averages, 
see \eqref{e:AI} below,
\begin{align}\label{e:model}
\big\{ \sum_n \phi_I(n) f(2x-n) w_Q(n-x) g(n) : I \in \mathcal{B}([\Lambda]) \big\}, \; \; \; \phi_I ``=" \frac{1}{|I|} \mathbf{1}_I,
\end{align}
satisfy
\begin{align}\label{e:introkey}
&\sup_{I \in \mathcal{B}([\Lambda])  } |\sum_n \phi_I(n) f(2x-n) w_Q(n-x) g(n)|  \\ 
& ``=" \sup_{I \in \mathcal{B}([\Lambda])  } |\sum_n \phi_I(n) f(2x-n) w_Q(n-x) (\Pi_I[\Lambda]g)(n)|,
\end{align}
where by quotations we mean moral equivalance, and 
\begin{itemize}
    \item the number of distinct collections $\{ \mathcal{B}([\Lambda]) : \Lambda\}$ is ``small";
    \item  $\{ \Pi_I[\Lambda] \}$ are multi-frequency Fourier multipliers, rooted around a common collection of distinguished frequencies, $\Lambda \subset \mathbb{T}$, and are similar to those considered by Bourgain in his work on polynomial ergodic theorems; and
    \item each $\Lambda$ has size controlled by the uncertainty principle
\[ |\Lambda| \; \; \;  ``\leq" \; \; \; \text{Const} \cdot \delta^{-2},
\]
see Corollary \ref{c:sampling} below.
\end{itemize}

As alluded to above, the operator \eqref{e:introkey} is agnostic to replacing the functions
\begin{align}\label{e:modinv00}
    (f(x),g(x)) \longrightarrow (e^{2 \pi i \theta x} f(x),e^{2 \pi i \theta x} g(x) )
\end{align}
for any $\theta$, which means that the major/minor arc analysis used in the study of the polynomial ergodic theorems, or multi-linear ergodic theorems involving polynomials of distinct degrees, is inappropriate. Rather, our arguments are motivated by those of Bourgain in \cite{B3}, but our current context forces us, at many of the steps of the argument, to take into account further issues of constructive interference in Fourier space, deriving from sumsets of the form
\[ m \cdot \text{Spec}_\delta(I) + n \cdot \Gamma_Q, \; \; \; |m|,|n| \leq 10,\]
and accordingly to use finitary methods. With this in mind, our arguments are almost entirely $\ell^2$-based, whereas Bourgain was able to make use of $\ell^1$-techniques; these $\ell^1$-based arguments reappear to some extent in our concluding section, \S \ref{s:remainder}.

With the reduction to \eqref{e:introkey}, valid away from an acceptably small exceptional set, one readily restricts the Fourier transform of ${f}$ to neighborhoods of sumsets of the form
\begin{align}\label{e:sumset0}
    \big\{ \xi \in \mathbb{T} : \text{dist}(\xi,\Gamma_Q + \Lambda) \leq \text{Const}/N \big\}, \; \; \; N = |I|
\end{align}
via additional multi-frequency projections, thus, with
\begin{align}
    \Pi_N[\Lambda] = \Pi_{N;Q}[\Lambda]
\end{align}
denoting a smooth Fourier projection to \eqref{e:sumset0}, we may approximate
\begin{align}
    \eqref{e:introkey} \; ``=" \sup_{I \in \mathcal{B}([\Lambda])} |\sum_n \phi_I(n) (\Pi_N[\Lambda] f)(2x-n) w_Q(n-x) (\Pi_I[\Lambda]g)(n)|;
\end{align}
this in turn necessitates a \emph{bilinear} perspective on entropy. This issue already presented in a simpler context in \cite{B3}, but arithmetic issues introduce serious complications in our setting. These entropy arguments are anchored, as might be expected, by appropriate single scale estimates -- with the caveat that we must take into account the behavior of our single scale averages along arithmetic progressions as well as full intervals -- in conjunction with the combinatorial number theory developed in \S \ref{s:AE} and the estimates of \S \ref{s:MultiFreq}.

\medskip

A major theme of our work is the role of dyadic harmonic analysis, which presents in two distinct, but dual, contexts:
\begin{itemize}
    \item In physical space, where the use of dyadic grids, see \S \ref{ss:dyadicgrids} below, readily connects our problem to the martingale setting: our intervals in \eqref{e:model} can be assumed to derive from a single dyadic grid, and thus satisfy the convenient nesting property
    \begin{align}
    I,J \in \mathcal{D} \; \; \; \text{ with } \;\; \; I \subset J \Rightarrow I \cap J = I;
    \end{align}
    this greatly facilitates our stopping time algorithms of \S \ref{s:ENERGY} and wave-packet analysis of \S \ref{s:tight};
    and
    \item In frequency space, where our analysis essentially takes place in the context of
    \begin{align}
        \{ \mathbb{Z}/2^N : N \in \mathbb{N} \} \subset \mathbb{T};
    \end{align}  
namely, we can morally assume that each $\Lambda$ is comprised of dyadic rational frequencies. This presents us with rich arithmetic structure, see \S \ref{s:AE} below, used crucially in our bilinear entropy arguments. To exploit this structure, we introduce a secondary decomposition of $\Gamma_Q$ according to the size of the maximal power of $2$ that divides our denominators, namely
\begin{align}
    \Gamma_Q = \bigcup_{0 \leq i \leq \log_2 Q} \Gamma_Q^{(i)}, \; \; \;\; \; \; \Gamma_Q^{(i)} := \{ a/q \in \Gamma_Q : 2^i | q, \ i \text{ maximal} \} 
\end{align}
and analogously decompose our weights 
\[ w_Q = \sum_{0 \leq i \leq \log_2 Q} w_Q^{(i)}; \]
see \eqref{e:gammaQi} and \eqref{e:wQi}.
\end{itemize}

With this in mind, we detail the structure of our paper.

\subsection{Structure}
Our paper begins with definitions and notational conventions, with the key point being a hierarchy of parameters/constants, see \S \ref{ss:param} below; the arguments are quite involved, and so the number of terms we introduce is relatively high;

In \S \ref{s:adm}, we present some properties of admissible weights, along with our three examples, and in \S \ref{s:sparse} we establish Proposition \ref{t:sparse};

Next, in \S \ref{s:analess}, we present some analytic tools to which we will appeal throughout, and in \S \ref{s:AE}, we develop certain tools from combinatorial number theory, crucially relying on the dyadic structure of our relevant sets of frequencies;



In our next section, \S \ref{s:MultiFreq}, we develop our multi-frequency theory, similar to \cite{BK1}, adapted to our current context;

In \S  \ref{s:polysplit}, we present a suitable variant of the uncertainty-principle for trigonometric polynomials which we use crucially in \S \ref{s:ENERGY} to establish our key orthogonality arguments;

\medskip

With these preliminaries in place, the remainder of the paper is concerned with the proof of Theorem \ref{t:main}:

\medskip

We begin the main thrust of our argument in \S \ref{s:trans}, where we reduce to the case where one of our functions is orthogonal to the Kronecker factor, and appropriately transfer our problem to the integer lattice;

\S \ref{s:tight} is of a technical nature, but will allow us to morally replace one of our functions with a linear combination of characters at the scales and locations aggregated in the sets $\mathcal{B}([\Lambda])$;

In \S \ref{s:entprelim}, we develop the single scale estimates used to anchor our crucial entropy arguments;

In \S \ref{s:nonboundary}, we close the main body of the argument, namely we resolve Theorem \ref{t:main} when our governing parameters $Q,\delta$ are related by
\[ Q \geq \delta^{-1/1000};\]

Finally, the argument is concluded in \S \ref{s:remainder}, in which we address the reverse inequality,
\[ Q \leq \delta^{-1/1000};\]
the arguments here are much more similar to \cite{B3}.

\subsection{Open Problems}
We close our introduction with a number of open problems.

\begin{problem}[Describing Admissibility]
    Can one explicitly identify our class of admissible weights?
\end{problem}

\begin{problem}[Extending Convergence]
Given the approximation \eqref{e:FTapprox}, can one establish convergence for $f \in L^p(X), \ g \in L^q(X)$ with $\frac{1}{p} + \frac{1}{q} \geq 1$ and $p,q > 1$? By \cite{GG}, this is equivalent to proving a weak-type bound
    \begin{align}
        \| \sup_N | B_{w,N}(f,g) | \|_{L^{r,\infty}(X)} \lesssim \|f \|_{L^p(X)} \| g \|_{L^q(X)}
    \end{align}
    for some $r \leq 1$, see \eqref{e:B1Scale}. In the case where $w \equiv 1$, such an estimate was provided by Lacey \cite{L}, in the range when $r > 2/3$. 
\end{problem}

\begin{problem}[Quantifying Convergence]
    It seems likely that one could combine the methods developed here with those of \cite{BKMod} to prove that
    \begin{align}\label{e:rateofdecay}
       \sup \| \mathcal{O}_{\{ N_i \};J}(f,g) \|_{L^2(X)} = o_{J \to \infty;\lambda}(1) \cdot \|f \|_{L^\infty(X)} \|g \|_{L^\infty(X)},
    \end{align}
where the outer supremum is over all $\lambda$-lacunary sequences, $\{ N_i \}$, 
\begin{align}
    \mathcal{O}_{\{N_i\};J}(f,g)(x)^2  := \frac{1}{J} \sum_{j \leq J} \sup_{N_i \leq N < N_{i+1}} |B_{w,N}(f,g) - B_{w,N_{i+1}}(f,g)|^2,
\end{align}
and all times involved are $\lambda$-lacunarily increasing. On the other hand, quantifying the rate of decay in \eqref{e:rateofdecay} via a power savings in $J$ is currently out of reach, as opposed to the case where $w = \mathbf{1}$, see \cite{DOP,BD}. Note that, given the condition \eqref{e:FTapprox}, a power savings in \eqref{e:rateofdecay} immediately implies an analogous (weaker) power savings from e.g.\ $L^4(X) \times L^4(X) \to L^2(X)$, by interpolation.
\end{problem}

\begin{problem}[Double Recurrence Along Polynomial Orbits]
    Our work makes crucial use of the decay condition
    \begin{align}
        |S(a/q)| \leq \text{Const} \cdot q^{o(1)-1};
    \end{align}
note that even in the case where
\begin{align}
    w(n) := 2 \sqrt{n} \cdot \mathbf{1}_{\{ k^2: k \in \mathbb{N} \}}(n)
\end{align}
is the counting function of the squares, so that one is tempted to take
\begin{align}
    w_Q(n) = \sum_{a/q \in \Gamma_Q} S(a/q) e(na/q ), \; \; \; S(a/q) = \frac{1}{q} \sum_{r \leq q} e(-r^2 a/q),
\end{align}
where one has the sharp estimate
\begin{align}
    |S(a/q)| \leq \text{Const} \cdot q^{-1/2},
\end{align}
and thus
\begin{align}
    \| w_Q \|_{U^2([N])} \approx 1
\end{align}
for all $N \geq Q^{10}$ (say), which leads to difficulty in producing the desired decay rate in the Heath-Brown criterion in admissibility and precludes any sort of single scale savings as in Lemma \ref{p:U3est} -- before encountering numerological breakdowns arising in the analytic part of our proof. Nevertheless, the following question naturally presents: can one prove pointwise almost sure convergence for
\begin{align}
    \frac{1}{N} \sum_{n \leq N} T^{n^2} f \cdot T^{-n^2} g?
\end{align}

The additive combinatorial approach of \cite{PSS} seems promising, but is currently too quantitatively weak to be of use; see also \cite{Prend}, which goes by way of $U^7$ analysis, and so remains beyond the purview of our (primarily) Fourier-analytic approach. 

A toy model concerns averages of the form
\begin{align}
    \frac{1}{N^{1+c}} \sum_{n \leq N, \ m \leq N^c} T^{n^2 + m^2} f \cdot T^{-n^2-m^2} g, \; \; \; 0 < c < 1,
\end{align}
where the $c=1$ case follows from approximating a square from the inside by unions of sectors, and then comparing convergence inside sectors,
\begin{align}
    r_{2;\omega}(n) := |\big\{ a^2 + b^2 = n, \; \; \; \text{arg}(a+ib) \in \omega \subset [0,2\pi] \big\}| 
\end{align}
to our weighted context, using the estimates
\begin{align}
\frac{1}{N} \sum_{n \leq N} |r_{2;\omega}(n) - \frac{|\omega|}{2\pi} \cdot r_2(n)| = o_{N \to \infty}(1).
\end{align}
\end{problem}

\begin{problem}[Euclidean Analogues]
    This problem concerns statistics of the singular variant of the Euclidean bilinear operator, \eqref{e:Bmu}. Can one provide a testing condition on $\mu$ so that
    \begin{align}
        \| B_\mu(f,g) \|_{L^{1,\infty}(\mathbb{R})} \lesssim \| f \|_{L^2(\mathbb{R})} \|g \|_{L^2(\mathbb{R})}?
    \end{align}
A class of good candidate (Cantor) measures were constructed in \cite{LP} and \cite{SS}, for which $B_\mu$ maps into $L^{1+}(\mathbb{R})$. In two dimensions, the natural conjecture concerns 
\[ B_{\theta}(f,g)(x) := \sup_{r > 0} |\int_{\mathbb{S}^1} f(x-r\omega) g(x-r R_\theta \omega) \ d\sigma(\omega)|, \]
where $R_\theta$ is given by rotation by $\theta$. This example has the same dimensionality/density as the squares inside of $\mathbb{N}$ -- and indeed $\| \sigma \|_{U^2}$ diverges only logarithmically. Estimating $B_{\theta}$ is closely linked to the famous problem of detecting vertices of large (non-degenerate) triangles inside subsets of positive upper density inside the plane, and accordingly a distinction occurs according to whether $\theta = \pi$ or otherwise, see \cite{GIKL} for discussion.
\end{problem}

\begin{problem}[Double Recurrence Meets Return Times]
    A natural next step, before attempting the famous problem of triple recurrence (discussed below), is that of combining double recurrence with return times behavior, i.e.\ formally substituting
    \begin{align}
    w(n) \longrightarrow h(S^n \omega)
    \end{align}
    where $h \in L^\infty(\Omega,\mu_0,S)$ is a bounded function on a secondary measure-preserving system, and $\omega \in \Omega$ is suitably ``generic."
    
    Specifically, one would aim to prove the following:
    \medskip 
   
    Suppose that $(\Omega,\mu_0,S)$ is a measure-preserving system, and let $h \in L^\infty(\Omega)$. There exists a set $\Omega_h \subset \Omega$ with $\mu_0(\Omega_h) =1$ so that for all $\omega \in \Omega_h$, the following holds: for any measure-preserving system, $(X,\mu,T)$, and any $f,g \in L^\infty(X)$, the averages 
    \begin{align}
        \frac{1}{N} \sum_{n \leq N} T^n f \cdot T^{-n} g \cdot  h(S^n \omega)
    \end{align}
converge $\mu$-almost surely.
\end{problem}

The following problem remains most compelling, and, as mentioned above, served as a large source of motivation for us to begin our current line of inquiry. 
\begin{problem}[Triple Recurrence]
    Let $(X,\mu,T)$ be a measure-preserving system. Prove that for each $f,g,h \in L^\infty(X)$, the trilinear averages
    \begin{align}
        \frac{1}{N} \sum_{n \leq N} T^n f \cdot T^{2n} g \cdot T^{3n} h
    \end{align}
    converge $\mu$-almost surely.
\end{problem}

\subsection{Acknowledgments}
The second author would like to thank Joni Ter\"{a}v\"{a}inen for generously explaining some of the key ideas of \cite{Leng,LSS}.

\section{Preliminaries}\label{s:Prelim}

\subsection{General Notation}
Throughout, we let
\begin{align}
    e(t) := e^{2 \pi i t}
\end{align}
denote the complex exponential, 
\begin{align}
    \| x \| := \| x \|_{\mathbb{T}} := \min_{k \in \mathbb{Z}} |x-k|,
\end{align}
and for $\theta \in \mathbb{R}$, define
\begin{align}
    \text{Mod}_\theta g(x) := e(\theta x) g(x).
\end{align}
For $r > 0$, we let 
\[ B(r) := \{ |\xi| \leq r\}\]
denote the ball of radius $r$; the ambient space will be clear from context.

We use the standard notation for $L^1$-normalized dilations:
\begin{align}
    \varphi_N(n) := \frac{1}{N} \varphi(\frac{n}{N}),
\end{align}
and let
\begin{align}
    M_{\text{HL}} f(x):=  \sup_{x \in I} \, \frac{1}{|I|} \sum_{n \in I} |f(n)|  
\end{align}
denote the discrete Hardy-Littlewood maximal function; here and throughout, we use $I$ to denote discrete intervals.

We collect
\begin{align}\label{e:GammaQ}
    \Gamma_Q := \{ a/q \text{ reduced} : Q/2 < q \leq Q \},
\end{align}
define
\begin{align}\label{e:lcmQ}
\mathcal{Q} := \text{lcm}\{ q : Q/2 < q \leq Q \},
\end{align}
and set
\begin{align}
    \mathbf{S}_Q := \mathbf{S}_{w,Q} := \sup_{a/q \in \Gamma_Q} |S(a/q)|,
\end{align}
where $S= S_w$ is in the definition of admissibility; regarding this $S$ as fixed, we define
\begin{align}\label{e:WQ}
    w_Q(n) := \sum_{a/q \in \Gamma_Q} S(a/q) e(na/q ).
\end{align}
In particular, $w_Q$ should be thought of as the $Q$-denominator slice of the major arc approximation of $w$; the majority of our later analysis is carried out at the level of each individual slice.

We also introduce dyadic divisions of the foregoing. 
Namely, we set
\begin{align}\label{e:gammaQi}
    \Gamma_Q^{(i)} := \{a/q \in \Gamma_Q :  2^i \parallel q \},
\end{align}
where we say $2^i \parallel q$ if $i$ is the maximal integer so that $2^{i} |q$, and similarly define
\begin{align}\label{e:lcmQi}
    \mathcal{Q}_i := \text{lcm}( q : Q/2 < q \leq Q, \  2^i \parallel q) 
\end{align}
and
\begin{align}\label{e:wQi}
    w_Q^{(i)}(n) := \sum_{a/q \in \Gamma_Q^{(i)}} S(a/q) e(an/q).
\end{align}

We let $\psi$ be a smooth approximation to $\mathbf{1}_{(1/2,2]}$ so that
\begin{align}
    \mathbf{1}_{(0,\delta_0]} \leq \sum_{0 < \delta \leq \delta_0} \psi_\delta(t) \leq \mathbf{1}_{(0,2\delta_0]},
\end{align}
and with $\psi_\delta(t) := \psi(\delta^{-1} t)$, set
\begin{align}\label{e:Psidelta}
    \Psi_\delta(z) := z \psi_\delta(|z|),
\end{align}
so that
\begin{align}\label{e:Lip}
    \sup_\delta \| \Psi_\delta \|_{\text{Lip}} \lesssim 1.
\end{align}

\subsection{Asymptotic Notation}\label{sss:O}
We will make use of the modified Vinogradov notation. We use $X \lesssim Y$ or $Y \gtrsim X$ to denote
the estimate $X \leq CY$ for an absolute constant $C$ and $X, Y \geq 0.$  If we need $C$ to depend on a
parameter, we shall indicate this by subscripts, thus for instance $X \lesssim_p Y$ denotes the estimate $X \leq C_p Y$ for some $C_p$ depending on $p$. We use $X \approx Y$ as shorthand for $Y \lesssim X \lesssim Y$. We use the notation $X \ll Y$ or $Y \gg X$ to denote that the implicit constant in the $\lesssim$ notation is extremely large, and analogously $X \ll_p Y$ and $Y \gg_p X$.

We also make use of big-Oh and little-oh notation: we let $O(Y)$  denote a quantity that is $\lesssim Y$, and similarly
$O_p(Y )$ will denote a quantity that is $\lesssim_p Y$; we let $o_{t \to a}(Y)$
denote a quantity whose quotient with $Y$ tends to zero as $t \to a$ (possibly $\infty$), and
$o_{t \to a;p}(Y)$
denote a quantity whose quotient with $Y$ tends to zero as $t \to a$ at a rate depending on $p$. When clear from context, we will suppress the $t \to a$ subscript.

\subsection{Fourier Transforms}
We will make heavy use of the Fourier transform below; we will use three different formulations (and their corresponding inverses):

For $g : I \to \mathbb{C}$, define
\begin{align}
    \mathcal{F}_I g(\xi) := \sum_{n \in I} g(n) e(-n \xi)
\end{align}
and
\begin{align}
    \mathcal{F}_I^{-1} G(n) := \frac{1}{|I|} \sum_{\xi \in \mathbb{Z}/|I|} G(\xi) e(\xi n);
\end{align}

For $g : \mathbb{Z} \to \mathbb{C}$, define
\begin{align}
    \mathcal{F}_{\mathbb{Z}} g(\xi) := \sum_{n \in \mathbb{Z}} g(n) e(-n \xi) 
\end{align}
and
\begin{align}
    \mathcal{F}_{\mathbb{Z}}^{-1} G(n) := \int_{\mathbb{T}} G(\xi) e(\xi n) \ d\xi;
\end{align}

For $g: \mathbb{R} \to \mathbb{C}$, define
\begin{align}
    \mathcal{F}_{\mathbb{R}} g(\xi) := \int_{\mathbb{R}} g(x) e(-x \xi) \ dx 
\end{align}
and
\begin{align}
    \mathcal{F}_{\mathbb{R}}^{-1} G(x) := \int_{\mathbb{R}} G(\xi) e(\xi x) \ d\xi.
\end{align}

\subsection{Gowers Norms and Technology}
\label{sub:gowers_norms}
Let $f \colon \mathbb{Z} \to \mathbb{C}  $ be a finitely supported function on the integers. Set the conjugation-difference operator to be 
\begin{align}\label{e:triangle}
  \Delta_h f(x) \coloneqq  f(x) \overline{f(x+h)}, 
  \qquad x,h\in \mathbb{Z} . 
\end{align}
The basic fact here is 
\begin{align} \label{e;double}
\Bigl\lvert \sum_x f(x)\Bigr\rvert ^2 
= 
\sum_{x,h} \Delta_h f(x). 
\end{align}
The higher order conjugation-difference operator is inductively defined to be 
\begin{align}
   \Delta_{h_1,\dots,h_s}f(x) \coloneqq  \Delta_{h_s}(\Delta_{h_1,\dots,h_{s-1}} f)(x), 
    \qquad x,h_1, \ldots, h_s\in \mathbb{Z} . 
\end{align}
Then, the $s$th order Gowers norm is 
\begin{align} \label{e;GowersDef}
    \| f \|_{U^s(\mathbb{Z})}^{2^s} \coloneqq  
    \sum_{x,h_1,\dots,h_s} \Delta_{h_1,\dots,h_s}f(x). 
\end{align}
For $s=1$, this is a semi-norm, while higher orders are norms. In particular, for $s=2$, we have 
\begin{align}  \label{e;U2-ell4}
\lVert f \rVert_{U^2(\mathbb{Z} )} ^{4} 
= \int_{\mathbb{T} } \lvert  \mathcal{F}_{\mathbb{Z}} f (\beta )\rvert ^{4}\;d \beta;
\end{align}  
we recall the inductive relationship between norms given by 
\begin{align}\label{e:foliate}
\| f \|_{U^{s+1}(\mathbb{Z})}^{2^{s+1}} = \sum_{h_1,\dots,h_{s-1}} \| \Delta_{h_1,\dots,h_{s-1}} f \|_{U^2(\mathbb{Z})}^4.
\end{align}

For integers $N$, let $[N]=\{1,2, \dots, N\}$. 
Denote the usual expectation by 
\begin{align}
 \mathbb{E}_{n \in [N]} f(n) := \frac{1}{N} \sum_{n \in [N]} f(n)\end{align}
and more generally, for finite $X \subset \mathbb{Z}$, define
\begin{align}
 \mathbb{E}_{n \in X} f(n) := \frac{1}{|X|} \sum_{n \in X} f(n).\end{align}
 
We then define normalized $U^s$ norms as follows. For $f \colon [N] \to \mathbb C$, 
set 
\begin{equation}
    \lVert f \rVert_{U^s([N]) } \coloneqq  
    \frac{\| f \|_{U^s(\mathbb{Z})}}{\lVert \mathbf{1}_{[N]} \rVert_{U^s(\mathbb Z) }} \approx \frac{\| f \|_{U^s(\mathbb{Z})}}{N^{\frac{s+1}{2^s}}}.
\end{equation}

\subsection{Quantifying Convergence}
For a sequence of scalars, $\{ a_N\}_{N \in \mathcal{I}}$, define the \emph{$r$-variation}, for $0 < r < \infty,$
\begin{align}
    \mathcal{V}^r(a_N : N \in \mathcal{I}) := \sup \big( \sum_i |a_{N_i} - a_{N_{i+1}}|^r \big)^{1/r},
\end{align}
where the supremum runs over all finite increasing subsequences $\{ N_i \} \subset \mathcal{I}$; throughout this paper, we will only be interested in the regime where $r > 2$.

A closely related measurement of oscillation is the \emph{jump-counting function} at altitude $\lambda >0$,
\begin{align}
    N_\lambda(a_N : N \in \mathcal{I}) &:= \sup\{ K : \text{ there exists } N_0 < N_1 < \dots < N_K \in \mathcal{I} \\
    & \qquad \text{ such that } |a_{N_i} - a_{N_{i-1}}| \geq \lambda, \ 1 \leq i \leq K \},
\end{align}
so that we may relate the jump function at altitude $\lambda$ to the $\lambda/2$-covering number of $\{ a_N : N \in \mathcal{I} \}$:
\begin{align}
    &\min \big \{ K : \text{there exists } N_1 < N_2 < \dots N_K : \{ a_N : N \in \mathcal{I} \} \subset \bigcup_{i=1}^K \, a_{N_i} + B(\lambda/2) \big\} \\
    &\leq N_{\lambda}(a_N : N \in \mathcal{I});
\end{align}
we further note the inequalities, valid for each $0 < r < \infty$
\begin{align}
    \sup_{\lambda > 0} \, \lambda^r N_\lambda(a_N : N \in \mathcal{I}) \leq \mathcal{V}^r(a_N : N \in \mathcal{I})^r \lesssim_r \sum_{v \in \mathbb{Z}} 2^{-v r} N_{2^{-v}}(a_N : N \in \mathcal{I}).
\end{align}
For functions defined on a measure space $\{ f_N : N \in \mathcal{I} \}$, we define
\begin{align}
    \mathcal{V}^r( f_N : N \in \mathcal{I})(x) :=     \mathcal{V}^r( f_N(x) : N \in \mathcal{I}),
\end{align}
and
\begin{align}
     N_\lambda(f_N : N \in \mathcal{I})(x) := N_\lambda(f_N(x) : N \in \mathcal{I});
\end{align}
often we will be interested in functions taking values in (finite-dimensional) Hilbert spaces, so we define
\begin{align}
    \mathcal{V}^r(\vec{f}_N(x) : N \in \mathcal{I}) := \sup \big( \sum_i \|\vec{f}_{N_i}(x) - \vec{f}_{N_{i+1}}(x)\|_{\mathcal{H}}^r \big)^{1/r},
\end{align}
where again the supremum is over all finite increasing subsequences, and
\begin{align}
    N_\lambda(\vec{f}_N(x) : N \in \mathcal{I}) &:= \sup\{ K : \text{ there exists } N_0 < N_1 < \dots < N_K \in \mathcal{I} \\
    & \qquad \text{ such that } \|\vec{f}_{N_i}(x) - \vec{f}_{N_{i-1}}(x)\|_{\mathcal{H}} \geq \lambda, \ 1 \leq i \leq K \}.
\end{align}

Note that by Minkowski's inequality, if
\begin{align}
    \vec{f}_N := (f_{1,N},\dots,f_{K,N}),
\end{align}
then whenever $r \geq 2$, we may pointwise bound
\begin{align}
    \mathcal{V}^r(\vec{f}_N(x) : N \in \mathcal{I}) \leq \| \mathcal{V}^r(f_{k,N}(x) : N \in \mathcal{I}) \|_{\ell^2(k \in [K])}.
\end{align}

\subsection{Organizing Parameters}\label{ss:param}
There are two principal parameters which will drive our analysis:
\begin{itemize}
    \item $Q$, which measures a degree of rationality; and
    \item $\delta$, which provides spectral information about one of our functions.
\end{itemize}

For the bulk of the paper, we will be interested in the regime where
\begin{align}
    Q \geq \delta^{-1/1000},
\end{align}
but will address the opposite situation in the final section, \S \ref{s:remainder}, where we will re-define some of the below parameters accordingly. But, unless otherwise stated, we will use the following conventions throughout:

We let $0 < \kappa \ll 2^{-1000}$ be a small constant, and $\delta_0$ a sufficiently small parameter, which we will specify in \S \ref{s:trans} below. We then define
\begin{align}
    t_0 := t_0(Q) := \delta_0^{\kappa} Q^{-\kappa},
\end{align}
and, for general $\delta \leq \delta_0$, define
\begin{align}
    t := t(\delta,Q) := \delta^{\kappa} Q^{-\kappa}.
\end{align}

We set
\begin{align}\label{e:R00}
    R := R(\delta,Q) := \delta^{-100\kappa } Q^{100 \kappa}.
\end{align}
Next, we choose prime numbers
\begin{align}\label{e:DELTA}
    \Delta_0 \approx t_0^{-10},
    \; \; \; 
    \Delta \approx t^{-10},
\end{align}
noting that consecutive primes in this range differ by multiplicative factors on the order of $1+o(t^3)$ by \cite{BHP}. We then choose
\begin{align}\label{e:K0size}
    K_0 := K_0(\delta,Q) \approx \delta^{-12\kappa}Q^{12\kappa}
\end{align}
so that
\begin{align}
    \Delta \mid K_0,
\end{align}
and will ultimately restrict all times considered to be of the form
\[
    K_0 2^{\mathbb{N}}.
\]
We set
\begin{align}\label{e:Vdef}
    V := Q^{\rho/10}
\end{align}
where we think of $2^{100} \kappa \ll \rho \ll 2^{-800}$ as being much larger than $\kappa$ but still extremely small.

And, we let
\begin{align}\label{e:lambda}
    \overline{\lambda} &:= \overline{\lambda}(\delta,Q) := \min\{ K_0 Q, V \delta^{-2} \}
\end{align}
so that
\begin{align}\label{e:lambdabound}
    \overline{\lambda} \lesssim \min\{ \delta^{-12 \kappa} Q^{1+12 \kappa}, \delta^{-2} Q^{\rho/10}  \}.
\end{align}

Finally, every factor of $\epsilon$ appearing in this paper can be assumed to be on the order of $\kappa^{10}$ (which is certainly not sharp), and we define 
\[ \tilde{\epsilon} \ll \epsilon^{10} \]
to be the smallest non-infinitesimal parameter appearing throughout.

The following parameter inequality will be used repeatedly:
\begin{align}\label{e:paramhier}
    ( t^{-1} \Delta R V )^{2^{500}} \le Q^{2^{-100}}.
\end{align}
In particular, any power of $t^{-1}$, $\Delta$, $R$, and $V$ on the order of $2^{500}$ may be (generously) absorbed into a small power of $Q$.

\section{Properties and Examples of Admissible Weights}\label{s:adm}
The primary goal of this section is to describe three classes of weights to which Theorem \ref{t:main} applies, namely \eqref{e:vonMangoldt}, \eqref{e:divisor}, and \eqref{e:sumofsquares}.

We begin, however, by outlining some elementary properties of admissible weights: moment and $U^3$ estimates.

\subsection{Properties of Admissible Weights}
We begin with a straightforward moment computation.
\begin{lemma} \label{heath-brown-moments}
    The following moment estimate holds whenever $N \gg Q^{10k}$ (say):
     \[ \mathbb{E}_{n \in [N]} |w_Q(n)|^{2k} \lesssim \mathbf{S}_Q^{2k} Q^{2k } (\log Q)^{4^k} \] for each $k \geq 1$. In particular
     \begin{align}
\mathbb{E}_{n \in [N]} |w_Q(n)|^{2k} \lesssim Q^{o_k(1)} 
     \end{align}
     for each admissible weight, and each $k \geq 1$.
\end{lemma}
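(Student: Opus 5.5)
We expand the $2k$-th moment directly. Writing $|w_Q(n)|^{2k} = w_Q(n)^k \overline{w_Q(n)}^k$ and inserting the definition \eqref{e:WQ}, we get
\begin{align}
\mathbb{E}_{n \in [N]} |w_Q(n)|^{2k} = \sum_{\substack{a_1/q_1,\dots,a_{2k}/q_{2k} \in \Gamma_Q}} \prod_{j \leq k} S(a_j/q_j) \prod_{j > k} \overline{S(a_j/q_j)} \cdot \mathbb{E}_{n \in [N]} e(n \theta),
\end{align}
where $\theta := \sum_{j \leq k} a_j/q_j - \sum_{j>k} a_j/q_j$. The frequency $\theta$ is a rational with denominator dividing $\mathrm{lcm}(q_1,\dots,q_{2k}) \leq Q^{2k}$. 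Hence either $\theta \in \mathbb{Z}$, in which case the inner average is $1$, or $\|\theta\| \geq Q^{-2k}$, in which case the geometric sum bound gives $|\mathbb{E}_{n\in[N]} e(n\theta)| \lesssim (N\|\theta\|)^{-1} \leq Q^{2k}/N$.

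The off-diagonal tuples contribute at most $|\Gamma_Q|^{2k} \mathbf{S}_Q^{2k} Q^{2k}/N \lesssim \mathbf{S}_Q^{2k} Q^{6k}/N$, which is negligible against $\mathbf{S}_Q^{2k}$ once $N \gg Q^{10k}$. It therefore remains to bound the diagonal contribution:
\begin{align}
\mathbf{S}_Q^{2k} \cdot \#\Big\{ (a_j/q_j)_{j \leq 2k} \in \Gamma_Q^{2k} : \sum_{j \leq k} a_j/q_j \equiv \sum_{j>k} a_j/q_j \pmod 1 \Big\}.
\end{align}

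\textbf{Main obstacle: the counting step.} We must show this count is $\lesssim Q^{2k} (\log Q)^{4^k}$. The plan is to fix the first $2k-1$ fractions and observe that the last one, $a_{2k}/q_{2k}$, is then determined as the reduced form of a specified element of $\mathbb{Q}/\mathbb{Z}$. That alone would give the useless bound $|\Gamma_Q|^{2k-1} \approx Q^{4k-2}$, so more care is needed.

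Instead, we argue with prime-by-prime (CRT) structure. The sum of reduced fractions has, at each prime $p$, $p$-adic valuation of its denominator controlled by the maxima among the $q_j$; the equation forces every prime power appearing to maximal order in exactly one $q_j$ to appear elsewhere too. This is easiest to organize by induction on $k$, using the case $k=1$ (where the count is exactly $|\Gamma_Q| \approx Q^2$) as the base.

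Concretely, we would group the $2k$ fractions by their denominators. We use that the number of $2k$-tuples of denominators $q_j \in (Q/2,Q]$ in which each prime-power-maximal part is shared is $\lesssim Q^{k}(\log Q)^{O(4^k)}$, by a divisor-type counting argument in the spirit of bounding $\sum_{q \le Q} \tau(q)^{C}$. For each admissible denominator tuple, the number of numerator tuples is at most $\prod_j q_j / \mathrm{lcm}(q_j) \lesssim Q^{k}$-ish, since the numerators form a subgroup-coset constraint of index $\mathrm{lcm}$.

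The last assertion follows by inserting $|S(a/q)| \leq C q^{o(1)-1}$, i.e.\ $\mathbf{S}_Q \lesssim Q^{o(1)-1}$, so that $\mathbf{S}_Q^{2k} Q^{2k} (\log Q)^{4^k} \lesssim Q^{o_k(1)}$. If the sharp counting proves delicate, we would settle for any bound $Q^{2k+o_k(1)}$, which suffices for the stated consequence.
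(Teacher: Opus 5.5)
Your reduction is the paper's: expand, discard the non-integral frequencies using $\|\theta\|\ge Q^{-2k}$ and $N\gg Q^{10k}$, and bound the diagonal by $\mathbf{S}_Q^{2k}\,\mathcal{N}$. Here $\mathcal{N}$ is the number of solutions in $\Gamma_Q^{2k}$, i.e.\ the $2k$-th moment of $\sum_{Q/2<q\le Q}c_q(n)$, which the paper bounds by quoting [GM], [CK].

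\textbf{The gap is in your count of $\mathcal{N}$.} For fixed denominators, the numerators lie in the kernel of the surjection $\prod_j \mathbb{Z}/q_j\to \frac{1}{L}\mathbb{Z}/\mathbb{Z}$, with $L=\mathrm{lcm}(q_1,\dots,q_{2k})$, so there are at most $\prod_j q_j/L$ of them. That part is correct. But this quantity is not $\lesssim Q^k$. On your admissible tuples (each prime's maximal power shared) one has $\prod_j q_j\ge L^2$, so the numerator count is $\ge (\prod_j q_j)^{1/2}\gtrsim_k Q^k$; it is bounded \emph{below} by $Q^k$, not above. For $q_1=\dots=q_{2k}=q$ it equals $q^{2k-1}$. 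So multiplying a count of denominator tuples by a uniform numerator bound loses up to a factor $Q^{k-1}$, and the claimed bound does not follow for any $k\ge 2$. Your fallback $Q^{2k+o_k(1)}$ is likewise only asserted, not proved.

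\textbf{The fix is to weight each denominator tuple by $1/L$ instead of counting tuples:}
\[ \mathcal{N}\le \sum_{q_1,\dots,q_{2k}\le Q}\frac{\prod_j q_j}{\mathrm{lcm}(q_1,\dots,q_{2k})}\le Q^{2k}\sum_{q_1,\dots,q_{2k}\le Q}\frac{1}{\mathrm{lcm}(q_1,\dots,q_{2k})}. \]
No sharing condition or induction on $k$ is needed.

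To bound the last sum, use Rankin's trick: $\mathbf{1}_{q\le Q}\le (Q/q)^{\sigma}$ with $\sigma=1/\log Q$. This makes the sum at most $e^{2k}\prod_p\sum_{e_1,\dots,e_{2k}\ge 0}p^{-\max_j e_j-\sigma\sum_j e_j}$. Each local factor is $1+(2^{2k}-1)p^{-1-\sigma}+O_k(p^{-2})$, so the sum is $\lesssim_k \zeta(1+\sigma)^{4^k-1}\lesssim_k(\log Q)^{4^k-1}$. This recovers the stated bound $Q^{2k}(\log Q)^{4^k}$ exactly.

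For the weaker $Q^{o_k(1)}$ consequence, grouping by $L\le Q^{2k}$ already suffices: at most $\tau(L)^{2k}$ tuples have $\mathrm{lcm}=L$, and $\sum_{L\le Q^{2k}}\tau(L)^{2k}/L\le Q^{o_k(1)}$.
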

\begin{proof}
    We just expand the left hand side and apply \cite[Theorem 1.1]{GM} for the case where $k \geq 2$, and \cite[Theorem 1.2]{CK} when $k=1$:
    \begin{align}
        & \sum_{\theta_1,\dots,\theta_k,\tau_1,\dots,\tau_k} \prod_{i=1}^k S(\theta_i) \overline{S(\tau_i)} \EE_{n \in [N]} e(\sum_{i= 1}^k (\theta_i - \tau_i) n ) \\
        & \lesssim N^{-1/10} + \mathbf{S}_Q^{2k} \cdot \mathbb{E}_{n \in [N]} |\sum_{a/q \in \Gamma_Q} e(na/q)|^{2k} \\
        & \lesssim \mathbf{S}_Q^{2k} Q^{2k} (\log Q)^{4^k}.
    \end{align}    
\end{proof}

We will also record the following $U^3$ statistic of each $w_Q$; this result is similar to \cite[Proposition 2.1]{F+}, in which case the role of $S(a/q)$ was given by 
\[ \frac{\mu(q)}{\phi(q)},\]
which in particular allowed us to restrict
\begin{align}
\Gamma_Q \longrightarrow \Gamma_Q \cap \{ a/q : \sup_{p \text{ prime}} v_p(q) \leq 1 \},
\end{align}
where $v_p$ denote the $p$-adic valuations, namely
\begin{align}\label{e:vp} v_p(n) := \max\{ k : p^k | n \}, \; \; \; v_p(\frac{a}{q}) := v_p(a) - v_p(q).
\end{align}

Specifically, \cite[Proposition 2.1]{F+} can be recast as follows.
\begin{proposition}\label{p:U3F+}
    Suppose that $N \geq Q^{100}$, and that 
    \[ |S(a/q)| \lesssim q^{o(1)-1} \cdot \mathbf{1}_{q \text{ square-free}}.\] Then
    \begin{align}
        \| \sum_{a/q \in \Gamma_Q} S(a/q) e(a/qn) \|_{U^3([N])} \lesssim Q^{o(1)-3/8}.
    \end{align}
\end{proposition}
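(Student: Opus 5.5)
We prove the bound by expanding the $U^3$ norm into a count of frequency configurations. Write $F(n) := \sum_{\theta \in \Gamma_Q} S(\theta) e(\theta n)$ and expand the definition \eqref{e;GowersDef}, normalised as in $U^3([N])$:
\begin{align}
\| F \|_{U^3([N])}^8 \approx N^{-4} \sum_{(\theta_\omega)_{\omega \in \{0,1\}^3} \subset \Gamma_Q} \prod_{\omega} \mathcal{C}^{|\omega|} S(\theta_\omega) \sum_{x,h_1,h_2,h_3} \mathbf{1}_{[N]}\text{-cutoffs} \cdot e\Big( \sum_\omega (-1)^{|\omega|} \theta_\omega (x + \omega \cdot h) \Big),
\end{align}
where $\mathcal{C}$ denotes complex conjugation. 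The phase is linear in $(x,h_1,h_2,h_3)$. Its four coefficients are
\begin{align}
L_0 := \sum_\omega (-1)^{|\omega|}\theta_\omega, \qquad L_i := \sum_{\omega_i = 1} (-1)^{|\omega|} \theta_\omega, \quad i=1,2,3.
\end{align}
Each $L_j$ is a rational number with denominator at most $Q^8$.

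\textbf{Step 1 (removing non-degenerate tuples).} If some $L_j \not\equiv 0 \pmod 1$, then $\|L_j\| \geq Q^{-8}$. Summing by parts in the corresponding variable over the convex region cut out by the $[N]$-cutoffs saves a factor $\lesssim Q^8/N$. There are at most $Q^{16}$ tuples, and $\prod_\omega |S(\theta_\omega)| \leq Q^{o(1)-8}$. So these tuples contribute $\lesssim Q^{16+o(1)}/N \leq Q^{-80}$, using $N \geq Q^{100}$, which is negligible. We are left with
\begin{align}
\| F \|_{U^3([N])}^8 \lesssim Q^{-50} + Q^{o(1)-8} \cdot \#\mathcal{A}_Q, \qquad \mathcal{A}_Q := \{ (\theta_\omega) \in \Gamma_Q^{8} : L_0 \equiv L_1 \equiv L_2 \equiv L_3 \equiv 0 \}.
\end{align}
The statement therefore reduces to the counting bound $\#\mathcal{A}_Q \lesssim Q^{5+o(1)}$.

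\textbf{Step 2 (local structure from squarefreeness).} Subtracting suitable combinations of the $L_j$ recovers the face relations: $\mathcal{A}_Q$ consists of the additive $2$-dimensional parallelepipeds of frequencies,
\begin{align}
\theta_{\omega} - \theta_{\omega + e_i} - \theta_{\omega+e_j} + \theta_{\omega+e_i+e_j} \equiv 0,
\end{align}
taken over all faces. Since each $q_\omega$ is squarefree, reduce each relation modulo each prime $p$. Any $p$ dividing exactly one denominator appearing in a relation makes that relation impossible, so every such $p$ must divide at least two of the denominators involved.

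\textbf{Step 3 (counting).} Parametrise a configuration by $\theta_{000}$ and three "edge" frequencies; this leaves four further membership constraints, namely that the remaining $\theta_\omega$ have denominators in $(Q/2,Q]$. Organise the count by the gcd lattice $\{ \gcd(q_\omega,q_{\omega'}) \}$. By the Chinese remainder theorem and squarefreeness, the number of compatible numerators factors into local counts at each prime. For each fixed lattice of denominators, the numerators then contribute roughly $\prod_{p} p^{(\text{local dimension})}$. Summing over denominators with the divisor bound $\tau(q) \lesssim q^{o(1)}$, this should give $Q^{5+o(1)}$. Heuristically, three independent frequencies cost $Q^{6}$. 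Each additional frequency forced into $\Gamma_Q$ must share most of its prime factors with the existing denominators, and costs only $Q^{o(1)}$ more. The gcd constraints between the three generators cost at least one further factor of $Q$. This mirrors the computation in \cite[Proposition 2.1]{F+}, which I would follow closely.

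\textbf{Main obstacle.} Step 3 is the main difficulty: proving $\#\mathcal{A}_Q \lesssim Q^{5+o(1)}$, that is, beating the naive $Q^{6}$ by a full power of $Q$ uniformly over all gcd configurations. My first attempt is to foliate as in \eqref{e:foliate}, writing
\begin{align}
\|F\|_{U^3}^8 = \sum_h \|\Delta_h F\|_{U^2}^4.
\end{align}
Each $\Delta_h F$ is again a rational exponential sum, with frequencies $\theta - \tau$. Its $U^2$ norm is its additive energy, and I would bound that energy using the squarefree local-cancellation property of Step 2 together with the divisor bound. Here I expect an energy bound of the shape $Q^{o(1)}\cdot(\text{number of pairs})^{3/2}$ to be what is needed.
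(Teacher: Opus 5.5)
Your Steps 1 and 2 are correct. Since $N\ge Q^{100}$, every tuple with some $L_j\not\equiv 0$ is negligible. The relations $L_0\equiv\dots\equiv L_3\equiv 0$ force $\theta_\omega=\alpha+\omega\cdot\beta$. So the proposition follows from the cube count $\#\mathcal{A}_Q\lesssim Q^{5+o(1)}$ over squarefree denominators. But that count is the entire content of the statement, and Step 3 does not prove it. The paper gives no argument here; it imports the bound from \cite[Proposition 2.1]{F+}. If you defer Step 3 to that reference, you have a citation, and Steps 1--2 add nothing.

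There is also no room for heuristics, because the exponent $5$ is sharp: cubes whose eight vertices share one squarefree denominator $q$ already number $\gtrsim\sum_q q^{4-o(1)}\approx Q^{5-o(1)}$. Your heuristic treats the cube as generated by three frequencies, with everything else costing $Q^{o(1)}$. In fact there are four free frequencies, and the faces alone number $Q^{4+o(1)}$ (this is the additive energy of $\Gamma_Q$). The third edge direction then costs a genuine factor $Q$, so you reach $Q^5$ only through offsetting errors. Summing over denominators with the divisor bound cannot work as stated either, since there are up to $Q^8$ denominator tuples. The foliation fallback does not reduce the problem. $\|\Delta_hF\|_{U^2}^4$ is the fourth moment of the Fourier coefficients of $\Delta_hF$, not an additive energy, and averaging it over $h$ just returns $\sum_\gamma E(\Gamma_Q\cap(\Gamma_Q-\gamma))$, which is $\#\mathcal{A}_Q$ again.

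Your local-dimension remark needs an exact exponent inequality to become a proof. Fix squarefree denominators $(q_\omega)$ and set $M=\mathrm{lcm}(q_\omega)$. By CRT, a cube with these denominators is a choice of $(x_0,y)\in\mathbb{F}_p^4$ for each $p\mid M$. The set $Z_p:=\{\omega: p\nmid q_\omega\}$ is exactly where $x_0+\omega\cdot y\equiv 0$. Hence there are at most $p^{4-r_p}$ local choices, where $r_p$ is the rank of $\{(1,\omega):\omega\in Z_p\}$. One checks three facts:
\begin{enumerate}
\item $|Z_p|\le 3$ forces $r_p=|Z_p|$;
\item $|Z_p|=4$ forces $r_p=3$;
\item $|Z_p|\ge 5$ is impossible when $p\mid M$. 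Otherwise, for each $i$ some edge $\{\omega,\omega+e_i\}$ lies in $Z_p$, forcing $y_i\equiv 0$, then $x_0\equiv 0$, i.e.\ $p\nmid M$.
\end{enumerate}
Consequently $5-r_p\le\frac58(8-|Z_p|)$, with equality only for $Z_p=\emptyset$, which is the common-denominator extremal case. Therefore
\[ \prod_{p\mid M}p^{4-r_p}\le \frac{1}{M}\prod_\omega q_\omega^{5/8}\le \frac{Q^5}{M}. \]
Summing over $8$-tuples of divisors of $M$ gives $\#\mathcal{A}_Q\lesssim Q^5\sum_{M}\tau(M)^8/M\lesssim Q^5(\log Q)^{O(1)}$, and hence $\|F\|_{U^3([N])}\lesssim Q^{o(1)-3/8}$.
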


We use this proposition to derive the following extension, albeit without the same quantitative savings. Specifically, we have the following.
\begin{lemma}\label{p:U3est}
    Suppose that $N \geq Q^{100}$, and that $w$ is admissible. Then
    \begin{align}
        \| w_Q \|_{U^3([N])} \lesssim Q^{o(1)-3/52}.
    \end{align}
\end{lemma}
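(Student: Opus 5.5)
We split $w_Q$ according to the squareful part of the denominator and treat the two pieces differently. Fix a threshold $L = Q^{\gamma}$, with $\gamma>0$ to be optimized at the end. Every $q$ factors uniquely as $q = q_1 q_2$, where $q_1$ is squarefree, $q_2$ is squarefull, and $(q_1,q_2)=1$. Write
\[ w_Q = w_Q^{\flat} + w_Q^{\sharp}, \]
where $w_Q^{\flat}$ collects the frequencies $a/q \in \Gamma_Q$ with squarefull part $q_2 \leq L$, and $w_Q^{\sharp}$ collects those with $q_2 > L$.

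\textbf{The squareful-large piece $w_Q^{\sharp}$.} Here we use the embedding $U^3([N]) \hookrightarrow L^4$ (or $L^{8/3}$, whichever is most convenient), together with the moment computation of Lemma \ref{heath-brown-moments}. Expanding $\mathbb{E}_{n\in[N]}|w_Q^\sharp(n)|^{4}$ as in that proof, and using $N \geq Q^{100}$, reduces it to counting additive quadruples $\theta_1+\theta_2=\tau_1+\tau_2$ among the relevant fractions, weighted by $\mathbf{S}_Q^4 \lesssim Q^{o(1)-4}$. The number of $q \sim Q$ with squarefull part exceeding $L$ is $\lesssim Q L^{-1/2}$. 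The number of admissible fractions is therefore $\lesssim Q^{2}L^{-1/2}$, compared with $Q^2$ in the full count. Feeding this into the energy bound (\cite{GM}-type estimates, or simply a trivial bound of the form (count)$^3$) should give
\[ \|w_Q^\sharp\|_{U^3([N])} \lesssim Q^{o(1)} L^{-c} \]
for some explicit $c$, for instance $c=1/8$ if one passes through $L^2$ via $U^3 \lesssim \|\cdot\|_{L^\infty}^{1/2}\|\cdot\|_{L^2}^{1/2}$-type interpolation. The $L^2$ norm is easy by Parseval: $\|w_Q^\sharp\|_{L^2}^2 \approx \sum |S|^2 \lesssim Q^{o(1)} L^{-1/2}$. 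The main remaining question for this piece is whether Parseval alone suffices, since $U^3 \leq U^4$-type inequalities go the wrong way. I would instead use $\|f\|_{U^3}^8 \leq \|f\|_{U^2}^4\|f\|_\infty^4$, which is a standard consequence of \eqref{e:foliate}. Combined with $\|f\|_{U^2}^4 = \|\hat f\|_4^4 \leq \max|\hat f|^2\,\|f\|_2^2$, this gives a bound depending only on $\|w_Q^\sharp\|_\infty$, $\mathbf{S}_Q$, and the $L^2$ mass.

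\textbf{The squareful-small piece $w_Q^{\flat}$.} Group the frequencies in $w_Q^\flat$ by the value of $q_2 \leq L$ and by the residue $a_2/q_2$ coming from the CRT splitting $a/q = a_1/q_1 + a_2/q_2$. This writes
\[ w_Q^\flat(n) = \sum_{q_2 \leq L}\ \sum_{a_2 \bmod q_2} e(n a_2/q_2) \cdot F_{q_2,a_2}(n), \]
where each $F_{q_2,a_2}$ is a sum over squarefree denominators $q_1 \in (Q/2q_2, Q/q_2]$. For fixed $(q_2,a_2)$, the modulation $e(na_2/q_2)$ is a linear phase, and $U^3$ norms are invariant under linear phases. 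Moreover $F_{q_2,a_2}$ has exactly the shape required by Proposition \ref{p:U3F+} at height $Q/q_2$, with coefficients bounded by $q^{o(1)-1} = (Q/q_2)^{o(1)-1}\cdot q_2^{-1}$. Applying Proposition \ref{p:U3F+} after dyadically splitting the $q_1$-range gives
\[ \|F_{q_2,a_2}\|_{U^3} \lesssim q_2^{-1}(Q/q_2)^{o(1)-3/8}. \]
Summing over at most $q_2$ residues $a_2$ and over the $\lesssim L^{1/2}$ squarefull $q_2 \leq L$ by the triangle inequality then yields
\[ \|w_Q^\flat\|_{U^3([N])} \lesssim Q^{o(1)-3/8}L^{3/8+1/2}. \]
One technical wrinkle must be handled here: the CRT splitting of the coefficients $S(a/q)$ is not multiplicative in general. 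To deal with this, I would rather absorb the phase $e(na_2/q_2)$ by restricting $n$ to residue classes mod $q_2$. On each class the remaining sum is a genuine squarefree-denominator sum with arbitrary coefficients of size $q^{o(1)-1}$, which is all Proposition \ref{p:U3F+} needs. This costs at most a further polynomial factor in $L$.

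\textbf{Optimization.} Balancing $Q^{-3/8}L^{A}$ against $L^{-c}$ gives an exponent $Q^{-3c/(8(A+c))}$. With the numerology $A = 7/8$-ish and $c = 1/8$, this is the kind of computation that produces $3/52$. I expect the bookkeeping in the $w_Q^\flat$ step to be the main obstacle, namely getting the loss in $L$ small enough. The bound on $w_Q^\sharp$ is routine.
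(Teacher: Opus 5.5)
\textbf{The gap is in your bound for $w_Q^{\sharp}$, and it breaks the numerology you need.}

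The inequality you fall back on, $\|f\|_{U^3}^8 \le \|f\|_{U^2}^4\|f\|_\infty^4$, is false. Take $f(n)=e(\alpha n^2)\mathbf{1}_{[N]}(n)$ with $\alpha$ irrational. Then $\Delta_{h_1,h_2,h_3} f \equiv 1$ on its support, so $\|f\|_{U^3([N])}= 1 = \|f\|_\infty$, while $\|f\|_{U^2([N])}\to 0$ by Weyl.

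The $\|\cdot\|_\infty^{1/2}\|\cdot\|_2^{1/2}$ interpolation does not help either. For $\|w_Q^\sharp\|_\infty$ only the trivial bound $Q^{1+o(1)}L^{-1/2}$ is available.

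The tool you set aside is exactly the right one, and it is what the paper uses: $\|f\|_{U^3([N])}\lesssim \|f\|_{L^2([N])}$. It follows from \eqref{e:foliate} and your own $U^2$ estimate applied to $\Delta_h f$, together with $\|\mathcal{F}_{\mathbb{Z}}(\Delta_h f)\|_{L^\infty(\mathbb{T})}\le \|\Delta_h f\|_{\ell^1}\le \|f\|_{\ell^2}^2$:
\[ \|f\|_{U^3(\mathbb{Z})}^8=\sum_h \|\Delta_h f\|_{U^2(\mathbb{Z})}^4 \le \|f\|_{\ell^2}^4 \sum_h \|\Delta_h f\|_{\ell^2}^2 = \|f\|_{\ell^2}^8 . \]
Combined with your (correct) Parseval computation $\mathbb{E}_{n\in[N]}|w_Q^\sharp(n)|^2\lesssim Q^{o(1)}L^{-1/2}$, this gives $c=1/4$.

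The value of $c$ matters. Balancing $Q^{-3/8}L^{A}$ against $L^{-c}$ yields $Q^{-3c/(8(A+c))}$. With your stated $c=1/8$ you get only $Q^{-3/64}$ when $A=7/8$, and $Q^{-1/32}$ when $A=11/8$. Both are weaker than the lemma. This remains true even if $c=1/8$ is obtained legitimately, e.g.\ via $\|f\|_{U^3}\lesssim \|f\|_{L^{8/3}}$, H\"older, and a \cite{GM}-type $L^4$ bound. The exponent $3/52$ corresponds to $A=11/8$, $c=1/4$; your final paragraph asserts it rather than computing it.

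\textbf{The $w_Q^\flat$ step is fine, and in fact better than the paper's.} The ``wrinkle'' you flag is not an obstacle. Proposition \ref{p:U3F+} uses only the size bound $q^{o(1)-1}$ on squarefree denominators, not multiplicativity; the paper itself applies it to the non-multiplicative coefficients $T_{u,r}(b/v)$. So your CRT splitting works as written:
\begin{itemize}
\item write $S(a/q)=q_2^{-1}\tilde c(a_1/q_1)$ with $|\tilde c|\lesssim Q^{o(1)}q_1^{-1}$;
\item use modulation invariance for the phase $e(na_2/q_2)$;
\item apply the triangle inequality over the $\phi(q_2)$ phases and the $\lesssim L^{1/2}$ squarefull $q_2\le L$.
\end{itemize}
This gives $\|w_Q^\flat\|_{U^3([N])}\lesssim Q^{o(1)-3/8}L^{7/8}$.

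Your fallback, restricting $n$ to residue classes modulo $q_2$, is precisely the paper's argument via $G_{u,r}(m)=F_u(um+r)$. It loses an extra $u^{1/2}$, because each of the $u$ classes contributes $u^{-1/2}\|G_{u,r}\|_{U^3([N/u])}$ with $T_{u,r}$ bounded by the triangle inequality. That is why the paper ends with $U^{11/8}$.

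With $c=1/4$ and your CRT bound, the choice $L=Q^{1/3}$ gives $\|w_Q\|_{U^3([N])}\lesssim Q^{o(1)-1/12}$. This is stronger than the stated $Q^{o(1)-3/52}$.
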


Below, we will decompose each $q = uv$, where throughout, $v$ will be square-free and $(u,v) =1$; we will call $u$ \emph{powerful}, decompose $u = a^2 b^3$, and observe
\begin{align}\label{e:powercount}
    |\{u \leq X, \ u \text{ powerful}\}| &\leq \sum_{b \leq X^{1/3}} |\{ a^2 \leq X/b^3 \}| \\
    & \lesssim X^{1/2} \sum_{b \geq 1} b^{-3/2} \lesssim X^{1/2}.
\end{align}

The point of making such a decomposition is to reduce matters to Proposition \ref{p:U3est} above: the contribution with large powerful parts will be discarded by crude $L^2$ considerations, while small powerful parts allow a reduction to the squarefree case after restricting to residue classes $\mod u$.

With this in mind, define
\begin{align}
    F_u(n) := \sum_{v: \, Q/2 < uv \leq Q} \big( \sum_{(a,uv) = 1} S(a/uv) e(an/uv) \big) 
\end{align}
and for $U \leq Q$, separate
\begin{align}
F_{>U} := \sum_{u > U \text{ powerful}} F_u, \; \; \; F_{\leq U} := \sum_{u \leq U \text{ powerful}} F_u
\end{align}
according to the relative size of our powerful $u$.

Then, Lemma \ref{p:U3est} will follow from the following estimates and an optimization in $U$:
\begin{align}\label{e:highlowest}
    \| F_{> U} \|_{U^3([N])} \lesssim Q^{o(1)} U^{-1/4}, \; \; \; \| F_{\leq U} \|_{U^3([N])} \lesssim Q^{o(1) - 3/8} U^{11/8}.
\end{align}

\medskip

We turn to the proof.

\begin{proof}[Proof of Lemma \ref{p:U3est}]
The estimate for $F_{>U}$ is of a simpler nature, and goes through the embedding of $U^3 \hookrightarrow L^2$. To see this, it suffices to count 
\begin{align}\label{e:countfreq}
    |\{ a/q \in \Gamma_Q : a/q \text{ contributes to } F_{>U} \}| \lesssim Q^2 U^{-1/2},
\end{align}
as the (normalized) $L^2$ estimate for $F_{>U}$ follows from the fact that distinct frequencies in \eqref{e:countfreq} are separated by $\gtrsim Q^{-2}$, and the large size of $N$. To show \eqref{e:countfreq}, we just observe that for each $u$, maintaining our restrictions on $v$, we may bound
    \begin{align}
        |\{ a/uv : Q/2u < v \leq Q/u \}| \leq \frac{Q^2}{u},
    \end{align}
    as there are at most $Q$ choices for $a$, so the estimate follows from \eqref{e:powercount} and a dyadic decomposition.

So, we will have established \eqref{e:highlowest} once we obtain the estimate
\begin{align}\label{e:lowgoal}
 \| F_{\leq U} \|_{U^3([N])} \lesssim Q^{o(1) - 3/8} U^{11/8};
\end{align}
we do so by reducing to Proposition \ref{p:U3F+}:

Fix a powerful $u \leq U$, and define
\begin{align}
    G_{u,r}(m) := F_u(um+r),
\end{align}
so that we may express
\begin{align}
    G_{u,r}(m) = \sum_{Q/2u < v \leq Q/u} \big( \sum_{(b,v) = 1} T_{u,r}(b/v) e(bm/v) \big)
\end{align}
where
\begin{align}
    T_{u,r}(b/v) := \sum_{\substack{ a \equiv b \mod v, \\ a \leq uv, \ (a,uv) = 1}} S(a/uv) e(ar/uv) = O(u Q^{o(1)-1}),
\end{align}
with the final estimate follows from the fact that for each fixed $b \mod v$, 
\begin{align}
    |\{ (a,uv) = 1 : a \equiv b \mod v\}| \leq u,
\end{align}
as the congruence $a \equiv b \mod v$ gives exactly $u$ residue classes $\mod uv$.

In particular, each $F_u$ is decomposed into arithmetic progressions $\mod u$, and on each progression the frequency set collapses to denominators $v$ which are square-free: by Proposition \ref{p:U3F+}, we may bound
\begin{align}
    \| G_{u,r} \|_{U^3([N/u])} \lesssim (Q/u)^{o(1)-3/8}.
\end{align}
Consequently, if we decompose $F_u$ into
\begin{align}
    F_u(n) = \sum_{r \leq u} f_{u,r}(n), \; \; \; f_{u,r}(n) := F_u(n) \cdot \mathbf{1}_{n \equiv r \mod u},
\end{align}
so that
\begin{align}
    \| F_u \|_{U^3([N])} \leq \sum_{r \leq u} \| f_{u,r} \|_{U^3([N])}
\end{align}
by the triangle inequality for the $U^3$ norm, we may estimate
\begin{align}\label{e:furU3}
    \| f_{u,r} \|_{U^3([N])} \lesssim u^{-1/2} \|G_{u,r} \|_{U^3([N/u])} \lesssim Q^{o(1)-3/8} u^{-1/8},
\end{align}
by expanding out the definition of the $U^3$ norm, and noting that 
\begin{align}
    \prod_{\omega \in \{0,1\}^3} f_{u,r}(x + \omega \cdot h) = 0 
\end{align}
unless $x \equiv r \mod u, \ h_i \equiv 0 \mod u$. This yields a bound,
\begin{align}
    \| F_u \|_{U^3([N])} \lesssim Q^{o(1) -3/8} u^{7/8},
\end{align}
and so \eqref{e:lowgoal} follows from a sum over $u \leq U$ and a final application of \eqref{e:powercount}.
\end{proof}

With this in mind, we address each weight in turn.
\subsection{The von Mangoldt Function}
In the case when $w(n) = \Lambda(n)$, the normalization condition is satisfied by the Prime Number Theorem, and the decay of exponential sums follows from the Siegel-Walfisz Theorem, see e.g.\ \cite[Corollary 5.29]{IK}, the computation involving Ramanujan sums,
\begin{align}
    \sum_{n : (n, q) = 1} e({n}/{q}) = \mu(q),
\end{align}
and well-known estimates on the totient function. Finally, the Heath-Brown-type approximation appears as \cite[Proposition 3.1]{F+}.

\subsection{The Divisor Function}
The divisor function is not, strictly speaking, handled by Theorem \ref{t:main}, as 
\begin{align}
\sum_{n \leq N} \tau(n) \approx N \log N,
\end{align}
so in particular $\tau$ is \emph{not} normalized. More precisely, by the Voronoi summation formula, one has
\begin{align} \label{voronoi-summation-consequence}
    \sum_{\substack{n \le N \\ n \equiv a \mod q}} \tau(n) = \frac{N}{q} \sum_{d\mid q} \frac{c_d(a)}{d} \Bigl( \log N + 2\gamma - 1 - 2\log d \Bigr) + O(N^{2/3}),
\end{align}
where $\gamma$ is Euler's constant, and $c_d$ is a Ramanujan sum, namely
\begin{align}\label{e:ram}
c_d(a) := \sum_{(n,d) = 1} e(an/d).
\end{align}

On the other hand, after some elementary manipulations, we can reduce our analysis of $\tau$ at scale $N$ to weights of the form
\begin{align}
    \sum_{Q \leq N^{o(1)}} \big( \sum_{a/q \in \Gamma_Q} S(a/q) e(an/q) \big), \; \; \; S = S_\tau
\end{align}
to which our analysis applies.

To begin, define
\begin{align}
    S_\tau(a/q;N) \equiv S_\tau(1/q;N) := \frac{1}{q}( \log N + 2\gamma - 1 - 2 \log q)
\end{align}
so that, by Fourier inversion,
\begin{align}
    S_\tau(a/q;N) = \frac{1}{N} \sum_{n \leq N} \tau(n) e(-an/q) + O(N^{-1/3}).
\end{align}

Now, set 
\begin{align}
Q := Q(N) := \exp((\log N)^c), \; \; \; 0 < c \leq 2^{-100},
\end{align}
and define the functions
\begin{align}
    \tau_{L;N}(n) := \sum_{a/q \in \Gamma_L} S_\tau(a/q;N) e(an/q), \; \; \; \tau_L(n) := \sum_{a/q \in \Gamma_L} 1/q \cdot e(an/q),
\end{align}
and
\begin{align}
    \tau_{\leq Q;N} := \sum_{L \leq Q} \tau_{L;N}, \; \; \; \tau_{\leq Q} := \sum_{L \leq Q} \tau_L,
\end{align}
where the sums run over dyadic parameters. Regarding $\mathbf{S}_L$ as deriving from $\tau_L$, we have the bound 
\[ \mathbf{S}_L \lesssim \frac{1}{L}; \]
in particular, our theory applies to ergodic averages of the form
\begin{align}
    \big\{ \frac{1}{N} \sum_{n \leq N} \tau_{\leq Q}(n) T^n f \cdot T^{-n} g : N \big\},
\end{align}
so to prove convergence of ergodic averages weighted by the divisor function, it suffices to prove that
\begin{align}\label{e:U3tauclose}
    \| \tau - \tau_{\leq Q; N} \|_{U^3([N])} \lesssim (\log N)^{-10}
\end{align}
and
\begin{align}\label{e:smallsumtau}
\frac{1}{N \log N} \sum_{n \leq N} |\tau_{\leq Q;N}(n) - \log N \cdot \tau_{\leq Q}(n)| \lesssim (\log N)^{-1/10},
\end{align}
say.

We begin with \eqref{e:U3tauclose}, by showing that
\begin{align}\label{e:divisorapprox}
    \eqref{voronoi-summation-consequence} = \sum_{n \leq N \ n \equiv a \mod q} \tau_{\leq Q;N}(n) + O(N^{2/3}).
\end{align}
To establish the above approximation, we expand the right-hand side of \eqref{e:divisorapprox} and interchange orders of summation
\begin{align}
    &\sum_{\substack{n \le N \\ n \equiv a \mod q}} \sum_{(r,s) = 1, \ s \leq Q} \frac{1}{s} (\log N + 2 \gamma -1 - 2 \log s) e(rn/s) \\
    &=  \sum_{(r,s) = 1, \ s \leq Q} \frac{1}{s} (\log N + 2 \gamma -1 - 2 \log s) \cdot ( \frac{N}{q} \cdot \mathbf{1}_{s | q } \cdot e(ra/s) + O(s) ) \\
    &= \frac{N}{q} \sum_{(r,s) = 1, \ s | q} \frac{1}{s} (\log N + 2 \gamma -1 - 2 \log s) e(ra/s) + O(Q \log N) \\
    & = \frac{N}{q} \sum_{s |q } \frac{1}{s} (\log N + 2 \gamma -1 - 2 \log s) c_s(a) + O( Q \log N).
\end{align}
Since
\begin{align}\label{e:divmoment}
    \frac{1}{N} \sum_{n \leq N} |\tau(n)|^{1024} \lesssim (\log N)^{2^{1024}}
\end{align}
by the Selberg-Delange method, and 
\begin{align}
    \frac{1}{N} \sum_{n \leq N} |\tau_{ \leq Q;N} (n)|^{1024} \lesssim (\log N)^{100} 
\end{align}
(say) by Lemma \ref{heath-brown-moments}, by the inverse theorem for Gowers norms and Leng's algorithm, see \cite[\S 3]{F+}, once we have verified that $\tau, \tau_{\leq Q;N}$ can be written as type I and II sums up to (much) lower order terms, as in \cite[Proposition 5.1]{Leng}, we will have established \eqref{e:U3tauclose}. But, we may express
\begin{align}
    \tau(n) \cdot \mathbf{1}_{(N^{2/3},N]}(n) = \sum_{d \leq N^{1/3}} 2 \cdot \mathbf{1}_{d|n} + \sum_{d,w > N^{1/3}} \mathbf{1}_{dw =n};
\end{align}
and, $\tau_{\leq Q;N}$ has an expression as a type I sum on $[N]$:
\begin{align} \label{type-i-divisor-heath-brown}
	\tau_{\leq Q; N}(n) &= \sum_{q\leq Q} S_{\tau}(1/q; N) c_q(n) \\
    &= \sum_{q\leq Q} S_{\tau}(1/q; N) \sum_{d|n} \mathbf{1}_{d|q} \cdot \mu(q/d)d \\
	&= \sum_{d|n,d\leq Q} \, \sum_{q\leq Q} S_{\tau}(1/q; N) \mathbf{1}_{d|q} \cdot \mu(q/d)d\\
	& = \sum_{d|n,d\leq Q} d \sum_{\substack{q\leq Q/d \\ (d, q) = 1}} S_\tau({1}/{qd}; N) \mu(q) \\
    &=: \sum_{d\leq N^{2/3}}\alpha_d \mathbf{1}_{d|n} \cdot \mathbf{1}_{[N]}(n), 
	\end{align}
where the bound $Q\leq N^{2/3}$ was used and we bound
\begin{align}
	\alpha_d &=  \mathbf{1}_{[Q]}(d) d  \sum_{\substack{q\leq Q/d \\ (d, q) = 1}} S_\tau({1}/{qd}; N) \mu(q) \\
    & \lesssim \log N \cdot \log Q \lesssim (\log N)^2.
\end{align}
The key point in the above analysis was that the exponential sums
\[ S_\tau(1/q;N) \equiv S_\tau(a/q;N) \]
are independent of $a$.

We now turn to \eqref{e:smallsumtau}, and compare
\begin{align}
    &\tau_{\leq Q; N}(n) - \log N \cdot \tau_{\leq Q}(n) \\
    &= \sum_{\substack{(a, q) = 1 \\ q \leq Q}} \frac{1}{q}(2 \gamma - 1 - 2 \log q) e({an}/{q}) \\
    &= \sum_{q \leq Q} \frac{1}{q}(2 \gamma - 1 - 2 \log q) \sum_{d | (q,n )} \mu({q}/{d}) d \\
    &= \sum_{\substack{d \leq Q \\ d|n}} \sum_{\substack{u \leq Q/d}} \frac{\mu(u)}{u}(2 \gamma - 1 - 2 \log d - 2\log u)   \\
    & \lesssim \tau(n;Q) \log^2 Q,
\end{align}
where 
\[ \tau(n;Q) := \sum_{d | n, \ d \leq Q} 1\]
is the sum of divisors of $n$ up to $Q$; \eqref{e:smallsumtau} follows:
\begin{align}
    \frac{1}{N \log N} \sum_{n \leq N} |\tau(n;Q)| \log^2 Q \lesssim \frac{(\log Q)^3}{\log N} \ll (\log N)^{-1/10}.
\end{align}

\subsection{The Sum of Two Squares}\label{ss:sumofsquares}
We first remark that $r_2$ is normalized, as 
\begin{align}
    \frac{|\{ a^2 + b^2 \leq N\}|}{\pi N} \to 1;
\end{align}
once again, set
\begin{align}
    Q = Q(N) := \exp( (\log N)^c), \; \; \; 0 < c \leq 2^{-100}.
\end{align}

With this in mind, we begin by expressing
\begin{align}
    r_2(n) := 4(\tau_1(n) - \tau_3(n)), \; \; \; \tau_i(n) := \sum_{
    \substack{d | n \\ d \equiv i \mod 4}} 1,
\end{align}
and set
\begin{align}
    S(a/q) = \lim_N \frac{1}{\pi N} \sum_{n \leq N} r_2(n) e(-na/q) =  G(a/q)^2,
\end{align}
where $G(a/q)$ are quadratic Gauss sums, namely
\begin{align}\label{e:gausssum0}
    G(a/q) :=  \frac{1}{q} \sum_{r \leq q} e(-r^2a/q) ,
    \end{align}
so
\begin{align}\label{e:gausssum}
    G(a/q)^2      = \begin{cases}
        \frac{(-1)^{(q-1)/2}}{q} & \text { if } q \equiv 1 \mod 2 \\
        0 & \text{ if } q \equiv 2 \mod 4 \\
        - 2i \cdot \frac{(-1)^{(a-1)/2}}{q} & \text{ if } q \equiv 0 \mod 4;
    \end{cases}
\end{align}
in particular, once again, $\mathbf{S}_L \lesssim L^{-1}$.

For notational ease, below we set
\begin{align}
    w(n) := \frac{r_2(n)}{\pi},
\end{align}
so that
\begin{align}
    \frac{1}{N}\sum_{n\leq N} w(n) e(na/q) &= G(a/q)^2 + O(\frac{q}{N^{1/2}} + \frac{q^2}{N}) \\
    & = G(a/q)^2 + O(\frac{1}{N^{1/4}})
\end{align}
whenever $q \leq Q \ll N^{1/4}$. In particular, if we define
\begin{align}
    w_L(n) := \sum_{a/q \in \Gamma_L} S(a/q) e(na/q)
\end{align}
and 
\begin{align}
    w_{\leq Q} := \sum_{L \leq Q} w_L
\end{align}
as above, then
\begin{itemize}
    \item $w, \ w_{\leq Q}$ have very similar behavior on arithmetic progressions with gap-size $q \leq N^{1/4}$;
    \item $\frac{1}{N} \sum_{n \leq N} |w(n)|^{1024} \lesssim (\log N)^{2^{1024}}$, see \eqref{e:divmoment}; and
    \item $\frac{1}{N} \sum_{n \leq N} |w_{\leq Q}(n)|^{1024} \lesssim (\log N)^{100}$ by Lemma \ref{heath-brown-moments},
\end{itemize}
so we may again apply Leng's algorithm to deduce
\begin{align}
    \| w - w_{\leq Q} \|_{U^3([N])} \lesssim (\log N)^{-10},
\end{align}
once we have verified that we can express $w, w_{\leq Q}$ as $\log N$-convex combinations of sums that are suitable for \cite[Proposition 5.2]{Leng} and \cite[Proposition 5.4]{Leng}.

To express $w$ appropriately, we proceed as follows:
since
\begin{align}
    \frac{r_2(n)}{4} = \sum_{d|n, \ d \text{ odd}} (-1)^{(d-1)/2},
\end{align}
on $(N^{2/3},N]$ we can express
\begin{align}
    \frac{\pi}{4} w(n) \cdot \mathbf{1}_{(N^{2/3},N]}(n) &= \sum_{k = 0}^{\infty} \Big( \sum_{\substack{d \leq N^{1/3} \\ d \text{ odd}}} (-1)^{(d-1)/2}  \cdot \mathbf{1}_{d | n} \cdot \mathbf{1}_{n \equiv 2^k \bmod 2^{k+2}} \\
    & \qquad \qquad + \sum_{\substack{d, w > N^{1/3} \\ d \text{ odd}}} (-1)^{(d-1)/2}  \cdot \mathbf{1}_{dw = n} \cdot \mathbf{1}_{n \equiv 2^k \bmod 2^{k+2}} \Big) \\
    & \equiv \sum_{k \leq \log N} \Big( \sum_{\substack{d \leq N^{1/3} \\ d \text{ odd}}} (-1)^{(d-1)/2}  \cdot \mathbf{1}_{d | n} \cdot \mathbf{1}_{n \equiv 2^k \bmod 2^{k+2}} \\
    & \qquad \qquad + \sum_{\substack{d, w > N^{1/3} \\ d \text{ odd}}} (-1)^{(d-1)/2}  \cdot \mathbf{1}_{dw = n} \cdot \mathbf{1}_{n \equiv 2^k \bmod 2^{k+2}} \Big);
\end{align}
viewing the indicators 
\[ \mathbf{1}_{n \equiv 2^k \bmod 2^{k+2}}\]
as indicators of arithmetic progressions, these can be absorbed directly into the pertaining (one-bounded) ``test" nilsequences. In particular, the methods of \cite[Lemma 5.1]{Leng}, \cite[Proposition 5.2]{Leng}, and \cite[Proposition 5.4]{Leng} apply to these sums.

It remains to address $w_{\leq Q}$. Splitting denominators into odd $q$ and $q$ that are divisible by $4$ and using \eqref{e:gausssum}, we decompose
\begin{align}
    w_{\leq Q}(n) = w_{\leq Q;1}(n) + w_{\leq Q;2}(n),
\end{align}
where
\begin{align}\label{e:wQ1}
    w_{\leq Q;1}(n) &:= \sum_{\substack{(a,q) = 1, \ q \leq Q \\ q \equiv 1 \bmod 2}} \frac{(-1)^{(q-1)/2}}{q} e(na/q) \\
    & = \sum_{q \leq Q, \ q \equiv 1 \bmod 2} \frac{(-1)^{(q-1)/2}}{q} c_q(n)
\end{align}
and
\begin{align}\label{e:wQ2}
   w_{\leq Q;2}(n) := - \sum_{\substack{(a,q)= 1,\ q \leq Q \\ q \equiv 0 \bmod 4}} \frac{2i}{q} (-1)^{(a-1)/2} e(na/q).
\end{align}
This decomposition reflects the explicit shape of the quadratic Gauss sums in \eqref{e:gausssum}: the odd-denominator piece, $w_{\leq Q;1}$, is independent of $a$, so we can represent it as a type I sum, similar to the way we addressed $\tau_{\leq Q;N}$ above; the remaining piece, $w_{ \leq Q;2}(n)$, requires a slightly more delicate congruence analysis, which we carry out below, dropping the prefactor of $-1$ as inessential.

To begin, we simplify the sum in $\{ a : (a,q) = 1 \}$ as
\begin{align}
    &\sum_{(a, q) = 1} (-1)^{(a-1)/2} e(na/{q}) \\
    &= \sum_{(a, q) = 1} e(\frac{a-1}{4}) e(na/q) \\
    &= -i \sum_{(a, q) = 1} e({a}/{q} \cdot (n + {q}/{4})) \\
    & = - i c_q(n+q/4).
\end{align}

If we foliate 
\[ \{ q \equiv 0 \mod 4, \ q \leq Q \} = \bigcup_{s=2}^{\log_2 Q} \{ q \equiv 2^s \mod 2^{s+1} \},\]
it suffices to address the contribution from $q$ deriving from a single value of $s \leq \log_2 Q \ll \log N$, provided we do so in a uniform way.

With
\begin{align}\label{e:mnq}
m := n + q/{4},
    \end{align}
we begin by expressing
\begin{align}\label{e:mobiusinversion00}
    2 \sum_{\substack{q \equiv 2^s \mod{2^{s+1}} \\ q \leq Q}} \frac{1}{q} c_q(n + {q}/{4}) = 2 \sum_{\substack{q \equiv 2^s \mod{2^{s+1}} \\ q \leq Q}} \frac{1}{q} \sum_{d | (m, q)} \mu(q/d) d,
\end{align}
and factor
\begin{align}\label{e:qq'}
    q = 2^s q', \; \; \; d = 2^j d',
\end{align} 
with $d'|q'$; we can assume that $j \in \{s-1,s\}$, as otherwise $\mu(q/d) = 0$. Thus
\begin{align}\label{e:mobiusapprox0}
    \sum_{d | (m, q)} \mu(q/d) d = \big(\sum_{d' | (m,q') } \mu(q'/d') d' \big) \cdot( 2^s \cdot \mathbf{1}_{2^s | m} - 2^{s-1}  \cdot \mathbf{1}_{2^{s-1} | m}),
\end{align}
and since $m = n + 2^{s-2} q'$, we have
\[ (m,q') = (n,q'),\]
so 
\begin{align}
\eqref{e:mobiusapprox0} = (\sum_{d' | (n,q') } \mu(q'/d') d' ) \cdot( 2^s \cdot  \mathbf{1}_{2^s | m} - 2^{s-1} \cdot  \mathbf{1}_{2^{s-1} | m});
\end{align}
consolidating, and substituting \eqref{e:mnq} and \eqref{e:qq'}, we obtain
\begin{align}
    \eqref{e:mobiusinversion00} = \sum_{q' \leq Q/2^s, \ q' \text{ odd}} \frac{1}{q'} \big(\sum_{d' | (n,q') } \mu(q'/d') d' \big) \cdot( 2\cdot \mathbf{1}_{2^s | n + 2^{s-2} q'} -  \mathbf{1}_{2^{s-1} | n+2^{s-2} q'}).
\end{align}
We now simplify the $2$-adic conditions:

Since $q'$ is odd, 
\[ 2^{s-1} | n + 2^{s-2} q' \iff 2^{s-1} | n + 2^{s-2},\]
so
\[ \mathbf{1}_{2^{s-1} | n+2^{s-2} q'} \equiv \mathbf{1}_{2^{s-1} | n + 2^{s-2}}.\]
If we now analyze $q' \mod 4$, we can express
\begin{align}
    2 \cdot \mathbf{1}_{2^s | n + 2^{s-2} q'} -  \mathbf{1}_{2^{s-1} | n+2^{s-2} q'} = 2A_+(n) + 2A_-(n) - B(n),
\end{align}
where
\begin{align}
    A_{\pm }(n) := \mathbf{1}_{n \pm 2^{s-2} \equiv 0 \mod 2^{s}}, \; \; \; B(n) := \mathbf{1}_{2^{s-1} | n + 2^{s-2}},
\end{align}
so that
\begin{align}
    &\eqref{e:mobiusinversion00} = 2 A_+(n) \sum_{q' \leq Q/2^s, \ q' \equiv 1 \mod 4} \frac{1}{q'} \sum_{d' | (n,q') } \mu(q'/d') d' \\
    & \qquad + 2 A_-(n) \sum_{q' \leq Q/2^s, \ q' \equiv 3 \mod 4} \frac{1}{q'} \sum_{d' | (n,q') } \mu(q'/d') d' \\
    & \qquad - B(n)  \sum_{q' \leq Q/2^s, \ q' \text{ odd}} \frac{1}{q'} \sum_{d' | (n,q') } \mu(q'/d') d'.
\end{align}
If we express $q' = d'u$ with $d'|n$ and $d'$ odd, and note that
\begin{align}
    q' \equiv 1 \mod 4 &\iff u \equiv d' \mod 4, \\
    q' \equiv 3 \mod 4 &\iff u \equiv - d' \mod 4,
\end{align}
then
\begin{align}
    &\eqref{e:mobiusinversion00}   = 2 A_+(n) \sum_{d' |n, \ d' \text{ odd}} \  \sum_{\substack{ u \leq Q/(2^sd'), \\ u \equiv d' \mod 4}} \frac{\mu(u)}{u} \\
    & \qquad + 2 A_-(n) \sum_{d' |n, \ d' \text{ odd}} \  \sum_{\substack{u \leq Q/(2^sd'), \\ u \equiv - d' \mod 4}} \frac{\mu(u)}{u} \\
    & \qquad - B(n) \sum_{d' |n, \ d' \text{ odd}} \  \sum_{\substack{u \leq Q/(2^sd'), \\ u \text{ odd}}} \frac{\mu(u)}{u}
\end{align}
is the desired representation of \eqref{e:mobiusinversion00} as a sum of type I sums.

\bigskip

These conclude our three examples; below, we will show that the restriction of these three weights to Piatetski-Shapiro times remain good weights for pointwise convergence of bilinear ergodic averages, as part of a more general phenomenon.

\section{Addressing Sparsity}\label{s:sparse}
Recall our notation
\[ \mathbb{N}_c := \{ \lfloor k^c \rfloor : k \in \mathbb{N} \};\]
the goal of this section is to prove Proposition \ref{t:sparse}, recalled below for convenience.
\begin{proposition*}
    Suppose that for any measure-preserving system $(X,\mu,T)$, whenever $T_1,T_2$ are powers of $T$, for any $f,g \in L^\infty(X)$, the bilinear ergodic averages
    \begin{align}
        \frac{1}{N} \sum_{n \leq N} w(n) T_1^n f \cdot T_2^n g
    \end{align}
   converge $\mu$-almost surely. Then the same is true for
   \begin{align}
        \frac{1}{|\mathbb{N}_c \cap [1,N]|} \sum_{n \leq N, \ n \in \mathbb{N}_c} w(n) T_1^n f  \cdot T_2^n g,
    \end{align}
    whenever $1 \leq c < 7/6$, provided
    \[ \| w \|_{U^3([N])} \lesssim N^{o(1)}. \]
\end{proposition*}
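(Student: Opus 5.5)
We compare the sparse averages with the full weighted averages via the standard Piatetski-Shapiro weight. Write
\[ \mathbf{1}_{\mathbb{N}_c}(n) = \lfloor -n^{\gamma} \rfloor - \lfloor -(n+1)^{\gamma} \rfloor, \qquad \gamma := 1/c, \]
so that $\mathbf{1}_{\mathbb{N}_c}(n) = \big((n+1)^\gamma - n^\gamma\big) + \big(\psi(-(n+1)^\gamma) - \psi(-n^\gamma)\big)$ with $\psi(x) = x - \lfloor x \rfloor - 1/2$. Since $|\mathbb{N}_c \cap [1,N]| \approx N^{\gamma}$ and $(n+1)^\gamma - n^\gamma = \gamma n^{\gamma-1} + O(n^{\gamma-2})$, the main term gives
\[ \frac{1}{N^\gamma}\sum_{n \leq N} \gamma n^{\gamma -1} w(n) T^n f \cdot T^{-n} g, \]
which converges almost surely by summation by parts from the hypothesis: it is an average of the unweighted-by-$n$ averages $B_{w,M}(f,g)$ against the weights $M^{\gamma-1}$. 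This uses the upper normalization of $w$ and boundedness of $f,g$ to control the boundary terms. Almost sure convergence of the sparse averages is therefore reduced to showing that the error
\[ E_N(f,g) := N^{-\gamma}\sum_{n \leq N} \big(\psi(-(n+1)^\gamma)-\psi(-n^\gamma)\big) w(n) T^nf \cdot T^{-n} g \]
tends to zero almost surely. By a standard lacunary argument it suffices to do this along $N = \lfloor \lambda^k \rfloor$ for every $\lambda > 1$, with a summable bound such as $\|E_N\|_{L^{8}(X)} \lesssim N^{-\eta}$. Between lacunary times one interpolates using positivity of $\mathbf{1}_{\mathbb{N}_c}$, after splitting $w, f, g$ into positive parts, and the $\lambda \to 1$ limit. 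This is the route of \cite{KS}.

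To get the power saving, expand $\psi$ in its truncated Fourier series $\psi(x) = -\sum_{0<|h|\leq H} \frac{e(hx)}{2\pi i h} + O(\min(1, 1/(H\|x\|)))$, with $H = N^{1-\gamma+\epsilon}$. The tail is handled in the usual Vaaler way: its average over $n \leq N$ is $O(N^{-\epsilon'})$ times $N^{\gamma}$, after accounting for the weight $w$ via Hölder. Each $h$-term is $\phi_h(n) := e(-h(n+1)^\gamma) - e(-hn^\gamma) \approx -2\pi i h \gamma n^{\gamma-1} e(-hn^\gamma)$, so we need
\[ \Big\| N^{-1}\sum_{n \leq N} e(h n^\gamma) w(n) T^n f \cdot T^{-n} g\Big\|_{L^{8}(X)} \]
to be small uniformly in $|h| \leq H$, with enough saving to beat the sum over $h$, which loses roughly $H N^{\gamma-1}\cdot \log H$. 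The single-scale $U^3$ control ((1) of the proof overview: bilinear averages with $1$-bounded $f,g$ are bounded in $L^{2^s}$ by the $U^{s+2}([N])$ norm of the weight, with $s=1$) transferred to the system reduces this to bounding $\| e(hn^\gamma) w(n) \|_{U^3([N])}$. Here we use the hypothesis $\|w\|_{U^3([N])} \lesssim N^{o(1)}$. We apply van der Corput differencing three times inside the $U^3$ norm, or equivalently bound it by $\|w\|_{U^3}$ times the $U^3$-dual correlation with the phase, via Cauchy–Schwarz in the difference variables. The derivatives of $\Delta_{h_1,h_2,h_3}$ of the phase $hn^\gamma$ have size $|h h_1h_2h_3| N^{\gamma-4}$. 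The exponent pair or van der Corput estimates on these oscillatory sums give a saving $N^{-\eta}$ precisely when $H$ is small enough, and working out the numerology should give the threshold $\gamma > 6/7$, i.e.\ $c < 7/6$.

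The main obstacle is this last step: extracting a genuine power saving for the twisted weight $e(hn^\gamma)w(n)$ in $U^3$ while only knowing $\|w\|_{U^3} \leq N^{o(1)}$, uniformly in $h \leq N^{1-\gamma+\epsilon}$. I would first try a Cauchy–Schwarz that separates $w$ from the phase. That means writing the bilinear average as $\sum_n w(n)\,[e(hn^\gamma)T^nf\,T^{-n}g]$ and applying the generalized von Neumann argument with the phase absorbed into one of the dynamical functions via a local Taylor expansion on short intervals of length $N^{1-\gamma/2}$, where $e(hn^\gamma)$ is essentially a linear phase. The modulation invariance \eqref{e:modinv00} then lets us remove linear phases at no cost, and the quadratic remainder is controlled by the $U^3$ hypothesis. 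Balancing the short-interval length against $H$ is where the exponent $7/6$ should come out.
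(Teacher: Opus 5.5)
There is a genuine gap, at exactly the step you flag as the main obstacle. It cannot be closed, because the reduction to $E_N \to 0$ is false under the stated hypotheses.

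The finitary claim already fails. From $\|w\|_{U^3([N])} \lesssim N^{o(1)}$ alone, no saving for $\|e(hn^{\gamma}) w\|_{U^3([N])}$ that is uniform in $|h| \leq H$ can follow: for $w(n) = e(n^{\gamma})$ the twist with $h=-1$ is identically $1$.

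More decisively, take $w = \mathbf{1}_{\mathbb{N}_c}$ with $1 < c < 7/6$. Both hypotheses hold trivially. Indeed $|w| \leq 1$, and the full averages are bounded pointwise by $N^{-1}|\mathbb{N}_c \cap [1,N]| \, \|f\|_\infty \|g\|_\infty \to 0$. Yet on a one-point system with $f=g=1$ the sparse averages are identically $1$. Your main term is $N^{-\gamma}\sum_{n \leq N, \, n \in \mathbb{N}_c} \gamma n^{\gamma-1} \approx \frac{\gamma}{2-c} N^{1/c-1} \to 0$, so $E_N \to 1$.

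Now replace $w$ by $\mathbf{1}_{\mathbb{N}_c \cap B}$, with $B = \bigcup_{j \text{ even}} [2^{2^j}, 2^{2^{j+1}})$. Both hypotheses survive; this $w$ is even admissible with $S \equiv 0$, since $\|\mathbf{1}_{\mathbb{N}_c}\|_{U^s([N])}^{2^s} \lesssim N^{-1}|\mathbb{N}_c \cap [1,N]|$. But now the sparse averages on a point oscillate between values near $0$ and near $1$. So the proposition as formulated has counterexamples, and no choice of short-interval length or exponent pair can rescue your last step. A hypothesis preventing $w$ from correlating with the Piatetski-Shapiro structure must be added. For instance: quantitative bounds on $\|e(hn^{\gamma}) w\|_{U^3([N])}$, uniform in $0 < |h| \leq H$ and strong enough to survive the sum over $h$. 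That is precisely the input your twisted-weight decomposition consumes.

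The paper takes a different route. It never twists $w$ by $e(hn^{\gamma})$. Instead it sets $W_c := (1 - c n^{1-1/c}\mathbf{1}_{\mathbb{N}_c})\mathbf{1}_{(N/2,N]}$ and separates $w$ from $W_c$ by Young's inequality, $\|\Delta_h(wW_c)\|_{U^2} \leq \|\mathcal{F}_{\mathbb{Z}}(\Delta_h W_c)\|_{L^1(\mathbb{T})} \, \|\Delta_h w\|_{U^2}$. Lemma \ref{l:tech} (truncated sawtooth expansion plus the second-derivative test) then bounds the first factor by $N^{1/2-\epsilon'}$ for most $h$. This is exactly the ``separate $w$ from the phase'' move you were reaching for, and it meets the same obstruction.

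Indeed, $\|\mathcal{F}_{\mathbb{Z}}(\Delta_h W_c)\|_{L^1(\mathbb{T})} \geq \|\Delta_h W_c\|_{\ell^\infty} \gtrsim N^{1-1/c}$ for every $|h| \leq N/4$. A gain from this inequality would need that factor to be $o(1)$. Concretely, for $w = \mathbf{1}_{\mathbb{N}_c}$ one has $|\mathbb{E}_{n \in [N]} wW_c(n)| \approx 1/2$, hence $\|wW_c\|_{U^3([N])} \gtrsim 1$, even though $\|w\|_{U^3([N])} \lesssim N^{(1/c-1)/8}$. This contradicts the estimate $\|wW_c\|_{U^3([N])} \lesssim N^{-\epsilon'}\|w\|_{U^3([N])}$ on which the paper's argument rests. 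So the gap is not an artifact of your decomposition: the result needs a stronger hypothesis on $w$ than the $U^3$ growth bound.
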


We establish Proposition \ref{t:sparse} by analyzing the Fourier statistics of 
\begin{align}\label{e:1orderdiff}
\{ \Delta_h W_c : h \in [N] \},
\end{align}
see \eqref{e:triangle},
where
\begin{align}
    W_c(n) :=W_{c,N}(n) := ( 1 - c n^{1-1/c} \mathbf{1}_{\mathbb{N}_c}(n) ) \cdot \mathbf{1}_{(N/2,N]}(n).
\end{align}
Indeed, after an elementary reparameterization, see \eqref{e:reparam} below, matters reduce to a $U^3$ analysis of $w W_c$, which reduces to controlling the Fourier transform of \eqref{e:1orderdiff}, via \eqref{e:foliate} and \eqref{e;U2-ell4}. The following lemma is precisely tailored to this reduction.

\begin{lemma}\label{l:tech}
    Suppose $1 < c < 7/6$. Then there exists a set $X \subset [N]$ and two real numbers $\epsilon_2(c) > \epsilon_1(c) > 0$ and some $\epsilon'>0$ so that
    \begin{itemize} 
\item     $\max_{h \in X} \| \mathcal{F}_{\mathbb{Z}}({\Delta_h W_c}) \|_{L^1(\mathbb{T})} \lesssim N^{1/2-\epsilon'};$
    \item $X$ exhausts most of $[N]$, in that 
    \[ \frac{|[N] \smallsetminus X|}{N} \lesssim N^{-\epsilon_2(c) + o(1)};\] and
    \item $\max_{h \in [N]} \| \mathcal{F}_{\mathbb{Z}}({\Delta_h W_c}) \|_{L^1(\mathbb{T})} \lesssim N^{1/2+\epsilon_1(c) + o(1)}$.
    \end{itemize}
    \end{lemma}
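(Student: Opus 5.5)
Our plan is to obtain all three bounds from the standard description of $\mathbf{1}_{\mathbb{N}_c}$ through the sawtooth function, followed by van der Corput estimates for the resulting exponential sums. Set $\gamma := 1/c \in (6/7,1)$ and $\psi(x) := x - \lfloor x \rfloor - 1/2$. On $(N/2,N]$ we have
\begin{align}
\mathbf{1}_{\mathbb{N}_c}(n) = \lfloor -n^\gamma \rfloor - \lfloor -(n+1)^\gamma \rfloor = \big((n+1)^\gamma - n^\gamma\big) + \psi(-(n+1)^\gamma) - \psi(-n^\gamma).
\end{align}
Since $c n^{1-\gamma}((n+1)^\gamma - n^\gamma) = 1 + O(1/n)$, this gives $W_c(n) = -c n^{1-\gamma}\,\Delta\psi(n)\,\mathbf{1}_{(N/2,N]}(n) + O(N^{-1})$, where $\Delta\psi(n) := \psi(-(n+1)^\gamma) - \psi(-n^\gamma)$. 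We then expand $\psi$ using Vaaler's trigonometric polynomial of length $H = N^{1-\gamma+\eta}$; the error is controlled in $\ell^1$ by a nonnegative trigonometric polynomial. The main term becomes
\begin{align}
\sum_{0<|k|\le H} a_k\, c\, n^{1-\gamma}\, e(k n^\gamma)\big(e(k((n+1)^\gamma-n^\gamma))-1\big), \qquad |a_k|\lesssim 1/|k|.
\end{align}
The factor $e(k(\cdot))-1$ has size $\lesssim |k|N^{\gamma-1}$, so after this normalization each summand has amplitude $O(1)$ and phase $k n^\gamma$.

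Next we handle $\Delta_h W_c(n) = W_c(n)\overline{W_c(n+h)}$, which becomes a double sum over $k_1,k_2 \le H$ of terms with phase $\phi_{h}(n) = k_1 n^\gamma - k_2(n+h)^\gamma$, multiplied by smooth amplitudes. Since $\Delta_h W_c$ is supported on an interval of length $\lesssim N$, Cauchy--Schwarz on $\mathbb{T}$ alone only gives $\|\widehat{F}\|_{L^1} \le \|F\|_{\ell^2}$, which is of size $N^{1/2}$ times $\|W_c\|_\infty$. The third bullet, with $N^{\epsilon_1}$ loss, is proved this way. We bound $\|\widehat{F}\|_{L^1(\mathbb{T})} \lesssim \|F\|_{\ell^2}^{1/2}\|F\|_{\ell^2 \text{ weighted}}^{1/2}$, or more concretely $\|\widehat F\|_{L^1}\lesssim \sum_{\text{dyadic } j}\|\widehat F\|_{L^2(|\xi|\sim 2^{-j})}2^{-j/2}$. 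The crude $\ell^2$ bound on $\Delta_h W_c$ is at most $N^{1/2}\cdot N^{2(1-\gamma)}$ up to $N^{o(1)}$, because $|W_c|\lesssim N^{1-\gamma}$ on $\mathbb{N}_c$. A better estimate uses $\sum_n |W_c(n)|^2|W_c(n+h)|^2$: the product is large only where both $n$ and $n+h$ lie in $\mathbb{N}_c$, and the number of such $n$ is $\lesssim N^{2\gamma-1}+N^{o(1)}$. This gives $N^{1/2+\epsilon_1}$ with $\epsilon_1$ of order $1-\gamma$, which settles the third bullet uniformly in $h$.

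For the first two bullets we need a genuine saving. We take $X$ to be the set of $h$ for which the exponential sums
\begin{align}
\sum_{n\in J} e(\phi_h(n) - n\xi)
\end{align}
have square-root-plus-$\epsilon'$ cancellation, uniformly in $k_1,k_2,\xi$ and in subintervals $J$. We then write the $L^1$ norm of $\widehat{F}$ through a smooth partition of $\mathbb{T}$ at scale $N^{-1}$. On each piece we apply the van der Corput second-derivative test, noting $\phi_h'' \approx (k_1-k_2)N^{\gamma-2} + k_2 h N^{\gamma-3}$. When $|k_1-k_2|$ or $h$ is large this gives a good bound; the diagonal $k_1=k_2$ with $h$ small is degenerate. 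The degenerate pairs are exactly those with $|k_1 - k_2| \lesssim k_2 h/N$ in which the two terms nearly cancel. We count them and place the $h$ for which this resonance happens too often into $[N]\smallsetminus X$, and we estimate the measure of this set by a divisor or lattice-point count. This produces $N^{-\epsilon_2}$, and we verify that $\epsilon_2 > \epsilon_1$ holds exactly when $\gamma > 6/7$. For $h \in X$, summing the van der Corput bounds over $k_1,k_2 \le H$ with weights $|a_{k_1}a_{k_2}|$ yields $N^{1/2-\epsilon'}$, provided $H^{2}N^{\gamma/2}\cdots$ is small, which again reduces to the numerology $c<7/6$. The Vaaler error term is treated in the same way with nonnegative coefficients.

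We expect the main obstacle to be this last step: obtaining an $L^1(\mathbb{T})$ bound rather than an $L^\infty$ bound, and reconciling the exponents so that $\epsilon_2(c) > \epsilon_1(c)$. Our first attempt will use the exponent pair $(1/2,1/2)$ (the second-derivative test) together with the partition of $\mathbb{T}$ above. If the numerology falls short of $7/6$, we will switch to the third-derivative test or the exponent pair $(1/6,2/3)$ for the small-$h$ range.
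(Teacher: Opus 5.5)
Your argument for the third bullet is the paper's. Cauchy--Schwarz and Plancherel give $\|\mathcal{F}_{\mathbb{Z}}(\Delta_h W_c)\|_{L^1(\mathbb{T})} \le \|\Delta_h W_c\|_{\ell^2}$. The uniform pair count $|\{n : n, n+h \in \mathbb{N}_c\}| \lesssim N^{2/c-1+o(1)}$ (which you assert; the paper takes it from \cite[Corollary 4.10]{KS}) then yields $\epsilon_1(c) = 1-1/c$.

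For the first two bullets there is a genuine gap, and a better exponent pair does not fix it. Your estimates are uniform in $\xi$, since the second-derivative test does not see the linear phase $-n\xi$. So at best they bound $\sup_{\xi} |\mathcal{F}_{\mathbb{Z}}(\Delta_h W_c)(\xi)|$, which is at least $\|\mathcal{F}_{\mathbb{Z}}(\Delta_h W_c)\|_{L^2(\mathbb{T})} = \|\Delta_h W_c\|_{\ell^2}$. That quantity is large:
\begin{itemize}
\item For $1 \le h \le N/4$ one has $\Delta_h W_c(n) = 1$ at every $n \in (N/2, N-h]$ with $n, n+h \notin \mathbb{N}_c$. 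That is at least $N/4 - O(N^{1/c})$ points, so $\|\Delta_h W_c\|_{\ell^2} \gtrsim N^{1/2}$.
\item Moreover $\sum_h \|\Delta_h W_c\|_{\ell^2}^2 \approx \|W_c\|_{\ell^2}^4 \approx N^{4-2/c}$. Combined with the uniform bound $N^{3-2/c+o(1)}$, this forces $\|\Delta_h W_c\|_{\ell^2} \gtrsim N^{3/2-1/c}$ for $N^{1-o(1)}$ values of $h$.
\end{itemize}
Hence no $\xi$-uniform estimate, including the third-derivative test and the pair $(1/6,2/3)$, can give $N^{1/2-\epsilon'}$ on a set of density $1 - N^{-\epsilon_2}$.

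The failure is visible in your own bookkeeping:
\begin{itemize}
\item After normalization each of the $\approx N^{1-\gamma}$ modes has amplitude $\approx 1$. The triangle inequality over $(k_1,k_2)$ therefore costs a factor $\approx N^{2-2\gamma}$. The per-pair bound, uniformly in $\xi$, cannot be smaller than $|J|^{1/2}$ by Parseval. So even exact square-root cancellation for every pair leaves $N^{1/2 + 2(1-\gamma)}$.
\item The diagonal $k_1 = k_2$ alone gives $\gtrsim N^{3/2-\gamma}$ under the second-derivative test.
\item With $H = N^{1-\gamma+\eta}$, your closing quantity is $H^2 N^{\gamma/2} = N^{2 - 3\gamma/2 + 2\eta}$. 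This exceeds $N^{1/2}$ for every $\gamma < 1$, so the appeal to $c < 7/6$ is unfounded.
\end{itemize}

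The paper takes a different route for these bullets. It bounds $L^1(\mathbb{T})$ by $L^2(\mathbb{T})$ and applies Chebyshev in $h$ through $\sum_h \|\Delta_h W\|_{\ell^2}^2 \le \|W\|_{\ell^2}^4$. It does this for the pieces of the decomposition $1 + f_1 + f_2 + \mathcal{E}$ from \cite{KS}, discarding $f_2$ and $\mathcal{E}$ by their small $\ell^2$ norms. Van der Corput is used only on the off-diagonal of $\sum_m |A_m|^2$, to bound $\|f_1\|_{\ell^2}^2$.

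Switching to that route will not repair your proof. It aims at $\|\Delta_h W_c\|_{\ell^2} \le N^{1/2-\epsilon_0}$ for most $h$, which the lower bound above excludes. The slip is that its estimate $\|f_1\|_{\ell^2}^2 \lesssim N^{1/2+1/(2c)-\epsilon'}$ is computed for the sawtooth difference without the factor $c n^{1-1/c}$, as the definition of $A_m$ and the stated pointwise bound $|f_1| \lesssim \log N$ show. With that factor, $\|f_1\|_{\ell^2}^2$ is comparable to $\|W_c\|_{\ell^2}^2 \approx N^{2-1/c}$, and Chebyshev gives nothing.

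The first bullet would require $\mathcal{F}_{\mathbb{Z}}(\Delta_h W_c)$ to have effective support, in the sense of $\|\cdot\|_{L^1}^2 / \|\cdot\|_{L^2}^2$, of measure at most $N^{-2\epsilon'}$, and at most $N^{-2(1-1/c)-2\epsilon'}$ for typical $h$. Neither your argument nor the paper's supplies any mechanism for such Fourier concentration.
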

In fact, we will prove that one can take 
\[ \epsilon_1(c) = 1 - 1/c.\]

\begin{proof}[Proof of Proposition \ref{t:sparse} Assuming Lemma \ref{l:tech}]

By summation by parts, we may re-parametrize 
    \begin{align}\label{e:reparam}
        w(\lfloor k^c \rfloor) \longrightarrow c n^{1-1/c} \cdot  w(n) \cdot \mathbf{1}_{\mathbb{N}_c}(n),
    \end{align}
as whenever $\{ a_n \}$ is a bounded sequence,
\begin{align}
    \frac{1}{N} \sum_{n \leq N} a_{\lfloor n^c \rfloor} \to L \iff \frac{1}{N} \sum_{n \leq N} c n^{1-1/c} \cdot a_n \cdot \mathbf{1}_{\mathbb{N}_c}(n) \to L. 
\end{align}

Indeed, since 
\[ |\mathbb N_c\cap [1,N]| = N^{1/c} + O(1),\]
the sparse averages
\[
\frac{1}{|\mathbb N_c\cap[1,N]|}\sum_{n\le N,\ n\in \mathbb N_c} a_n
\]
are asymptotically equivalent to
\[
\frac{1}{N}\sum_{m\le N^{1/c}} a_{\lfloor m^c\rfloor},
\]
and hence, by summation by parts, to
\[
\frac{1}{N}\sum_{n\le N} c n^{1-1/c} \cdot a_n \cdot \mathbf{1}_{\mathbb N_c}(n).
\]
    
With this in mind, implicitly restricting all functions as we may to $(N/2,N]$, we show that
\begin{align}
    \| w W_c \|_{U^3([N])} &\approx N^{-1/2} \| w W_c \|_{U^3} \\
    &\lesssim N^{-\epsilon(c)'} \| w \|_{U^3([N])} \lesssim N^{-\epsilon(c)'/2}.
\end{align}
To do so, we use Young's convolution inequality to estimate
\begin{align}
    \|  \Delta_h (wW_c) \|_{U^2}^4 \lesssim \| \mathcal{F}_{\mathbb{Z}}( \Delta_h W_c ) \|_{L^1(\mathbb{T})}^4 \| \Delta_h w \|_{U^2}^4,
\end{align}
and bound, for $p = 1 + o_c(1)$,
\begin{align}
   & N^{-1/2} \cdot (\mathbb{E}_{h \in [N]}     \|  \Delta_h (wW_c) \|_{U^2}^4)^{1/4} \\
   &\leq     N^{-1/2} \cdot (\mathbb{E}_{h \in [N]} \mathbf{1}_{X^c}(h)     \|  \Delta_h (wW_c) \|_{U^2}^4)^{1/4} + N^{-1/2} \cdot (\mathbb{E}_{h \in [N]} \mathbf{1}_{X}(h)     \|  \Delta_h (wW_c) \|_{U^2}^4)^{1/4} \\
    & \leq N^{-\epsilon_2(c)/p} N^{\epsilon_1(c)} ( \mathbb{E}_{h \in [N]} \| \Delta_h w \|_{U^2}^{4p'} )^{1/4p'} + N^{-\epsilon'}\| w \|_{U^3([N])} \\
    & \lesssim N^{1+\epsilon_1(c) -(1+\epsilon_2(c))/p + o(1)} + N^{-\epsilon' + o(1)},
\end{align}
with the final line following from an application of H\"{o}lder's inequality.

This implies by a dyadic decomposition, Calder\'{o}n's Transference Principle \cite{C1}, and Lemma \ref{e:gowersUs} below, that whenever $N$ runs over a lacunary sequence,
\begin{align}
    \sum_N |\mathbb{E}_{n \in [N]} w(n) \big(1 - cn^{1-1/c} \mathbf{1}_{\mathbb{N}_c}(n) \big) T^n f \cdot T^{-n} g|^2 \in L^1(X)
\end{align}
whenever $f,g \in L^{\infty}(X)$, which yields the result.
    \end{proof}

We accordingly devote the remainder of this section to the proof of Lemma \ref{l:tech}.
\subsection{The Trivial Bound: $\epsilon_1(c)$.}
For any weight, $W$, we can always bound
\begin{align}
    \| \mathcal{F}_{\mathbb{Z}}( \Delta_h W ) \|_{L^1(\mathbb{T})} \leq (\sum_n |W(n) W(n+h)|^2)^{1/2}
\end{align}
by Cauchy-Schwarz and Plancherel.

Specializing to $W= W_c$, we bound
\begin{align}
    |W_c| \leq 1 + N^{1-1/c} \mathbf{1}_{\mathbb{N}_c \cap [N]},
\end{align}
so we can bound
\begin{align}
    \| \Delta_h W_c \|_2^2 \lesssim N + N^{2-1/c} + N^{4-4/c} \sum_n \mathbf{1}_{\mathbb{N}_c}(n) \mathbf{1}_{\mathbb{N}_c}(n+h).
\end{align}
The final term is bounded by 
\[ N^{4-4/c} (N^{2/c - 1 + o(1)}) \]
by \cite[Corollary 4.10]{KS}, so simplifying, we see that we can take
\begin{align}
    \epsilon_1(c) = 1-1/c.
\end{align}

\subsection{Importing the Fourier Decomposition of $c n^{1-1/c} \cdot \mathbf{1}_{\mathbb{N}_c}$}

Set $H := N^{2-2/c + \delta'}$, where $0 < \delta' = \delta'(c) \ll 2/c-1$ is sufficiently small.

Then, by \cite[p. 18]{KS}, on $(N/2,N]$ we may decompose,
\begin{align}
c n^{1-1/c} \cdot \mathbf{1}_{\mathbb{N}_c} = 1 + f_{1} + f_{2} + \mathcal{E},
\end{align}
where
\begin{align}
    (\mathcal{F}_{\mathbb{Z}}f_1)(\beta) := - \frac{1}{2\pi i} \sum_{N/2 < m \leq N} e(m \beta) \sum_{1 \leq |h| \leq H} \frac{1}{h}(e(-h(m+1)^{\frac{1}{c}}) - e(-hm^{\frac{1}{c}}))
\end{align}
is the main term, with $|f_1| \lesssim \log N$ pointwise,
\begin{align}
    (\mathcal{F}_{\mathbb{Z}}f_2)(\beta) := - \frac{1}{2\pi i} \sum_{N/2 < m \leq N} e(m \beta) \sum_{|h|> H} \frac{1}{h}(e(-h(m+1)^{\frac{1}{c}}) - e(-hm^{\frac{1}{c}}))
\end{align}
is coming from the tail of the saw-tooth function, and can be bounded by
\begin{align}
    |f_{2}(x)| \lesssim \sum_{u \in \{0,1\}} \min\{1,\frac{1}{N^{2-2/c}\|(x+u)^{1/c}\|} \},
\end{align}
so that
\begin{align}
    \| f_2 \|_{\ell^2}^2 \lesssim \| f_2 \|_{\ell^1} \lesssim N^{2/c-1-\delta'/2} 
\end{align}
by \cite[Lemma 4.8]{KS}, and 
\[ \mathcal{E} = O(N^{1/c-2} \mathbf{1}_{(N/2,N]})
\]
is coming from the Taylor expansion of $t \mapsto (m+t)^{1/c}$, and is an error term. 

Consequently,
\begin{align}
    \Delta_h W_c(n) = -\Delta_h( f_1 + f_2 + \mathcal{E}).
\end{align}
We will make use of the moment bound
\begin{align}
&    |\{ h \in [N] : \|\mathcal{F}_{\mathbb{Z}}( \Delta_h W )\|_{L^2(\mathbb{T})} \geq \lambda \}|  \lesssim \lambda^{-2} \sum_{h \in [N]} \| \Delta_h W \|_{\ell^2}^2 \\
& \qquad \lesssim \lambda^{-2} \sum_{x,h \in [N]} |W(x)|^2 |W(x+h)|^2 = \lambda^{-2} \| W \|_{\ell^2}^4;
\end{align}
specializing to $\lambda = N^{1/2 -\epsilon_0}$, where we think of $0 < \epsilon_0 \ll \delta'$ as being extremely small, we see that we are interested in bounds of the form
\begin{align}
    |\{ h \in [N] : \| \mathcal{F}_{\mathbb{Z}}( \Delta_h W )\|_{L^2(\mathbb{T})} \geq N^{1/2 - \epsilon_0} \}| \lesssim N^{2\epsilon_0 - 1} \| W \|_{\ell^2}^4,
\end{align}
so we are free to ignore any weight with
\begin{align}
    \| W \|_{\ell^2}^2 \lesssim N^{1 -\epsilon_2(c)/2 - \epsilon_0 + o(1)}.
\end{align}
In particular, we can disregard the contribution of $\mathcal{E}$, and whenever
\begin{align}
N^{2/c - 1 - \delta'/2} \ll N^{1 - \epsilon_2(c)/2 - \epsilon_0 + o(1)} \Rightarrow \epsilon_2(c) < 4 - 4/c +\delta'/2
\end{align}
(which is always acceptable since we are taking $\epsilon_1(c) = 1 - 1/c)$
we can disregard the contribution of $f_2$.

So, it remains to prove only
\begin{align}
    |\{ h \in [N] : \| \mathcal{F}_{\mathbb{Z}}( \Delta_h f_1 )\|_{L^2(\mathbb{T})} \geq N^{1/2 - \epsilon_0} \}| \ll N^{1/c - \epsilon'},
\end{align}
which will follow from the moment estimate:
\begin{align}
    \| f_1 \|_{\ell^2}^2 \lesssim N^{1/2+1/2c - \epsilon'};
\end{align}
below, we allow $\epsilon' = \epsilon'(c)$ to vary from line to line, but each instance will be bounded away from $0$.

If we set
\begin{align}
    A_m := \big| \sum_{1 \leq |h| \leq H} \frac{1}{h} \cdot ( e(-h(m+1)^{1/c}) - e(-h m^{1/c}) ) \big|,
\end{align}
then by orthogonality of phases we just need to prove that
\begin{align}
    \sum_{m \approx N} |A_m|^2 \lesssim N^{1/2+1/2c - \epsilon'}.
\end{align}
In fact, if we split
\begin{align}
    A_{m,K} := \big| \sum_{|h| \approx K} \frac{1}{h} \cdot ( e(-h(m+1)^{1/c}) - e(-h m^{1/c}) ) \big|
\end{align}
then we may bound
\begin{align}
    \sum_{m \approx N} | \sum_{K \leq H} A_{m,K} |^2 \lesssim \log N \sum_{m \approx N} \sum_{K \leq H} |A_{m,K}|^2 
\end{align}
so it suffices to prove that for each fixed $K \leq H$,
\begin{align}
    \sum_{m \approx N} |A_{m,K}|^2 \lesssim N^{1/2+1/2c - \epsilon'}
\end{align}
or
\begin{align}
    \frac{1}{K^2} \sum_{m \approx N} |B_{m,K}|^2 \lesssim N^{1/2+1/2c - \epsilon'}
\end{align}
    where
\[ B_{m,K} := \big| \sum_{|h| \approx K} D_h(m) \big|, \; \; \; D_h(m) := e(-h(m+1)^{1/c}) - e(-h m^{1/c}).\]
The previous reduction can be made to work by summation by parts, since
\[ \frac{1}{h} = \frac{1}{K} \cdot \frac{K}{h} \]
and 
\[ h \mapsto \frac{K}{h} \mathbf{1}_{|h| \approx K}\]
has a total variation norm of $O(1)$. And, by conceding a factor of $2$, we will only assume that we are summing over positive $h \approx K$.

Note that by the Lipschitz nature of the exponential and Taylor expansion, we may bound
\[ |D_h(m)| \lesssim \min\{1, |h| N^{1/c-1} \},\]
so 
\begin{align}\label{e:diag}
    \sum_{m \approx N} |D_h(m)|^2 \ll N \min \{ 1, |h|^2 N^{2/c-2} \}.
\end{align}

With this in mind, we express
\begin{align}
    \sum_{m \approx N} |B_{m,K}|^2 = \sum_{h_1,h_2 \approx K } \Big( \sum_{m \approx N} D_{h_1}(m) \overline{D_{h_2}(m)} \Big).
\end{align}

We can address the diagonal term using \eqref{e:diag},
\begin{align}
    \frac{1}{K^2} \sum_{h \approx K} \sum_{m \approx N} |D_h(m)|^2 \ll \min \{ N/K, K N^{2/c-1}\} \lesssim N^{1/2+1/2c -\epsilon'},
\end{align}
by taking the geometric mean of the minimum. So, we don't have to worry about the diagonal contribution.

Our task, therefore, is to bound
\begin{align}
    \frac{1}{K^2} \sum_{h_1,h_2 \approx K, \ |h_1-h_2| \geq 1 } \Big( \sum_{m \approx N} D_{h_1}(m) \overline{D_{h_2}(m)} \Big) \lesssim N^{1/2 + 1/2c - \epsilon'}.
\end{align}
Set $k := h_1-h_2$; if we expand out 
\[ D_{h_1}(m) \overline{D_{h_2}(m)},\]
we get
\begin{align}
&   \big( e(-h_1(m+1)^{1/c}) - e(-h_1 m^{1/c}) \big) \big( e(h_2(m+1)^{1/c}) - e(h_2 m^{1/c}) \big) \\
& = e(-k (m+1)^{1/c}) - e(-h_1( (m+1)^{1/c} - m^{1/c}) - k m^{1/c}) \\
& \qquad - e(-h_1 (m^{1/c} - (m+1)^{1/c}) -k (m+1)^{1/c}) + e(-k m^{1/c}),
\end{align}
so we need to understand
\begin{align}
    & \frac{1}{K^2} \sum_{h_1 \approx K}  \sum_{1 \leq |k| \lesssim K } \Big| \sum_{m \approx N} e(-k m^{1/c}) \Big| ;
\end{align}
we apply van der Corput. Indeed, the second derivative of the phase is 
\[ k m^{1/c-2} \approx |k| N^{1/c-2}, \]
so by the second derivative test, \cite{vdC},
\begin{align}
    \Big| \sum_{m \approx N} e(-k m^{1/c}) \Big|  &\lesssim N (|k| N^{1/c-2})^{1/2} + (|k| N^{1/c-2})^{-1/2} \\
    & \lesssim |k|^{1/2} N^{1/2c} + N^{1-1/2c} |k|^{-1/2}.
\end{align}
Since $K \leq H \leq N^{2-2/c+\delta'}$, we arrive at the desired bound:
\begin{align}
    \frac{1}{K} \sum_{1  \leq |k| \lesssim K} |k|^{1/2} N^{1/2c} + N^{1-1/2c} |k|^{-1/2} &\lesssim K^{1/2} N^{1/2c} + K^{-1/2} N^{1-1/2c} \\
    & \lesssim N^{1/2 + 1/2c - \epsilon'}.
\end{align}

The proof is complete.

\bigskip

The above sections have been concerned with applications and consequences of Theorem \ref{t:main}. We now switch perspectives, and begin developing and consolidating the toolkit needed to prove our main result.

\section{Analytic Essentials}\label{s:analess}
In this section, we collect a number of analytic tools that will recur throughout the main argument.

\subsection{Elementary Inequalities}
\begin{lemma}[Summation by Parts]
    Suppose that $\{ a_n \}$ are uniformly bounded and 
    \[ \sup_{N \gg \epsilon_1 N_0} |\mathbb{E}_{n \in [N]} a_n|  \ll \epsilon_1;\]
then, whenever $\varphi \in \mathcal{C}^\infty_c([0,1])$ is $\mathcal{C}^1(\mathbb{R})$-normalized, 
\begin{align}
    \sup_{N \gg N_0} |\sum_n \varphi_N(n) a_n| \ll \epsilon_1.
\end{align}
\end{lemma}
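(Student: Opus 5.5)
The plan is a discrete summation by parts in $n$, with partial sums controlled by the hypothesis. Set $A(m) := \sum_{n \leq m} a_n = m\, \mathbb{E}_{n \in [m]} a_n$, so $|A(m)| \lesssim m$ for all $m$ by uniform boundedness, and $|A(m)| \ll \epsilon_1 m$ once $m \gg \epsilon_1 N_0$. Since $\varphi$ is supported in $[0,1]$, $\varphi_N(n) = N^{-1}\varphi(n/N)$ vanishes for $n > N$. Abel summation then gives
\begin{align}
\sum_n \varphi_N(n) a_n = \sum_{m \leq N} A(m)\big(\varphi_N(m) - \varphi_N(m+1)\big).
\end{align}
There is no boundary term, because $\varphi_N(N+1) = 0$ up to the negligible endpoint effect; if needed one adds $A(N)\varphi_N(N)$, which is $\ll \epsilon_1$ by the hypothesis at scale $N$. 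By the mean value theorem and the $\mathcal{C}^1$ normalization, each increment satisfies
\[ |\varphi_N(m) - \varphi_N(m+1)| \leq N^{-2}\|\varphi'\|_\infty \lesssim N^{-2}. \]

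Split the sum at $m_0 := C\epsilon_1 N_0$, the threshold beyond which the hypothesis applies. For $m > m_0$, use $|A(m)| \ll \epsilon_1 m$ to get a contribution of at most
\[ \epsilon_1 N^{-2}\sum_{m \leq N} m \lesssim \epsilon_1. \]
For $m \leq m_0$, use the trivial bound $|A(m)| \lesssim m$ to get
\[ N^{-2} \sum_{m \leq m_0} m \lesssim (m_0/N)^2 \lesssim (\epsilon_1 N_0 / N)^2. \]
Since $N \gg N_0$, this is $\ll \epsilon_1^2 \leq \epsilon_1$, so the initial segment costs nothing. Adding the two ranges gives $\sup_{N \gg N_0} |\sum_n \varphi_N(n) a_n| \ll \epsilon_1$.

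The only delicate point is bookkeeping the implicit constants so that the outputs really are $\ll \epsilon_1$. The hypothesis gives decay $\ll \epsilon_1$ only from scale $\gg \epsilon_1 N_0$ onward, while the conclusion must hold for all $N \gg N_0$. This works because $\epsilon_1 N_0 \ll N_0 \lesssim N$, so the bad initial segment occupies a vanishing proportion of $[N]$, and the quadratic weight $m/N^2$ suppresses its contribution further. No earlier result of the paper is needed beyond this elementary computation.
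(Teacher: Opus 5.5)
Your proof is correct and is the discrete form of the paper's argument. The paper writes $\varphi_N(n) = -\int_0^1 \frac{1}{tN}\mathbf{1}_{[0,Nt]}(n)\, t\varphi'(t)\,dt$, which turns $\sum_n \varphi_N(n)a_n$ into a $t\varphi'(t)\,dt$-weighted average of the partial averages $\mathbb{E}_{n\le Nt}a_n$. It then splits at $t=\epsilon_1$, using the trivial bound below that threshold and the hypothesis above it, exactly as you split at $m_0\approx \epsilon_1 N_0$. Your Abel identity also holds exactly, with no boundary term, because $\varphi_N(N+1)=0$.
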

\begin{proof}
    Express
    \begin{align}
        \varphi_N(n) := -\int_0^1 \frac{1}{t N} \mathbf{1}_{[0,Nt]}(n) (t \varphi'(t))  \ dt
    \end{align}
so that for any $N \geq N_0$
\begin{align}
    \big| \sum_{n} \varphi_N(n) a_n\big| &= \big| \int_0^1 \big( \frac{1}{tN} \sum_{n \leq Nt} a_n \big) t \varphi'(t) \ dt\big|  \\
    & \leq O(\epsilon_1) + \int_{\epsilon_1}^1 \big|\frac{1}{tN} \sum_{n \leq Nt} a_n \big| \cdot |t| |\varphi'(t)| \ dt \\
    & = O(\epsilon_1).
\end{align}
\end{proof}

The following elementary Lemma will be used crucially below.
\begin{lemma}\label{l:lacred}
    Suppose that $\{ f_N \}$ are a sequence of $1$-bounded functions satisfying
    \begin{align}\label{e:liptime} |f_N-f_M| \lesssim \frac{|N-M|}{N}\end{align}
    whenever $N/2 \leq M \leq N$ are sufficiently large.

Suppose that
\begin{align}
    \sup_{K \approx 1/t, \ K \in \mathbb{N}} \mu(\{ \limsup_{N \in K 2^{\mathbb{N}}} |f_N| \gg t \}) \leq \epsilon_1;
\end{align}
then
\begin{align}
    \mu(\{ \limsup_{N} |f_N| \gg t \}) \lesssim t^{-1} \epsilon_1.
\end{align}
Similarly, if
\begin{align}
    \sup_{t^{-5/2} \leq K \leq t^{-3}, \ K \in \mathbb{N} \text{ prime}} \mu(\{ \limsup_{N \in K 2^{\mathbb{N}}} |f_N| \gg t \}) \leq \epsilon_1,
\end{align}
then
\begin{align}
    \mu(\{ \limsup_{N} |f_N| \gg t \}) \lesssim t^{-3} \epsilon_1.
\end{align}    
\end{lemma}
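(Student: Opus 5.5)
The plan is to use the Lipschitz-in-time hypothesis \eqref{e:liptime} to compare an arbitrary large time $N$ with a nearby time from one of finitely many lacunary grids $K 2^{\mathbb{N}}$, and then to take a union bound over the grids.

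First, the covering claim. Fix $t$ and consider the grids $K2^{\mathbb{N}}$ for integers $K \in [1/t, 2/t]$; there are $\approx t^{-1}$ of them. Given $N$ large, write $N \in [2^j/t, 2^{j+1}/t)$ and set $K := \lfloor N 2^{-j} \rfloor$. Then $K \approx 1/t$ and $M := K2^j$ satisfies $0 \leq N - M < 2^j \leq tN$. By \eqref{e:liptime},
\begin{align}
|f_N - f_M| \lesssim \frac{|N-M|}{N} \lesssim t.
\end{align}
Since the implicit constants in ``$\gg t$'' may be taken large, this yields the pointwise inclusion
\begin{align}
\{ \limsup_N |f_N| \gg t\} \subset \bigcup_{K \approx 1/t} \{ \limsup_{N \in K2^{\mathbb{N}}} |f_N| \gg t \},
\end{align}
where the threshold on the right is a somewhat smaller multiple of $t$. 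If $\limsup_N |f_N| \geq Ct$, then infinitely many $N$ have $|f_N| \geq Ct$; each such $N$ is approximated by some $M = M(N)$ in one of the finitely many grids with $|f_M| \geq (C - O(1))t$. By pigeonholing, one grid contains infinitely many of these $M$'s; note that $M \to \infty$ as $N \to \infty$, since $M \geq N/2$. The union bound over the $\approx t^{-1}$ grids then gives $\lesssim t^{-1}\epsilon_1$.

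Second, the prime variant. Now take the grids $K2^{\mathbb{N}}$ with $K$ prime in $[t^{-5/2}, t^{-3}]$. I would fix the window $K \in [t^{-3}/2, t^{-3}]$, which lies inside the allowed range. By the prime gap result \cite{BHP} (or simply Bertrand plus the prime number theorem in short intervals, since gaps of size $\ll K^{0.6} \ll tK$ suffice), consecutive primes there differ by $\ll tK$. Given $N$, choose $j$ with $N 2^{-j} \in [t^{-3}/2, t^{-3})$, and choose a prime $K \leq N 2^{-j}$ with $N2^{-j} - K \lesssim tK$. Then $|N - K2^j| \lesssim tN$, and the same argument as above applies, with the number of grids now $\lesssim t^{-3}$, giving $\lesssim t^{-3}\epsilon_1$.

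The main obstacle is mild. One must ensure that the "$\gg$" constants are arranged so that the Lipschitz error $O(t)$ is absorbed, i.e.\ that the implicit constant in the hypothesis's threshold is smaller than that in the conclusion's. One must also confirm that $N/2 \leq M \leq N$, so that \eqref{e:liptime} applies; this holds since $N - M \leq tN$ with $t$ small.
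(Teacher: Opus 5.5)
Your argument is correct and is essentially the paper's: you approximate each large $N$ by a point $K2^{j}$ on one of finitely many grids, use \eqref{e:liptime} to absorb an $O(t)$ error, and take a union bound over the $\approx t^{-1}$ (resp.\ $\lesssim t^{-3}$) grids. Rounding $N2^{-j}$ down rather than up, as the paper does, and restricting the prime case to the top window $[t^{-3}/2,t^{-3}]$ are immaterial changes; the prime case again rests on the Baker--Harman--Pintz bound $|p_{l+1}/p_l-1|\ll t$ in that range. One small correction: Bertrand's postulate plays no role here. What you actually need is a prime-gap exponent strictly below $2/3$, which BHP provides (as do older short-interval results of Ingham or Huxley type), and you do identify this correctly.
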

\begin{proof}
We begin with the first point: enumerate 
    \[ [10/t,200/t] \cap \mathbb{Z} =: \{ K_1 <K_2< \dots < K_L \}, \] and collect the subsequences
\[ \mathcal{I}_i := K_i 2^{\mathbb{N}}, \; \; \;  \mathcal{I} := \bigcup_{i=1}^L \mathcal{I}_i.\]
Then
\begin{align}
    \{ \limsup_N |f_N| \gg t \} \subset \{ \limsup_{N \in \mathcal{I}} |f_N| \gg t \} \subset \bigcup_{i=1}^L \{ \limsup_{N \in \mathcal{I}_i} |f_N| \gg t\},
\end{align}
with the corresponding inequality for measures. To see this, note that for every sufficiently large $N$, there exists some (possibly non-integral) $\alpha \in [20/t,100/t]$ so that we may express
\[ N = 2^k \alpha; \]
if we choose
\[ M = 2^k K\] with $0 \leq K-\alpha \leq 1$ and $K \in [10/t,200/t]$, then
\begin{align}
    |f_M - f_N| \lesssim t,
\end{align}
by \eqref{e:liptime}, from which the result follows.

For the second, by \cite{BHP}, if $\{ p_1<p_2< \dots \}$ is the enumeration of the set of primes, see \eqref{e:primes}, then whenever $p_L \gg t^{-5/2} \gg t^{-40/19}$ 
\begin{align}
    \sup_{l \geq L} \big |\frac{p_{l+1}}{p_l} -1 \big| \ll t,
\end{align}    
from which the result follows by arguing as above.
\end{proof}

\subsection{Fourier Analytic Inequalities}

\begin{lemma}[Sampling]\label{l:sampling}
Suppose that $\phi$ is Schwartz, and consider the function
\begin{align}
    P_N(\beta) := \sum_n \phi_N(n) g(n) e(n \beta).
\end{align}
Then, whenever $\Lambda \subset \mathbb{T}$ is $1/N$-separated
\begin{align}
    \sum_{\theta \in \Lambda} |P_N(\theta)|^2 \lesssim_A 1/N \sum_n (1 + |n|/N)^{-A} |g(n)|^2.
\end{align}
\end{lemma}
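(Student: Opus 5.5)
This is a large sieve / Bessel-type inequality, and I will prove it by a $TT^*$ (duality) argument. For a sequence $\{c_\theta\}_{\theta\in\Lambda}$, the left-hand side equals the square of the supremum over $\sum_\theta|c_\theta|^2\le 1$ of
\[
\Big|\sum_n g(n)\,\phi_N(n)\sum_{\theta\in\Lambda} c_\theta e(n\theta)\Big|.
\]
To set up the weights, I write
\[
\phi_N(n) = \frac{1}{N}\phi(n/N),
\]
and split $|\phi_N(n)| \le N^{-1/2}\,\omega(n)^{1/2}\cdot N^{-1/2}\,\omega(n)^{-1/2}|\phi(n/N)|$, with weight $\omega(n):=(1+|n|/N)^{-A}$.

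Applying Cauchy--Schwarz in $n$ gives
\[
\Big|\sum_n g(n)\phi_N(n)\sum_\theta c_\theta e(n\theta)\Big|^2 \le \Big(\frac1N\sum_n \omega(n)|g(n)|^2\Big)\cdot \frac1N\sum_n \omega(n)^{-1}|\phi(n/N)|^2\,\Big|\sum_\theta c_\theta e(n\theta)\Big|^2 .
\]
The task is then to bound the second factor by $O_A(1)\sum_\theta|c_\theta|^2$. Set
\[
\Phi(x):=(1+|x|)^{A}|\phi(x)|^2 ;
\]
this is rapidly decaying since $\phi$ is Schwartz. To make Fourier analysis available, I dominate $\Phi$ by a smooth nonnegative function with compactly supported Fourier transform, or alternatively just a smooth rapidly decaying majorant $\tilde\Phi \ge \Phi$ (e.g. $C_A(1+x^2)^{-1}$ is not smooth-Fourier compact, but any Schwartz majorant suffices).

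Expanding the square, the second factor becomes
\[
\sum_{\theta,\theta'} c_\theta\overline{c_{\theta'}}\, \frac1N\sum_n \tilde\Phi(n/N)\,e(n(\theta-\theta')).
\]
By Poisson summation,
\[
\frac1N\sum_n \tilde\Phi(n/N)e(n\beta)=\sum_{k\in\mathbb Z}\widehat{\tilde\Phi}\big(N(k-\beta)\big),
\]
which is $\lesssim_{A} (1+N\|\beta\|)^{-2}$ since $\widehat{\tilde\Phi}$ is Schwartz. Schur's test then bounds the bilinear form by
\[
\sup_\theta\sum_{\theta'\in\Lambda}(1+N\|\theta-\theta'\|)^{-2}\cdot\sum_\theta|c_\theta|^2,
\]
and the $1/N$-separation of $\Lambda$ makes the supremum $O(1)$: there are $O(1)$ points of $\Lambda$ in each shell $\{j/N\le\|\theta-\theta'\|<(j+1)/N\}$, and $\sum_j (1+j)^{-2}<\infty$.

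Combining the two steps yields $\sum_\theta|P_N(\theta)|^2\lesssim_A N^{-1}\sum_n(1+|n|/N)^{-A}|g(n)|^2$. The only mildly delicate point is choosing a smooth majorant of $(1+|x|)^A|\phi|^2$ whose Poisson-summed kernel decays off the diagonal; a Schwartz majorant works directly, since $\phi$ being Schwartz absorbs any polynomial factor. No other obstacle arises.
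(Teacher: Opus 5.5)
Your proof is correct, and it takes a genuinely different route from the paper's.

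\textbf{The paper's route.} The paper works directly with $P_N$. It decomposes $\phi=\sum_{k\ge 0}2^{-kA}\phi^{(k)}$ into pieces supported on $|x|\le 2^k$, so that each $P_N^{(k)}$ is a trigonometric polynomial with frequencies in $|n|\lesssim 2^kN$. It then uses a reproducing identity $P_N^{(k)}=\chi_{k,N}*P_N^{(k)}$, with a bump $\chi_{k,N}$ of width about $(2^kN)^{-1}$. Cauchy--Schwarz inside the convolution, together with the separation of $\Lambda$, gives the Plancherel--P\'olya-type bound $\sum_{\theta\in\Lambda}|P_N^{(k)}(\theta)|^2\lesssim 2^{2k}N\|P_N^{(k)}\|_{L^2(\mathbb{T})}^2$. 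Plancherel and the geometric factors $2^{-kA}$ then produce the weight $(1+|n|/N)^{-A}$.

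\textbf{Your route.} You dualize and move the weight onto $g$ with a weighted Cauchy--Schwarz. This reduces the lemma to the smoothed large-sieve estimate $\frac1N\sum_n\tilde\Phi(n/N)\,|\sum_\theta c_\theta e(n\theta)|^2\lesssim\sum_\theta|c_\theta|^2$, which you prove by Poisson summation and Schur's test. Each step checks out:
\begin{itemize}
\item $\omega(n)^{-1}|\phi(n/N)|^2=\Phi(n/N)$;
\item the periodized kernel satisfies $|K(\beta)|\lesssim(1+N\|\beta\|)^{-2}$, because the $k\neq0$ terms contribute $O(N^{-2})$;
\item each shell in the Schur sum meets $\Lambda$ in $O(1)$ points.
\end{itemize}

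\textbf{Comparison.} Your argument avoids the dyadic decomposition of $\phi$ and the reproducing bumps. It also makes clear that only the off-diagonal decay of $\widehat{\tilde\Phi}$ is used. The paper's argument is the standard sampling argument for band-limited trigonometric polynomials and puts the uncertainty principle in the foreground.

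\textbf{A small simplification.} You do not need a Schwartz majorant. The function $C_{A,\phi}(1+x^2)^{-1}$ already dominates $(1+|x|)^A|\phi(x)|^2$, and its Fourier transform is $C_{A,\phi}\,\pi e^{-2\pi|\xi|}$. Hence the Poisson-summed kernel obeys the same bound $(1+N\|\beta\|)^{-2}$. If you do want a Schwartz majorant, its existence deserves a one-line justification, for instance by summing smooth bumps adapted to dyadic annuli with rapidly decaying heights.
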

\begin{proof}
    For any $A \geq 100$, we may decompose
    \begin{align}
        \phi = \sum_{k \geq 0} 2^{-kA} \phi^{(k)}
    \end{align}
where $\phi^{(k)}$ are smooth and satisfy $|\phi^{(k)}| \lesssim_A \mathbf{1}_{|x| \leq 2^k}$. If we set
\begin{align}
P_N^{(k)}(\beta) := \sum_{n} \phi_N^{(k)}(n) g(n) e(n \beta),
\end{align}
and choose smooth bump functions, $\{ \chi_{k,N} \}$ so that
\begin{align}
   \mathbf{1}_{|x| \leq 2^{k+2} N} \leq  \mathcal{F}_{\mathbb{R}}^{-1} \chi_{k,N} \leq \mathbf{1}_{|x| \leq 2^{k+4} N}
\end{align}
then by Fourier localization/reproducing
\begin{align}
    P_N^{(k)} \equiv \chi_{k,N}*P_N^{(k)},
\end{align}
so, if $\{ \Lambda_k^{(i)} : 1 \leq i \lesssim 2^k\} \subset \Lambda$ are $2^k/N$ separated subsets of $\Lambda$, we may bound
\begin{align}
&    \sum_{\theta \in \Lambda} |P_N^{(k)}(\theta)|^2 \\
& = \sum_{i \lesssim 2^k} \sum_{\theta \in \Lambda_k^{(i)}} |P_N^{(k)}(\theta)|^2 \\
& = \sum_{i \lesssim 2^k} \sum_{\theta \in \Lambda_k^{(i)}} |\int \chi_{k,N}(\theta - t) P_N^{(k)}(t) \ dt|^2 \\
& \lesssim \int \Big( \sum_{i \lesssim 2^k} \sum_{\theta \in \Lambda_k^{(i)}} |\chi_{k,N}(\theta-t)| \Big) |P_{N}^{(k)}(t)|^2 \ dt \\
& \lesssim 2^{2k} N \| P_N^{(k)} \|_{L^2(\mathbb{T})}^2.
\end{align}
In particular
\begin{align}
    &\sum_{\theta \in \Lambda} |P_N(\theta)|^2 \\
    & = \sum_{\theta \in \Lambda} |\sum_{k \geq 0} 2^{-kA} P_N^{(k)}(\theta)|^2 \\
    & \lesssim \sum_{k \geq 0} 2^{-k A/2} \sum_{\theta \in \Lambda} |P_N^{(k)}(\theta)|^2 \\
    & \lesssim N \sum_{k \geq 0} 2^{-k A/3} \| P_N^{(k)} \|_{L^2(\mathbb{T})}^2 \\
    & = \sum_n \Big( \sum_{k \geq 0}  1/N \cdot 2^{-kA/3} |\phi^{(k)}(n/N)|^2 \Big) |g(n)|^2,
\end{align}
from which the result follows, upon relabeling $A/3 \longrightarrow A$. 
\end{proof}

The following corollary immediately presents.
\begin{cor}\label{c:sampling}
In the setting of Lemma \ref{l:sampling}, suppose that 
\[ \Lambda \subset \{ \beta \in \mathbb{T} : | P_N (\beta) | \geq \delta \} \]
is $1/N$-separated and $|g(n)| \leq 1$. Then,
\begin{align}
    |\Lambda| \lesssim \delta^{-2}.
\end{align}
\end{cor}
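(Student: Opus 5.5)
The plan is to apply Lemma~\ref{l:sampling} to the $1/N$-separated set $\Lambda$ and compare the two sides. Every $\theta \in \Lambda$ satisfies $|P_N(\theta)| \geq \delta$, so the left side of the sampling inequality is at least $|\Lambda|\,\delta^2$:
\begin{align}
|\Lambda| \, \delta^2 \leq \sum_{\theta \in \Lambda} |P_N(\theta)|^2 \lesssim_A \frac{1}{N} \sum_n (1+|n|/N)^{-A} |g(n)|^2 .
\end{align}

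Next we bound the right side using $|g(n)| \leq 1$. Fix any $A \geq 2$, say $A=100$ as in the proof of Lemma~\ref{l:sampling}. Then the right side is at most
\begin{align}
\frac{1}{N}\sum_n (1+|n|/N)^{-A} \lesssim 1 .
\end{align}
To see this, split $n$ into the ranges $|n| \leq N$ and $2^k N < |n| \leq 2^{k+1}N$ for $k \geq 0$. The range $|n|\le N$ contributes $O(1)$, and the $k$-th dyadic range contributes $O(2^{k} \cdot 2^{-kA})$, which is summable in $k$.

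Dividing by $\delta^2$ gives $|\Lambda| \lesssim \delta^{-2}$. Since $A$ is fixed, the implicit constant depends only on $\phi$.

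There is no real obstacle here. The only point to watch is the normalization: $\phi_N = \frac1N \phi(\cdot/N)$ carries the factor $1/N$, and this factor is exactly what cancels the roughly $N$ terms with $|n|\lesssim N$, making the right side $O(1)$.
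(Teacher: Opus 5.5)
Your argument is correct and is exactly the immediate deduction the paper intends (it states the corollary without proof). You bound the left side of Lemma~\ref{l:sampling} below by $|\Lambda|\delta^2$, and the right side above by $O(1)$ using $|g|\leq 1$ together with $\frac{1}{N}\sum_n (1+|n|/N)^{-A} \lesssim 1$.
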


The next Lemma allows us to use $L^2(\mathbb{T})$-statistics to control the inverse Fourier transform of Fourier multipliers; it will be used in our final section, \S \ref{s:remainder}.

\begin{lemma}[``$H^{1/2}$"-Sobolev Embedding]\label{l:sob}
Suppose that $m \in \mathcal{C}^1(\mathbb{T})$. Then, for any $k \in \mathbb{Z}$,
\begin{align}
    \| m^{\vee} \|_{\ell^1} \lesssim |m^{\vee}(k)| + \big( \| m \|_{L^2(\mathbb{T})} \cdot \| \partial \big( e(k\cdot)m \big) \|_{L^2(\mathbb{T})} \big)^{1/2}.
\end{align}
\end{lemma}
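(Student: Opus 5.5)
The plan is to treat $m^\vee(n) := \int_{\mathbb{T}} m(\xi) e(n\xi)\, d\xi$ as a sequence on $\mathbb{Z}$. We isolate the single coefficient at the point $n = -k$, where the modulation by $e(k\cdot)$ is centred (or at $n=k$, depending on the sign convention for $m^\vee$; the two cases are symmetric). The remaining terms are then estimated by a dyadic Cauchy--Schwarz argument, optimized in a cutoff parameter. The first step is to write $\tilde m := e(k\cdot)\, m$. Then $\tilde m^\vee(n) = m^\vee(n+k)$, so $\|m^\vee\|_{\ell^1} = \|\tilde m^\vee\|_{\ell^1}$, and $\|\tilde m\|_{L^2(\mathbb{T})} = \|m\|_{L^2(\mathbb{T})}$. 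After relabelling, the claim becomes
\begin{align}
\|\tilde m^\vee\|_{\ell^1} \lesssim |\tilde m^\vee(0)| + \big(\|\tilde m\|_{L^2(\mathbb{T})}\, \|\partial \tilde m\|_{L^2(\mathbb{T})}\big)^{1/2}.
\end{align}
This is a statement about a $\mathcal{C}^1$ function on the torus with no modulation left in it.

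Next, fix a parameter $L \geq 1$ and split the sum over $n \neq 0$ into $1 \leq |n| \leq L$ and $|n| > L$. On the first range, Cauchy--Schwarz and Plancherel give
\begin{align}
\sum_{1 \leq |n| \leq L} |\tilde m^\vee(n)| \lesssim L^{1/2}\, \|\tilde m\|_{L^2(\mathbb{T})}.
\end{align}
On the second range, we use that $(\partial \tilde m)^\vee(n) = -2\pi i n\, \tilde m^\vee(n)$, which follows by integration by parts on $\mathbb{T}$ with no boundary terms since $\tilde m$ is periodic and $\mathcal{C}^1$. Cauchy--Schwarz and Plancherel then give
\begin{align}
\sum_{|n| > L} |\tilde m^\vee(n)| \leq \Big(\sum_{|n|>L} |n|^{-2}\Big)^{1/2} \Big(\sum_n |n|^2 |\tilde m^\vee(n)|^2\Big)^{1/2} \lesssim L^{-1/2}\, \|\partial \tilde m\|_{L^2(\mathbb{T})}.
\end{align}

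Finally, choose $L := \max\{1, \|\partial\tilde m\|_{L^2}/\|\tilde m\|_{L^2}\}$; the case $\tilde m \equiv 0$ is trivial. If this ratio is at least $1$, both ranges contribute $\lesssim (\|\tilde m\|_{L^2}\|\partial\tilde m\|_{L^2})^{1/2}$, which gives the claim. If the ratio is less than $1$, then $L = 1$, the first range is empty, and the tail is bounded by $\|\partial\tilde m\|_{L^2} \leq (\|\tilde m\|_{L^2}\|\partial\tilde m\|_{L^2})^{1/2}$. The $n=0$ term is kept separately as $|\tilde m^\vee(0)| = |m^\vee(\mp k)|$.

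The only step needing care is the bookkeeping of the modulation. The derivative must fall on $e(k\cdot)m$ rather than on $m$, and this works precisely because multiplying by $e(k\cdot)$ translates the Fourier coefficients by $k$. We also need to confirm that, under the paper's sign convention for $\mathcal{F}_{\mathbb{Z}}^{-1}$, the isolated coefficient is the one indexed by $k$; if the convention gives $-k$ instead, we replace $k$ by $-k$ in the modulation. This is harmless, since the statement is applied for arbitrary $k$.
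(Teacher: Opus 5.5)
Your proposal is correct and follows the paper's argument: you modulate to move the $k$-th coefficient to the origin, which with the paper's convention gives $\tilde m^\vee(0) = m^\vee(k)$ exactly. You then split the remaining frequencies at a threshold, apply Cauchy--Schwarz and Plancherel to $\tilde m^\vee(n)$ on the low range and to $n\,\tilde m^\vee(n)$ on the high range, and optimize the threshold at $\|\partial \tilde m\|_{L^2}/\|\tilde m\|_{L^2}$. One small slip: when $L=1$ the range $1\le |n|\le L$ is $\{\pm 1\}$, not empty. This is harmless, since you can split as $|n|<L$ versus $|n|\ge L$, or take $L$ equal to the ratio itself as the paper does, because $\sum_{|n|>L}|n|^{-2}\lesssim L^{-1}$ also holds for $L<1$.
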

\begin{proof}
By modulation invariance, we can normalize $k = 0$. Then, with $A$ a parameter to be optimized, we just split
\begin{align}
    \sum_{|n| \geq 1} |m^{\vee}(n)| &\leq \sum_{1 \leq |n| \leq A} |m^{\vee}(n)| + \sum_{|n| > A} |m^{\vee}(n) n| \cdot |n|^{-1} \\
    & \lesssim A^{1/2} \|m^{\vee}(n) \|_{\ell^2} + A^{-1/2} \| m^{\vee}(n) n \|_{\ell^2} \\
    & \lesssim A^{1/2} \| m\|_{L^2(\mathbb{T})} + A^{-1/2} \| \partial m \|_{L^2(\mathbb{T})},
\end{align}
from which the result follows by specializing 
\[ A := \frac{\| \partial m \|_{L^2(\mathbb{T})}}{\| m \|_{L^2(\mathbb{T})}}.\]
\end{proof}

\subsection{L\'{e}pingle's Inequality and Consequences}\label{ss:Lep}
The following inequalities all essentially go back to the work of L\'{e}pingle \cite{LE}, who was interested in quantifying convergence in the martingale context; they were imported to the pointwise ergodic theoretic setting 
by Bourgain in \cite{B2}. We summarize the main points, and leave a fuller discussion to e.g.\ \cite[\S 3]{BOOK}, \cite{BK}, or \cite{KZK}.

We begin by recalling the dyadic (reverse) martingale for (finite dimensional) Hilbert-space-valued functions. With $K = O(1)$ a fixed integer, call an interval $K$-dyadic if
\begin{align}
    I \in \{ K \cdot 2^k [n,n+1) : k \geq 0, \ n \in \mathbb{Z} \};
\end{align}
when $K$ is clear from context, we will just refer to $K$-dyadic intervals as \emph{dyadic}, and set
    \[ \big\{ \mathbb{E}_k \vec{f} := \sum_{|I| = K 2^k \text{ dyadic}} \mathbb{E}_I \vec{f} \cdot \mathbf{1}_I : k \geq 0 \big\} \]
    where
\begin{align}\label{e:EI}
    \mathbb{E}_I \vec{f} := \frac{1}{|I|} \sum_{n \in I} \vec{f}(n)
    \end{align}
denotes the average value of $\vec{f} \in \ell^2(\mathcal{H})$ over $I$.

\begin{lemma}\label{l:LEP}
    The following inequalities hold, independent of $\mathcal{H}, \ K$:
    \begin{align}
        \sup_{\lambda > 0} \| \lambda N_\lambda( \mathbb{E}_k\vec{f} : k)^{1/2} \|_{\ell^2} \lesssim \| \vec{f} \|_{\ell^2(\mathcal{H})},
    \end{align}
and for $r > 2$
        \begin{align}
\| \mathcal{V}^r( \mathbb{E}_k\vec{f} : k) \|_{\ell^2} \lesssim \frac{r}{r-2} \| \vec{f} \|_{\ell^2(\mathcal{H})}. 
    \end{align}
\end{lemma}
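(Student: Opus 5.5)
The plan is to reduce both inequalities to the scalar Lépingle inequality for the dyadic martingale, and then to upgrade to Hilbert-space values with constants independent of the dimension. The first step uses that $\mathcal{H}$ is finite dimensional: write $\vec f$ in an orthonormal basis. The coordinatewise bound via Minkowski (recorded in the preliminaries) would lose a factor of $\dim\mathcal H$ when passing to jump counts, so I will not use it. Instead I will prove the vector-valued statements directly, since Lépingle's argument only uses the martingale property and orthogonality of increments, and these hold verbatim in Hilbert space.

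The second step treats $\{\mathbb{E}_k \vec f\}$ as a reverse martingale with respect to the decreasing filtration generated by $K$-dyadic intervals of length $K2^k$. The nesting of $K$-dyadic intervals gives $\mathbb{E}_{k+1}\mathbb{E}_k = \mathbb{E}_{k+1}$. Restricting to a single $K$-dyadic interval of length $K2^{k_0}$ and rescaling counting measure to a probability measure turns this into a genuine (forward, after reindexing $k\mapsto k_0-k$) martingale on a finite probability space. There the Hilbert-space-valued Lépingle inequality applies:
\begin{align}
\| \lambda N_\lambda^{1/2} \|_{L^2} \lesssim \| f\|_{L^2(\mathcal H)}.
\end{align}
I will verify this by the standard stopping time proof. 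Define stopping times $\tau_0<\tau_1<\dots$ as successive first times the martingale moves by $\ge\lambda/2$ from its value at the previous stopping time. The jump count at altitude $\lambda$ is then dominated by the number of stopping times. Optional stopping and orthogonality of martingale differences in $L^2(\mathcal H)$ give
\begin{align}
\sum_j \|f_{\tau_{j+1}}-f_{\tau_j}\|_{L^2(\mathcal H)}^2 \le \|f\|_{L^2(\mathcal H)}^2 .
\end{align}
Each nonterminal increment has norm $\ge\lambda/2$ on the set where that stopping time is finite. Summing yields $\lambda^2\, \mathbb{E}\#\{j:\tau_j<\infty\}\lesssim\|f\|^2$. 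All of this is dimension free, uses only the inner product, and does not involve $K$. Summing over the disjoint top-level intervals, and letting $k_0\to\infty$ by monotone convergence, gives the $\ell^2(\mathbb Z)$ bound.

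The third step obtains the $r$-variation bound from the jump inequality by the layer-cake comparison recorded in the preliminaries:
\begin{align}
\mathcal V^r(\mathbb E_k\vec f)^r\lesssim_r \sum_v 2^{-vr}N_{2^{-v}}(\mathbb E_k \vec f).
\end{align}
Summing these pointwise only yields a weak/restricted bound, so the intended route is the Pisier–Xu / Bourgain interpolation. Decompose the variation at level $2^{-v}$ via the jump counts restricted to the set where $\sup_k\|\mathbb E_k\vec f\|\approx 2^{u}$ (Doob's inequality controls this maximal function, again dimension free). Then apply the weak jump bound on each dyadic piece with $\lambda\approx 2^{-v}$ relative to $2^u$, and sum the geometric series in $v$. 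The series converges with constant $\sim\sum_{v\ge0}2^{-v(r-2)}\approx r/(r-2)$, which produces the stated constant.

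I expect the main obstacle to be this last summation. One must get exactly $\frac{r}{r-2}$ without losing a logarithm, and the $\lambda$-uniform jump estimate does not sum directly in $\lambda$. The first thing to try is the standard trick of splitting each variation increment into a \enquote{short} part, controlled by square functions on dyadic blocks (orthogonality again), and a \enquote{long} part, controlled by the jump inequality at the scale of the block.
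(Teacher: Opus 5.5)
Your Steps 1--2 are sound. With the greedy stopping times, every gap $(n_{i-1},n_i]$ of a $\lambda$-jump chain contains a stopping time, and orthogonality of the stopped increments gives $\lambda^2\sum_x N_\lambda(x)\le 4\|\vec f\|_{\ell^2(\mathcal H)}^2$ with no dependence on $\dim\mathcal H$ or $K$. This is the Bourgain--L\'epingle argument that the paper only cites. For the second inequality you need not avoid coordinates: the Minkowski bound $\mathcal V^r(\vec f_N)\le\|\mathcal V^r(f_{k,N})\|_{\ell^2(k)}$, $r\ge 2$, is lossless. So that inequality reduces to the scalar strong-type L\'epingle bound.

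The gap is Step 3, which is exactly that strong-type bound, and the mechanism you describe cannot produce it. The jump inequality has a \emph{global} right-hand side. Restricting its left-hand side to $E_u:=\{\sup_k\|\mathbb E_k\vec f\|\approx 2^u\}$ does not localize $\|\vec f\|_{\ell^2}^2$. Concretely, on $E_u$ only $\lambda=2^{u-m}$ with $m\ge -2$ contribute, and H\"older plus the jump bound give
\[ \sum_{x\in E_u}\mathcal V^r(x)^2\lesssim \big(\tfrac{1}{r-2}\big)^{2/r}\,\big(2^{2u}|E_u|\big)^{1-2/r}\,\|\vec f\|_{\ell^2}^{4/r}. \]
The geometric series in $m$ converges, but $\sum_u(2^{2u}|E_u|)^{1-2/r}$ is not controlled by $\sum_u 2^{2u}|E_u|$, since $1-2/r<1$. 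Put differently, the uniform $\ell^2$ jump bound plus Doob only gives $|\{\mathcal V^r>s\}|\lesssim_r s^{-2}\|\vec f\|_{\ell^2}^2$. That is a weak-type $\ell^{2,\infty}$ estimate, and $\int s\cdot s^{-2}\,ds$ diverges. The short/long splitting does not help either. It compares non-martingale averages with $\mathbb E_k$, whereas here every time is already a martingale time, so there is no short variation to peel off.

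What is missing is a localization of the right-hand side. One way is a good-$\lambda$ inequality. In the coarse-to-fine martingale, let $\sigma$ be the first time the running $r$-variation exceeds $s$, and $\rho$ the first time $\mathbb E_k\|\vec f\|>\epsilon s$. A child interval has half the length of its parent, so $\|\mathbb E_k\vec f\|\le 2\,\mathbb E_{k+1}\|\vec f\|$. Hence the martingale restarted at $\sigma$ and stopped at $\rho$ is bounded by $4\epsilon s$ and has squared $\ell^2$ norm $\lesssim \epsilon^2 s^2\,|\{\mathcal V^r>s\}|$. Applying your weak-type bound to it gives $|\{\mathcal V^r>4s,\ \sup_k\mathbb E_k\|\vec f\|\le\epsilon s\}|\le\gamma(\epsilon)\,|\{\mathcal V^r>s\}|$, with $\gamma(\epsilon)\to 0$ (about $\epsilon^r/(r-2)^2$ for $r$ near $2$). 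Doob for the dyadic maximal function of $\|\vec f\|$ then yields the strong $\ell^2$ bound with a constant of the claimed order. Alternatively, your construction already gives the pointwise bound $\lambda^2 N_\lambda\le 4\sum_j\|\vec f_{\tau_{j+1}}-\vec f_{\tau_j}\|^2$. Burkholder's square function inequality (dimension-free for Hilbert-valued martingales) then gives jump bounds in $\ell^p$ for all $1<p<\infty$. Weak-type variational bounds at some $p_0<2<p_1$ then interpolate, via the truncations $\vec f\,\mathbf 1_{\|\vec f\|>\epsilon s}$ and $\vec f\,\mathbf 1_{\|\vec f\|\le\epsilon s}$, to the strong $\ell^2$ estimate.
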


A convolution form presents as well. 
\begin{cor}\label{c:LEP}
Suppose that $\chi \in \{ \mathbf{1}_{[0,1]}, \phi\}$ where $\int \phi = 1$ is Schwartz and in the unit ball of a suitable semi-norm, and that all our times derive from a $2$-lacunary sequence, $N \in \mathcal{I}$.

Then the following inequalities hold, independent of $\mathcal{H}$ and $\mathcal{I}$:
    \begin{align}
        \sup_{\lambda > 0} \| \lambda N_\lambda( \chi_N*\vec{f}  : N \in \mathcal{I} )^{1/2} \|_{\ell^2} \lesssim \| \vec{f} \|_{\ell^2(\mathcal{H})},
    \end{align}
and for $r > 2$
        \begin{align}
\| \mathcal{V}^r( \chi_N*\vec{f} : N \in \mathcal{I}) \|_{\ell^2} \lesssim         \frac{r}{r-2}  \| \vec{f} \|_{\ell^2(\mathcal{H})}. 
    \end{align}
\end{cor}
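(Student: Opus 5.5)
The plan is to deduce Corollary \ref{c:LEP} from Lemma \ref{l:LEP} by comparing the convolution averages with the dyadic martingale averages $\mathbb{E}_k\vec{f}$. We split into a ``long'' martingale part and a ``short'' square-function error. Since $\mathcal{I}$ is $2$-lacunary, we may enumerate $\mathcal{I} = \{N_j\}$ with $N_{j+1} \geq 2N_j$. To each $N_j$ we assign $k_j$ with $K2^{k_j} \approx N_j$; the map $j \mapsto k_j$ is at most $O(1)$-to-one. We then decompose
\begin{align}
\chi_{N_j}*\vec{f} = \mathbb{E}_{k_j}\vec{f} + \big(\chi_{N_j}*\vec{f} - \mathbb{E}_{k_j}\vec{f}\big).
\end{align}
For the first piece, variation and jump counting along a subsequence of $k$ (with bounded multiplicity) are dominated by the full ones, so Lemma \ref{l:LEP} applies directly; note the constant is independent of $K$ and $\mathcal{H}$.

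For the error piece, we use the pointwise subadditivity estimates
\begin{align}
\mathcal{V}^r(a_j+b_j) \leq \mathcal{V}^r(a_j)+2\big(\textstyle\sum_j \|b_j\|^2\big)^{1/2},
\end{align}
valid for $r\geq 2$, and similarly
\begin{align}
\lambda N_\lambda(a+b)^{1/2}\lesssim \lambda N_{\lambda/3}(a)^{1/2}+\big(\textstyle\sum_j\|b_j\|^2\big)^{1/2},
\end{align}
since each jump of size $\lambda$ not witnessed by $a$ forces some $\|b_j\|\geq \lambda/3$, and the number of such $j$ is at most $9\lambda^{-2}\sum_j \|b_j\|^2$. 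It then remains to prove the square function bound
\begin{align}
\Big\| \Big(\sum_j \|\chi_{N_j}*\vec{f}-\mathbb{E}_{k_j}\vec{f}\|_{\mathcal{H}}^2\Big)^{1/2}\Big\|_{\ell^2} \lesssim \|\vec{f}\|_{\ell^2(\mathcal{H})}.
\end{align}
By choosing an orthonormal basis of the finite-dimensional $\mathcal{H}$, this reduces to scalar $f$.

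We expect this square function estimate to be the main obstacle, mainly because $\mathbb{E}_k$ is not translation invariant. We would handle it with the standard Littlewood--Paley argument, as in \cite{B2} or \cite[\S 3]{BOOK}. First insert the smooth averages: write $\chi_{N_j}*f - \mathbb{E}_{k_j}f = (\chi_{N_j}*f - \phi_{N_j}*f) + (\phi_{N_j}*f - \mathbb{E}_{k_j}f)$, where $\phi$ is a fixed Schwartz bump with $\int\phi=1$.

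The convolution term is handled on the Fourier side. The multiplier $\widehat{\chi}(N\xi)-\widehat{\phi}(N\xi)$ vanishes at $\xi=0$ and is bounded by $\min\{N|\xi|,(N|\xi|)^{-1/2}\}$; for $\chi=\mathbf{1}_{[0,1]}$ the decay exponent is $1$, and at least $1/2$ suffices. Lacunarity therefore gives
\begin{align}
\sup_\xi \sum_j |\widehat{\chi}(N_j\xi)-\widehat{\phi}(N_j\xi)|^2 \lesssim 1,
\end{align}
and Plancherel finishes this term.

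For the martingale comparison $\phi_{N_j}*f - \mathbb{E}_{k_j}f$, decompose $f=\sum_l \Delta_l f$ with martingale differences $\Delta_l = \mathbb{E}_{l-1}-\mathbb{E}_{l}$, which are orthogonal. We then show
\begin{align}
\|\phi_{N_j}*\Delta_l f - \mathbb{E}_{k_j}\Delta_l f\|_{\ell^2}\lesssim 2^{-c|l-k_j|}\|\Delta_l f\|_{\ell^2}.
\end{align}
When $l>k_j$, $\mathbb{E}_{k_j}\Delta_lf=0$ and the mean-zero property of $\Delta_l f$ on the scale $2^{l}$ lets $\phi_{N_j}$ gain $2^{-(l-k_j)}$, up to boundary contributions controlled by the smoothness of $\phi$. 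When $l\leq k_j$, $\Delta_lf$ is constant on intervals of length $K2^{l}$, so both operators nearly fix it except near dyadic endpoints, whose proportion is $2^{-(k_j-l)}$, and this gives a square-root gain. Schur's test in $(j,l)$ combined with orthogonality of $\{\Delta_l f\}$ then yields the square function bound, with constants uniform in $K$ because everything scales with $K2^{k}$.
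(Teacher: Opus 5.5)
Your argument is essentially the paper's. Both reduce to Lemma~\ref{l:LEP} by bounding square functions that compare $\chi_N*\vec f$ with $\mathbb{E}_k\vec f$: the convolution-versus-convolution piece is handled on the Fourier side, and the convolution-versus-martingale piece by a Haar/martingale-difference expansion.

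The differences are cosmetic. The paper uses the rough average $\frac{1}{N}\sum_{n\le N}$ as the intermediate object, comparing smooth to rough via Fourier and rough to $\mathbb{E}_k$ spatially. You use a smooth $\phi_N$ instead. You also make explicit the perturbation inequalities for $\mathcal{V}^r$ and $N_\lambda$ that the paper leaves implicit.

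One step is wrong as written: the two regimes in your last paragraph are interchanged. In the paper, $\mathbb{E}_k$ averages over intervals of length $K2^k$, so it gets coarser as $k$ grows, and your $\Delta_l=\mathbb{E}_{l-1}-\mathbb{E}_l$ is constant on intervals of length $K2^{l-1}$ with mean zero on intervals of length $K2^l$. The correct case analysis is:
\begin{itemize}
\item For $l\le k_j$: $\mathbb{E}_{k_j}\Delta_l f=0$. Since $\Delta_l f$ has mean zero at a scale no larger than $N_j$, one gets $\|\phi_{N_j}*\Delta_l f\|_{\ell^2}\lesssim 2^{-(k_j-l)}\|\Delta_l f\|_{\ell^2}$.
\item For $l>k_j$: $\Delta_l f$ is constant on intervals of length $K2^{l-1}\ge K2^{k_j}$, so $\mathbb{E}_{k_j}\Delta_l f=\Delta_l f$. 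The difference $\phi_{N_j}*\Delta_l f-\Delta_l f$ is concentrated, up to Schwartz tails, within $O(N_j)$ of the endpoints of those intervals. This gives the square-root gain $2^{-(l-k_j)/2}$.
\end{itemize}
Each of your two cases instead asserts the behaviour of the other. In particular, $\mathbb{E}_{k_j}\Delta_l f$ is not $0$ for $l>k_j$, and mean zero at a scale $2^l\gg N_j$ yields no gain for $\phi_{N_j}$. After the swap, the bound $2^{-c|l-k_j|}$ holds, and your Schur-test and orthogonality conclusion goes through unchanged.

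A minor point: on $\mathbb{Z}$ the multiplier of $\chi_N-\phi_N$ does not vanish exactly at $\xi=0$. For example, $\sum_n \frac{1}{N}\mathbf{1}_{[0,1]}(n/N)=1+\frac{1}{N}$. This $O(1/N)$ discrepancy is square-summable over a lacunary sequence, so it is harmless.
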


 To derive the above from Lemma \ref{l:LEP}, it suffices to prove the $\ell^2$-boundedness of the square functions
\begin{align}
    (\sum_k |\mathbb{E}_k \vec{f} - \frac{1}{N} \sum_{n \leq N} \vec{f}(x-n)|^2)^{1/2}, \; \; \; K=1, \ 2^k \leq N < 2^{k+1},
\end{align}
which can be seen by expanding $f$ into haar coefficients and arguing spatially, see e.g. \cite[\S 3]{BOOK}, and
\begin{align}
    (\sum_k |\sum_n \chi_N(n) \vec{f}(x-n) - \frac{1}{N} \sum_{n \leq N} \vec{f}(x-n)|^2)^{1/2},
\end{align}
whenever $\chi$ is a smooth bump function with $\int \chi = 1$, which follows from a brief argument with the Fourier transform.

We will make use of the following corollary.
\begin{cor}\label{c:sep}
    Suppose that $\Lambda \subset \mathbb{T}$ is a $\kappa-$separated set, 
    \[ \min_{\theta \neq \theta' \in \Lambda} \|\theta - \theta'\|_{\mathbb{T}} \geq \kappa,\]
    and that we are in the regime where $K 2^k \geq A \kappa^{-1}$ and $A \geq |\Lambda|^{100}$.

For $(K-)$dyadic intervals, define
\[ A_I^\theta f(x) := \mathbb{E}_{n \in I} \text{Mod}_{-\theta}f(n) \cdot  \mathbf{1}_I(x), \]
see \eqref{e:EI}
and let
\begin{align*}
\vec{N}_\lambda( A_I \vec{f}(x) : I \ni x)
\end{align*}
be the jump-counting function of $( A_I^\theta f)_{\theta \in \Lambda}$ at altitude $\lambda > 0$ with respect to the norm $\ell^2(\Lambda)$. Then, if $|f| \leq \mathbf{1}_{3P}$
\begin{align*}
\sup_{\lambda > 0} \| \lambda \vec{N}_\lambda( A_I \vec{f} : I \ni x)^{1/2} \|_{\ell^2(P)} \lesssim |P|^{1/2}.
\end{align*}
\end{cor}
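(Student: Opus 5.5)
Writing $\vec{f} = (\text{Mod}_{-\theta} f)_{\theta \in \Lambda}$, we have $A_I \vec{f} = \mathbb{E}_I \vec{f} \cdot \mathbf{1}_I$ for the $\ell^2(\Lambda)$-valued function. So the statement would be immediate from Lemma \ref{l:LEP} applied to $\vec{f}\mathbf{1}_{3P}$, except that $\|\vec{f}\|_{\ell^2(\ell^2(\Lambda))}^2 \approx |\Lambda| |P|$. This loses a factor $|\Lambda|^{1/2}$. The plan is to remove this loss using the separation of $\Lambda$ and the largeness of the scales $K 2^k \geq A\kappa^{-1}$: at such scales the averages at distinct frequencies should be almost orthogonal.

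First, reduce to the regime where $|I| = K 2^k \geq A\kappa^{-1}$ (as the hypothesis dictates) and $I \cap P \neq \emptyset$. Only intervals meeting $P$ contribute to the $\ell^2(P)$ norm. Since $f$ is supported in $3P$, intervals much longer than $|P|$ give averages of size $O(|P|/|I|)$, whose jumps sum geometrically, so I would truncate these scales harmlessly. Next, dualize the $\ell^2(\Lambda)$ norm. For each $x$ and each $I \ni x$, write
\[
\|A_I\vec{f}(x)\|_{\ell^2(\Lambda)} = \sup_{\|c\|_{\ell^2(\Lambda)}\le 1} \big|\mathbb{E}_{n\in I} f(n) \sum_\theta c_\theta e(-\theta n)\big|.
\]
The key almost-orthogonality estimate is a large-sieve-type inequality: for $\kappa$-separated $\Lambda$ and $|I|\ge A\kappa^{-1}$,
\[
\sum_{\theta\in\Lambda}\big|\mathbb{E}_{n\in I} h(n)e(-\theta n)\big|^2 \lesssim (1+ (\kappa|I|)^{-1}) \, \mathbb{E}_{n\in I}|h(n)|^2,
\]
and the error factor is $1+O(A^{-1})$. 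I would prove this by the standard duality/Gram-matrix bound, with off-diagonal entries $|\mathbb{E}_I e((\theta-\theta')n)| \lesssim (|I|\kappa)^{-1}$. Summing these over the $|\Lambda|^2 \le A^{1/50}$ pairs costs only $|\Lambda|^2/A \ll 1$. This is where $A\ge|\Lambda|^{100}$ enters.

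With this in hand, I would run Lépingle's argument in its Hilbert-space form, following the proof of Lemma \ref{l:LEP}. The jump inequality is proved via a stopping-time/martingale-difference decomposition. Its only input is the $\ell^2$ bound $\sum_k \|(\mathbb{E}_{k+1}-\mathbb{E}_k)\vec{f}\|^2 \le \|\vec f\|^2$, plus the maximal inequality. I would replace $\|\vec f\|^2_{\ell^2(\ell^2(\Lambda))}$ by the quantity controlled by the Gram bound. Concretely, I would project $\vec{f}$ onto the closed span $\mathcal{H}_P$ of the vectors $(e(-\theta n))_\theta$ seen by averages at scales $\ge A\kappa^{-1}$. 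On each dyadic interval of the minimal admissible scale, the map $h\mapsto(\mathbb{E}_I h e(-\theta\cdot))_\theta$ has norm $\lesssim |I|^{-1/2}$ by the Gram bound. I would then apply Lemma \ref{l:LEP} to the $\ell^2(\Lambda)$-valued function $F(x) := A_{I_0(x)}\vec{f}(x)$, where $I_0(x)$ is the dyadic interval containing $x$ at the smallest admissible scale. Every larger-scale average $A_I\vec{f}$ equals $\mathbb{E}_I F$, because averages nest in a dyadic grid. Then $\|F\|_{\ell^2(\ell^2(\Lambda))}^2 \lesssim \sum_{I_0}|I_0|\cdot \mathbb{E}_{I_0}|f|^2 \lesssim |P|$ by the Gram estimate. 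Lemma \ref{l:LEP} then yields the claim.

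The main obstacle is the Gram-matrix step, namely obtaining $\mathbb{E}_{I_0}$-uniform orthogonality with constant $O(1)$ rather than $|\Lambda|$. The issue is choosing the base scale compatibly with $K$-dyadic structure and the hypothesis $K2^k \ge A\kappa^{-1}$. The nesting reduction through $F$ is then routine.
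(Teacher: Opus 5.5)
Your argument is correct, but it takes a different route from the paper's.

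\textbf{The paper's route.} The paper splits on the Fourier side. For each $\theta$ it writes $f=f_\theta+(f-f_\theta)$, where $\mathcal{F}_{\mathbb{Z}}f_\theta$ is a smooth cutoff of $\mathcal{F}_{\mathbb{Z}}f$ to $\theta+O(\kappa)$. By $\kappa$-separation these cutoffs overlap boundedly, so $\sum_\theta\|f_\theta\|_{\ell^2}^2\lesssim\|f\|_{\ell^2}^2$, and the vector-valued L\'epingle inequality controls the main term. The remainder $\text{Mod}_{-\theta}(f-f_\theta)$ has Fourier transform vanishing near $0$. Comparing it with its convolution against a Schwartz $\chi$ that annihilates it shows its averages at scales $K2^k\ge A\kappa^{-1}$ are pointwise $O(A^{1/2}/(K2^k\kappa))$. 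Its jump function is then crudely dominated by a square function; the sum over $\theta$ costs a factor $|\Lambda|$, which the gain in $A$ absorbs.

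\textbf{Your route.} You stay in physical space. Let $k_0$ be the least $k$ with $K2^k\ge A\kappa^{-1}$. The tower property lets you replace $\vec f$ by $F=\mathbb{E}_{k_0}\vec f$ without changing any average that enters the jump count. The large sieve on each interval of that single scale gives $\|F\|_{\ell^2(\ell^2(\Lambda))}^2\le(1+O(|\Lambda|/A))\|f\|_{\ell^2}^2\lesssim|P|$. After that, Lemma \ref{l:LEP} applies as a black box.

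\textbf{Comparison.} Your route is shorter and produces no error term. With the sharp Montgomery--Vaughan constant $N-1+\kappa^{-1}$ it does not even use $A\ge|\Lambda|^{100}$; it only needs $K2^{k_0}\gtrsim\kappa^{-1}$. What it gives up is generality in the averaging operators: it depends on exact nesting of the $K$-dyadic grid. The paper's frequency decomposition does not care which averaging family is used, so it transfers to the smooth, non-nested averages of Corollary \ref{c:LEP}.

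\textbf{Remarks on your write-up.} The ``main obstacle'' you flag is not one. The base scale is just $k_0$ above, and Schur's test on the Gram matrix already gives the constant $1+O(|\Lambda|/A)$. Several intermediate steps are also superfluous once you pass to $F$: truncating very large scales, dualizing, re-running L\'epingle's stopping-time proof, and projecting onto a span $\mathcal{H}_P$.
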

\begin{proof}
Set 
\begin{align}
    (\mathcal{F}_{\mathbb{Z}} f_\theta)(\beta) := (\mathcal{F}_{\mathbb{Z}}f)(\beta) \varphi(\kappa^{-1}(\beta - \theta)),
\end{align}
where
\begin{align}
    \mathbf{1}_{[-2,2]} \leq \varphi \leq \mathbf{1}_{[-5,5]}
\end{align}
is smooth.
Then
\begin{align}
   \sup_{\lambda > 0} \| \lambda \vec{N}_\lambda( A_I^\theta f_\theta : I)^{1/2} \|_{\ell^2} \lesssim \|f \|_{\ell^2}
\end{align}
by the corollary to L\'{e}pingle's inequality, Corollary \ref{c:LEP}, so it suffices to address the complementary term. 

Set
\begin{align}
    R_k^\theta :=  \mathbb{E}_k \big( \text{Mod}_{-\theta} (f- f_\theta)\big)
\end{align}
and
\begin{align}
    S_k^\theta :=  \mathbb{E}_k \, \chi*\big( \text{Mod}_{-\theta} (f- f_\theta)\big)
\end{align}
where $\chi$ is a Schwartz function with 
\begin{align}
    \mathbf{1}_{[-\kappa/A^{1/2},\kappa/A^{1/2}]} \leq \mathcal{F}_{\mathbb{Z}} \chi \leq     \mathbf{1}_{[-3\kappa/A^{1/2},3\kappa/A^{1/2}]}.
\end{align}
By convexity and the boundedness of $f$, we note the pointwise bound
\begin{align}\label{e:pointwisecomp}
    \sup_\theta \| R_k^\theta - S_k^\theta \|_{\ell^\infty} \lesssim \frac{A^{1/2}}{K 2^k \kappa};
\end{align}
on the other hand
\begin{align}
    \chi* \text{Mod}_{-\theta} (f- f_\theta) \equiv 0
\end{align}
by an argument with the Fourier transform.

Now, by trivially dominating the jump-counting function,
\begin{align}
\sup_{\lambda > 0} \lambda N_\lambda(a_n)^{1/2} \leq 2 \cdot (\sum_n |a_n|^2)^{1/2},
\end{align}
it suffices to show that 
\begin{align}
    \sum_{\theta \in \Lambda} \sum_{K 2^k \geq A/\kappa} \sum_{x \in P} | R_{k}^\theta(x)|^2  \lesssim \frac{|\Lambda|}{A^{1/10}} |P|,
\end{align}
or -- uniformly in $\theta \in \Lambda$ -- 
\begin{align}
    \sum_{x \in P} \sum_{K 2^k \geq A/\kappa} |R_k^\theta(x)|^2 \lesssim \frac{1}{A^{1/10}} |P|.
\end{align}
But, by \eqref{e:pointwisecomp}, we may bound
\begin{align}
    |R_k^\theta(x)|^2 \lesssim |S_k^\theta(x)|^2 + \frac{A}{(K 2^k)^2 \kappa^2} \lesssim \frac{A}{(K 2^k)^2 \kappa^2},
\end{align}
from which the result follows.
 \end{proof}

\subsection{Gowers Norm Inequalities}
We collect a few standard inequalities about Gowers norms; all of these can be proven by Cauchy-Schwarz and induction on $s \geq 2$.

\begin{lemma}\label{l:gowersL2}
Suppose that $|\phi_I| \lesssim \frac{1}{|I|} \mathbf{1}_I$ is smooth, that $|f| \leq 1$, and that $|I| = N$. Then
    \begin{align}
       \| \sum_{n} \phi_I(n) f(2x-n) w(n-x) g(n) \|_{\ell^2_x(I)}^2 \lesssim \|w \|_{U^3([-N,2N])}^2 \|g \|_{\ell^2(I)}^2.
    \end{align}
\end{lemma}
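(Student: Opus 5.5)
The plan is to run the standard PET/van der Corput argument: two applications of Cauchy--Schwarz, each removing one of $g$, $f$, followed by a final application that isolates $w$ and produces a $U^3$ norm. Set $A(x) := \sum_n \phi_I(n) f(2x-n) w(n-x) g(n)$ and substitute $n = x+m$. This gives
\[
A(x) = \sum_m \phi_I(x+m)\, f(x-m)\, w(m)\, g(x+m),
\]
which is a bilinear average with weight $w$ along the configuration $(x-m, x+m)$ indexed by $m$. Since $n \in I$ and $x \in I$ force $m \in [-N,N]$, only the values of $w$ on an interval of length $\approx N$ enter. This matches the $U^3([-N,2N])$ on the right-hand side, after extending $w$ by zero outside $[-N,2N]$.

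First, square and sum in $x$. Expanding $|A(x)|^2$ yields a double sum over $n, n'$. Reparametrize by $n' = n+h$ and move the sum over $x$ inside, so that
\[
\sum_x |A(x)|^2 = \sum_{n,h} g(n)\overline{g(n+h)} \sum_x \phi_I(n)\phi_I(n+h)\, f(2x-n)\overline{f(2x-n-h)}\, w(n-x)\overline{w(n+h-x)}.
\]
Now apply Cauchy--Schwarz in the pair $(n,h)$, with $|h| \le N$ forced by the support of $\phi_I$. The $g$-factor contributes at most $\sum_{n,h} |g(n)|^2 |g(n+h)|^2 \cdot (\ldots)$. Cleaner is Cauchy--Schwarz in $n$ alone, for each fixed $h$, which gives a factor $\|g\|_{\ell^2(I)}^2$ times
\[
\Big( \sum_{n} \Big| \sum_x \phi_I(n)\phi_I(n+h)\, f(2x-n)\overline{f(2x-n-h)}\, \Delta_{h'} w(n-x) \Big|^2 \Big)^{1/2},
\]
where the factor $w(n-x)\overline{w(n+h-x)}$ is $\Delta_h w(n-x)$, in the notation \eqref{e:triangle} with $h' = h$. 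Then sum over $h$ using Cauchy--Schwarz in $h$; the factor $N$ from counting $h$ is offset by the $N^{-2}$ normalization of $\phi_I\phi_I$.

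The inner expression is a single-scale linear correlation of $F_h(y) := f(y)\overline{f(y-h)}$, which is $1$-bounded, along $y = 2x-n$ against $\Delta_h w$ along $n-x$. Expand the square in $n$ and apply Cauchy--Schwarz once more, now in $x$, to eliminate $F_h$ using only $|F_h| \le 1$. This leaves a sum of $\Delta_{h,k}w$ over parallelepiped configurations with a free linear form, i.e.\ a $U^2$-type count of $\Delta_h w$: we get $\|\Delta_h w\|_{U^2}^4$ up to normalization, equivalently $\int |\mathcal{F}_{\mathbb{Z}}(\Delta_h w)|^4$ by \eqref{e;U2-ell4}. Summing over $h$ with \eqref{e:foliate} then gives $\|w\|_{U^3}^{8}$ raised to the appropriate power. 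After restoring the $N$-normalizations, this should give the claimed $\|w\|_{U^3([-N,2N])}^2 \|g\|_{\ell^2(I)}^2$.

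The smoothness of $\phi_I$ serves only to decouple the cutoffs $\phi_I(n)\phi_I(n+h)$ from the variables being differenced. Write $\phi_I$ as a rapidly convergent superposition of modulated characters; each Fourier mode is a linear phase, which is absorbed into $f$ or $g$ without changing their bounds. Alternatively, the dependence on $x$ can simply be replaced by $\mathbf{1}_I$, since $|\phi_I| \lesssim |I|^{-1}\mathbf{1}_I$ and the cutoff sits on the variable being squared.

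The main obstacle I expect is the bookkeeping that keeps $g$ in $\ell^2$ rather than $\ell^\infty$. Cauchy--Schwarz must be applied first in the variable carrying $g$, so that $g$ appears exactly once squared. This asymmetry is also why we can afford only one pair of differencings after $g$ is removed, which gives $U^3$ and not a higher norm. I also need to check that the exponents close up to give $\|w\|_{U^3}^2$ rather than a smaller power; if they do not, I would fall back on the $\|w\|_{U^3}^{2}$ bound and Hölder as needed.
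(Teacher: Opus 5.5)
Your overall strategy (Cauchy--Schwarz to remove $g$, a second Cauchy--Schwarz to remove $f$, then \eqref{e:foliate}) is the one the paper has in mind; the paper only says the lemma follows from Cauchy--Schwarz. However, the bookkeeping you commit to does not close, and for this lemma the bookkeeping is the entire content. Write $K_h(n) := \phi_I(n)\phi_I(n+h)\sum_x F_h(2x-n)\,\Delta_h w(n-x)$ with $F_h(y)=f(y)\overline{f(y-h)}$. In your ``cleaner'' route you bound $(\sum_n |g(n)|^2|g(n+h)|^2)^{1/2}\le \|g\|_{\ell^2}^2$ for each fixed $h$. You then apply Cauchy--Schwarz in $h$ to $\sum_{|h|<N}(\sum_n|K_h(n)|^2)^{1/2}$, which costs a counting factor $N^{1/2}$ that nothing absorbs. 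The $N^{-2}$ from $\phi_I\phi_I$ is already needed to make the bound scale-correct, so it cannot pay for this as well. The best this route gives is $\|A\|_{\ell^2(I)}^2\lesssim N^{1/2}\|w\|_{U^3([-N,2N])}^2\|g\|_{\ell^2(I)}^2$. The lemma is applied to functions controlled only in $\ell^2$ (e.g.\ $g_{\delta,I;M}$ in Proposition~\ref{p:orthog00}), so this loss is fatal. Note also that the second Cauchy--Schwarz bounds $\sum_n|K_h(n)|^2$ by $\|\Delta_h w\|_{U^2}^2$, not the fourth power. So one more Cauchy--Schwarz in $h$ is needed before \eqref{e:foliate} applies, and that is the only place a counting factor in $h$ legitimately enters.

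The fix is the joint Cauchy--Schwarz in $(n,h)$ that you wrote down first and then abandoned. Since $\sum_{n,h}|g(n)|^2|g(n+h)|^2=\|g\|_{\ell^2}^4$ exactly, it gives $\|A\|_{\ell^2(I)}^2\le\|g\|_{\ell^2}^2\big(\sum_{h,n}|K_h(n)|^2\big)^{1/2}$ with no loss. For fixed $h$, proceed in four steps:
\begin{enumerate}
\item Pull out $|\phi_I(n)\phi_I(n+h)|\lesssim N^{-2}\mathbf{1}_I(n)$; this is why smoothness of $\phi_I$ is irrelevant.
\item Substitute $m=n-x$, with $w$ cut off to $[-N,2N]$ and the $x$-sum extended to $\mathbb{Z}$.
\item Replace $F_h$ by $F_h\mathbf{1}_J$ for an interval $J$ of length $O(N)$, and drop the $n$-cutoff.
\item Expand the square:
\[ \sum_n \Big|\sum_m (F_h\mathbf{1}_J)(n-2m)\,\Delta_h w(m)\Big|^2=\sum_k c_h(k)\,\rho_h(k),\qquad \rho_h(k):=\sum_m \Delta_h w(m)\overline{\Delta_h w(m+k)}, \]
where $c_h(k)$ is the autocorrelation of $F_h\mathbf{1}_J$ at lag $2k$.
\end{enumerate}
Then $\|c_h\|_{\ell^2}\lesssim N^{3/2}$, and Cauchy--Schwarz in $k$ (not in $x$) gives $\sum_n|K_h(n)|^2\lesssim N^{-5/2}\|\Delta_h w\|_{U^2(\mathbb{Z})}^2$. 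Now use Cauchy--Schwarz in $h$ together with \eqref{e:foliate}: $\sum_{|h|<N}\|\Delta_h w\|_{U^2}^2\lesssim N^{1/2}\|w\|_{U^3(\mathbb{Z})}^4$. Altogether, using $\|\mathbf{1}_{[-N,2N]}\|_{U^3(\mathbb{Z})}^2\approx N$,
\[ \|A\|_{\ell^2(I)}^2\lesssim N^{-1}\|w\mathbf{1}_{[-N,2N]}\|_{U^3(\mathbb{Z})}^2\|g\|_{\ell^2}^2\approx\|w\|_{U^3([-N,2N])}^2\|g\|_{\ell^2(I)}^2. \]
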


\begin{lemma}\label{e:gowersUs}
Whenever $f,g$ are $1$-bounded and supported on $I$, an interval of length $\leq CN$,
    \begin{align}
        \| \frac{1}{N} \sum_{n \leq N} f(x-n) g(x+n) w(n) \|_{\ell^{2^s}}^{2^s} \lesssim_s \| w \|_{U^{s+2}([N])}^{2^s} N
    \end{align}
\end{lemma}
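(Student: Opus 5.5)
We argue by induction on $s \geq 1$, each step being a single application of the van der Corput/Cauchy--Schwarz mechanism underlying \eqref{e;double}. Write
\[ A_N(x) := \frac{1}{N} \sum_{n \leq N} f(x-n) g(x+n) w(n), \]
and note that $A_N$ is supported in an $O(N)$-neighbourhood of $I$. Since $|I| \lesssim N$, all sums over $x$ run over $O(N)$ points.

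For the base case $s=1$, we expand
\[ \|A_N\|_{\ell^2}^2 = \frac{1}{N^2}\sum_x \sum_{n,m \leq N} f(x-n)\overline{f(x-m)}\, g(x+n)\overline{g(x+m)}\, w(n)\overline{w(m)}. \]
We set $m = n+h$ and substitute $y = x - n$. The summand then becomes $\Delta_h w(n) \cdot \Delta_{-h} f(y)\cdot F_h(y+2n)$, where $F_h(z) := g(z)\overline{g(z+h)}$ is $1$-bounded and supported on an interval of length $O(N)$; the factor $\Delta_{-h} f(y)$ does not depend on $n$. We apply Cauchy--Schwarz in $(y,h)$ to separate the $f$-factor, which costs a factor $O(N^2)$ from its support. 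The remaining quantity is
\[ \sum_{y,h} \Big| \sum_n \Delta_h w(n) F_h(y+2n) \Big|^2, \]
and we expand it once more. This produces a difference in $n$ with shift $k$, and the inner sum over $y$ of $F_h(y+2n)\overline{F_h(y+2n+2k)}$ is bounded by $O(N)$, uniformly in $n$. So one $g$-type function is eliminated at each Cauchy--Schwarz, and we are left with averages of $\Delta_{h,k} w$ over the box, that is, $\|w\|_{U^2}$-type data. To obtain $\|w\|_{U^3([N])}$ rather than $\|w\|_{U^2}$, we perform one further Cauchy--Schwarz in the variable $k$. The cleanest route is to apply Cauchy--Schwarz in $(y,h,k)$ once more and use the $\ell^2$ Fourier estimate $\sum_z |\sum_n a(n) G(z+2n)|^2 \leq \|G\|_2^2\, \|\mathcal{F}_{\mathbb{Z}} a\|_\infty^2$. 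This is the standard generalized von Neumann theorem for the progression $x-n, x, x+n$ with weight, whose complexity requires $U^3$ control of $w$ for the $L^2$ norm. Tracking normalizations, $\|\mathbf{1}_{[N]}\|_{U^3}^8 \approx N^4$, and this yields $\|A_N\|_2^2 \lesssim N\|w\|_{U^3([N])}^2$, which is consistent with Lemma \ref{l:gowersL2}.

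For the inductive step, we use $\|A_N\|_{2^{s+1}}^{2^{s+1}} = \sum_x |A_N(x)|^{2}\,|A_N(x)|^{2}\cdots$. It is more convenient to write $|A_N(x)|^2 = \frac{1}{N^2}\sum_h \sum_n \Delta_h w(n)\,\tilde f_h(x-n)\,\tilde g_h(x+n)$ with $\tilde f_h = \Delta_{-h} f$ and $\tilde g_h = \Delta_h g$, both $1$-bounded and supported on $O(N)$-intervals. This gives $|A_N|^2 = \frac{1}{N}\sum_{|h|\leq N} A_N^{(h)}$, where $A^{(h)}_N$ is an average of the same shape with weight $\Delta_h w$ (after a harmless change in the support of $n$). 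Hölder in $h$ then gives
\[ \|A_N\|_{2^{s+1}}^{2^{s+1}} \leq \frac{1}{N}\sum_h \|A_N^{(h)}\|_{2^s}^{2^s} \lesssim \frac{1}{N}\sum_h \|\Delta_h w\|_{U^{s+2}([N])}^{2^s} N. \]
By Hölder again and the recursive identity \eqref{e:foliate}, namely $\mathbb{E}_h \|\Delta_h w\|_{U^{s+2}}^{2^{s+2}} \approx \|w\|_{U^{s+3}}^{2^{s+3}}$ in normalized form, this is at most $N\|w\|_{U^{s+3}([N])}^{2^{s+1}}$, completing the induction.

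The main obstacle is the base case. Getting $U^3$ rather than $U^2$ in the $L^2$ bound requires the correct sequence of Cauchy--Schwarz applications, each removing one of $f$, $g$, and the second copy of the ``diagonal'' dependence. Care is also needed with the range of $h$, since $n+h$ may leave $[N]$; we handle this by passing to the $[-N,2N]$ box, as in Lemma \ref{l:gowersL2}, at the cost of constants.
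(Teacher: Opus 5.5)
Your proof is correct and follows the route the paper indicates; the paper only says these bounds follow from Cauchy--Schwarz and induction on $s$, and your inductive step (writing $|A_N|^2=\frac{1}{N}\sum_{|h|<N}A_N^{(h)}$, then H\"older in $h$, then $\sum_h\|\Delta_h w\|_{U^{s+2}(\mathbb{Z})}^{2^{s+2}}=\|w\|_{U^{s+3}(\mathbb{Z})}^{2^{s+3}}$) gives exactly the right powers of $N$. In the base case, drop the Fourier detour and the ``rather than $U^2$'' framing: $\sum_y F_h(y+2n)\overline{F_h(y+2n+2k)}=:c(h,k)$ is independent of $n$ (not just bounded uniformly) with $|c(h,k)|\lesssim N$, so one Cauchy--Schwarz in $(h,k)$ gives $\sum_{h,k}|c(h,k)|\,|\sum_n\Delta_{h,k}w(n)|\lesssim N^2\|w\|_{U^3(\mathbb{Z})}^4$, and this step is forced because no $U^2$ bound holds here (take $w(n)=e(\alpha n^2)$ with $f,g$ quadratic phases).
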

We will use the following immediate corollary.
\begin{cor}
    Suppose that $\phi \in \mathcal{C}_c^{\infty}([0,1])$ has $\| \phi \|_{\mathcal{C}^2(\mathbb{R})} \leq 1$. Then 
       \begin{align}
        \| \sum_{n} \phi_N(n) f(x-n) g(x+n) w(n) \|_{\ell^{2^s}}^{2^s} \lesssim_s \| w \|_{U^{s+2}([N])}^{2^s} N.
    \end{align}
\end{cor}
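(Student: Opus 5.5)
The statement to prove is the corollary: for $\phi\in\mathcal{C}^\infty_c([0,1])$ with $\|\phi\|_{\mathcal{C}^2(\mathbb{R})}\le 1$, the smoothly weighted averages $\sum_n \phi_N(n) f(x-n)g(x+n)w(n)$ obey the same $\ell^{2^s}$ bound as the sharp averages in Lemma \ref{e:gowersUs}. I would reduce to that lemma by writing $\phi_N$ as an average of normalized interval indicators, exactly as in the Summation by Parts lemma. Since $\phi(1)=0$, the fundamental theorem of calculus gives
\begin{align}
\phi(n/N) = -\int_0^1 \mathbf{1}_{[0,Nt]}(n)\, \phi'(t)\, dt,
\end{align}
so that
\begin{align}
\sum_n \phi_N(n) f(x-n)g(x+n)w(n) = -\int_0^1 t\phi'(t)\cdot \Big(\frac{1}{Nt}\sum_{n\le Nt} f(x-n)g(x+n)w(n)\Big)\, dt .
\end{align}
Write $M := \lfloor Nt\rfloor$. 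The inner average equals $\frac{M}{Nt}\cdot \frac{1}{M}\sum_{n\le M}(\cdots)$, and the prefactor $M/(Nt)$ is at most $1$.

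Next I would apply Minkowski's integral inequality in $\ell^{2^s}_x$ and use Lemma \ref{e:gowersUs} at scale $M$. Here $f,g$ are supported on an interval of length $\le CN \le C' M/t$. For $t$ bounded below this is harmless. In general I would instead localize $f,g$ to intervals of length $\approx M$, apply the lemma on each piece, and sum over the pieces; this works because the $\ell^{2^s}$ norm raised to the power $2^s$ is additive over disjoint spatial pieces, up to bounded overlap. The result is the bound $\|w\|_{U^{s+2}([M])}\cdot M^{2^{-s}}$ for the inner average, up to constants and the overlap count.

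The main obstacle is comparing $\|w\|_{U^{s+2}([M])}$ with $\|w\|_{U^{s+2}([N])}$ for $M=tN<N$. Restricting $w$ to $[M]$ and using the normalization $\|\mathbf{1}_{[N]}\|_{U^{s+2}}\approx N^{(s+3)/2^{s+2}}$ gives $\|w\|_{U^{s+2}([M])}\lesssim t^{-(s+3)/2^{s+2}}\|w\|_{U^{s+2}([N])}$. This only uses that restricting to a subinterval does not increase the unnormalized norm up to constants, which holds because a sharp cutoff to an interval is $U^{s+2}$-controlled via Fourier expansion of the interval indicator, with a logarithmic loss. To avoid this technicality I would instead note that the lemma's proof, via Cauchy--Schwarz and van der Corput, bounds the sum over $n\le M$ directly by the unnormalized $\|w\mathbf{1}_{[M]}\|_{U^{s+2}(\mathbb{Z})}$.

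Combining, the integrand is bounded by $|t\phi'(t)|\cdot t^{-a}\, \|w\|_{U^{s+2}([N])} N^{2^{-s}}$ for some $a<1$ (indeed $a=(s+3)/2^{s+2}\le 1/2$ for $s\ge 1$, plus $2^{-s}$ from the spatial count), which is integrable on $[0,1]$. Raising to the power $2^s$ gives the stated bound. The $\mathcal{C}^2$ normalization is more than is needed here; only $\mathcal{C}^1$ is used.
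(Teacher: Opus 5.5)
Your reduction is sound up to one step: the layer-cake formula for $\phi_N$, Minkowski in $t$, and the spatial localization to pieces of length $\approx M$. Everything after that rests on the comparison
\[
\|w\mathbf{1}_{[M]}\|_{U^{s+2}(\mathbb{Z})}\lesssim_s\|w\mathbf{1}_{[N]}\|_{U^{s+2}(\mathbb{Z})}\qquad(M\le N),
\]
with no loss, and you have not proved it.

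Gowers norms are not solid. Restricting a function to a subset can increase its $U^2$ norm drastically; for example, take random signs and restrict them to where they equal $+1$. So this comparison is not automatic. The argument you cite, expanding the interval indicator in Fourier series, costs the Lebesgue constant $\|\mathcal{F}_{\mathbb{Z}}\mathbf{1}_{[M]}\|_{L^1(\mathbb{T})}\approx\log M$. Any function agreeing with $\mathbf{1}_{[M]}$ on $[N]$ must still jump at $M$, so no choice of extension removes this loss. For $t\approx 1/2$ it is a factor $\approx\log N$ that the $t$-integral cannot absorb. As written, you therefore obtain the corollary only up to $\log^{2^s}N$, not ``up to constants''.

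Your proposed workaround does not help. Run at scale $M$, the proof of Lemma \ref{e:gowersUs} produces $\|w\mathbf{1}_{[M]}\|_{U^{s+2}(\mathbb{Z})}$, which is exactly the quantity that still has to be compared with the scale-$N$ one.

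The comparison is true, but it needs a genuine input.
For $U^2$, $\|F\mathbf{1}_J\|_{U^2(\mathbb{Z})}=\|\mathcal{F}_{\mathbb{Z}}(F\mathbf{1}_J)\|_{L^4(\mathbb{T})}$, and $\mathcal{F}_{\mathbb{Z}}(F\mathbf{1}_J)$ is a partial Fourier sum of $\mathcal{F}_{\mathbb{Z}}F$. M.~Riesz's theorem says partial sums are bounded on $L^4(\mathbb{T})$ uniformly in the interval $J$, which gives the $U^2$ case.
Then \eqref{e:foliate} lifts this to $U^{s+2}$, because $\Delta_{h_1,\dots,h_s}(w\mathbf{1}_{[M]})=\Delta_{h_1,\dots,h_s}(w\mathbf{1}_{[N]})\cdot\mathbf{1}_{J_h}$ with $J_h$ an interval.
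With this patch your route works and uses only $\mathcal{C}^1$.

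The paper avoids sharp truncations altogether. It writes $\phi(n/N)=\int\mathcal{F}_{\mathbb{R}}\phi(\xi)\,e(\xi n/N)\,d\xi$ and absorbs $\phi(\cdot/N)$ into the weight. The triangle inequality and invariance of $U^{s+2}$ under linear phases then give $\|\phi(\cdot/N)w\|_{U^{s+2}([N])}\le\|\mathcal{F}_{\mathbb{R}}\phi\|_{L^1(\mathbb{R})}\|w\|_{U^{s+2}([N])}$. After that, Lemma \ref{e:gowersUs} is applied once, at scale $N$. The $\mathcal{C}^2$ hypothesis is there to make $\mathcal{F}_{\mathbb{R}}\phi$ integrable. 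That is precisely what $\mathbf{1}_{[M]}$ lacks, which is why your decomposition into sharp averages runs into the logarithm.
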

\begin{proof}
By the previous lemma, we just need to show that
    \begin{align}
\| \phi(n/N) w(n) \|_{U^{s+2}([N])} \lesssim \| w(n) \|_{U^{s+2}([N])};
    \end{align}
    but this follows from Fourier inversion and convexity:
    \begin{align}
        \int |(\mathcal{F}_{\mathbb{R}}\phi)(\xi)| \cdot \| e(\xi n/N) w(n) \|_{U^{s+2}([N])} \ d\xi \lesssim \| w \|_{U^{s+2}([N])},
    \end{align}
    since Gowers norms are invariant under multiplication by characters for $s \geq 2$, and we may trivially bound
    \[ \| \mathcal{F}_{\mathbb{R}}\phi \|_{L^1(\mathbb{R})} \lesssim \| \phi \|_{\mathcal{C}^2(\mathbb{R})} \leq 1.\]
\end{proof}

\subsection{Dyadic Grids}\label{ss:dyadicgrids}
We introduce a useful tool in harmonic analysis: shifted dyadic grids. Below, every interval we introduce will have side length $\geq 1$.

A grid is a collection of intervals $\{ I : I \in \mathcal{Q} \}$ so that whenever $I,I' \in \mathcal{Q}$ 
\[ I \cap I' \in \{ I,I',\emptyset\}, \]
(up to null sets). Below, all side-lengths will be of the form $K_0 2^{\mathbb{N}}$.

A standard example is the usual dyadic grid
\[ \{ 2^k \cdot \big( n + [0,1) \big): k \geq 0, \ n \in \mathbb{Z} \}.\]

Throughout the argument, we will need the flexibility to work with many different dyadic grids, so we address the more general construction.

Regarding $K_0 \in \mathbb{N}$ as fixed, define the \emph{shifted dyadic grids} to be
\[ \mathcal{D}_k^{\Delta,L} := \{ K_0 \cdot 2^k \cdot \big( n + L/\Delta + [0,1) \big)  : n\in \mathbb{Z} \},  \]
where
\[ \ L \in [\Delta] \]
and
\begin{align}\label{e:shiftgrid} \mathcal{D}_U^{\Delta,L} := \bigcup_{k \equiv U \mod \Delta-1} \mathcal{D}_k^{\Delta,L}, \end{align}
noting that $2^{\Delta-1} \equiv 1 \mod \Delta$ by Fermat's Little Theorem, since we have chosen $\Delta$ to be prime; we will only be interested in the case where $\Delta | K_0$, see \eqref{e:K0size}.
 
For each $L \in [\Delta]$, define
\begin{align}\label{e:grid0} 
\overline{\mathcal{D}_U^{\Delta,L}} &:= \bigcup_{k \equiv U \mod \Delta-1} 
\overline{\mathcal{D}_k^{\Delta,L}} \notag \\
& \qquad := \bigcup_{k \equiv U \mod \Delta-1} \{ K_0 \cdot 2^k \cdot \big( n + L/\Delta + [0,1/\Delta) \big)  : n \in \mathbb{Z}\}.
\end{align}

Note that for 
\[ x \in \overline{\mathcal{D}_U^{\Delta,L}}, \ x \in I \in \mathcal{D}_U^{\Delta,L} \] 
we have the smoothness property:
\begin{align}\label{e:dyadsmooth} \frac{|\{ I \triangle \big( x + [0, |I|) \big) \}|}{|I|} \lesssim \Delta^{-1}, 
\end{align}
where we use $\triangle$ to denote symmetric difference
\[ A \triangle B := (A \smallsetminus B) \cup (B \smallsetminus A).\]

Often, we will root the grid inside of a parental interval, $P \subset I_0$, thus
\[ \mathcal{D}(P) := \{ I \in \mathcal{D} : I \subset P \}, \; \; \; \overline{\mathcal{D}}(P) := \{ \overline{I} := I \cap \overline{\mathcal{D}} : I \in \mathcal{D}(P) \}. \]
Our estimates will be uniform in each grid, so our arguments will be flexible enough to obtain a union bound at the close.

In what follows, the smoothness property \eqref{e:dyadsmooth} will be crucial; the below lemma will be applied exclusively when $\Delta$ is as in \eqref{e:DELTA}.
\begin{lemma}\label{l:dyadicsmoothness}
    Suppose that $|f| \leq 1$, and that for each interval $I$,
    \begin{align}
        \| g \|_{\ell^2(I)}^2 \lesssim |I|.
    \end{align}
Then whenever $x \in I \in \mathcal{D}, \ x \in \overline{\mathcal{D}}$, whenever $|I| \geq 2^{Q^{1/5}}$, if
\begin{align}
    \phi_I(n) := \frac{1}{|I|} \phi(\frac{n-c_I}{|I|}), \; \; \; I = [c_I,c_I + |I|),
\end{align}
then
\begin{align}
    |\sum_n \phi_I(n) f(2x-n) w_Q(n-x) g(n) - \sum_n \phi_{|I|}(n-x) f(2x-n) w_Q(n-x) g(n)| \lesssim \Delta^{-1/4}.
\end{align}
\end{lemma}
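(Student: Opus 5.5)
The plan is to write the difference as a single sum against the kernel difference
\[ D_x(n) := \phi_I(n) - \phi_{|I|}(n-x), \]
and control it by Cauchy--Schwarz, using an $\ell^2$ bound on $w_Q$ over the relevant window together with the hypothesis $\|g\|_{\ell^2(J)}^2 \lesssim |J|$. Here $I=[c_I,c_I+|I|)$ and $\phi_{|I|}(\cdot - x)$ is the same bump, recentred so that it lives on $x+[0,|I|)$. Since $x \in I \cap \overline{\mathcal D}$, the smoothness property \eqref{e:dyadsmooth} gives $|x - c_I| \lesssim \Delta^{-1}|I|$. Because $\phi$ is Schwartz (or smooth with compact support), translating by $\lesssim \Delta^{-1}|I|$ changes the kernel pointwise by
\[ |D_x(n)| \lesssim \Delta^{-1}|I|^{-1}(1+|n-c_I|/|I|)^{-A}. \]
In the non-smooth model $\phi=\mathbf 1_{[0,1)}$, the difference $D_x$ is instead supported on the symmetric difference $I \triangle (x+[0,|I|))$, which has size $\lesssim \Delta^{-1}|I|$, with height $|I|^{-1}$. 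I would handle both cases, since the symmetric-difference version gives a weaker but still adequate power of $\Delta$.

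Next I bound, using $|f|\le 1$,
\[ |\text{difference}| \le \sum_n |D_x(n)|\,|w_Q(n-x)|\,|g(n)| \le \Big(\sum_n |D_x(n)|\,|w_Q(n-x)|^2\Big)^{1/2}\Big(\sum_n |D_x(n)|\,|g(n)|^2\Big)^{1/2}. \]
For the $g$ factor I decompose into dyadic annuli around $I$ and apply the hypothesis on each; this gives $\lesssim \Delta^{-1}$ in the smooth case (or $\lesssim 1$ crudely, resp.\ $\lesssim \Delta^{-1}$ via the support size with an interval containing the symmetric difference). I will keep only an amount like $\Delta^{-1/2}$ from one factor and $O(1)$ from the other, which suffices for the target $\Delta^{-1/4}$.

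The $w_Q$ factor is the main point. We need $\sum_n |D_x(n)||w_Q(n-x)|^2 \lesssim Q^{o(1)}$ times a power of $\Delta^{-1}$, uniformly in $x$. The natural tool is Lemma \ref{heath-brown-moments} with $k=1$, which gives $\mathbb E_{n\in[N]}|w_Q(n)|^2 \lesssim Q^{o(1)}$ for $N \gg Q^{10}$. Since $w_Q$ is $\mathcal Q$-periodic, however, its local $\ell^2$ averages over any interval of length $N$ agree with the global mean up to an error $O(\mathcal Q/N)\,\|w_Q\|_\infty^2$. Here $\mathcal Q = \mathrm{lcm}\{q\le Q\} \le e^{O(Q)}$ and $\|w_Q\|_\infty \lesssim Q^2$. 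This is exactly why the hypothesis $|I|\ge 2^{Q^{1/5}}$ appears; I would check that it makes $\mathcal Q/|I|$ negligible, and if $e^{O(Q)}$ is too large I would instead argue directly via the expansion in the proof of Lemma \ref{heath-brown-moments}. That expansion needs only that distinct frequencies in $\Gamma_Q$ are $\gtrsim Q^{-2}$-separated, so that off-diagonal terms contribute $O(Q^{4}/|J|)$ on any interval $J$ with $|J|\ge Q^{10}$; this holds here. Either way, weighting by the annular decay of $D_x$ gives $\sum_n |D_x(n)||w_Q(n-x)|^2 \lesssim \Delta^{-1}Q^{o(1)}$ (or $\Delta^{-1/2}$-type bounds in the indicator case).

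Combining the two factors yields a bound $\lesssim \Delta^{-1/2} Q^{o(1)}$. The obstacle is then absorbing $Q^{o(1)} = Q^{o(1)}\mathbf S_Q^2Q^2$-type losses against $\Delta$. Since $\Delta \approx t^{-10} = \delta^{-10\kappa}Q^{10\kappa}$, a loss of $Q^{o(1)}$ is dominated by $\Delta^{1/4}$ for $Q$ large; this is the step I would verify most carefully, and the reason the exponent is $1/4$ rather than $1/2$.
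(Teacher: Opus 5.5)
Your argument is correct and is essentially the paper's proof. Both use Cauchy--Schwarz against the kernel difference, which is small by \eqref{e:dyadsmooth}, together with the hypothesis on $g$ and a local bound $\mathbb{E}_{n \in J}|w_Q(n)|^2 \lesssim Q^{o(1)}$ coming from $\mathbf{S}_Q \lesssim Q^{o(1)-1}$; the $Q^{o(1)}$ loss is then absorbed because $\Delta \gtrsim Q^{10\kappa}$. One correction: the periodicity route cannot work, since $\mathcal{Q} \geq e^{cQ} \gg 2^{Q^{1/5}}$, so it is your fallback that carries the proof (diagonal terms give $\sum_{\theta} |S(\theta)|^2 \lesssim Q^{o(1)}$, off-diagonal terms are controlled by the $Q^{-2}$-separation of $\Gamma_Q$).
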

\begin{proof}
Using the Schwartz nature of $\phi_I$, we may just use admissibility, specifically the decay
\begin{align}
    \mathbf{S}_Q \lesssim Q^{o(1) -1},
\end{align}
to estimate the difference by:
    \begin{align}
       |I|^{-1/2} \max_{|J| \leq \Delta^{-1} |I|} (\sum_{n \in J} |w_Q(n)|^2)^{1/2} \ll \Delta^{-1/2} Q^{o(1)}.
    \end{align}
\end{proof}

\section{Arithmetic Essentials}\label{s:AE}
The goal of this section is to develop an arithmetic toolkit that will be used below. Specifically, we will be interested in understanding interactions between
\begin{align}
    m \Lambda + n \Gamma_Q^{(i)}, \; \; \; |m|, |n| \leq 10,
\end{align}
where $\Lambda \subset \mathbb{Z}/M_0$ are finite subsets with
\[ |\Lambda| \lesssim V \delta^{-2}, \]
and
\[ 2^{Q^{1/5}} \leq M_0 \in K_0 2^{\mathbb{N}} \]
is a large parameter, see \eqref{e:Vdef} and \eqref{e:K0size}.

We begin by recording some elementary statistics of $\Gamma_Q^{(i)}, \ \mathcal{Q}_i, \ w_Q^{(i)}$:

\begin{enumerate}
    \item One has the following estimate on $\mathcal{Q}_i$:
    \begin{align} \label{eqn:size-of-Qi}
        \mathcal{Q}_i \leq 2^i \cdot \text{lcm}\{ q : Q/{2^{i+1}} < q \leq Q/2^i \} \leq 2^i \cdot 3^{Q/2^i}
    \end{align}
    which follows from the standard estimate for the least common multiple of the first $Q/2^i$ integers;
    \item The cardinality of $\Gamma_Q^{(i)}$ satisfies the upper bound
    \[ |\Gamma_Q^{(i)}| \leq \frac{Q^2}{2^i}, \]
    as there are $Q/2^i$ values that the denominators of elements in $\Gamma_Q^{(i)}$ can attain, and at most $Q$ values for the numerator;
    \item The weights $w_Q^{(i)}(n)$ obey the moment estimates given in Lemma \ref{heath-brown-moments}. Namely, for each integer $0 \leq i \leq \log_2 Q$, one has that:
    \begin{align} \label{eqn:heath-brown-moments-w_Q^i}
        \mathbb{E}_{n \in [\mathcal{Q}_i]} |w_Q^{(i)}(n)|^{2k} \lesssim Q^{o(1)};
    \end{align}
this can be seen by formally replacing
\begin{align}
    S(a/q) \longrightarrow S(a/q) \cdot \mathbf{1}_{\Gamma_Q^{(i)}}(a/q),
\end{align}
    which is still bounded by $\mathbf{S}_Q$.
\end{enumerate}

The next lemma concerns the statistics between sum/difference sets
\begin{align}
    \Gamma_Q, \ \Gamma_Q^{(i)} \; \; \; \text{ and } \; \; \; \Lambda.
\end{align}
We phrase these in terms of the following counting functions:

For each $0 \leq i \leq \log_2 Q$, and $|m|, |n| \leq 10$, define
\begin{align}
    D^{(i)}_{(m,n);(\Lambda,Q)}(\xi) := |\{ (\theta, a/q) \in \Lambda \times \Gamma_Q^{(i)}  : m\theta + n a/q = \xi \}|
\end{align}
    and
\begin{align}
    D_{(m,n);(\Lambda,Q)}(\xi) := |\{ (\theta, a/q) \in \Lambda \times \Gamma_Q  : m\theta + n a/q = \xi \}|.
\end{align}

\begin{lemma} \label{l:maximal-multiplicity}
The following bounds hold for each $|m|, |n| \leq 10$:
\begin{align}
\| D^{(i)}_{(m,n); (\Lambda,Q)} \|_{L^\infty(\mathbb{T})} \lesssim \min\{ 2^i K_0,|\Lambda| \} \leq \overline{\lambda}
\end{align}
and similarly
\begin{align}
    \| D_{(m,n);(\Lambda,Q)} \|_{L^\infty(\mathbb{T})} \lesssim \overline{\lambda}.
\end{align}
\end{lemma}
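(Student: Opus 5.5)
Fix $\xi \in \mathbb{T}$ and $|m|,|n| \leq 10$. The bound $D^{(i)}_{(m,n);(\Lambda,Q)}(\xi) \leq 10|\Lambda|$ is trivial. Once $\theta$ is chosen, $a/q$ is determined by $n\,a/q = \xi - m\theta$ up to the at most $|n| \leq 10$ solutions of that equation (and when $n = 0$ the count is at most $|\Gamma_Q^{(i)}|$ times the number of $\theta$ with $m\theta = \xi$; this degenerate case I would handle separately, or note it is excluded in applications). So the real content is the bound by $2^i K_0$, and for that I would fix $a/q$-side information and count the admissible $\theta$.

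The key arithmetic fact is that $\Lambda \subset \mathbb{Z}/M_0$ with $M_0 \in K_0 2^{\mathbb{N}}$. If $(\theta, a/q)$ and $(\theta', a'/q')$ both solve $m\theta + n a/q = \xi$, subtract to get
\[
n(a/q - a'/q') = m(\theta' - \theta) \in \mathbb{Z}/M_0 .
\]
Write $q = 2^i q_0$ and $q' = 2^i q_0'$ with $q_0, q_0'$ odd. Then $a/q - a'/q'$ has odd part of its denominator dividing $\mathrm{lcm}(q_0, q_0')$, and this odd part must divide the odd part of $M_0 \cdot \mathrm{lcm}(1,\dots,10)$, which in turn divides $K_0 \cdot 10!$. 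Fixing one solution $a'/q'$, every other solution therefore satisfies
\[
a/q \in a'/q' + \tfrac{1}{n}\cdot \frac{1}{2^{\max} \cdot (\text{odd part of } 10!\,K_0)}\,\mathbb{Z}.
\]
The plan is to show that the set of reduced $a/q \in \mathbb{T}$ with $2^i \parallel q$, $q \leq Q$, lying in such a coset has size $\lesssim 2^i K_0$.

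To get that count, I would argue as follows. Since $2^i \parallel q$ and $2^i \parallel q'$, the $2$-part of the difference has denominator dividing $2^i$. Hence $a/q \in a'/q' + \frac{1}{2^i \cdot 10!\cdot K_0}\mathbb{Z}$, using that $M_0$ is $K_0$ times a power of $2$ and that the power of $2$ is absorbed by the $2^i$ constraint. There are at most $2^i \cdot 10!\cdot K_0$ points of this coset in $\mathbb{T}$, giving the bound $\lesssim 2^i K_0$; the factor $|n| \leq 10$ contributes only a constant. The same counting gives $D_{(m,n);(\Lambda,Q)} \lesssim \sum_i \min\{2^i K_0, |\Lambda|\}$. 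For the final comparison with $\overline{\lambda} = \min\{K_0 Q, V\delta^{-2}\}$, use $2^i \leq Q$ and $|\Lambda| \lesssim V\delta^{-2}$.

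The main obstacle is the $2$-adic step. The denominator of $a/q - a'/q'$ can have $2$-adic valuation less than $i$ because of cancellation, while $\mathbb{Z}/M_0$ permits arbitrarily large powers of $2$. The fact to verify carefully is that the coset can still be taken with modulus $2^i \cdot O(K_0)$. This follows because $a/q - a'/q'$ has $2$-adic valuation at least $-i$ for every pair, so the set of possible differences lies in $\frac{1}{2^i K_0 \cdot 10!}\mathbb{Z}$. For the undecomposed $D$, summing over $i \leq \log_2 Q$ costs a factor $\log Q$. I would avoid that loss by the direct bound: the denominators $q \leq Q$ of the $a/q$ in the coset divide $\mathrm{lcm}(q,q')\cdot$, so all solutions lie in $\frac{1}{Q\cdot O(K_0)}$-type cosets when $2^i$ is replaced by the largest $2$-power at most $Q$. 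Alternatively, the $\log Q$ can be absorbed into the definition of $\overline{\lambda}$ via the slack in $K_0$.
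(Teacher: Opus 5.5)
Your proposal is correct and is essentially the paper's argument. The $2$-adic valuation of $a/q - a'/q'$ is at least $-i$ because $2^i \parallel q, q'$, and the relation $n(a/q - a'/q') = m(\theta' - \theta) \in \mathbb{Z}/M_0$ with $M_0 \in K_0 2^{\mathbb{N}}$ controls the odd part, so all solutions lie in one coset of $\frac{1}{2^i \cdot O(K_0)}\mathbb{Z}$. The paper writes $105$ where you write $10!$, and yours is the safer constant when $n = \pm 9$. For $D_{(m,n);(\Lambda,Q)}$ the paper likewise just replaces $2^i$ by the largest power of $2$ at most $Q$.

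Your worry about a $\log Q$ loss is unfounded, so drop the ``absorb into $\overline{\lambda}$'' alternative. Since $\Gamma_Q$ is the disjoint union of the $\Gamma_Q^{(i)}$, you get $D = \sum_i D^{(i)} \lesssim \sum_{i \leq \log_2 Q} 2^i K_0 \lesssim Q K_0$ as a geometric sum, while $D \leq 10|\Lambda|$ holds trivially. As you partly note, both your argument and the paper's implicitly need $m, n \neq 0$.
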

\begin{proof}
We begin with the case of $\Gamma_Q^{(i)}$.

Let $\xi \in m\Lambda + n\Gamma_Q^{(i)}$ be arbitrary, and pick one representation
\begin{align}
    \xi = m\theta_1 + n \frac{a_1}{q_1} \in m \Lambda + n \Gamma_Q^{(i)};
\end{align}
if this representation is unique, there is nothing to show, so suppose that
\begin{align}\label{e:xisumset}
    \xi = m \theta + n \frac{a}{q}
\end{align}
is another such representation. Then
\begin{align}
    m(\theta_1 - \theta) = n (\frac{a}{q} - \frac{a_1}{q_1}).
\end{align}
The claim is that 
\begin{align}\label{e:valuationarg}
    105 \cdot 2^i K_0 \big( \frac{a}{q} - \frac{a_1}{q_1} \big) \in \mathbb{Z},
\end{align}
which will yield the result, as this forces each $\frac{a}{q}$ in \eqref{e:xisumset} to satisfy
\begin{align}
    \frac{a}{q} \in \frac{a_1}{q_1} + \frac{1}{105 \cdot 2^i K_0}\mathbb{Z} \mod 1;
\end{align}
the same argument with $Q/2< 2^i \leq Q$ will yield the analogous statement about $D_{(m,n);(\Lambda,Q)}$; and the bound involving $|\Lambda|$ is trivial.

To see \eqref{e:valuationarg}, for each prime $p$, recall the $p$-adic valuation $v_p : \mathbb{Q} \to \mathbb{Z}$, see \eqref{e:vp};
it suffices to prove that for each prime $p$,
\begin{align}
    v_p\big( 2^i K_0 \big( \frac{a}{q} - \frac{a_1}{q_1} \big) \big) \geq - 1 \cdot \mathbf{1}_{3 \leq p \leq 7}.
\end{align}
By construction, 
\[ \min_{a/q \in \Gamma_Q^{(i)}} v_2(2^i \frac{a}{q}) \geq 0,\]
so
\begin{align}
    v_2( 2^i(\frac{a}{q} - \frac{a_1}{q_1})) \geq 0,
\end{align}
and from the identity
\begin{align}
    m(\theta_1 - \theta) = n(\frac{a}{q} - \frac{a_1}{q_1} )
\end{align}
we see that 
\[ v_2( 2^i m(\theta_1 - \theta))  \geq 0.\]
To address the case of odd $p$, since 
\[ \theta_1,\theta \in \frac{1}{K_0} 2^{- \mathbb{N}} \cdot \mathbb{Z}, \]
multiplication by $K_0$ kills the odd denominator, so
\begin{align}
    m 2^i K_0 (\theta_1-\theta) \in \mathbb{Z}.
\end{align}
Consequently, for odd $p$,
\begin{align}
    v_p\big( 2^i K_0 \big( \frac{a}{q} - \frac{a_1}{q_1} \big) \big) = v_p\big( \frac{m}{n} 2^i K_0(\theta_1-\theta) \big) \geq -1 \cdot \mathbf{1}_{3 \leq p \leq 7}.
\end{align}
\end{proof}

To describe the next lemma, we need a little notation: choose a smooth bump function
\begin{align}\label{e:chibump} 
\mathbf{1}_{\Gamma_Q + \Lambda + B(\frac{\Delta^{1/2}}{N})} \leq \chi^{N} \leq \mathbf{1}_{\Gamma_Q + \Lambda + B(\frac{10\Delta^{1/2}}{N})} 
\end{align}
so that\begin{align}
   \sup_{|\alpha| \leq A} N^\alpha |\partial^\alpha \chi^N| \lesssim_{A} 1
\end{align}
for sufficiently large $A$. 

And, we abbreviate
\begin{align}\label{e:FTw_Q}
    \widehat{w_Q^{(i)}}(\beta) := \mathbb{E}_{r\in [\mathcal{Q}_i]} w_Q^{(i)}(r) e(-\beta r).
\end{align}

Finally, we set
\begin{align}\label{e:M_0'}
    M_0' := \begin{cases} 2^{Q^{1/2+2\epsilon}} M_0 & \text{ if } Q^{1/2} \leq 2^i \leq Q \\
    2^{Q^{1+2\epsilon}} M_0 & \text{ if } 2^i < Q^{1/2}.
    \end{cases}
\end{align}

\begin{lemma} \label{variation-bound-finalizing}
    For every $\beta \in \mathbb{T}$ and every $i \leq \log_2 Q$ one has that:
    \begin{align} \label{variation-bound-finalizing-ineqn}
        \chi^{M_0}(\beta) \cdot \sum_{\theta \in \Lambda} |(\mathcal{F}_\mathbb{Z} \varphi_{M_0'})( \mathcal{Q}_i (\beta - \theta) )|^2 \cdot |\widehat{w_Q^{(i)}}(\beta - \theta)|^2 \lesssim \overline{\lambda} Q^{o(1)-2}
    \end{align}
    for any Schwartz function $\varphi$, normalized in a sufficiently high semi-norm.
\end{lemma}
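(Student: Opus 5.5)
The plan is to compute $\widehat{w_Q^{(i)}}$ explicitly and use the cutoff $(\mathcal{F}_{\mathbb{Z}}\varphi_{M_0'})(\mathcal{Q}_i(\beta-\theta))$ to pin $\beta-\theta$ to within $\lesssim 1/(\mathcal{Q}_i M_0')$ of a lattice point $k/\mathcal{Q}_i$. The sum over $\theta$ then collapses to exact representations $\xi=\theta+a/q$, whose multiplicity is controlled by Lemma \ref{l:maximal-multiplicity}.

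\emph{Step 1 (explicit formula).} Since every $a/q\in\Gamma_Q^{(i)}$ has $q\mid\mathcal{Q}_i$, expanding \eqref{e:FTw_Q} gives
\begin{align}
\widehat{w_Q^{(i)}}(\eta)=\sum_{a/q\in\Gamma_Q^{(i)}} S(a/q)\, D_{\mathcal{Q}_i}(\eta-a/q), \qquad D_{\mathcal{Q}_i}(x):=\mathbb{E}_{r\in[\mathcal{Q}_i]}e(-xr).
\end{align}
The kernel satisfies $D_{\mathcal{Q}_i}(k/\mathcal{Q}_i)=\mathbf{1}_{k\equiv 0 \bmod \mathcal{Q}_i}$, and it is $O(\mathcal{Q}_i)$-Lipschitz. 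Hence, if $\eta=k/\mathcal{Q}_i+\epsilon$, then
\begin{align}
\widehat{w_Q^{(i)}}(\eta)=S(k/\mathcal{Q}_i)\,\mathbf{1}_{\Gamma_Q^{(i)}}(k/\mathcal{Q}_i)+O\big(\mathcal{Q}_i|\epsilon|\,|\Gamma_Q^{(i)}|\,\mathbf{S}_Q\big),
\end{align}
and trivially $|\widehat{w_Q^{(i)}}|\lesssim Q^{2}$ everywhere.

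\emph{Step 2 (split the $\theta$-sum).} For each $\theta\in\Lambda$, write $\mathcal{Q}_i(\beta-\theta)=k_\theta+\mathcal{Q}_i\epsilon_\theta$ with $\|\mathcal{Q}_i\epsilon_\theta\|_{\mathbb{T}}$ minimal, and split according to whether $|\epsilon_\theta|\le M_0^{-1}Q^{-10}\mathcal{Q}_i^{-1}$ or not.

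For the far $\theta$, the Schwartz decay gives
\begin{align}
|(\mathcal{F}_{\mathbb{Z}}\varphi_{M_0'})(\mathcal{Q}_i(\beta-\theta))|\lesssim_A (M_0'\,\|\mathcal{Q}_i(\beta-\theta)\|)^{-A}\le (M_0'/(M_0Q^{10}))^{-A}.
\end{align}
The ratio $M_0'/M_0\ge 2^{Q^{1/2+2\epsilon}}$ is super-polynomial in $Q$, and $|\Lambda|\le M_0$. So this contribution is $\lesssim |\Lambda|\, Q^4\, 2^{-AQ^{1/2}}\ll Q^{-2}$, after using $M_0\le (M_0'/M_0)^{O(1)}$ or simply taking $A$ large and absorbing $|\Lambda|\lesssim V\delta^{-2}\le$ a power of $M_0'/M_0$.

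For the near $\theta$, Step 1 shows $|\widehat{w_Q^{(i)}}(\beta-\theta)|^2\lesssim \mathbf{S}_Q^2\,\mathbf{1}_{k_\theta/\mathcal{Q}_i\in\Gamma_Q^{(i)}}+(\text{tiny error})$, where the error is $\lesssim (Q^{O(1)}/(M_0Q^{10}))^{2}$. Since $\mathbf{S}_Q\lesssim Q^{o(1)-1}$, it remains to count the $\theta$ with $\beta-\theta$ within $M_0^{-1}Q^{-10}\mathcal{Q}_i^{-1}$ of some $a/q\in\Gamma_Q^{(i)}$.

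\emph{Step 3 (counting; the main point).} Suppose $(\theta,a/q)$ and $(\theta',a'/q')$ both satisfy $|\beta-\theta-a/q|\le M_0^{-1}Q^{-10}\mathcal{Q}_i^{-1}$. Then $\theta+a/q-\theta'-a'/q'$ is a rational with denominator dividing $M_0qq'\le M_0Q^2$, yet it has size $< 1/(M_0Q^2)$. So it vanishes, and all such pairs represent one common value $\xi\in\Lambda+\Gamma_Q^{(i)}$. Lemma \ref{l:maximal-multiplicity} with $(m,n)=(1,1)$ then bounds their number by $\overline{\lambda}$, giving the claimed $\overline{\lambda}Q^{o(1)-2}$. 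The factor $\chi^{M_0}(\beta)$ is only a harmless restriction here.

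I expect the main obstacle to be the numerology in Step 2. One must check that the window $1/(\mathcal{Q}_iM_0')$ coming from the cutoff is much smaller than the separation $1/(M_0Q^2)$ of distinct sums. Equivalently, one needs $M_0'\gg M_0Q^2$ up to powers, while $\mathcal{Q}_i$ is allowed to be as large as $2^i3^{Q/2^i}$ by \eqref{eqn:size-of-Qi}. This is exactly why $M_0'$ in \eqref{e:M_0'} carries $2^{Q^{1/2+2\epsilon}}$, resp.\ $2^{Q^{1+2\epsilon}}$: it beats $3^{Q^{1/2}}$ when $2^i\ge Q^{1/2}$, resp.\ $3^{Q}$ when $2^i<Q^{1/2}$. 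I would first verify this comparison in each case of \eqref{e:M_0'}, and then fix the near/far threshold accordingly.
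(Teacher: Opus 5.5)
Your proposal is correct and follows essentially the same route as the paper: Schwartz decay of $\mathcal{F}_{\mathbb{Z}}\varphi_{M_0'}$ discards the $\theta$ with $\mathcal{Q}_i(\beta-\theta)$ far from $\mathbb{Z}$, and the $O(\mathcal{Q}_i)$-Lipschitz bound together with cancellation of $\mathcal{Q}_i$-th roots of unity reduces $\widehat{w_Q^{(i)}}(\beta-\theta)$ to a single value $S(a/q)$; the $1/(Q^2M_0)$-separation of $\Lambda+\Gamma_Q^{(i)}$ combined with Lemma \ref{l:maximal-multiplicity} then caps the number of surviving $\theta$ by $\overline{\lambda}$. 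One correction to your closing paragraph and to Step 2: no comparison of $M_0'$ with $\mathcal{Q}_i$ is needed here, since a larger $\mathcal{Q}_i$ only shrinks the window $1/(\mathcal{Q}_iM_0')$, so your Steps 2--3 only use $M_0'/M_0\gg Q^{O(1)}$; for the far $\theta$ you must absorb $|\Lambda|$ via $|\Lambda|\lesssim V\delta^{-2}\le Q^{O(1)}$, because the alternative $|\Lambda|\le M_0\le (M_0'/M_0)^{O(1)}$ fails in general.
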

\begin{proof}
    One can restrict summation in \eqref{variation-bound-finalizing-ineqn} to $\theta$ for which
    \begin{align}
        \| \mathcal{Q}_i(\beta - \theta) \| \leq \frac{Q}{M_0'}
    \end{align}
    is satisfied for some $\beta$ in the support of $\chi^{M_0}$, as otherwise each summand in \eqref{variation-bound-finalizing-ineqn} is bounded by $O_A(Q^{-A})$ and naturally 
    \[ \frac{|\Lambda|}{Q^A} \ll \overline{\lambda} Q^{-2}.\]
    For each $\theta$ that remains, pick $\frac{a_\theta}{q_\theta} \in \Gamma_Q^{(i)}$ so that:
    \begin{align}
        \|\beta - \theta - \frac{a_\theta}{q_\theta}\|
    \end{align}
    is the smallest, breaking ties arbitrarily. Therefore, we can replace $|\widehat{w_Q^{(i)}}(\beta - \theta)|$ in \eqref{variation-bound-finalizing-ineqn} by:
    \begin{align}
        |\EE_{r \in [\mathcal{Q}_i]} S \big(\frac{a_\theta}{q_\theta} \big) e \big( (\frac{a_\theta}{q_\theta} + \theta - \beta) r \big)|
    \end{align}
    without losing much, certainly $\ll 2^{-R/10}$ (say), since all the other terms are negligible by the below argument. We now prove that:
    \begin{align} \label{variation-bound-finalizing-ineqn-2}
       & \chi^{M_0}(\beta) \sum_{\theta \in \Lambda} |(\mathcal{F}_\mathbb{Z} \varphi_{ M_0'})(\mathcal{Q}_i(\beta - \theta) )|^2 \cdot \big| \EE_{r \in [\mathcal{Q}_i]} S \big(\frac{a_\theta}{q_\theta} \big) e \big( (\frac{a_\theta}{q_\theta} + \theta - \beta) r \big) \big|^2 \\
       & \lesssim \overline{\lambda} Q^{o(1)-2}.
    \end{align}
Note that the condition \[ \|\mathcal{Q}_i(\beta - \theta) \| \leq \frac{Q}{M_0'} \] implies that there exist $r_\theta \in \Z$ so that 
    \[ |\beta - \theta - \frac{r_\theta}{\mathcal{Q}_i} | \leq \frac{Q}{\mathcal{Q}_i M_0'}.\] Since both $\frac{a_\theta}{q_\theta}$ and $\frac{r_\theta}{\mathcal{Q}_i}$ can be expressed as fractions with denominator $\mathcal{Q}_i$, there exists $k_\theta \in \Z$ so that:
    \begin{align}
        \frac{a_\theta}{q_\theta} - \frac{r_\theta}{\mathcal{Q}_i} = \frac{k_\theta}{\mathcal{Q}_i}.
    \end{align}
    We will show that when $k_\theta \not \equiv 0 \mod {\mathcal{Q}_i}$, the summand corresponding to $\theta$ in \eqref{variation-bound-finalizing-ineqn-2} is smaller than 
    \[ \frac{1}{2^{Q^{1/2}}},\]
    which is negligible, even when summed over $\theta \in \Lambda$.
    Indeed, one computes:
    \begin{align}
        \EE_{r \in [\mathcal{Q}_i]} S \big(\frac{a_\theta}{q_\theta} \big) e \big( (\frac{a_\theta}{q_\theta} + \theta - \beta) r \big) &= \EE_{r \in [\mathcal{Q}_i]} S \big(\frac{a_\theta}{q_\theta} \big) e \big( (\frac{k_\theta}{\mathcal{Q}_i} + \frac{r_\theta}{\mathcal{Q}_i} + \theta - \beta) r \big)
    \end{align}
    Using the Lipschitz nature of the exponential,
    \begin{align}
        \bigg| e \big( (\frac{k_\theta}{\mathcal{Q}_i} + \frac{r_\theta}{\mathcal{Q}_i} + \theta - \beta) r \big) - e \big( \frac{k_\theta r}{\mathcal{Q}_i} \big) \bigg| &\leq 2 \pi \bigg|\frac{r_\theta}{\mathcal{Q}_i} + \theta - \beta \bigg| \mathcal{Q}_i\\
        &\leq \frac{2 \pi Q}{M_0'} \leq \frac{1}{2^{Q^{1/2}}},
    \end{align}
    and the cancellation arising from summing roots of unity, we bound
    \begin{align}
        \EE_{r \in [\mathcal{Q}_i]} S \big(\frac{a_\theta}{q_\theta} \big) e \big( (\frac{k_\theta}{\mathcal{Q}_i} + \frac{r_\theta}{\mathcal{Q}_i} + \theta - \beta) r \big) &= \EE_{r \in [\mathcal{Q}_i]} S \big(\frac{a_\theta}{q_\theta} \big) e \big( \frac{k_\theta r}{\mathcal{Q}_i} \big) + O \big(\frac{1}{2^{Q^{1/2}}} \big) \\
        & = O \big(\frac{1}{2^{Q^{1/2}}} \big).
    \end{align}
    The upshot is that for every $\theta$ that remains in the \eqref{variation-bound-finalizing-ineqn-2} one has that:
    \begin{align}
        \|\beta - \theta - \frac{a_\theta}{q_\theta} \| \leq \frac{Q}{M_0'}.
    \end{align}
    We next observe that
    \begin{align}
        m\Lambda + n\Gamma_Q^{(i)}
    \end{align}
is $\gtrsim \frac{1}{Q^2 M_0}$-separated: if
\begin{align}
    m \theta + n \frac{a}{q} = m \theta' + n \frac{a'}{q'} + \eta, \; \; \; |\eta| \ll  \frac{1}{Q^2 M_0}, 
\end{align}
then because
\begin{align}
    \theta - \theta' \in \mathbb{Z}/M_0
\end{align}
and $\frac{a}{q} - \frac{a'}{q'}$ has denominator $\leq Q^2$, we must have $\eta = 0$. Consequently, there is at most one value 
\[ \xi \in m \Lambda + n \Gamma_Q^{(i)}  \]
in each $\frac{Q}{M_0'}$ neighborhood of $\beta$, and each such value has at most $\overline{\lambda}$ many representations by Lemma \ref{l:maximal-multiplicity} above; the gain of $Q^{o(1) -2}$ just follows from admissibility, namely the bound $\mathbf{S}_Q \lesssim Q^{o(1)-1}$.
\end{proof}

\section{Multi-Frequency Analysis}\label{s:MultiFreq}

In this section, we develop the main analytic tool that we will use below, refined variants of Bourgain's multi-frequency maximal theory \cite{B2}. The set up is as follows:

Suppose that 
\begin{align}
    |\Psi(t)| \lesssim |t|
\end{align}
is $1$-Lipschitz, that $\Lambda \subset \mathbb{T}$ is a finite set of 
\[ |\Lambda| = K\] frequencies with
\begin{align}
    \min_{\theta \neq \theta' \in \Lambda} |\theta - \theta'| \approx 2^{-l_0},
\end{align}
that $\mathbf{r}:\mathbb{Z} \to [-2^{l_0},2^{l_0}]$ is an arbitrary function, and that 
\begin{align}
    \max_{\theta \in \Lambda} |v(\theta)| \leq \mathbf{v} 
\end{align}
is an arbitrary weight. Suppose that $\{ I_k = [0,L_k) \}$ are a collection of intervals with lacunarily increasing lengths,
\begin{align}
    \frac{L_k}{L_{k-1}} \geq \lambda > 1,
\end{align}
and define
\begin{align}
    l_1 := l_0 + 100 \log K.
\end{align}
The object of study will be the operators
\begin{align}
    V^r_\mathbf{r}(x)  := \mathcal{V}^r\big(\sum_{\theta \in \Lambda} e(\theta x) v(\theta) \Psi( \mathbb{E}_{z \in I_k+\mathbf{r}(x)} \text{Mod}_{-\theta} f(x-z) ) : k \gg l_1 \big),
\end{align}
which we will investigate by way of comparison with
\begin{align}
    V^r(x) := \mathcal{V}^r\big(\sum_{\theta \in \Lambda} e(\theta x) v(\theta) \Psi( \mathbb{E}_{z \in I_k} f_\theta(x-z) )  : k \geq l_1 \big),
\end{align}
where $f_\theta$ is defined via its Fourier transform,
\[ (\mathcal{F}_{\mathbb{Z}}{f_\theta})(\beta) := \chi(2^{l_0}\beta) \mathcal{F}_{\mathbb{Z}}{f}(\beta + \theta),\] where $\chi$ is Schwartz and supported inside $[-1/2,1/2]$.

All expectations will be taken with respect to the $z$ variable below.

\begin{proposition}\label{p:mfvarprop}
    The following estimate holds, uniformly in $\mathbf{r}$ and $\Psi$:
    \begin{align}
        \| V^r_{\mathbf{r}} \|_{\ell^2} \lesssim (\frac{r}{r-2})^2 \log^2 |\Lambda| \cdot \mathbf{v} \cdot \|f \|_{\ell^2}.
    \end{align}
\end{proposition}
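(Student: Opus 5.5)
The plan is to prove the estimate in two stages: first for the smoothly localized comparison operator $V^r$, and then pass from $V^r$ to $V^r_{\mathbf{r}}$ by controlling the error at each scale with a square function.

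\emph{Step 1: the comparison operator $V^r$.} Since $|e(\theta x)|=1$ and $|v(\theta)|\le \mathbf{v}$, Cauchy--Schwarz in $\theta$ and the Lipschitz property of $\Psi$ reduce matters to the $\ell^2(\Lambda)$-valued sequence $\big(\mathbb{E}_{z\in I_k} f_\theta(x-z)\big)_{\theta\in\Lambda}$. This loses a factor $|\Lambda|^{1/2}$, which is too costly. Instead I would follow Bourgain's argument from \cite{B2}. Because the $f_\theta$ have Fourier support in $\theta+[-2^{-l_0-1},2^{-l_0-1}]$, which are disjoint intervals, the vector $\vec F=(\text{Mod}_{\theta}f_\theta)_\theta$ satisfies $\|\vec F\|_{\ell^2(\ell^2(\Lambda))}\lesssim\|f\|_{\ell^2}$.

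For $k\ge l_1$, the averaging kernel at scale $L_k\gg 2^{l_0}K^{100}$ acts on each $f_\theta$ essentially as a multiplier near the origin. One then writes $\sum_\theta e(\theta x)v(\theta)\Psi(\cdot)$ and applies the Rademacher--Menshov/entropy argument: decompose the range of $k$ dyadically into $\log|\Lambda|$ blocks. Within each block, Corollary~\ref{c:LEP} in its Hilbert-space-valued form controls the jumps of the vector $(\mathbb{E}_{I_k}f_\theta)_\theta$, and since $\Psi$ is $1$-Lipschitz coordinatewise, the variation of the scalar sum is bounded by $\mathbf{v}$ times the $\ell^2(\Lambda)$-valued variation after exploiting the frequency separation of the modulated pieces. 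Concretely, $\sum_\theta e(\theta x)v(\theta)G_\theta(x)$, with $G_\theta$ slowly varying at scale $2^{l_0}$, has $\ell^2_x$ norm $\lesssim\mathbf{v}\|(G_\theta)\|_{\ell^2(\ell^2)}$ by almost orthogonality. The $\log^2|\Lambda|$ factor enters through the Rademacher--Menshov step, and the factor $(r/(r-2))^2$ comes from Lemma~\ref{l:LEP} together with the interpolation between jump counts and $r$-variation.

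\emph{Step 2: from $V^r$ to $V^r_{\mathbf{r}}$.} Using the Lipschitz property of $\Psi$ once more, the difference $V^r_{\mathbf{r}}-V^r$ is dominated by the square function
\[
\Big(\sum_{k\gg l_1}\Big|\sum_\theta e(\theta x)v(\theta)\big[\Psi(\mathbb{E}_{I_k+\mathbf{r}(x)}\text{Mod}_{-\theta}f(x-\cdot))-\Psi(\mathbb{E}_{I_k}f_\theta(x-\cdot))\big]\Big|^2\Big)^{1/2}.
\]
This difference splits into two contributions.
\begin{itemize}
\item \emph{The shift by $\mathbf{r}(x)$.} Since $|\mathbf{r}|\le 2^{l_0}\ll L_k$, the shift costs $2^{l_0}/L_k$ in $\ell^\infty$ on each average. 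Summed over $k\ge l_1$, this is geometrically small and absorbs the $|\Lambda|$ loss, because $2^{l_1-l_0}=K^{100}$.
\item \emph{The high-frequency remainder.} The term $\text{Mod}_{-\theta}f-f_\theta$ is frequency-supported away from $0$ at distance $\gtrsim 2^{-l_0}$. Its averages at scales $L_k\gg 2^{l_0}$ are therefore small in square-function norm, exactly as in the proof of Corollary~\ref{c:sep}: compare with a Schwartz-smoothed average, use the pointwise bound analogous to \eqref{e:pointwisecomp}, and sum over $k$. The result is $\lesssim K^{-10}\|f\|_{\ell^2}$ per $\theta$, which is harmless after summing over $|\Lambda|=K$.
\end{itemize}

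The main obstacle I expect is Step 1, specifically obtaining only a polylogarithmic dependence on $|\Lambda|$ despite the nonlinearity $\Psi$. The almost-orthogonality of the modulated pieces must be preserved after applying $\Psi$. I would handle this by noting that for $k\ge l_1$ each $\Psi(\mathbb{E}_{I_k}f_\theta)$ is still essentially frequency-localized at scale $2^{-l_0}$, up to tails controlled by the Lipschitz bound, since averaging and then composing with a Lipschitz function preserves slow variation. The modulations $e(\theta x)$ then remain almost orthogonal on intervals of length $2^{l_0}$, which is what makes the polylogarithmic bound possible.
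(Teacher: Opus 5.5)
Your Step 2 is fine; it is essentially Lemma \ref{l:transfreely}. The gap is in Step 1, which contains the whole difficulty, and the mechanism you describe there does not work.

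You assert that, because $\Psi$ is Lipschitz and the $\theta$'s are separated, the $r$-variation of $\sum_{\theta}e(\theta x)v(\theta)\Psi(\mathbb{E}_{I_k}f_\theta(x-\cdot))$ is bounded by $\mathbf{v}$ times the $\ell^2(\Lambda)$-valued variation. This is not justified. Pointwise in $x$, the only available bound is Cauchy--Schwarz in $\theta$, which costs exactly the $|\Lambda|^{1/2}$ you want to avoid. The almost-orthogonality estimate $\|\sum_\theta e(\theta x)v(\theta)G_\theta\|_{\ell^2_x}\lesssim \mathbf{v}\|(G_\theta)\|_{\ell^2(\ell^2(\Lambda))}$ applies to one fixed family of coefficients. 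It does not apply to a supremum over chains $k_0<k_1<\cdots$ chosen depending on $x$.

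Splitting the $k$-range into $\log|\Lambda|$ dyadic blocks does not remove this $x$-dependent supremum. The nonlinearity $\Psi$ is not the real obstruction either; only its Lipschitz bound is ever used. Rademacher--Menshov also plays no role here; in the paper it enters only in Corollary \ref{c:mfproj}.

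The missing idea is to make the coefficients exactly independent of $x$, and then replace the supremum by a finite sum via metric entropy. Your remark that the $G_\theta$ vary slowly at scale $2^{l_0}$ points the right way, but it must be implemented as exact freezing.

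\begin{itemize}
\item \textbf{Freeze.} Work on each interval $I$ of length $2^{l_0}$. Use the translation freedom of Lemma \ref{l:transfreely} \emph{inside} the argument, choosing $\mathbf{r}_I(x)=x-c_I$. Then the vectors $(\mathbb{E}_{I_k}f_\theta(c_I-\cdot))_{\theta\in\Lambda}$ no longer depend on $x\in I$.
\item \textbf{Chain.} Cover this frozen set in $\ell^2(\Lambda)$ by nets. At altitude $2^{-u}$ a minimal net has at most $\vec N_{2^{-u}}(c_I)$ elements. Telescoping along predecessors bounds the variation by $\sum_u(\sum_{\text{net}}|\sum_\theta e(\theta x)v(\theta)\Delta_{I_k}f_\theta(c_I)|^r)^{1/r}$, with increments of $\ell^2(\Lambda)$-norm $\lesssim 2^{-u}$.
\item \textbf{Two bounds per level.} The increments are now fixed vectors, so you may pass to an $\ell^2$ sum over the net and apply almost orthogonality of $\{e(\theta x)\}$ on $I$ term by term. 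This gives $\mathbf{v}2^{-u}\vec N_{2^{-u}}(c_I)^{1/2}|I|^{1/2}$. Alternatively, Cauchy--Schwarz gives $\mathbf{v}2^{-u}|\Lambda|^{1/2}\vec N_{2^{-u}}(c_I)^{1/r}|I|^{1/2}$.
\item \textbf{Split the levels.} Neither bound is summable over all $u$. Use the first, interpolated to $\vec N^{1/s}$ with $s-2\approx (r-2)/\log|\Lambda|$, on the $O(\log|\Lambda|)$ levels $\mathcal{M}(c_I)|\Lambda|^{-A}\le 2^{-u}\le 2\mathcal{M}(c_I)$. Use the second below that range, where the factor $2^{-u(1-u_0)}$ with $ru_0>2$ absorbs $|\Lambda|^{1/2}$.
\end{itemize}

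This level split, together with the vector-valued L\'{e}pingle bounds for $\vec N_\lambda$, $\mathcal{V}^s$ and $\mathcal{M}$, is what produces the factors $\log^2|\Lambda|$ and $(r/(r-2))^2$; see Lemma \ref{l:localized}.
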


It will be convenient to have the flexibility to \emph{choose} our shift $\mathbf{r}$; we use the following lemma to do so.

\begin{lemma}\label{l:transfreely}
    The following holds pointwise, uniformly in choice of $\mathbf{r}$:
    \begin{align}
        |V^r_\mathbf{r} - V^r| \lesssim |\Lambda|^{-10} \cdot \mathbf{v} \cdot M_{\text{HL}} f(x)
    \end{align}
\end{lemma}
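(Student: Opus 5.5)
The plan is to compare the two variational expressions term by term, for each fixed $k \gg l_1$. Since $\Psi$ is $1$-Lipschitz and $\mathcal{V}^r$ is subadditive with $\mathcal{V}^r(a_k) \leq 2\sum_k |a_k|$, it suffices to bound
\begin{align}
    \sum_{k \gg l_1} \sum_{\theta \in \Lambda} |v(\theta)| \cdot \big| \mathbb{E}_{z \in I_k+\mathbf{r}(x)} \text{Mod}_{-\theta} f(x-z) - e(-\theta x)\,\mathbb{E}_{z \in I_k} f_\theta(x-z)\big|
\end{align}
pointwise by $|\Lambda|^{-10}\mathbf{v} M_{\text{HL}}f(x)$. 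The phase factor $e(\theta x)$ in front is unimodular, so it only has to be tracked in order to match $\text{Mod}_{-\theta}f$ with the demodulated function $f_\theta$. After undoing the modulation, we compare $\text{Mod}_{-\theta}f$ with $\text{Mod}_{-\theta}$ applied to $e(\theta\,\cdot)f_\theta(\cdot)$. The latter is $f$ frequency-localized to $\theta + [-2^{-l_0-1},2^{-l_0-1}]$.

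The key steps, in order, are as follows.

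First, split the error for each $\theta$ into two pieces. One is the shift error,
\[ \mathbb{E}_{z\in I_k+\mathbf{r}(x)} - \mathbb{E}_{z \in I_k}, \]
acting on $\text{Mod}_{-\theta} f$. The other is the unshifted average acting on the high-frequency part $\text{Mod}_{-\theta}f - f_\theta$ (modulo phases).

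Second, treat the shift error. Since $|\mathbf{r}(x)| \leq 2^{l_0}$ and $L_k \geq \lambda^{k} \gtrsim 2^{l_1} = 2^{l_0}K^{100}$ once $k \gg l_1$, the symmetric difference $(I_k + \mathbf{r})\triangle I_k$ has relative size $\lesssim 2^{l_0}/L_k$. This error is therefore $\lesssim (2^{l_0}/L_k) M_{\text{HL}}f(x)$. Summing geometrically over $k \gg l_1$ gives $\lesssim K^{-100}M_{\text{HL}}f(x)$, and summing over $\theta$ with weight $\mathbf{v}$ costs only a factor $K$.

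Third, treat the high-frequency part. The multiplier of $\mathbb{E}_{z \in I_k}$ restricted to $|\beta| \gtrsim 2^{-l_0}$ is small. Concretely, write $\mathbf{1}_{I_k}/L_k - \mathbf{1}_{I_k}/L_k * \check\chi_{2^{l_0}}$, using that $\chi(2^{l_0}\cdot)$ is a Schwartz symbol at scale $2^{-l_0}$. Here one should arrange $\chi \equiv 1$ near the origin on a ball of radius $\approx$ const, or else absorb the normalization. The difference of kernels
\[ \frac{1}{L_k}\mathbf{1}_{I_k} - \frac{1}{L_k}\mathbf{1}_{I_k} * 2^{-l_0}\check\chi(2^{-l_0}\cdot) \]
has $\ell^1$-mass concentrated near the endpoints of $I_k$ at scale $2^{l_0}$, with Schwartz tails. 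It is therefore dominated by $(2^{l_0}/L_k)$ times an integrable radially decreasing majorant, plus rapidly decaying tails. This gives a pointwise bound of $(2^{l_0}/L_k) M_{\text{HL}}f(x)$ again, which sums over $k\gg l_1$ as in the second step.

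The main obstacle is the third step, because of the interplay with the interval geometry. We must verify that the convolution of the box kernel with the $2^{l_0}$-scale bump differs from the box by a kernel whose $\ell^1$ norm is $O(2^{l_0}/L_k)$. We must also check that this kernel is dominated by an $L^1$-normalized decreasing majorant, so that the Hardy--Littlewood maximal function controls it uniformly in $k$. We must do this with enough decay to absorb the factor $K$ from the sum over $\theta$. I would handle this by estimating the kernel directly in physical space using the mean-zero and Schwartz decay of $\check\chi$ away from the edges of $I_k$. The extra room $K^{100}$ in $l_1 = l_0 + 100\log K$ provides more than enough savings to reach $|\Lambda|^{-10}$.
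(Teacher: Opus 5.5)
Your decomposition (shift error plus frequency-localization error, Lipschitz $\Psi$, geometric summation over $k\gg l_1$ to absorb the factor $|\Lambda|$ from the $\theta$-sum) is the paper's route. The paper's proof asserts in one line each that both errors are $\lesssim_\lambda 2^{-k/2}|\Lambda|^{-20}M_{\text{HL}}f(x)$. That pointwise bound is false, and so is your version $(2^{l_0}/L_k)\,M_{\text{HL}}f(x)$.

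The small relative size of $(I_k+\mathbf r(x))\triangle I_k$ does not help. Half of that set consists of $z\in[L_k,L_k+\mathbf r(x))$, i.e.\ points $x-z$ at distance $\approx L_k$ from $x$. Now $L_k^{-1}$ times a sum of $|f|$ over a window of length $2^{l_0}$ at distance $L_k$ from $x$ is only $\lesssim M_{\text{HL}}f(x)$, with no gain. Concretely, for $f=\delta_{x-L_k}$ and $\mathbf r(x)=1$, the shift error at scale $k$ equals $1/L_k$, while $M_{\text{HL}}f(x)=1/(L_k+1)$.

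The same defect breaks the majorant claim in your third step. An $L^1$-normalized radially decreasing $\Phi$ satisfies $\Phi(L_k)\lesssim 1/L_k$. The kernel has height $\approx 1/L_k$ on a $2^{l_0}$-neighborhood of the far endpoint $L_k$, so that piece is dominated only by $O(1)\cdot\Phi$, not by $O(2^{l_0}/L_k)\cdot\Phi$. (Separately, no factor $e(-\theta x)$ belongs in your displayed difference: $f_\theta=\mathcal F_{\mathbb Z}^{-1}[\chi(2^{l_0}\cdot)]*\text{Mod}_{-\theta}f$ is already demodulated.)

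In fact the inequality as stated cannot hold. Take $\Psi=\mathrm{id}$, and $\chi$ even with $\chi(0)=1$ and $\mathcal F_{\mathbb R}^{-1}\chi\ge 0$. Take $f=\delta_y$ with $x-y=L_{k_0}+2^{l_0}-1$, $\mathbf r(x)=2^{l_0}$, and $v(\theta)=\mathbf v\, e(-\theta(x-y))$. Both sequences are then real multiples of $|\Lambda|\mathbf v$.
\begin{itemize}
\item The shifted sequence is $|\Lambda|\mathbf v/L_k$ for $k\ge k_0$ and $0$ before.
\item The smoothed unshifted sequence is $\approx 0$ for $k<k_0$, at most $\approx |\Lambda|\mathbf v/(2L_{k_0})$ at $k=k_0$, and $\approx|\Lambda|\mathbf v/L_k$ afterwards.
\end{itemize}
Computing the $r$-variations gives $|V^r_{\mathbf r}(x)-V^r(x)|\gtrsim_\lambda|\Lambda|\mathbf v/L_{k_0}\approx|\Lambda|\mathbf v\,M_{\text{HL}}f(x)$. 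This is incompatible with the claimed factor $|\Lambda|^{-10}$.

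What your two steps do prove, once the far endpoint is tracked, is the following. Both the far piece of the symmetric difference and the far Schwartz bump of the smoothing kernel are centered at $x-L_k$, which gives
\begin{align}
|V^r_{\mathbf r}(x)-V^r(x)|\lesssim \mathbf v\,|\Lambda|\sum_{k\gg l_1}\frac{2^{l_0}}{L_k}\big(M_{\text{HL}}f(x)+M_{\text{HL}}f(x-L_k)\big).
\end{align}
Since $\sum_{k\gg l_1}2^{l_0}/L_k\lesssim_\lambda |\Lambda|^{-100}$, this is $\lesssim|\Lambda|^{-10}\mathbf v\|f\|_{\ell^\infty}$ pointwise. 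By translation invariance of $\ell^2$, it is also $\lesssim|\Lambda|^{-10}\mathbf v\|f\|_{\ell^2}$ in $\ell^2$.

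An $\ell^2$ control of this type is what is actually consumed when the local bounds of Lemma \ref{l:localized} are square-summed in the proof of Proposition \ref{p:mfvarprop}. So the statement should be amended to this translated-maximal-function (or $\ell^2$) form, and with that change your argument goes through.
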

\begin{proof}
This follows directly from the Lipschitz nature of $\Psi$ and the fact that we are only interested in large scales. First, we compare
    \begin{align}
        |\Psi( \mathbb{E}_{I_k} \text{Mod}_{-\theta} f(x-z) ) - \Psi( \mathbb{E}_{I_k+\mathbf{r}(x)} \text{Mod}_{-\theta} f(x-z) )| \lesssim_\lambda 2^{-k/2} \cdot |\Lambda|^{-20} \cdot M_{\text{HL}} f(x);
    \end{align}
    and similarly
\begin{align}
        &|\Psi( \mathbb{E}_{I_k} \text{Mod}_{-\theta} f(x-z)) - \Psi( \mathbb{E}_{I_k} ((\mathcal{F}_{\mathbb{R}}^{-1} \chi)*\text{Mod}_{-\theta} f)(x-z))| \\
        & \lesssim |\mathbb{E}_{I_k} \text{Mod}_{-\theta} f(x-z) - \mathbb{E}_{I_k} \sum_t \text{Mod}_{-\theta} f(x-t-z) (\mathcal{F}_{\mathbb{R}}^{-1} \chi)(t)| \\
        & \lesssim_\lambda 2^{-k/2} |\Lambda|^{-20} \cdot  M_{\text{HL}} f(x).
    \end{align}
    \end{proof}

Henceforth, all estimates for $V^r$ will transfer to $V^r_{\mathbf{r}}$, and in particular we will have the flexibility to \emph{proscribe} our translation function, $\mathbf{r}$.

Noting that
\begin{align}\label{e:orthog}
    \| (\sum_{\theta \in \Lambda} |f_\theta|^2)^{1/2} \|_{\ell^2} \lesssim \|f \|_{\ell^2},
\end{align}
we adopt a vector-valued perspective below, and define the following three operators, see \S \ref{ss:Lep} above.

\begin{definition}
Define the jump-counting function at altitude $\lambda > 0$:
\begin{align}\label{e:vecJump}
    \vec{N}_\lambda(x) &:= \sup\{ M : \text{there exist } k_0 < k_1 < \dots < k_M : \\
    & \qquad \| \mathbb{E}_{I_{k_i}} f_\theta(x-z) - \mathbb{E}_{I_{k_{i-1}}} f_\theta(x-z) \|_{\ell^2(\Lambda)} \geq \lambda \};
\end{align}
the vector-valued maximal function;
\begin{align}\label{e:vecMax}
    \mathcal{M}(x) := (\sum_{\theta \in \Lambda} \sup_{k \geq l_1} |\mathbb{E}_{I_k} f_\theta(x-z)|^2)^{1/2}
\end{align}
and the vector-valued variational operator
\begin{align}\label{e:vecVr}
\mathcal{V}^r(x) := 
    \sup \big( \sum_{i} \| \mathbb{E}_{I_{k_i}}f_\theta(x-z) - \mathbb{E}_{I_{k_{i-1}}} f_\theta(x-z) \|_{\ell^2(\Lambda)}^r \big)^{1/r}
\end{align}
where the supremum runs over all finite increasing subsequences. 
\end{definition}

Consolidating Lemma \ref{l:LEP} and \eqref{e:orthog}, we obtain the following.
\begin{lemma}
For each $\lambda > 0, \ r > 2$,
\begin{align}
   \frac{r-2}{r} \| \mathcal{V}^r \|_{\ell^2} + \| \mathcal{M} \|_{\ell^2} + \| \lambda \vec{N}_\lambda^{1/2} \|_{\ell^2} \lesssim \|f \|_{\ell^2}.
\end{align}
\end{lemma}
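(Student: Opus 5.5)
The three estimates will be handled separately, and each reduces to Lemma \ref{l:LEP} (equivalently Corollary \ref{c:LEP}) applied to the vector-valued function $\vec{f} := (f_\theta)_{\theta \in \Lambda}$, taking values in the finite-dimensional Hilbert space $\mathcal{H} = \ell^2(\Lambda)$. The first step is the orthogonality estimate \eqref{e:orthog}. The multipliers $\chi(2^{l_0}(\beta - \theta))$, after undoing the frequency shift, have supports of width $\lesssim 2^{-l_0}$ centred at the points $\theta$. Since the frequencies in $\Lambda$ are $\approx 2^{-l_0}$-separated, these supports have bounded overlap. Plancherel then gives
\[ \| \vec{f} \|_{\ell^2(\mathcal{H})}^2 = \sum_{\theta \in \Lambda} \| f_\theta \|_{\ell^2}^2 \lesssim \| f \|_{\ell^2}^2. \]
If the separation constant makes the overlap larger, one can shrink the support of $\chi$ or split $\Lambda$ into $O(1)$ subfamilies, each of which is separated enough.

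Second, the averages $\mathbb{E}_{z \in I_k} f_\theta(x-z)$ are convolutions $\chi_{L_k} * f_\theta$ with $\chi = \mathbf{1}_{[0,1]}$, and they act coordinatewise. So the vector-valued objects \eqref{e:vecJump} and \eqref{e:vecVr} are exactly the jump-counting function and the $r$-variation of $\{\chi_{L_k} * \vec{f}\}_k$ in the norm of $\mathcal{H}$. Corollary \ref{c:LEP} requires $2$-lacunary times, while here we only know $L_k/L_{k-1} \geq \lambda > 1$. I would split the index set $\{k\}$ into $O_\lambda(1)$ subsequences, each of which is $2$-lacunary. The variation and jump-counting functions over the full sequence are controlled by the sum over these subsequences plus a short-variation square function. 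That square function is bounded in $\ell^2$ by the same martingale-difference comparison that is used to derive Corollary \ref{c:LEP} from Lemma \ref{l:LEP}. Since these bounds are uniform in $\mathcal{H}$, this yields
\[ \frac{r-2}{r} \| \mathcal{V}^r \|_{\ell^2} + \sup_{\lambda > 0} \| \lambda \vec{N}_\lambda^{1/2} \|_{\ell^2} \lesssim \| \vec{f} \|_{\ell^2(\mathcal{H})}. \]
The restriction to $k \geq l_1$ only passes to a subsequence, which can only decrease the variation and jump counts.

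Third, for the maximal function I would avoid working coordinatewise. Instead, fix $k_0 = l_1$ and dominate, for each $\theta$,
\[ \sup_k |\mathbb{E}_{I_k} f_\theta| \leq |\mathbb{E}_{I_{k_0}} f_\theta| + \mathcal{V}^r_\theta. \]
The problem is that taking the $\ell^2(\Lambda)$ norm of the coordinatewise variations is not the same as the vector variation. So a cleaner route is the vector-valued Hardy--Littlewood maximal inequality, which follows from Fefferman--Stein, or simply from the coordinatewise $\ell^2$-boundedness of $M_{\text{HL}}$ summed over $\theta$. Squaring and summing gives
\[ \| \mathcal{M} \|_{\ell^2}^2 \leq \sum_{\theta \in \Lambda} \| M_{\text{HL}} f_\theta \|_{\ell^2}^2 \lesssim \sum_{\theta \in \Lambda} \| f_\theta \|_{\ell^2}^2, \]
and then \eqref{e:orthog} finishes the bound.

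The only genuinely delicate point is the non-$2$-lacunary reduction in the second step, namely controlling the short variations uniformly in $\mathcal{H}$. This is standard but is where the constant depending on $\lambda$ enters.
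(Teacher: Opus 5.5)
Your proof is correct and is essentially the paper's argument. The paper's one-line justification is the vector-valued L\'{e}pingle inequality (Lemma \ref{l:LEP}, via Corollary \ref{c:LEP}) applied to $(f_\theta)_{\theta\in\Lambda}$ with values in $\ell^2(\Lambda)$, combined with the orthogonality bound \eqref{e:orthog}. Your coordinatewise Hardy--Littlewood bound for $\mathcal{M}$ and your long/short-variation reduction from $\lambda$-lacunary to $2$-lacunary times fill in details the paper leaves implicit.
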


With these preliminaries in mind, we turn to the proof.

\subsection{The Proof of Proposition \ref{p:mfvarprop}}
We work locally, and estimate
\begin{align}
    \sum_{|I| = 2^{l_0}} \| V^r f \|_{\ell^2(I)}^2.
\end{align}

Fix one interval $I$; by arguing as in Lemma \ref{l:transfreely}, we can replace $V^r$ with $V^r_{\mathbf{r}_I}$, where
\begin{align}
    {V}^r_{\mathbf{r}_I}(x) &:= \mathcal{V}^r\big(\sum_{\theta \in \Lambda} e(\theta x) \Psi( \mathbb{E}_{I_k} f_\theta(c_I-z) ) v(\theta) : k \geq l_1 \big),
\end{align}
where $c_I \in I$ is a point to be determined later.

We apply a standard metric chaining argument:

With $c_I$ to be determined later, we set
\begin{align}\label{e-ftheta} \mathcal{X}(I) := \mathcal{X}(c_I) := \{ (\mathbb{E}_{I_k}f_\theta(c_I-z))_{\theta \in \Lambda} : k \geq l_1 \} \subset \ell^2(\Lambda) \end{align}
and for each $u$ so that 
\[ 2^{-u} \leq   \text{diam}(\mathcal{X}(I)) \leq 2 \mathcal{M}(c_I),
\]
see \eqref{e:vecMax}, define $\mathcal{X}_u(I)$ to be a collection of intervals $\{ I_k = I_k(I)\}$ so that
\begin{align}
\mathcal{X}(I) \subset \bigcup_{I_k \in \mathcal{X}_u(I)} B\big( (\mathbb{E}_{I_k} f_\theta (c_I-z))_{\theta \in \Lambda}, 2^{-u} \big),
\end{align}
subject to the constraint that $|\mathcal{X}_u(I)|$ is minimal; the cardinality is essentially the $2^{-u}$-\emph{entropy} of the set, and note that for $u$ in the proscribed range, we may bound
\[ |\mathcal{X}_u(I)| \leq \vec{N}_{2^{-u}}(c_I).\]
Above,
\[ B\big((\mathbb{E}_{I_k} f_\theta (c_I-z))_{\theta \in \Lambda},2^{-u} \big) := \{ (b_{\theta})_{\theta \in \Lambda} : \| b_{\theta} - \mathbb{E}_{I_k} f_{\theta}(c_I-z) \|_{\ell^2(\Lambda)} \leq 2^{-u} \} \]
are balls with respect to the $\ell^2(\Lambda)$-norm.

For each $I_k \in \mathcal{X}_u(c_I)$, define the \emph{predecessor} of $I_k$, $\varrho(I_k)\in \mathcal{X}_{u-1}(c_I)$ to be the smallest interval, $I_k'$, so that 
\begin{align}\label{e-int}
B\big( (\mathbb{E}_{I_k} f_\theta(c_I-z))_{\theta \in \Lambda}, 2^{-u} \big) \cap B \big( 
(\mathbb{E}_{I_k'} f_\theta(c_I-z))_{\theta \in \Lambda}, 2^{1-u} \big) \neq \emptyset.
\end{align}

Then Proposition \ref{p:mfvarprop} will follow directly from applying the following lemma and square-summing over $\{ I \}$.

\begin{lemma}\label{l:localized}
    Whenever $|I| = 2^{l_0}$, for any $\frac{2}{r} < u_0 < 1$, we may bound 
    \begin{align}\label{e:localized}
    \| {V}^r \|_{\ell^2(I)} &\lesssim \mathbf{v} \cdot \frac{1}{1-u_0} \log K \min_{c_I \in I} \, \mathcal{V}^s(c_I) \cdot |I|^{1/2}\\
    & \qquad + \mathbf{v} \cdot |\Lambda|^{-10} \min_{c_I \in I} \, \mathcal{M}(c_I)^{1-u_0} \cdot \mathcal{V}^{ru_0}(c_I)^{u_0} \cdot |I|^{1/2} \\
    & \qquad \qquad + \mathbf{v} \cdot |\Lambda|^{-10} \min_{c_I \in I}  M_{\text{HL}} f(c_I) \cdot |I|^{1/2},
\end{align}
where $2 < s < r$ can be chosen to satisfy 
\[\frac{1}{s-2} \lesssim \frac{\log K}{r-2}.\]
\end{lemma}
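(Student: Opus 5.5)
We follow Bourgain's metric chaining argument from \cite{B2}, applied to the frozen operator $V^r_{\mathbf{r}_I}$ in which every point $x\in I$ uses the same base point $c_I$. First we fix $c_I\in I$ to be a point that nearly attains the minimum of the relevant right-hand quantity; a union of the three minima can be arranged by choosing $c_I$ to minimize their sum. By Lemma \ref{l:transfreely}, replacing $V^r$ by $V^r_{\mathbf{r}_I}$ costs at most
\[
\mathbf{v}\,|\Lambda|^{-10}M_{\text{HL}}f(c_I)\,|I|^{1/2}
\]
in $\ell^2(I)$. Here we use that $|I|=2^{l_0}$ and $k\ge l_1$, so shifts by $O(2^{l_0})$ are negligible; this produces the third term.

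With the base point frozen, the only $x$-dependence is through the phases $e(\theta x)$. We telescope along the chaining tree. For consecutive times $k_{i-1}<k_i$, we pass through the predecessor chains $\varrho^{(j)}(I_{k})$ at levels $u$ with $2^{-u}$ ranging from $\operatorname{diam}\mathcal X(I)$ down to a cutoff $2^{-u_1}$. The difference of the $\Psi$-terms then splits into a sum over levels $u$ of increments
\[
\Delta_u(x)=\sum_\theta e(\theta x)v(\theta)\bigl[\Psi(a_\theta)-\Psi(b_\theta)\bigr],
\]
where $a,b$ are linked centers in $\mathcal X_u,\mathcal X_{u-1}$ with $\|a-b\|_{\ell^2(\Lambda)}\lesssim 2^{-u}$. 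By the Lipschitz property of $\Psi$, the coefficient vector has $\ell^2(\Lambda)$ norm $\lesssim 2^{-u}$. Since $\Lambda$ is $2^{-l_0}$-separated and $|I|=2^{l_0}$, a large sieve / Bessel estimate (Lemma \ref{l:sampling} in dual form, using $\log K$ slack from $l_1$) gives
\[
\Big\|\sum_\theta e(\theta x)c_\theta\Big\|_{\ell^2(I)}\lesssim \mathbf{v}\,|I|^{1/2}\|c\|_{\ell^2(\Lambda)}.
\]
For each level $u$ we take the supremum over the at most $|\mathcal X_u|\le \vec N_{2^{-u}}(c_I)$ links. We handle this by square-summing over links, which costs $|\mathcal X_u|^{1/2}2^{-u}$, controlled by $\mathcal V^s(c_I)$ via
\[
2^{-u}\vec N_{2^{-u}}^{1/s}\le \mathcal V^s .
\]
Converting $\vec N^{1/2}$ to $\vec N^{1/s}$ by interpolation within each dyadic range of $u$ introduces the factor $\frac1{1-u_0}$. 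Summing the variational $r$-norm over levels loses $\log K$ once we restrict to the $O(\log K)$ levels with $2^{-u}\ge K^{-C}\mathcal M(c_I)$; this is the first term.

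The fine levels, $2^{-u}<K^{-C}\mathcal M(c_I)$, are treated crudely. Here we bound each increment by Cauchy--Schwarz in $\theta$, paying $|\Lambda|^{1/2}$, and sum the jumps using
\[
\sum_u 2^{-u}\vec N_{2^{-u}}^{1/r}\lesssim \mathcal M^{1-u_0}(\mathcal V^{ru_0})^{u_0}.
\]
This inequality follows by writing $2^{-u}=2^{-u(1-u_0)}2^{-uu_0}$, using $2^{-u}\le\mathcal M$ for the first factor and $\vec N^{1/(ru_0)}2^{-u}\le \mathcal V^{ru_0}$ for the second, with $ru_0>2$. Since these levels lie below $K^{-C}\mathcal M$, we gain $K^{-C(1-u_0)}$, which beats $|\Lambda|^{1/2}$ and leaves $|\Lambda|^{-10}$; this is the second term.

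The main obstacle is the coarse-level step. There we must control $\sup$ over chains of the oscillating sums $\sum_\theta e(\theta x)c_\theta$ uniformly in $x\in I$ with only a $\log K$ loss, and we must check that the ordering of times is compatible with the tree so that the $r$-variation of the telescoped sum is dominated level-by-level. We would follow Bourgain's argument verbatim here, using the $\ell^2(I)$ Bessel bound above as the only frequency-dependent input, and choose $s$ with $\frac1{s-2}\approx\frac{\log K}{r-2}$ to absorb the sum over the $O(\log K)$ levels.
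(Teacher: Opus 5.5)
Your architecture is the paper's: freeze the base point via Lemma \ref{l:transfreely} (third term), chain along nets with Lipschitz increments of $\ell^2(\Lambda)$-size $2^{-u}$, split the levels at $\mathcal M(c_I)|\Lambda|^{-A}$, and at the fine levels use pointwise Cauchy--Schwarz in $\theta$ with $2^{-u}=2^{-u(1-u_0)}2^{-uu_0}$.

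\textbf{The gap is at the coarse levels.} Square-summing over links and using orthogonality on $I$ gives a level-$u$ contribution of $\mathbf v\,2^{-u}\vec N_{2^{-u}}(c_I)^{1/2}|I|^{1/2}$. That quantity is controlled only by $\mathcal V^2(c_I)$, the endpoint where L\'epingle's inequality fails. Since $\vec N^{1/2}\ge \vec N^{1/s}$, the inequality $2^{-u}\vec N_{2^{-u}}^{1/s}\le \mathcal V^s$ does not bound it for any $s>2$.

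The missing step is to bound each coarse level by the \emph{minimum} of the two estimates you already have. One is the orthogonality bound. The other is the pointwise Cauchy--Schwarz bound $\mathbf v|\Lambda|^{1/2}2^{-u}|\mathcal X_u(c_I)|^{1/r}$, which comes from an $\ell^r$ sum over links rather than $\ell^2$. Then take a weighted geometric mean:
\[
\min\{|\Lambda|^{1/2}\vec N^{1/r},\,\vec N^{1/2}\}\le |\Lambda|^{\alpha/2}\,\vec N^{\alpha/r+(1-\alpha)/2},\qquad \alpha\approx \frac{1}{\log K}.
\]
The loss $|\Lambda|^{\alpha/2}$ is then $O(1)$. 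Setting $\frac1s:=\frac{\alpha}{r}+\frac{1-\alpha}{2}$ gives $s-2\approx\alpha(r-2)$. This is exactly where $\frac{1}{s-2}\lesssim\frac{\log K}{r-2}$ comes from; $s$ is not chosen ``to absorb the sum over the $O(\log K)$ levels.''

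Your accounting of $\frac{1}{1-u_0}$ is also off. It does not come from interpolation. It comes from the \emph{number} of coarse levels, $A\log K$ with $A\approx \frac{1}{1-u_0}$. This choice is forced because at the fine levels you need $|\Lambda|^{1/2}\,|\Lambda|^{-A(1-u_0)}\le |\Lambda|^{-10}$. So your constant $C$ cannot be absolute, as your own fine-level step shows. Each coarse level then contributes $\lesssim \mathcal V^s(c_I)$, which gives the first term.

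Smaller points:
\begin{itemize}
\item The Bessel bound on $\ell^2(I)$ needs no slack from $l_1$. Separation $2^{-l_0}$ against $|I|=2^{l_0}$ suffices.
\item The $x$-uniformity you list as the main obstacle is not needed. With the base point frozen, the nets are independent of $x$. One bounds $\mathcal V^r\le \sum_u(\sum_{\text{links}}|\cdot|^r)^{1/r}$ pointwise in $x$ and then applies Minkowski in $\ell^2_x(I)$.
\item Choosing $c_I$ to minimize the sum of the three quantities only gives $\min_{c_I}$ of the sum. The separate minima in the statement rest instead on these operators being essentially constant on $I$ at scales $\ge 2^{l_1}$.
\end{itemize}
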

\begin{proof}[Proof of Lemma \ref{l:localized}]
As above, we can replace $V^r$ with $V^r_{\mathbf{r}_I}$ for any $\mathbf{r}_I$ we wish; we will make an appropriate choice below.

We apply the metric chaining mechanism to the set $\mathcal{X}(c_I) = \mathcal{X}(I)$, and for $I_k \in \mathcal{X}_u(I)$ we define
\[ \Delta_{I_k} f_\theta(c_I) := \Psi(\mathbb{E}_{I_k} f_\theta(c_I-z)) - \Psi(\mathbb{E}_{{\varrho(I_k)}}f_\theta(c_I-z)) \]
so that
\begin{align}
\| \Delta_{I_k} f_\theta(c_I) \|_{\ell^2(\Lambda)} \lesssim 2^{-u}    
\end{align}
by the Lipschitz nature of $\Psi$; then, by telescoping along predecessors, we bound
\begin{align}
    &\mathcal{V}^r\big( \sum_{\theta \in \Lambda} e(\theta x) v(\theta) \Psi(\mathbb{E}_{I_k} f_\theta(c_I-z)) : k \geq l_1 ) \\
    &\leq \sum_{2^{-u} \leq 2 \mathcal{M}(c_I)} \mathcal{V}^r\big( \sum_{\theta \in \Lambda} e(\theta x) v(\theta) \Delta_{I_k} f_\theta(c_I) : I_k \in \mathcal{X}_u(c_I) \big) \\
    & \lesssim \sum_{2^{-u} \leq 2 \mathcal{M}(c_I)} \big( \sum_{I_k \in \mathcal{X}_u(c_I)} \big| \sum_{\theta \in \Lambda} e(\theta x) v(\theta)  \Delta_{I_k} f_\theta(c_I)|^r \big)^{1/r}.  
\end{align}
For each $u$, we bound
\begin{align}
    &\| (\sum_{I_k \in \mathcal{X}_u(c_I)} \big| \sum_{\theta \in \Lambda} e(\theta x) v(\theta) \Delta_{I_k} f_\theta(c_I) \big|^r)^{1/r} \|_{\ell^2_x(I)} \\
    &\lesssim \mathbf{v} \cdot 2^{-u} \min\{ |\Lambda|^{1/2} |\mathcal{X}_u(c_I)|^{1/r}, |\mathcal{X}_u(c_I)|^{1/2} \} \cdot |I|^{1/2} \\
    &  \lesssim \mathbf{v} \cdot 2^{-u} \min\{ |\Lambda|^{1/2}\vec{N}_{2^{-u}}(c_I)^{1/r}, \vec{N}_{2^{-u}}(c_I)^{1/2}\} \cdot |I|^{1/2}:
\end{align} 
the first inequality is just a pointwise estimate, which follows from applying Cauchy Schwarz in the inner sum in $\theta$; for the second, we bound
\begin{align}
    &\| (\sum_{I_k \in \mathcal{X}_u(c_I)} \big| \sum_{\theta \in \Lambda} e(\theta x) v(\theta) \Delta_{I_k} f_\theta(c_I) \big|^r)^{1/r} \|_{\ell^2(I)} \\ &\leq \| (\sum_{I_k \in \mathcal{X}_u(c_I)} \big| \sum_{\theta \in \Lambda} e(\theta x) v(\theta)  \Delta_{I_k} f_\theta(c_I) \big|^2)^{1/2} \|_{\ell^2(I)} \\
    & \lesssim \mathbf{v} \cdot 2^{-u} |\mathcal{X}_u(c_I)|^{1/2} \cdot |I|^{1/2} \\
    & \lesssim \mathbf{v} \cdot 2^{-u} \vec{N}_{2^{-u}}(c_I)^{1/2} \cdot |I|^{1/2},
\end{align}
using the fact that $\{ \theta \}$ are $2^{l_0}$ separated, and the elementary inequality
\begin{align}
    \| \sum_{\theta \in \Lambda} e(\theta x) a_\theta \|_{\ell^2(I)} \lesssim \| \sum_{\theta \in \Lambda} e(\theta x) a_\theta w_I(x) \|_{\ell^2} \lesssim (\sum_{\theta \in \Lambda} |a_\theta|^2)^{1/2} \cdot |I|^{1/2}
\end{align}
where 
\[ \mathbf{1}_{I} \leq w_I \lesssim (1 + \text{dist}(x,I))^{-100}\]
has a Fourier transform supported inside $[-2^{-l_0-2},2^{-l_0-2}]$.

To conclude, with $A=A(u_0) = O(\frac{1}{1-u_0})$, we bound
\begin{align}
    &\sum_{2^{-u} \leq 2 \mathcal{M}(c_I)} 
     2^{-u} \min\{ |\Lambda|^{1/2}\vec{N}_{2^{-u}}(c_I)^{1/r}, \vec{N}_{2^{-u}}(c_I)^{1/2}\} \cdot |I|^{1/2} \\
    &\leq \sum_{ \mathcal{M}(c_I)/|\Lambda|^A \leq 2^{-u} \leq 2\mathcal{M}(c_I)} 2^{-u} \vec{N}_{2^{-u}}(c_I)^{1/s} + |\Lambda|^{1/2} \sum_{2^{-u} \leq \mathcal{M}(c_I)/|\Lambda|^A} 2^{-u(1-u_0)} \big(2^{-u}  \vec{N}_{2^{-u}}(c_I)^{1/ru_0} \big)^{u_0} \\
    & \ \lesssim \frac{1}{1-u_0} \cdot \log K \cdot \mathcal{V}^s(c_I) + |\Lambda|^{-10} \cdot \mathcal{M}(c_I)^{1-u_0} \cdot \mathcal{V}^{ru_0}(c_I)^{u_0}, 
\end{align}
completing the proof, since all operators are smooth at scales $2^{l_0}$.
\end{proof}

\begin{cor}\label{c:varschwartz}
    For any Schwartz function, $\varphi$,
    \begin{align}
        &\| \mathcal{V}^r( \sum_{\theta \in \Lambda} e(\theta x) v(\theta) \varphi_k*\text{Mod}_{-\theta} f(x) : k \geq l_1) \|_{\ell^2} \\
        & \lesssim (\frac{r}{r-2})^2 \log^2 |\Lambda| \cdot \mathbf{v} \cdot \| (1 + |t|) \varphi'(t) \|_{L^1(\mathbb{R})} \| f \|_{\ell^2}.
    \end{align}    
\end{cor}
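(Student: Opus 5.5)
The plan is to reduce Corollary \ref{c:varschwartz} to Proposition \ref{p:mfvarprop}, applied with $\Psi(t) = t$ (which is $1$-Lipschitz with $|\Psi(t)| \lesssim |t|$), by writing each Schwartz average as an average of interval averages. Exactly as in the summation-by-parts lemma of \S \ref{s:analess}, I split $\varphi = \varphi\mathbf{1}_{[0,\infty)} + \varphi\mathbf{1}_{(-\infty,0)}$ and treat the positive half; the negative half is symmetric, using intervals $(-L_k,0]$ in place of $[0,L_k)$. For $t>0$ one has the identity
\begin{align}
    \varphi(t) = -\int_0^\infty \mathbf{1}_{[0,s)}(t)\, \varphi'(s) \, ds = -\int_0^\infty \frac{1}{s}\mathbf{1}_{[0,s)}(t) \, \big(s \varphi'(s)\big) \, ds,
\end{align}
so that, after the dilation $t \mapsto t/2^k$,
\begin{align}
    \varphi_k * \text{Mod}_{-\theta} f(x) = -\int_0^\infty \mathbb{E}_{z \in [0, 2^k s)} \text{Mod}_{-\theta} f(x-z) \cdot s\varphi'(s) \, ds,
\end{align}
up to the harmless discrepancy between sums and integrals at scales $2^k \gg 2^{l_1}$. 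That discrepancy is bounded by $2^{-k}$ times a maximal function, and square-sums to an acceptable error, in the same way as in Lemma \ref{l:transfreely}.

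Next, Minkowski's inequality for the $\mathcal{V}^r$ seminorm, which is a norm on sequences up to constants, gives
\begin{align}
    \| \mathcal{V}^r(\cdots) \|_{\ell^2} \leq \int_0^\infty |s \varphi'(s)| \cdot \big\| \mathcal{V}^r\big( \sum_{\theta} e(\theta x) v(\theta) \mathbb{E}_{z \in [0,2^k s)} \text{Mod}_{-\theta} f(x-z) : k \geq l_1 \big) \big\|_{\ell^2} \, ds.
\end{align}
For fixed $s$, the intervals $I_k = [0, 2^k s)$ are $2$-lacunary. They are not integer-length for small $s$, but we round them, which costs nothing at large $k$. Proposition \ref{p:mfvarprop}, together with Lemma \ref{l:transfreely} (with $\mathbf{r} \equiv 0$) to pass from the smoothed $f_\theta$ back to $\text{Mod}_{-\theta} f$, bounds the inner norm by $(\frac{r}{r-2})^2\log^2|\Lambda| \cdot \mathbf{v}\,\|f\|_{\ell^2}$ uniformly in $s$.

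The main obstacle is this uniformity in $s$, and specifically the range restriction $k \gg l_1$. For $s \geq 1$ the scales $2^k s$ with $k \geq l_1$ are at least $2^{l_1}$, so the proposition applies directly. For $s<1$, the scales with $2^k s < 2^{l_1}$ fall below the proposition's range. I would handle these by crude bounds: on that stretch the variation is dominated by a sum over finitely many $k$ of quantities controlled by $\sum_\theta$ of Hardy--Littlewood maximal functions. The number of such $k$ is about $\log(1/s)$, which is why the weight $(1+|t|)$ appears. More precisely, I would rescale and absorb the loss into $\|(1+|t|)\varphi'\|_{L^1}$: the terms with $s\geq 1$ are controlled by $\int |s\varphi'|$, and the terms with $s<1$ by $\int_0^1 |\varphi'| \lesssim \int (1+|t|)|\varphi'|$. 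If the $\log(1/s)$ count is problematic, I would instead re-index so that the proposition is applied to the sequence $\{2^k s : 2^k s \geq 2^{l_1}\}$ for each $s$, which is still lacunary. The omitted initial segment then consists of at most one scale per $s$ relative to $\{\varphi_k\}$, and is controlled by the trivial bound $|\varphi_k * g| \lesssim \|(1+|t|)\varphi'\|_{L^1} M_{\text{HL}} g$ combined with \eqref{e:orthog}. Integrating in $s$ then yields the stated bound.
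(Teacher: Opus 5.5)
Your skeleton matches the paper's: write $\varphi$ as a superposition of normalized interval averages, apply Minkowski in $s$, then apply Proposition \ref{p:mfvarprop} for each fixed $s$. The range $s\ge 1$ is fine. The gap is in the range $s<1$.

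Your crude bound for the scales $2^ks<2^{l_1}$ runs through $\sum_\theta |v(\theta)|\,M_{\text{HL}}(\text{Mod}_{-\theta}f)\le |\Lambda|\,\mathbf{v}\,M_{\text{HL}}f$, and this costs a full factor $|\Lambda|$ per scale. Appealing to \eqref{e:orthog} does not rescue it. That inequality concerns the Fourier-localized $f_\theta$, and Lemma \ref{l:transfreely} only lets you substitute $f_\theta$ for $\text{Mod}_{-\theta}f$ at scales well above $2^{l_1}$; even there, Cauchy--Schwarz in $\theta$ would cost $|\Lambda|^{1/2}$. With the weight $s$ and roughly $\log_2(1/s)$ low scales, the loss is about $s\log(1/s)\,|\Lambda|$. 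When $s\approx 1/2$ this is of order $|\Lambda|$, so it does not follow that the $s<1$ terms are controlled by $\int_0^1|\varphi'|$ with constant $(\frac{r}{r-2})^2\log^2|\Lambda|\,\mathbf{v}$. Your fallback does not help either: for fixed $s$ the omitted segment $\{k\ge l_1 : 2^ks<2^{l_1}\}$ has about $\log_2(1/s)$ elements, not at most one.

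The missing idea is the paper's regrouping of frequencies. For $s\approx 2^{-j}$, split $\Lambda$ into $\lesssim 2^j$ subsets $\Lambda_{j,i}$ that are $2^{j-l_0}$-separated, and apply Proposition \ref{p:mfvarprop} to each with $l_0$ replaced by $l_0-j$. The new threshold satisfies $l_0-j+100\log|\Lambda_{j,i}|\le l_1-j$, which lies below every scale $2^ks\approx 2^{k-j}$ with $k\ge l_1$, so no scale falls out of range. The triangle inequality over the $2^j$ pieces costs $2^j$, which is exactly paid for by the weight $s\approx 2^{-j}$. Summing over $j$ gives $\int_{|t|\le 1}|\varphi'|$; this, not a $\log(1/s)$ count, is the source of the ``$1$'' in $(1+|t|)$.

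Your route can also be repaired, but not with maximal functions. For $L\ge 2^{l_0}$, the single-scale operator is the Fourier multiplier $\sum_\theta v(\theta)\,m_L(\beta-\theta)$ with $m_L:=\mathcal{F}_{\mathbb{Z}}(L^{-1}\mathbf{1}_{[0,L)})$. By the $2^{-l_0}$-separation its sup norm is $\lesssim \mathbf{v}\log|\Lambda|$, which handles the $O(\log|\Lambda|)$ scales in $[2^{l_0},2^{l_1})$. Scales below $2^{l_0}$ occur only when $s<|\Lambda|^{-100}$, and there the crude loss $|\Lambda|$ is absorbed by $s\log(1/s)$.
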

\begin{proof}
By a dyadic decomposition, it suffices to assume that $\varphi$ is compactly supported; we address the case where $\varphi$ is simply absolutely continuous.

Thus, if we let $m$ denote the Radon-Nikodym derivative of $\varphi$, we may assume without loss of generality that $\varphi(0) = 0$ (since the variation is translation invariant), and that $\varphi$ is supported in $(0,\infty)$. By convexity, we may then express
    \begin{align}
        &\mathcal{V}^r( \sum_{\theta \in \Lambda} e(\theta x) v(\theta) \varphi_k*\text{Mod}_{-\theta} f(x) : k \geq l_1) \\
        & \leq \int \mathcal{V}^r( \sum_{\theta \in \Lambda} e(\theta x) v(\theta) \mathbb{E}_{[0,t2^k]} \text{Mod}_{-\theta} f(x-z) : k \geq l_1) \cdot |t| \ dm(t) \\
        & \leq \sum_{0 \leq j \leq l_0} \sum_{i \leq 2^j} \int_{|t| \approx 2^{-j}} \mathcal{V}^r( \sum_{\theta \in \Lambda_{j,i}} e(\theta x) v(\theta) \mathbb{E}_{[0,t2^k]} \text{Mod}_{-\theta} f(x-z) : k \geq l_1) \cdot |t| \ dm(t) \\
        & + \int_{|t| \geq 1} \mathcal{V}^r( \sum_{\theta \in \Lambda} e(\theta x) v(\theta) \mathbb{E}_{[0,t2^k]} \text{Mod}_{-\theta} f(x-z) : k \geq l_1) \cdot |t| \ dm(t),
    \end{align}
where $\Lambda_{j,i} \subset \Lambda$ are $2^{j-l_0}$ separated. Taking $\ell^2$ norms yields the result.
\end{proof}

\begin{cor}\label{c:mfproj}
    Let $R_k := \Lambda + B(L_k^{-1})$ where $L_k$ are lacunarily increasing as above, and define
    \begin{align}
        \mathcal{F}_{\mathbb{Z}}( \Pi_k f ) := \mathcal{F}_{\mathbb{Z}}{f} \cdot \mathbf{1}_{R_k}.
    \end{align}
Then
\begin{align}
   \| \mathcal{V}^r(\Pi_k f ) \|_{\ell^2} \lesssim (\frac{r}{r-2})^2 \log^2 |\Lambda| \|f \|_{\ell^2}.
\end{align}
\end{cor}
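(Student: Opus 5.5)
The plan is to derive Corollary \ref{c:mfproj} from Corollary \ref{c:varschwartz}, replacing the sharp projection to $R_k = \Lambda + B(L_k^{-1})$ by smooth multi-frequency projections built from Schwartz functions. First I would reduce to frequencies $\theta$ that are well separated relative to the scales in question. Let $2^{-l_0}$ be the minimal separation of $\Lambda$. For the scales with $L_k^{-1} \ll 2^{-l_0}$ (say $k \geq l_1$ after relabeling), the balls $\theta + B(L_k^{-1})$ are pairwise disjoint, and so
\[ \Pi_k f = \sum_{\theta \in \Lambda} \text{Mod}_\theta \big( P_k \text{Mod}_{-\theta} f_\theta \big), \]
where $P_k$ is the Fourier projection to $B(L_k^{-1})$ and $f_\theta$ is the localization of $f$ to a $2^{-l_0}$-neighborhood of $\theta$, as in \S \ref{s:MultiFreq}. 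This places us in the setting of the multi-frequency operators with $v \equiv 1$, so $\mathbf{v} = 1$. For the finitely many scales with $L_k^{-1} \gtrsim 2^{-l_0}/K^{100}$, I would group $\Lambda$ into clusters at each scale and repeat the argument. Alternatively, I would handle these $O(\log K)$ scales crudely: the $r$-variation over $O(\log K)$ indices is dominated by the $\ell^2$ sum, and each $\Pi_k$ is an $\ell^2$ contraction. This gives $O((\log K)^{1/2})\|f\|_{\ell^2}$, which is acceptable.

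Next I would write $\mathbf{1}_{B(L_k^{-1})} = \widehat{\varphi}(L_k \cdot) + (\mathbf{1}_{B(L_k^{-1})} - \widehat{\varphi}(L_k \cdot))$ for a Schwartz $\varphi$ with $\widehat{\varphi} = 1$ near the origin and supported in $B(2)$, adjusted so that the smooth part agrees with the indicator away from its boundary. The smooth part is exactly
\[ \sum_{\theta} e(\theta x)\, \varphi_{L_k} * \text{Mod}_{-\theta} f_\theta(x), \]
and Corollary \ref{c:varschwartz} bounds its $r$-variation by $(\frac{r}{r-2})^2 \log^2|\Lambda| \, \|f\|_{\ell^2}$. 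Since $L_k$ is only $\lambda$-lacunary rather than dyadic, I would either note that the proof of Corollary \ref{c:varschwartz} only used lacunarity, or split the sequence into $O_\lambda(1)$ subsequences.

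The main obstacle is the rough remainder. Its multiplier is supported in annuli $\theta + \{|\xi| \approx L_k^{-1}\}$, and there the sharp cutoff is not Schwartz. I would control its variation by the square function
\[ \Big( \sum_k \big\| \sum_\theta e(\theta x) (m_k * \text{Mod}_{-\theta} f_\theta) \big\|^2 \Big)^{1/2}, \]
using $\mathcal{V}^r \le \mathcal{V}^2 \lesssim$ the $\ell^2$ sum of increments. Here $m_k$ is the remainder multiplier, which is supported where $|\xi| \approx L_k^{-1}$. By Plancherel and the disjointness in $\theta$ of the frequency supports, the square function's $\ell^2$ norm is at most
\[ \Big( \sum_\theta \sum_k \| \widehat{m_k} \, \widehat{\text{Mod}_{-\theta} f_\theta} \|_{L^2}^2 \Big)^{1/2}. \]
The bounded overlap of the lacunary annuli gives $\lesssim_\lambda \|f\|_{\ell^2}$. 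If the annuli fail to overlap boundedly because of the sharp edge, I would instead decompose the indicator of $[-1,1]$ into Whitney-type smooth pieces accumulating at $\pm 1$. I would then use a Rubio de Francia-type square function estimate for the sharp edges, after the modulation reduction to $\ell^2$, which is trivial by Plancherel. This yields the claimed $(\frac{r}{r-2})^2 \log^2|\Lambda|$ bound.
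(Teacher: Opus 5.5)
Your treatment of the separated scales is fine and is essentially the paper's one-line ``square function argument and Corollary~\ref{c:varschwartz}''. The sharp-edge worry is unnecessary. The multiplier $\mathbf{1}_{B(L_k^{-1})}-\widehat{\varphi}(L_k\cdot)$ is supported in $\{cL_k^{-1}\le|\xi|\le 2L_k^{-1}\}$, and these annuli overlap boundedly for lacunary $L_k$. Moreover, when $L_k^{-1}\ll 2^{-l_0}$ each $\xi$ sees at most one $\theta$.

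The genuine gap is in the remaining scales. It is false that only $O(\log K)$ scales have $L_k^{-1}\gtrsim 2^{-l_0}/K^{100}$. These scales run from $L_k^{-1}\approx 2^{-l_0}K^{-100}$ up to $L_k^{-1}\approx 1$ (beyond which $R_k=\mathbb{T}$), so there are about $l_0+100\log K$ of them. The quantity $l_0$ is not controlled by $K=|\Lambda|$: take $\Lambda=\{0,2^{-l_0}\}$, or a hierarchically clustered $\Lambda$ whose components merge at many widely separated scales. Your crude bound therefore yields only $l_0^{1/2}\|f\|_{\ell^2}$, which is not uniform. Your other option, ``group $\Lambda$ into clusters at each scale and repeat the argument,'' is exactly where the content of this corollary lies, and you leave it undone.

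The paper fills this gap in three steps.

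\emph{Exceptional scales.} It isolates $E=\{k: 2^{-k-K^{200}}\le|\theta-\theta'|\le 2^{-k+K^{200}}\text{ for some }\theta\neq\theta'\}$. This set has cardinality $K^{O(1)}$, independent of $l_0$. It is handled by Rademacher--Menshov, $\|\mathcal{V}^2(\Pi_kf:k\in E)\|_{\ell^2}\lesssim\log|E|\,\|f\|_{\ell^2}$; a crude bound would cost $|E|^{1/2}$, a power of $K$.

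\emph{Frozen blocks.} The complement of $E$ splits into $K^{O(1)}$ blocks $[a_n,b_n]$ on which the cluster structure is frozen: clusters have diameter $\ll 2^{-k}$ and are $\gg 2^{-k}$ apart. So on each block, $\Pi_k$ is a separated multi-frequency projection around the cluster centers (up to a harmless perturbation of the radii), and your first step applies.

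\emph{Gluing.} A long/short variation splitting (Jones--Seeger--Wright) reduces matters to $\big(\sum_n\mathcal{V}^r(\Pi_kf:k\in[a_n,b_n])^2\big)^{1/2}$ plus $\mathcal{V}^2(\Pi_{b_n}f:n)$. The second term is again handled by Rademacher--Menshov. For the first, one uses the identity $\mathcal{V}^r(\Pi_kf:k\in[a_n,b_n])=\mathcal{V}^r(\Pi_k(\Pi_{a_n}f-\Pi_{b_n}f):k\in[a_n,b_n])$ together with $\sum_n\|\Pi_{a_n}f-\Pi_{b_n}f\|_{\ell^2}^2\le\|f\|_{\ell^2}^2$, which holds because the $R_k$ are nested. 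Without this orthogonality across blocks, applying the separated estimate block by block would lose a power of the number of blocks.
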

\begin{proof}
By monotone convergence, we may restrict to scales $|k| \leq M$, provided our estimates do not depend on $M$.

By a square function argument and Corollary \ref{c:varschwartz}, we may bound
\begin{align}
   \| \mathcal{V}^r(\Pi_k f : k \geq l_1 ) \|_{\ell^2} \lesssim (\frac{r}{r-2})^2 \log^2 K \|f \|_{\ell^2}.
\end{align}
By the Rademacher-Menshov inequality, for any finite $E \subset \mathbb{Z}$, we may also bound
\begin{align}\label{e:V2}
   \| \mathcal{V}^2(\Pi_k f : k \in E ) \|_{\ell^2} \lesssim \log |E| \|f \|_{\ell^2}.
\end{align}    

So, let 
\begin{align}\label{e:E} E := \{ |k| \leq M : \text{ there exists } \theta \neq \theta' \in \Lambda : 2^{-k - K^{200}} \leq |\theta - \theta'| \leq 2^{-k + K^{200}} \} \end{align}
which has size $|E| \lesssim K^{500}$; we estimate the contribution to the jump counting function coming from times in $E$ using \eqref{e:V2}.

Now, let $I_n := [a_n,b_n]$ be such that for each $k \in I_n$, $R_k$ has $n$ connected components, and $[a_n,b_n] \cap E = \emptyset$. 

Note that since $I_n \cap E = \emptyset$, if for $k \in I_n$ we express $R_k$ as a disjoint union of intervals,
\[ R_k = \bigcup_{l \leq n} (c_l-2^{-k},d_l+2^{-k}) = \bigcup_{l \leq n} \big( \frac{c_l+d_l}{2} + (-2^{-k} - \frac{d_l - c_l}{2}, 2^{-k} + \frac{d_l-c_l}{2}) \big).\]

We majorize
\begin{align}
    \mathcal{V}^r(  \Pi_k f :k) \lesssim \big( \sum_{n \leq K} \mathcal{V}^r( \Pi_k f : k \in I_n)^2 \big)^{1/2} + \mathcal{V}^2(\Pi_{b_n} f : n \leq K),
\end{align}
by \cite{JSW}, 
and bound
\[ \| \mathcal{V}^2(\Pi_{b_n}f : n \leq K) \|_{\ell^2} \lesssim \log K \|f \|_{\ell^2}\]
by \eqref{e:V2}, so we focus on the first term. But, we have the equality
\begin{align}
\mathcal{V}^r( \Pi_k f : k \in I_n) \equiv \mathcal{V}^r( \Pi_k ( \Pi_{a_n} f - \Pi_{b_n} f ) : k \in I_n) 
\end{align}
so summing yields the estimate
\begin{align}
    \| \big( \sum_{n \leq K} \mathcal{V}^r( \Pi_k f : k \in I_n)^2 \big)^{1/2} \|_{\ell^2}^2 &\lesssim (\frac{r}{r-2})^4 \log^4 K \cdot \sum_n \| \Pi_{a_n} f - \Pi_{b_n} f \|_{\ell^2}^2 \\
    & \qquad \lesssim (\frac{r}{r-2})^4 \log^4 K \| f \|_{\ell^2}^2.
\end{align}
\end{proof}

\section{Polynomials ``Split"}\label{s:polysplit}
The goal of this section is to establish certain orthogonality properties of polynomials defined over finite (nested) cyclic subgroups. The situation is as follows:

Suppose $I = \bigcup_\alpha I_\alpha$ is a disjoint union, with $|I_\alpha| \leq |I|/10$ and $|I_\alpha| \in 2^{-k} |I| \in \mathbb{N}$, and that each $|I_\alpha| \geq 2^{Q^{1/5}}$. Then, by orthogonality, we have the exact relationship:
\begin{align}
    \mathbb{E}_{\mathbb{Z}/|I|} |\sum_{n \in I} g(n) e(-n \beta)|^2 &=     \mathbb{E}_{\mathbb{Z}/|I|} |\sum_\alpha \sum_{n \in I_\alpha} g(n) e(-n \beta)|^2\\
    &= \sum_\alpha \mathbb{E}_{\mathbb{Z}/|I_\alpha|} |\sum_{n \in I_\alpha} g(n) e(-n \beta)|^2.
\end{align}
Below, we will show that -- up to lower order errors -- a similar orthogonality persists when one localizes the summation in a way that respects the uncertainty principle; the following Lemma is a discrete analogue of \cite[Lemma 5.1]{B3}. For notational ease, we will let
\begin{align}\label{e:duall2}
    \| h \|_{\ell^2(\mathbb{Z}/M)}^2 := \frac{1}{M} \sum_{\xi \in \mathbb{Z}/M} |h(\xi)|^2
\end{align}
denote normalized $\ell^2$ norms.

\begin{lemma}\label{l:polysplit}
Suppose $I = \bigcup_\alpha I_\alpha$ is as above, that $|g(n)| \leq 1$, and that $\Lambda \subset [0,1/10]$. Then

\begin{align}\label{e:uncertaintyprinc}
    &\mathbb{E}_{\mathbb{Z}/|I|} \mathbf{1}_{\Lambda + O(R/|I|)}(\beta) \cdot  |\sum_{n \in I} g(n) e(- n \beta)|^2 \\
    & \qquad \leq \sum_\alpha \mathbb{E}_{\mathbb{Z}/|I_{\alpha}|} \mathbf{1}_{\Lambda + O(R/|I_{\alpha}|)}(\beta) \cdot |\sum_{n \in I_\alpha} g(n) e(- n \beta)|^2 \\
    & \qquad \qquad \qquad  + |I| \big( O_A(R^{-A} |\Lambda|) + O(R^{-1/2}) \big).
\end{align}
\end{lemma}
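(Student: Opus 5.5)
Write $G(\beta) := \sum_{n \in I} g(n) e(-n\beta)$ and $G_\alpha(\beta) := \sum_{n \in I_\alpha} g(n) e(-n\beta)$, so that $G = \sum_\alpha G_\alpha$. The plan is to replace the sharp cutoff $\mathbf{1}_{\Lambda + O(R/|I|)}$ by a smooth majorant adapted to the coarser scale $R/|I_\alpha|$, and then expand the square. I will use that orthogonality of distinct pieces survives a smooth frequency weight up to small errors.

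\emph{Step 1 (smoothing).} Fix a Schwartz bump $\eta \geq 0$ with $\eta \geq 1$ on $[-1,1]$ and $\mathcal{F}_{\mathbb{R}}^{-1}\eta$ compactly supported. Set
\[
m(\beta) := \sum_{\theta \in \Lambda} \eta\big(|I_{\min}| (\beta-\theta)/(C R)\big),
\]
where $|I_{\min}|$ is the smallest piece length; the pieces are dyadic sub-blocks, so one may reduce to the case of equal lengths $|I_\alpha| = 2^{-k}|I|$ and treat that case. Then $\mathbf{1}_{\Lambda + O(R/|I|)} \leq m$, because $R/|I| \leq R/|I_\alpha|$. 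Bound the left side of \eqref{e:uncertaintyprinc} by
\[
\mathbb{E}_{\mathbb{Z}/|I|}\, m\, |G|^2 = \sum_{\alpha,\alpha'} \mathbb{E}_{\mathbb{Z}/|I|}\, m\, G_\alpha \overline{G_{\alpha'}}.
\]

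\emph{Step 2 (off-diagonal terms).} Expanding, $\mathbb{E}_{\mathbb{Z}/|I|} m(\beta) e(-(n-n')\beta)$ is, up to aliasing mod $|I|$, a sum over $\theta$ of $e(-(n-n')\theta)$ times the kernel $(CR/|I_\alpha|)\check\eta(CR(n-n')/|I_\alpha|)$, which is negligible unless $|n-n'| \lesssim |I_\alpha|/R$. Hence only $n,n'$ in adjacent pieces, within distance $|I_\alpha|/R$ of a common boundary, contribute. By Cauchy--Schwarz and $|g| \leq 1$, together with the large sieve / sampling bound of Lemma \ref{l:sampling} to control the $\theta$-sum, this boundary layer contributes $O(|I| R^{-1/2})$. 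The Schwartz tails give the $O_A(R^{-A}|\Lambda|)|I|$ term, since each $\theta$ costs at most $|I|\cdot R^{-A}$ trivially.

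\emph{Step 3 (diagonal terms).} For the diagonal terms, $\mathbb{E}_{\mathbb{Z}/|I|} m |G_\alpha|^2$ must be compared with $\mathbb{E}_{\mathbb{Z}/|I_\alpha|} \mathbf{1}_{\Lambda+O(R/|I_\alpha|)} |G_\alpha|^2$. First, since $G_\alpha$ is a trigonometric polynomial of degree $|I_\alpha|$ and $m$ is smooth at scale $R/|I_\alpha| \gg 1/|I_\alpha|$, the averages over $\mathbb{Z}/|I|$ and over $\mathbb{Z}/|I_\alpha|$ of $m|G_\alpha|^2$ agree up to $O(R^{-A}|\Lambda||I_\alpha|)$. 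This follows by Poisson summation, using that $\widehat{m|G_\alpha|^2}$ is supported in $|n| \lesssim |I_\alpha|$, below the aliasing frequency, after translating $I_\alpha$ to the origin, which is harmless by modulation invariance of $|G_\alpha|$. Second, replace $m$ by $\mathbf{1}_{\Lambda + O(R/|I_\alpha|)}$ with a larger implied constant. The difference is supported where $m$ lives in its Schwartz tail, and there $m \lesssim_A R^{-A}$ per $\theta$ on the relevant region; combined with $\mathbb{E}|G_\alpha|^2 = |I_\alpha|$ (Plancherel) this gives $O_A(R^{-A}|\Lambda||I_\alpha|)$. Summing over $\alpha$ gives the stated error.

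The main obstacle is Step 2. The naive bound on the boundary layers, of order $|I|/R$ points each with a multiplier of size $|\Lambda|$, loses a factor $|\Lambda|$. To avoid this I would first try to exploit that $m$ is a bounded-overlap sum only after merging $\theta$'s closer than $R/|I_\alpha|$, so that $\|m\|_\infty \lesssim 1$. With that in hand, the cross term is bounded by $\|m\|_\infty^{1/2}$ times $\ell^2$ norms of the boundary-localized pieces, giving $(|I|\cdot |I|/R)^{1/2} = |I|R^{-1/2}$.
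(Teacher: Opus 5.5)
\textbf{Main gap: Step 3 loses the constant $1$.} The step that actually fails is Step 3, not Step 2. The failure is the constant in front of the main term. The lemma must hold with constant exactly $1$: it is iterated in the energy increment of \S\ref{s:ENERGY}, where these main terms telescope over many generations, so any fixed multiplicative loss is fatal.

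Your route bounds the left side by $\mathbb{E}_{\mathbb{Z}/|I|}\, m|G|^2$ with $m \geq \mathbf{1}_{\Lambda+O(R/|I|)}$. It then needs $m \le \mathbf{1}_{\Lambda+O(R/|I_\alpha|)}$ up to negligible tails. Your $m$ necessarily overshoots $1$, for two reasons:
\begin{itemize}
\item Since $\mathcal{F}_{\mathbb{R}}^{-1}\eta$ is compactly supported, $\eta$ is entire. So $\eta \ge 1$ on $[-1,1]$ forces $\sup\eta > 1$.
\item Bumps attached to different $\theta$ overlap. Merging gives only $\|m\|_\infty \lesssim 1$, not $\le 1$.
\end{itemize}
Take $g(n) = e(\beta_0 n)$ with $m(\beta_0) = \sup m$. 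Then $\sum_\alpha \mathbb{E}_{\mathbb{Z}/|I|}\, m|G_\alpha|^2 \approx (\sup m)|I|$, while the main term on the right is at most $|I|$. So your claim that the diagonal is at most the right side plus $O_A(R^{-A}|\Lambda|)|I|$ fails by $\gtrsim |I|$, and no later step can recover this.

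Two further points in Step 3 are also wrong as stated.
\begin{itemize}
\item \emph{Tails.} Your bumps have width $CR/|I_\alpha|$ and are compared with an indicator of width $O(R/|I_\alpha|)$. The tail of $m$ outside the indicator is then small only in the implied constant, not $O(R^{-A})$. You need a power-of-$R$ gap between the resolution of $m$ and the indicator; the paper's $\gamma = R^{-1/2}$ corresponds to resolution $R^{1/2}/|I_\alpha|$.
\item \emph{Aliasing.} $\widehat{|G_\alpha|^2}$ already fills $(-|I_\alpha|,|I_\alpha|)$. Multiplying by $m$ pushes past the alias-free range of $\mathbb{Z}/|I_\alpha|$ and creates a wrap-around term coupling the two ends of $I_\alpha$. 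That term is of boundary-layer size, not $O(R^{-A})$.
\end{itemize}

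\textbf{Second gap: unequal lengths.} The reduction to equal lengths is unjustified and goes the wrong way. By the lemma itself, refining a partition can only increase the right side (up to errors), so the inequality with all pieces of length $|I_{\min}|$ is weaker than the one claimed. For genuinely unequal lengths, a single frequency multiplier on $I$ cannot work. Returning to $\mathbf{1}_{\Lambda+O(R/|I_\alpha|)}$ for the longest piece forces $m$ to live at scale $R/|I_{\max}|$. Its kernel then spreads over $\gtrsim |I_{\max}|/R$, which can exceed the short pieces.

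By contrast, your Step 2 is fine once $\|m\|_\infty \lesssim 1$ and the kernel is compactly supported. With $u=\sum_\alpha u_\alpha$ the boundary-layer part of $g$, the off-diagonal sum equals $\langle Ku,u\rangle - \sum_\alpha\langle Ku_\alpha,u_\alpha\rangle \le \|m\|_\infty\|u\|_{\ell^2}^2 \lesssim |I|/R$.

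\textbf{How the paper avoids both problems.} It argues by duality. It writes the square root of the left side as $\sum_{n\in I} g(n)(\mathcal{F}_I^{-1}h)(n)$, with $h$ supported on $\Lambda'$ and $\|h\|_{\ell^2(\mathbb{Z}/|I|)}=1$. It then proceeds as follows.
\begin{itemize}
\item It inserts a spatial cutoff $\varphi_\alpha$ in each $I_\alpha$; the boundary layers cost $\gamma^{1/2}|I|^{1/2}$.
\item It expands each piece on its own grid $\mathbb{Z}/|I_\alpha|$ and applies Cauchy--Schwarz.
\item The dual factor is at most $\sum_n|\mathcal{F}_I^{-1}h(n)|^2 = \|h\|^2 = 1$, by exact orthogonality on $\mathbb{Z}/|I_\alpha|$. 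This is where the sharp constant comes from.
\item The decay of $\mathcal{F}_{\mathbb{Z}}\varphi_\alpha$ off $\Lambda+B(R/|I_\alpha|)$ supplies the $R^{-A}|\Lambda|$ term piece by piece, whatever the lengths.
\end{itemize}
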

\begin{proof}
    It suffices to remove the dilation by $R$ on the left hand side,
    \begin{align}
        \mathbf{1}_{\Lambda + O(R/|I|)} \longrightarrow \mathbf{1}_{\Lambda};
    \end{align}
    to do so, we choose 
    \[ \Lambda' := \{ \xi \in \mathbb{Z}/|I| : \text{dist}(\xi,\Lambda) \leq R/|I|\} \]
and then just observe that
\[ \Lambda' + O(R/|I_\alpha|) \subset \Lambda + O(2R/|I_\alpha|) \]
    since $|I_\alpha| \leq |I|/10$, so one can simply replace
    \begin{align}
\Lambda \longrightarrow \Lambda', \; \; \; R \longrightarrow 2R 
\end{align}
and then relabel appropriately.

To estimate \eqref{e:uncertaintyprinc}, we express the square root of the left hand side, for an appropriate $h$ with
\begin{align}
\| h \|_{\ell^2(\mathbb{Z}/|I|)} = 1
\end{align}
that vanishes off $\Lambda$, as
\[ \sum_{n \in I} g(n) (\mathcal{F}_I^{-1} h)(n) = \sum_{\alpha} \sum_{n \in I_\alpha} g(n) (\mathcal{F}_I^{-1} h)(n). \]
Freezing $\alpha$, we let $\varphi_{\alpha}$ be a Schwartz function with \[ \mathbf{1}_{\{ n \in I_\alpha : \text{dist}(n,I_\alpha^c) \geq \gamma |I_\alpha| \}} \leq \varphi_\alpha(n) \leq \mathbf{1}_{I_\alpha}(n) \]
so that
\[ |\mathcal{F}_{\mathbb{R}} \varphi_\alpha(\xi)| \lesssim_A |I_\alpha|(1 +  \gamma |I_\alpha||\xi| )^{-A} \]
pointwise; note that $\mathcal{F}_{\mathbb{Z}} \varphi_\alpha$ satisfies
\[ |\mathcal{F}_{\mathbb{Z}} \varphi_\alpha(\xi)| \lesssim_A |I_\alpha|(1 +  \gamma |I_\alpha|\|\xi\|_{\mathbb{T}} )^{-A} \]
by Poisson summation. Since we are interested in the case where $|\xi| \leq 1/5$, there is no difference between the norm and norm $\mod 1$.

Then
\[ \sum_{I_\alpha} (1 - \varphi_\alpha)(n) g(n) (\mathcal{F}_I^{-1} h)(n) = O(\gamma^{1/2} |I_\alpha|^{1/2} \| (\mathcal{F}_I^{-1} h) \|_{\ell^2(I_\alpha)}), \]
and so a sum over $\alpha$ yields a bound of
\[ \gamma^{1/2} |I|^{1/2} \| h \|_{\ell^2(\mathbb{Z}/|I|)} = \gamma^{1/2} |I|^{1/2},\]
by Cauchy-Schwarz.

We turn to the main contribution. By Fourier inversion in $\mathbb{Z}/|I_\alpha|$,
\begin{align}
&\sum_{n \in I_\alpha} \varphi_\alpha(n) g(n) (\mathcal{F}_I^{-1} h)(n) \\
&= \mathbb{E}_{\beta \in \mathbb{Z}/|I_{\alpha}|} \mathbb{E}_{\xi \in \mathbb{Z}/|I|} h(\xi) (\mathcal{F}_{I_\alpha}g)(\beta) \sum_n \varphi_\alpha(n) e(-(\xi - \beta)n) \\
&  = \mathbb{E}_{\beta \in \mathbb{Z}/|I_{\alpha}|} \big( \mathbb{E}_{\xi \in \mathbb{Z}/|I|} h(\xi) (\mathcal{F}_{\mathbb{Z}} \varphi_\alpha)(\xi - \beta) \big) \cdot (\mathcal{F}_{I_\alpha}g)(\beta) \\
&  = \mathbb{E}_{\beta \in \mathbb{Z}/|I_{\alpha}|} \big( \mathbb{E}_{\xi \in \mathbb{Z}/|I|} h(\xi) (\mathcal{F}_{\mathbb{Z}} \varphi_\alpha)(\xi - \beta) \big) \cdot (\mathcal{F}_{I_\alpha}g)(\beta) \cdot \mathbf{1}_{\Lambda + B(R/|I_\alpha|)}(\beta) \\
& \qquad \qquad + \mathbb{E}_{\beta \in \mathbb{Z}/|I_{\alpha}|} \big( \mathbb{E}_{\xi \in \mathbb{Z}/|I|} h(\xi) (\mathcal{F}_{\mathbb{Z}} \varphi_\alpha)(\xi - \beta) \big) \cdot (\mathcal{F}_{I_\alpha}g)(\beta) \cdot \mathbf{1}_{(\Lambda + B(R/|I_\alpha|))^c}(\beta).
\end{align}
We begin with the main term; by Cauchy-Schwarz
\begin{align}
    &\sum_\alpha |\mathbb{E}_{\beta \in \mathbb{Z}/|I_{\alpha}|} \big( \mathbb{E}_{\xi \in \mathbb{Z}/|I|} h(\xi) (\mathcal{F}_{\mathbb{Z}} \varphi_\alpha)(\xi - \beta) \big) \cdot (\mathcal{F}_{I_\alpha}g)(\beta) \cdot \mathbf{1}_{\Lambda + B(R/|I_\alpha|)}(\beta)|^2 \\
    & \leq \sum_\alpha \mathbb{E}_{\beta \in \mathbb{Z}/|I_{\alpha}|} |(\mathcal{F}_{I_\alpha} g)(\beta) \cdot \mathbf{1}_{\Lambda + B(R/|I_\alpha|)}(\beta)|^2 \times \Sigma,
\end{align}
where
\[ \Sigma := \sum_\alpha \mathbb{E}_{\beta \in \mathbb{Z}/|I_{\alpha}|}| ( \mathbb{E}_{\xi \in \mathbb{Z}/|I|} h(\xi) (\mathcal{F}_{\mathbb{Z}} \varphi_\alpha)(\xi - \beta) \big)|^2.  \]
We show that $\Sigma \leq 1$ by computing, for each $\alpha$,
\begin{align}
 &\mathbb{E}_{\beta \in \mathbb{Z}/|I_{\alpha}|}| ( \mathbb{E}_{\xi \in \mathbb{Z}/|I|} h(\xi) (\mathcal{F}_{\mathbb{Z}} \varphi_\alpha)(\xi - \beta) \big)|^2\\
    & =  \mathbb{E}_{\xi,\zeta \in \mathbb{Z}/|I|} h(\xi) \overline{h}(\zeta) \sum_{n,m} \varphi_\alpha(n) \overline{\varphi_\alpha}(m) e(m \xi - n \zeta) \mathbb{E}_{\beta \in \mathbb{Z}/|I_{\alpha}|} e(-(m-n)\beta) \\
    & = \sum_{n} |\varphi_\alpha(n)|^2 \mathbb{E}_{\xi,\zeta \in \mathbb{Z}/|I|} h(\xi) \overline{h}(\zeta) e((\xi - \zeta) n) \\
    & = \sum_n |\varphi_\alpha(n)|^2 |(\mathcal{F}_{I}^{-1} h)(n)|^2;
\end{align}
summing over $\alpha$ yields an upper bound of 
\[ \Sigma \leq \| h \|_{\ell^2(\mathbb{Z}/|I|)}^2 = 1.\]
In the above argument, we emphasize that $\varphi_\alpha$ is supported inside $I_\alpha$, so for $m,n$ in the support of $\varphi_\alpha$
\begin{align}
    \mathbb{E}_{\beta \in \mathbb{Z}/|I_\alpha|} e(-(m-n) \beta) = \mathbf{1}_{m =n}.
\end{align}

Thus, it suffices to efficiently bound
\[ \mathbb{E}_{\beta \in \mathbb{Z}/|I_{\alpha}|} \big( \mathbb{E}_{\xi \in \mathbb{Z}/|I|} h(\xi) (\mathcal{F}_{\mathbb{Z}} \varphi_\alpha)(\xi - \beta) \big) \cdot (\mathcal{F}_{I_\alpha}g)(\beta) \cdot \mathbf{1}_{(\Lambda + B(R/|I_\alpha|))^c}(\beta) \]
for each $\alpha$.

For each $\beta \notin \Lambda + B(R/|I_\alpha|)$, we bound
\begin{align}
    &|\mathbb{E}_{\xi \in \mathbb{Z}/|I|} h(\xi) (\mathcal{F}_{\mathbb{Z}} \varphi_\alpha)(\xi - \beta)|^2 \\
    &\leq \mathbb{E}_{\xi \in \mathbb{Z}/|I|} \mathbf{1}_{\Lambda}(\xi) \cdot  |I_\alpha|^2 (1 + \gamma |I_\alpha|\| \beta - \xi \|)^{-2A} \\
    & \qquad \lesssim \gamma^{-1} (\gamma R)^{-A} |I_\alpha| \cdot \mathbb{E}_{\xi \in \mathbb{Z}/|I|} \mathbf{1}_{\Lambda}(\xi) \cdot \gamma |I_\alpha| \cdot ( 1 + \gamma |I_\alpha| \|\beta - \xi\| )^{-A}
\end{align}
so 
\begin{align}
&\big| \mathbb{E}_{\beta \in \mathbb{Z}/|I_{\alpha}|} \big( \mathbb{E}_{\xi \in \mathbb{Z}/|I|} h(\xi) (\mathcal{F}_{\mathbb{Z}} \varphi_\alpha)(\xi - \beta) \big) (\mathcal{F}_{I_\alpha} g)(\beta) \cdot \mathbf{1}_{(\Lambda + B(R/|I_\alpha|))^c}(\beta) \big|  \\
&  \leq \| g \|_{\ell^2(I_{\alpha})} \cdot (\gamma^{-1} (\gamma R)^{-A} |I_\alpha|)^{1/2} \cdot  \big( \mathbb{E}_{\beta \in \mathbb{Z}/|I_{\alpha}|} \mathbb{E}_{\xi \in \mathbb{Z}/|I|} \mathbf{1}_{\Lambda}(\xi) \gamma |I_\alpha| \cdot ( 1 + \gamma |I_\alpha| \| \beta - \xi \|)^{-A} \big)^{1/2} 
\\
& \leq (\gamma^{-1} (\gamma R)^{-A})^{1/2} |I_\alpha| \cdot ( \mathbb{E}_{\xi \in \mathbb{Z}/|I|} \mathbf{1}_{\Lambda}(\xi) )^{1/2}
\\
& \leq (\gamma^{-1} (\gamma R)^{-A})^{1/2} |I_\alpha| \cdot (\frac{|\Lambda|}{|I|})^{1/2}.
    \end{align} 
Summing over $\alpha$ therefore yields an upper bound of
\[ R^{-A/8} |I|^{1/2} |\Lambda|^{1/2}\]
upon specializing $\gamma = R^{-1/2}$, completing the proof.
\end{proof}

In the following section, we will use this splitting behavior to develop a stopping time algorithm that will eventually allow us to reduce to the case where we consider interval coming from a single set
\begin{align}
    \mathcal{B}([\Lambda]),
\end{align}
as discussed in the proof overview, see \eqref{e:branches}.

\section{Energy Pigeon-Holing}\label{s:ENERGY}
Our task here is to organize our collection of dyadic intervals, $\mathcal{D}$, into scales and locations on which $g$ ``acts like" a fixed linear combination of characters; we freely discard small pointwise errors, exceptional sets, and exceptional ranges of scales  from our analysis. This will allow us to employ the analysis using the technology developed in \S \ref{s:MultiFreq}.

We proceed by energy pigeon-holing, namely by exploiting orthogonality in phase space; the following elementary lemma allows us to convert statements about ``low energy" -- i.e.\ an $\ell^2$ statistic -- to pointwise control.

\begin{lemma}\label{l:l2topointwise}
    Suppose that $\|g \|_{\ell^2(I)}^2 \leq t^2 |I|$, that $|f| \leq 1$, and that $|I| \geq 2^{Q^{1/5}}$. Then
    \begin{align}
        |\sum_n \phi_I(n) f(2x-n) w_Q(n-x) g(n)| \lesssim t^{1/2}
    \end{align}
\end{lemma}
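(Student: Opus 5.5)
The plan is a direct Cauchy--Schwarz argument in the $n$ variable, with the weight $w_Q$ controlled through the moment bounds of Lemma \ref{heath-brown-moments}; the $t^{1/2}$ (rather than $t$) loss leaves room for a Hölder step. Using $|\phi_I| \lesssim \frac{1}{|I|}\mathbf{1}_I$ (up to Schwartz tails, which we dominate by a dyadic decomposition $\phi_I \lesssim \sum_{j} 2^{-100j}\frac{1}{2^j|I|}\mathbf{1}_{2^jI}$, noting that the hypothesis $\|g\|^2_{\ell^2(J)} \lesssim |J|$ on enlarged intervals handles the tails with an acceptable loss) and $|f|\le 1$, we bound
\begin{align}
|\sum_n \phi_I(n) f(2x-n) w_Q(n-x) g(n)| \lesssim \frac{1}{|I|}\sum_{n\in I} |w_Q(n-x)|\,|g(n)| \le \Big(\frac{1}{|I|}\sum_{n\in I}|w_Q(n-x)|^2\Big)^{1/2}\Big(\frac{1}{|I|}\sum_{n \in I}|g(n)|^2\Big)^{1/2}.
\end{align}
The second factor is at most $t$ by hypothesis.

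For the first factor, note that $n - x$ ranges over a translate of an interval of length $|I|\ge 2^{Q^{1/5}}$. Since $w_Q$ is $\mathcal{Q}$-periodic with $\mathcal{Q} \le 3^{Q}$, this could be much larger than $|I|$, so I would not use periodicity directly. Instead, I would argue as in Lemma \ref{heath-brown-moments}: expand $|w_Q|^2$ as a double sum over $\Gamma_Q\times\Gamma_Q$. The diagonal contributes $\sum_{a/q\in\Gamma_Q}|S(a/q)|^2 \lesssim Q^2\mathbf{S}_Q^2 \lesssim Q^{o(1)}$. The off-diagonal frequencies differ by at least $Q^{-2}$, so averaging over any interval of length $L$ gives each off-diagonal term size $O(Q^2/L)$, and the total off-diagonal contribution is $O(Q^{6}\mathbf{S}_Q^2/L) = O(Q^{O(1)} 2^{-Q^{1/5}})$, which is negligible. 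Hence the first factor is $\lesssim Q^{o(1)}$, uniformly in the translate.

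Combining the two factors gives a bound of $Q^{o(1)}\,t$. It then remains to absorb the $Q^{o(1)}$ into $t^{-1/2}$. Since $t = \delta^{\kappa}Q^{-\kappa}\le Q^{-\kappa}$, we have $t^{-1/2}\ge Q^{\kappa/2}$, which dominates $Q^{o(1)}$ for large $Q$, as the $o(1)$ from admissibility is eventually below $\kappa/2$; this yields $\lesssim t^{1/2}$. I expect the only delicate step to be this uniform $L^2$ control of $w_Q$ over translated intervals shorter than the period $\mathcal{Q}$; the large-sieve/orthogonality computation above handles it because $|I|$ is superpolynomial in $Q$. The Schwartz-tail bookkeeping is routine.
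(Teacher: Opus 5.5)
Your proof is correct and follows the paper's one-line argument: Cauchy--Schwarz in $n$, the second-moment bound $\frac{1}{|I|}\sum_{n\in I}|w_Q(n-x)|^2\lesssim Q^{o(1)}$ (your diagonal/off-diagonal computation is a clean self-contained version of it), and then absorbing $Q^{o(1)}$ using $t\le Q^{-\kappa}$. One correction: the Schwartz-tail remark should simply be dropped, since here $\phi\in\mathcal{C}^\infty_c([0,1])$ and so $\phi_I$ is supported on $I$. Had genuine tails been present, knowing only $\|g\|_{\ell^2(J)}^2\lesssim |J|$ on enlarged intervals would leave an $O(Q^{o(1)}2^{-A})$ contribution that is not small in $t$, so that step would not have closed.
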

\begin{proof}
    This follows since $\mathbb{E}_{n \in [N]} |w_Q(n)|^2 \lesssim Q^{\kappa^2/100}$ for $|I| = N$.
\end{proof}

Below, we will view $Q,\delta$ as fixed, noting that whenever $Q \geq \delta^{-1/1000}$, $R \leq Q^{2^{20}\kappa}$, see \eqref{e:R00}; we recall the normalized $\ell^2$-norms \eqref{e:duall2}, and emphasize that each time $M \in K_0 2^{\mathbb{N}}$, where $K_0$ is as in \eqref{e:K0size}, see Lemma \ref{l:lacred}.

\subsection{Trees}
Set $I_0 := [J]$ and let
\[ \mathcal{I} : I_0 \to \mathcal{D}(I_0) \]
be an arbitrary selector (which we secretly think of as a linearizing function for our pertaining suprema), 
where $\mathcal{D}(I_0) := \{ I \in \mathcal{D} : I \subset I_0\}$, and we regard $\mathcal{D}$ as an arbitrary but fixed dyadic grid. 

The structure we introduce will depend on $\mathcal{I}$, but all estimates will be uniform.

For a collection of intervals $E := \{ I \}$, we define the \emph{shadow} of $E$,
\begin{align}
    \text{sh}(E) := \bigcup I.
\end{align}

We will exhibit a partition of $\mathcal{D}(I_0)$ into $V$
many subsets, ``trees,"
\[ \mathcal{D}_j(I_0), \ 0 \leq j < V \]
and an exceptional collection of intervals, $\mathcal{D}_V(I_0)$, so that we can express
\[ \text{sh}(\mathcal{D}_j(I_0)) = \bigcup_{T_j^{\text{max}}} I_j \]
as a disjoint union of maximal dyadic intervals, ``tree tops," so that for each $I_j \in T_j^{\text{max}}$ there exists a unique $I_{j-1} \in T_{j-1}^{\text{max}}$ with $I_j \subset I_{j-1}$; $I_0$ anchors the inductive construction. We use the notation
\[ T_j^{\text{max}}(I_{j-1}) := \{ I \in T_j^{\text{max}} : I \subset I_{j-1} \}. \]

The intervals collected in 
\[ \mathcal{D}_V(I_0)  \]
  will be very localized,
\begin{align}\label{e:loc} |\text{sh}(\mathcal{D}_V(I_0))| = \sum_{T_V^{\text{max}}} |I_V| \lesssim t^{-2} \cdot  V^{-1} \cdot  |I_0|; 
\end{align}
 these intervals will only effect the argument minimally.

\begin{definition}
    A $V$-\emph{forest} consists of disjoint collections of intervals 
    \[ \{ \mathcal{D}_j(I_0) : 0 \leq j\leq V\},\] equipped with nested ``tree tops" $T_j^{\text{max}} \subset \mathcal{D}_j(I_0)$ as above, so that each $I_j \in T_j^{\text{max}}$ comes equipped with a finite set of frequencies, $\Lambda_j(I_j) \subset \mathbb{Z}/|I_j|$, satisfying the following properties:
    \begin{itemize}
    \item The forest exhausts most of $I_0$: \eqref{e:loc} holds;
    \item There are not too many frequencies involved: $|\Lambda_0(I_0)| \lesssim \delta^{-2}$, and in general
      \[ \sup_{j < V} \sup_{I_j \in T_{j}^{\text{max}}} |\Lambda_j(I_j)| \lesssim V  \cdot  \delta^{-2}; \]
      \item The frequencies respect nesting: if $I_j \in T_j^{\text{max}}(I_{j-1})$ then we have the containment
      \[ \Lambda_{j-1}(I_{j-1}) \subset \Lambda_j(I_j);\]
    \item We have the bound
\[ \sup_{I \in \mathcal{D}_j(I_j), \ I_j \in T_j^{\text{max}}} \| \sum_{\xi \in \mathbb{Z}/|I| \smallsetminus \big( \Lambda_j(I_j) + B(R/|I|) \big)} \Psi_\delta\big( \frac{\mathcal{F}_Ig(\xi)}{|I|} \big)  e(n \xi) \|_{\ell^2(\mathbb{Z}/|I|)} \ll t, \]
see \eqref{e:Psidelta}.
    \end{itemize}
    \end{definition}

\begin{definition}
Given a collection of frequencies $\Lambda \subset \mathbb{T}$, we say that an interval, $I$, is \emph{$(R,t)$-localized} with respect to $\Lambda$ if
\begin{align}
\| \sum_{\xi \in \mathbb{Z}/|I| \smallsetminus \Lambda_R(I)} \Psi_\delta\big( \frac{\mathcal{F}_I {g}(\xi)}{|I|} \big) e(\xi x) \|_{\ell^2(I)}^2 \ll t^2 |I|
\end{align}
where
\[ \Lambda_R(I) := \{ \xi \in \mathbb{Z}/|I| : [\xi-R/|I|,\xi + R/|I|) \cap \Lambda \neq \emptyset \}. \]
Otherwise, we say that $I$ is \emph{$(R,t)$-diffuse} with respect to $\Lambda$.
\end{definition}

Below, we use the notation
\begin{align}\label{e:Specdelta}
    \text{Spec}_\delta(I) := \{ \xi \in \mathbb{Z}/|I| : |\mathcal{F}_I g(\xi)| \approx \delta |I| \}
\end{align}
to denote the large spectrum of $\delta$ at each scale and location; note that if $I$ is \emph{$(R,t)$-diffuse} with respect to $\Lambda$, then
\begin{align}
&\| \frac{1}{|I|} \sum_{\text{Spec}_\delta(I) \smallsetminus \Lambda_R(I)} \mathcal{F}_I {g}(\xi) e(\xi x) \|_{\ell^2(I)}^2 \\
& \qquad = \| \mathcal{F}_I{g} \cdot \mathbf{1}_{\text{Spec}_\delta(I) \smallsetminus \Lambda_R(I)} \|_{\ell^2(\mathbb{Z}/|I|)}^2 \gtrsim t^2 |I|.
\end{align}

Below, regarding $\delta > 0$ as fixed, we use the notation
\[ \Sigma(I) := \Sigma_\delta(I) := \text{Spec}_\delta(I) \subset \mathbb{Z}/|I|.\]

So, at time $t=0$, initiate 
\[ \Lambda_0(I_0) := \Sigma(I_0),\]
let
\[ X_1 := \{ r \in I_0 : \mathcal{I}(r) \text{ is diffuse with respect to $\Lambda_0(I_0)$} \}, \]
and let $T_1^{\text{max}}$ denote the maximal (with respect to inclusion) dyadic sub-intervals inside $X_1$.
Thus
\[ \text{sh}(X_1) = \bigcup_{T_1^{\text{max}}} I.\]
Note that for $r \in I_1 \in T_1^{\text{max}}$
\[ \mathcal{I}(r) \subset I_1.\]

For each $I_1 \in T_1^{\text{max}}$, set
\[ \Lambda_1(I_1) := \big\{ \xi \in \mathbb{Z}/|I_1| : \text{dist}(\xi,\Lambda_0(I_0)) \leq |I_1|^{-1} \big\} \cup \Sigma(I_1) \subset \mathbb{Z}/|I_1|. \]

Next, for each $I_1 \in T_1^{\text{max}}$, let
\[ X_2(I_1) := \{ r \in I_1 : \mathcal{I}(r)  \text{ is diffuse with respect to $\Lambda_1(I_1)$} \}, \]
and consolidate
\[ X_2 := \bigcup_{T_1^{\text{max}}} X_2(I_1).\]

We let $T_2^{\text{max}}(I_1)$ denote the maximal dyadic sub-intervals of $X_2(I_1)$
and collect
\[ T_2^{\text{max}} = \bigcup_{I_1 \in T_1^{\text{max}}} T_2^{\text{max}}(I_1). \]

For each $I_2 \in T_2^{\text{max}}(I_1)$, let
\[ \Lambda_2(I_2) := \big\{ \xi \in \mathbb{Z}/|I_2| : \text{dist}(\xi,\Lambda_1(I_1)) \leq |I_2|^{-1} \big\} \cup \Sigma(I_2) \subset \mathbb{Z}/|I_2|.\]

Note that for $r \in I_2 \in T_2^{\text{max}}(I_1)$
\[ \mathcal{I}(r) \subset I_2 \subset I_1.\]

We continue this process inductively up to time $V$; we prove the following localization estimate.

\begin{proposition}
The following bound holds:
\[ |\text{sh}(X_V)| \lesssim t^{-2} V^{-1} |I_0|.\]
\end{proposition}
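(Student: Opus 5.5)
We prove the bound by an energy-increment argument. The quantity tracked is the localized $\ell^2$ spectral energy of $g$, restricted to the frequency sets $\Lambda_j(I_j)$. Each step of the stopping time produces a gain of order $t^2|I_j|$ over the shadow of the newly selected tree tops. Lemma \ref{l:polysplit} says this energy is essentially monotone under splitting. Since the total energy is at most $\|g\|_{\ell^2(I_0)}^2 \lesssim |I_0|$, the gains cannot accumulate over $V$ generations without the shadows shrinking.

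Concretely, for each $j$ define the energy functional
\[ \mathcal{E}_j := \sum_{I_j \in T_j^{\text{max}}} \mathbb{E}_{\mathbb{Z}/|I_j|} \mathbf{1}_{\Lambda_j(I_j) + O(R/|I_j|)}(\beta)\, |\mathcal{F}_{I_j} g(\beta)|^2 . \]
By Plancherel and $|g| \le 1$, every $\mathcal{E}_j \le \sum_{T_j^{\text{max}}} |I_j| \le |I_0|$.

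The key step is to show the increment
\[ \mathcal{E}_{j+1} \ge \mathcal{E}_j + c\, t^2 |\text{sh}(X_{j+1})| - \text{error}, \qquad \text{error} \lesssim |I_0| \big( R^{-A}V\delta^{-2} + R^{-1/2} \big). \]

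To prove it, fix $I_j$ and the tops $I_{j+1} \in T_{j+1}^{\text{max}}(I_j)$, and complete them by dyadic intervals $\{I_\alpha\}$ to a partition of $I_j$. Apply Lemma \ref{l:polysplit} with $\Lambda = \Lambda_j(I_j)$; this bounds the $I_j$-energy by the sum of the child energies, up to the stated error. The $|\Lambda|$ factor there is at most $V\delta^{-2}$, and the hierarchy \eqref{e:paramhier} makes the error negligible against $t^2$.

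On a selected child $I_{j+1}$ the frequency set grows: $\Lambda_{j+1}(I_{j+1})$ contains the $|I_{j+1}|^{-1}$-neighbourhood of $\Lambda_j(I_j)$, so the inherited part of its energy already dominates the child's share of the parent energy. It also contains $\Sigma(I_{j+1})$. Each $r \in I_{j+1}$ has $\mathcal{I}(r) \subset I_{j+1}$ diffuse with respect to $\Lambda_j$, and by the remark after \eqref{e:Specdelta} this forces spectral mass $\gtrsim t^2|\mathcal{I}(r)|$ on $\text{Spec}_\delta(\mathcal{I}(r)) \smallsetminus \Lambda_R$.

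The main obstacle is transferring this new mass from the diffuse subinterval $\mathcal{I}(r)$ up to the maximal top $I_{j+1}$. The diffusion is witnessed on $\mathcal{I}(r)$, not on $I_{j+1}$. To handle this, apply Lemma \ref{l:polysplit} a second time, splitting $I_{j+1}$ into the maximal diffuse intervals $\mathcal{I}(r)$ it contains (which cover it, by maximality) together with filler intervals. This lets the energies of the $\mathcal{I}(r)$ be summed to give $\gtrsim t^2 |I_{j+1}|$ of energy outside $\Lambda_j + B(R/\cdot)$. Tracking $\Sigma$ across scales then requires the selection to be run with enlarged neighbourhoods, which is what the $R$-dilation in the definition is designed for. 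If this transfer proves too lossy, the fallback is to redefine $\mathcal{E}_j$ as a sum over all selected diffuse intervals rather than tree tops.

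Granting the increment, the sets $X_j$ are nested, so $|\text{sh}(X_V)| \le |\text{sh}(X_j)|$ for every $j \le V$. Summing the increments gives
\[ c\, t^2 \sum_{j<V} |\text{sh}(X_{j+1})| \le \mathcal{E}_V + V\cdot \text{error} \lesssim |I_0| . \]
Hence $V\, t^2\, |\text{sh}(X_V)| \lesssim |I_0|$, which is the claimed bound $|\text{sh}(X_V)| \lesssim t^{-2}V^{-1}|I_0|$.

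Along the way we would also verify the frequency count $|\Lambda_j(I_j)| \lesssim V\delta^{-2}$. It follows by induction, since each generation adds $|\Sigma(I_j)| \lesssim \delta^{-2}$ frequencies by Corollary \ref{c:sampling}, and the $|I_j|^{-1}$-neighbourhood step only refines existing points without increasing their number by more than a constant.
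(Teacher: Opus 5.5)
Your overall strategy is the paper's: an energy increment driven by Lemma~\ref{l:polysplit}, diffuseness of the new tree tops, and the inclusion $\Sigma(I_{j+1})\subset\Lambda_{j+1}(I_{j+1})$. The paper runs the same chain, telescoping downward from level $v$. However, two steps fail as you have written them.

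The first is the increment itself. With $\mathcal{E}_j$ summed over tree tops only, the inequality $\mathcal{E}_{j+1}\ge\mathcal{E}_j+ct^2|\text{sh}(X_{j+1})|-\text{error}$ is false. Lemma~\ref{l:polysplit} bounds the localized energy of $I_j$ by the localized energies of \emph{all} pieces of the partition, and the fillers $I_\alpha$ you add are not counted in $\mathcal{E}_{j+1}$. Concretely, if $X_{j+1}=\emptyset$ then $\mathcal{E}_{j+1}=0$. Yet for $j\ge1$ you have $\mathcal{E}_j\ge ct^2|\text{sh}(X_j)|$, since every $I_j$ is diffuse and $\Sigma(I_j)\subset\Lambda_j(I_j)$. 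The fix is to let the functional live on a full nested partition of $I_0$. Fillers created inside $I_j$ keep the frequency set $\Lambda_j(I_j)$ and are never split again. The functional is then still $\le|I_0|$ by Plancherel, and Lemma~\ref{l:polysplit} does give the increment. The paper's displayed chain also sums over tree tops only, so keeping the fillers is what makes either version rigorous. You also need the paper's preliminary sparsification of scales into $O(1)$ classes, so that the hypothesis $|I_\alpha|\le|I|/10$ of Lemma~\ref{l:polysplit} holds.

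The second is the error accounting, which does not close. You charge each step $\lesssim|I_0|(R^{-A}V\delta^{-2}+R^{-1/2})$ and then assert $V\cdot\text{error}\lesssim|I_0|$. But in this regime $R\le Q^{2^{20}\kappa}$, while $V=Q^{\rho/10}$ with $\rho\gg 2^{100}\kappa$, so $VR^{-1/2}\gg1$. The hierarchy \eqref{e:paramhier} makes one error small against $t^2$; it does not make $V$ of them small against $1$. Since only the level-$j$ tops are split at step $j$, the error is really $\lesssim(R^{-A}V\delta^{-2}+R^{-1/2})|\text{sh}(X_j)|$; this is the paper's $\sum_{T_{v-1}^{\text{max}}}O(\cdot)|I_{v-1}|$. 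Because $R^{-1/2}=t^{50}\ll t^2$, it is absorbed by the gain $ct^2|\text{sh}(X_j)|$ from the previous step. This gives $\sum_{j\le V}|\text{sh}(X_j)|\lesssim t^{-2}|I_0|$, and nesting finishes.

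Two smaller points. Your ``main obstacle'' does not arise. The tree tops are the maximal intervals among the selected $\mathcal{I}(r)$, $r\in X_{j+1}$ (this is why $\mathcal{I}(r)\subset I_{j+1}$), so each one is itself diffuse, and the paper uses exactly this. Your proposed transfer would not have sufficed in any case. Splitting plus Plancherel only shows that $I_{j+1}$ has energy $\gtrsim t^2|I_{j+1}|$ off $\Lambda_j(I_j)+B(R/|I_{j+1}|)$. It does not place that energy on $\text{Spec}_\delta(I_{j+1})$, which is the only set added to $\Lambda_{j+1}$. Also, $|\Lambda_j(I_j)|\lesssim V\delta^{-2}$ does not follow from ``a constant per generation'': that iterates to $C^V$, which would ruin the $R^{-A}|\Lambda|$ error. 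It holds because the radii $|I_k|^{-1}$ grow geometrically along the tree, so each frequency of each $\Sigma(I_k)$ spawns only $O(1)$ points of $\Lambda_j(I_j)$.
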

\begin{proof}
We begin by sparsifying our scales into (say) $10$ subclasses, so that within each class
\[ |I| < |I'| \Rightarrow |I| \leq 2^{-10} |I'|; \]
we will restrict all intervals in question to a single such subclass, and then use a union bound to conclude.

Suppose that for all $w \leq v $,
\[ |\text{sh}(X_w)| \geq |\text{sh}(X_v)| \geq D^{-1} |I_0| \geq R^{-A_0} |I_0|.\]
We will show that $v \lesssim t^{-2} D$. Since our tree tops are disjoint and $g$ is one-bounded, we observe the energy bound
\begin{align}\label{e:totalenergy}
|I_0| \geq \sum_{T_{v-1}^{\text{max}}} \sum_{I_v \in T_v^{\text{max}}(I_{v-1})} \| \sum_{n \in I_v} g(n) e(-n \beta) \|_{\ell^2(\mathbb{Z}/|I_v|)}^2;
\end{align}
since our tree tops are \emph{diffuse}, we bound
\begin{align}
\eqref{e:totalenergy}
& \geq \sum_{T_{v-1}^{\text{max}}} \sum_{I_v \in T_v^{\text{max}}(I_{v-1})} \big( \| \sum_{n \in I_v} g(n) e(-n \beta) \cdot \mathbf{1}_{\Lambda_{v-1}(I_{v-1}) + B(R/|I_v|)}(\beta) \|_{\ell^2(\mathbb{Z}/|I_v|)}^2 + c t^2 |I_v| \big) \\
& \geq \sum_{T_{v-1}^{\text{max}}} \big( \sum_{I_v \in T_v^{\text{max}}(I_{v-1})} \| \sum_{n \in I_v} g(n) e(-n \beta) \cdot \mathbf{1}_{\Lambda_{v-1}(I_{v-1}) + B(R/|I_v|)}(\beta) \|_{\ell^2(\mathbb{Z}/|I_v|)}^2 \big) + c t^2 \cdot |\text{sh}(X_v)| \\
& \geq \sum_{T_{v-1}^{\text{max}}}  \| \sum_{n \in I_{v-1} } g(n) e(-n \beta) \cdot \mathbf{1}_{\Lambda_{v-1}(I_{v-1}) + B(R/|I_{v-1}|)}(\beta) \|_{\ell^2(\mathbb{Z}/|I_{v-1}|)}^2 \\
& \qquad + c t^2 \cdot  |\text{sh}(X_v)|   - \sum_{T_{v-1}^{\text{max}}} O_A(R^{-A} v \delta^{-2} + R^{-1/2})) |I_{v-1}|  \\
& \geq \sum_{T_{v-1}^{\text{max}}}  \| \sum_{n \in I_{v-1}} g(n) e(-n \beta) \cdot \mathbf{1}_{\Lambda_{v-1}(I_{v-1}) + B(R/|I_{v-1}|)} (\beta)\|_{\ell^2(\mathbb{Z}/|I_{v-1}|)}^2 + c_0 t^2/D |I_0|,
\end{align}
provided we choose $A$ so large that
\[ R^{-A} V \delta^{-2} \leq R^{-1},\]
say; we deduced the above lower bound via Lemma \ref{l:polysplit}, which applies since our tree tops are disjoint, nested, and have widely separated sizes by our initial sparsification.

We express
\begin{align}
&\sum_{T_{v-1}^{\text{max}}}  \| \sum_{n \in I_{v-1}} g(n) e(-n \beta) \cdot \mathbf{1}_{\Lambda_{v-1}(I_{v-1}) + B(R/|I_{v-1}|)}(\beta) \|_{\ell^2(\mathbb{Z}/|I_{v-1}|)}^2 \\
& \qquad = \sum_{T_{v-2}^{\text{max}}} \sum_{I_{v-1} \in T_{v-1}^{\text{max}}(I_{v-2})}  \| \sum_{n \in I_{v-1}} g(n) e(-n \beta) \cdot \mathbf{1}_{\Lambda_{v-1}(I_{v-1}) + B(R/|I_{v-1}|)}(\beta) \|_{\ell^2(\mathbb{Z}/|I_{v-1}|)}^2 \\
& \qquad \geq \sum_{T_{v-2}^{\text{max}}} \sum_{I_{v-1} \in T_{v-1}^{\text{max}}(I_{v-2})} \Big( \| \sum_{n \in I_{v-1}} g(n) e(-n \beta) \cdot \mathbf{1}_{\Lambda_{v-2}(I_{v-2}) + B(R/|I_{v-1}|)}(\beta) \|_{\ell^2(\mathbb{Z}/|I_{v-1}|)}^2 \\
& \qquad \qquad  + 
\| \sum_{n \in I_{v-1}} g(n) e(-n \beta) \cdot \mathbf{1}_{\text{Spec}_\delta(I_{v-1}) \smallsetminus \big( \Lambda_{v-2}(I_{v-2}) + B(R/|I_{v-1}|)\big) }(\beta) \|_{\ell^2(\mathbb{Z}/|I_{v-1}|)}^2 \Big),
\end{align}
since
\[ \Lambda_{v-1}(I_{v-1}) \supset \text{Spec}_{\delta}(I_{v-1}).\]
So,
\begin{align}
&\sum_{T_{v-1}^{\text{max}}}  \| \sum_{n \in I_{v-1}} g(n) e(-n \beta) \cdot \mathbf{1}_{\Lambda_{v-1}(I_{v-1}) + B(R/|I_{v-1}|)}(\beta) \|_{\ell^2(\mathbb{Z}/|I_{v-1}|)}^2 \\
& \geq \sum_{T_{v-2}^{\text{max}}} \big(\sum_{I_{v-1} \in T_{v-1}^{\text{max}}(I_{v-2})} \| \sum_{n \in I_{v-1}} g(n) e(-n \beta) \cdot \mathbf{1}_{\Lambda_{v-2}(I_{v-2}) + B(R/|I_{v-1}|)}(\beta) \|_{\ell^2(\mathbb{Z}/|I_{v-1}|)}^2\big) \\
& \qquad + c t^2 \cdot |\text{sh}(X_{v-1})| \\
& \geq \sum_{T_{v-2}^{\text{max}}}  \big( \| \sum_{n \in I_{v-2}} g(n) e(-n \beta) \cdot \mathbf{1}_{\Lambda_{v-2}(I_{v-2}) + B(R/|I_{v-2}|)}(\beta) \|_{\ell^2(\mathbb{Z}/|I_{v-2}|)}^2 \big) + c_0 t^2 \cdot |\text{sh}(X_{v-1})|
\end{align}
or
\[ |I_0| \geq \sum_{T_{v-2}^{\text{max}}}  \| \sum_{n \in I_{v-2}} g(n) e(-n \beta) \cdot \mathbf{1}_{\Lambda_{v-2}(I_{v-2}) + B(R/|I_{v-2}|)}(\beta) \|_{\ell^2(\mathbb{Z}/|I_{v-2}|)}^2 + 2 c_0 t^2/D |I_0|;\]
iterating this forces $v \lesssim t^{-2} D$.
\end{proof}

\subsection{From Trees to Branches}
In this subsection, we ``prune" each tree in our $V$-forest into ``branches," which have the additional feature that the frequencies linked to each branch are ``widely separated" relative to scale. 

We begin with the following definition.

\begin{definition}
Regarding a $V$ forest as fixed, a $U$-\emph{branch} consists of a (possibly empty) disjoint collection of intervals $\mathcal{B}_s(I_j) \subset \mathcal{D}_j(I_j), \ I_j \in T_j^{\text{max}}, \ s \leq U$ and collections of frequencies $\Lambda_{j,s}(I_j) \subset \mathbb{T}$, so that
    \begin{itemize}
        \item There are not too many frequencies involved:
        \[ \Lambda_{j,s}(I_j) \subset \Lambda_j(I_j); \]
        \item The frequencies are widely separated relative to scale:
        \[  
\min_{\theta \neq \theta' \in \Lambda_{j,s}(I_j)} \|\theta - \theta'\| \geq 2^{O(R)} \cdot \max_{I \in \mathcal{B}_s(I_j)} |I|^{-1}; \]
\item We may localize each family
\[ \big\{ \sum_{\xi \in \text{Spec}_\delta(I)} \mathcal{F}_{\mathbb{Z}}g(\xi) e(\xi n) : I \in \mathcal{B}_s(I_j) \big\}\]
to a small neighborhood of $\Lambda_{j,s}(I_j)$
\[ \| \sum_{\xi \in \mathbb{Z}/|I| \smallsetminus \big(\Lambda_{j,s}(I_j) + B(2R/|I|) \big)} \Psi_\delta\big( \frac{\mathcal{F}_I g(\xi)}{|I|} \big) e(\xi x) \|_{\ell^2(I)}^2 \ll t^2 |I|  \]
for each $I \in \mathcal{B}_s(I_j)$. 
    \end{itemize}
\end{definition}

We isolate the main structural decomposition we will achieve in the following Lemma.

\begin{lemma}[Pruning a Tree]\label{l:branches}
    For each $U \geq R t^{-2}$, for each $I_j \in T_j^{\text{max}}, \ j < V$, we may decompose
    \[ \mathcal{D}_j(I_j) = \bigcup_{s \leq UV} \mathcal{B}_s(I_j) \cup \mathcal{B}_{\infty}(I_j) \]
where each $\mathcal{B}_s(I_j)$ is a (possibly empty) ``branch," and there are only a few scales represented in $\mathcal{B}_\infty(I_j)$, the ``boundary" branch:
\[ |\{ |I| : I \in \mathcal{B}_\infty(I_j)\}| \lesssim RU V.\]
\end{lemma}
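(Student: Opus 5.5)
The plan is to organize the frequencies of $\Lambda_j(I_j)$ by their mutual separation, scale by scale, and to cut the scales into a bounded number of runs on which a single clustered frequency set serves every interval. Fix $I_j \in T_j^{\text{max}}$ and write $\Lambda := \Lambda_j(I_j)$, so that $|\Lambda| \lesssim V\delta^{-2}$ by the forest property. Since we assume $U \geq R t^{-2}$, it suffices to show the scales number at most $O(|\Lambda|)\lesssim V\delta^{-2}$, up to an $R$-controlled boundary loss. For each dyadic scale $2^k$ (recall lengths lie in $K_0 2^{\mathbb{N}}$), define the clustering of $\Lambda$ at scale $2^k$ as the partition into connected components of $\Lambda + B(2^{CR}2^{-k})$, keeping one representative per component; call the resulting set $\Lambda^{(k)}$. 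As $k$ increases the components only split, so the map $k \mapsto \Lambda^{(k)}$ changes at most $|\Lambda|-1$ times. Call a scale $k$ \emph{critical} if some pair $\theta\neq\theta'\in\Lambda$ has $\|\theta-\theta'\| \in [2^{-k-O(R)}, 2^{-k+O(R)}]$. This is the analogue of the exceptional set $E$ in the proof of Corollary \ref{c:mfproj}: there are at most $O(R)$ critical scales per splitting event, hence $O(R|\Lambda|)$ critical scales in total.

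The branches are then built as follows. Place every $I\in\mathcal{D}_j(I_j)$ with $\log_2(|I|/K_0)$ critical into $\mathcal{B}_\infty(I_j)$; this uses $\lesssim R|\Lambda| \lesssim RUV$ scales. The remaining scales form $O(|\Lambda|)$ maximal runs of consecutive noncritical scales. Along each run the components of $\Lambda + B(2^{O(R)}/|I|)$ stay the same and remain separated by $\geq 2^{O(R)}$ times the largest reciprocal scale in the run. On each run, set $\Lambda_{j,s}(I_j)$ to be the chosen cluster representatives. Then:
\begin{itemize}
\item the separation property holds by the noncriticality of the run;
\item the containment $\Lambda_{j,s}\subset\Lambda_j$ holds by construction;
\item the number of branches is $O(|\Lambda|) \leq UV$.
\end{itemize}

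The localization property is the substantive step. For $I\in\mathcal{D}_j(I_j)$, the forest gives smallness of the $\Psi_\delta$-projection off $\Lambda + B(R/|I|)$. Off a critical scale, every frequency of $\Lambda$ within $R/|I|$ of some $\xi$ lies in a cluster whose diameter is $\leq 2^{-O(R)}/|I|$, which is much smaller than $R/|I|$. Hence $\Lambda + B(R/|I|) \subset \Lambda_{j,s}+B(2R/|I|)$, provided we choose the $O(R)$ in the definition of critical so that clusters at noncritical scales have diameter $\leq R/|I|$. This is a discrete containment on $\mathbb{Z}/|I|$, and the stated $\ell^2(I)$ bound then follows directly from the forest property with no loss.

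The main obstacle is to reconcile the two roles of the constant in $2^{O(R)}$. Clusters must have diameter $\ll R/|I|$ for localization, while distinct clusters must be $2^{O(R)}$-separated relative to the \emph{largest} reciprocal scale of the run. This forces gaps between pairwise distances of $\Lambda$, and those gaps are exactly what the critical-scale exclusion of width $O(R)$ provides. If the per-pair count proves too lossy, the fallback is the $|E|\lesssim K^{500}$-type count from Corollary \ref{c:mfproj}. That count is absorbed only if $U$ is taken larger, so the tight part of the argument is checking $R|\Lambda|\lesssim RUV$.
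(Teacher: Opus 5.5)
There is a genuine gap, and it sits exactly at the step you flagged as tight. Your reduction ``since $U\ge Rt^{-2}$, it suffices to show the scales number at most $O(|\Lambda|)$'' does not follow. The forest only gives $|\Lambda_j(I_j)|\lesssim V\delta^{-2}$, and this can be attained, whereas the lemma must hold for $U=Rt^{-2}=(Q/\delta)^{102\kappa}$. That is the value the paper actually uses in \S\ref{s:nonboundary}, where the counts $UV$ and $RUV$ enter the boundary-branch estimate and the bound $Vt^{-2}R\log Q\cdot\mathbf{C}_{\text{Branch}}$. Throughout most of the regime $Q\ge\delta^{-1/1000}$ this $U$ is far smaller than $\delta^{-2}$.

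Your construction really does produce too many branches. Take $\Lambda$ hierarchical, with clusters splitting at $\approx|\Lambda|$ scales that are pairwise $2^{O(R)}$ apart; there is room for this, since $\log_2|I_j|$ is unbounded. Then every splitting scale is critical and starts a new run, so you get $\approx V\delta^{-2}$ branches and $\approx RV\delta^{-2}$ boundary scales rather than $UV$ and $RUV$. The root cause is that your localization is lossless: the exact containment $\Lambda+B(R/|I|)\subset\Lambda_{j,s}+B(2R/|I|)$ forces a new branch at every split. As a result the hypothesis $U\ge Rt^{-2}$ never does any work in your argument. (Separately, criticality is defined pairwise, so ``$O(R)$ critical scales per splitting event'' needs its own justification, but this is secondary.)

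The missing idea is to make localization lossy and pay for the loss in energy, using $|\Psi_\delta|\lesssim\delta$. The paper proceeds as follows.
\begin{itemize}
\item It selects entropy levels, with $m_s$ the largest $m$ such that $\text{Ent}(\Lambda_0,2^{m})\ge\text{Ent}(\Lambda_0,2^{m_{s-1}})-\delta^{-2}/U$.
\item It takes nested maximal $2^{m_s}$-separated subsets $\Lambda_s$. These serve as branch frequencies on the scales $2^{m_{s-1}}\le|I|^{-1}\le 2^{m_s}$, excluding scales within $2^{\pm O(R)}$ of some $2^{m_s}$.
\item It simply drops the frequencies of $\Lambda_{s-1}\smallsetminus\Lambda_s$, which split off at intermediate scales; there are only about $\delta^{-2}/U$ of them.
\end{itemize}
Each dropped frequency occupies $O(R)$ points of $\mathbb{Z}/|I|$, each carrying $|\Psi_\delta|^2\lesssim\delta^2$. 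The total localization loss is therefore $\lesssim R\delta^2|I|\cdot\delta^{-2}/U=(R/U)|I|\ll t^2|I|$, and this is precisely where $U\ge Rt^{-2}$ is used.

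The counts then follow. The total entropy is at most $|\Lambda_0|\lesssim V\delta^{-2}$, and each level consumes more than about $\delta^{-2}/U$ of it, so there are $\lesssim UV$ levels. The boundary scales, being those within $O(R)$ of some $m_s$, then number $\lesssim RUV$. Your scheme is essentially the special case in which the threshold $\delta^{-2}/U$ is below $1$, i.e.\ $U\gtrsim\delta^{-2}$, so that no frequency ever needs to be discarded. It does not cover the stated range of $U$.
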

For the remainder of the paper, we will use ``branch" to refer to ``non-boundary branch," thus will distinguish between branches and boundary branches.

\begin{proof}
For each $\epsilon > 0$, let
\[ \text{Ent}(\Lambda_0,\epsilon) \]
denote the cardinality of the maximal $\epsilon$-separated subset of $\Lambda_0$, where we abbreviate $\Lambda_0 := \Lambda_j(I_j)$. We select an increasing sequence of \emph{non-positive} integers 
\[ m_0 < m_1 < \dots < m_{UV} \leq 0\] 
so that $|I_j|^{-1} = 2^{m_0}$, and for each $s \geq 1$, $m_s$ is the maximal integer satisfying
\[ \text{Ent}(\Lambda_0,2^{m_s}) \geq \text{Ent}(\Lambda_0,2^{m_{s-1}}) - \delta^{-2}/U; \]
since $|\Lambda_0| \leq V \delta^{-2}$, this process is guaranteed to stop after $\leq UV$ many steps.

We now choose $\Lambda_1 \subset \Lambda_0$ a maximal $2^{m_1}$-separated subset, so that
\[ \Lambda_0 \subset \Lambda_1 + B(2^{m_1}),\]
and inductively select
\[ \Lambda_s \subset \Lambda_{s-1}\]
a maximal $2^{m_s}$-separated subset, so that
\[ \Lambda_{s-1} \subset \Lambda_s + B(2^{m_s}).\]

We collect the boundary scales lieing to close to an entropy jump,
\[ \mathcal{B}_{\infty}(I_j) := \{ I \in \mathcal{D}_j(I_j) : |I|^{-1} = 2^{m_s \pm O(R)} \text{ for some } s \leq UV \}, \]
and
\[ \mathcal{B}_s(I_j) := \{ I \in \mathcal{D}_j(I_j) \smallsetminus \mathcal{B}_\infty(I_j) : 2^{m_{s-1}} \leq |I|^{-1} \leq 2^{m_s} \}, \]
so that $I \in \mathcal{B}_s(I_j)$ satisfy
\begin{align}
    2^{m_s} \gg 2^{O(R)}/|I|.
\end{align}

It remains to show that for all $I \in \mathcal{B}_s(I_j)$

\[ \| \sum_{\xi \in \mathbb{Z}/|I| \smallsetminus \big(\Lambda_{j,s}(I_j) + B(2R/|I|) \big)} \Psi_\delta\big( \frac{\mathcal{F}_I g(\xi)}{|I|} \big) e(\xi x) \|_{\ell^2(I)}^2 \ll t^2 |I|.  \]

Abbreviating 
\[ \Lambda_0 \supset \Lambda_{s-1} := \Lambda_{j,s-1}(I_j) \supset \Lambda_s := \Lambda_{j,s}(I_j), \]
we bound  
\begin{align*}
&\sum_{\xi \in \mathbb{Z}/|I| \smallsetminus \big(\Lambda_{j,s}(I_j) + B(2R/|I|) \big)} |\Psi_\delta\big( \frac{\mathcal{F}_I g(\xi)}{|I|} \big)|^2 \\
&\leq \sum_{\xi \in \mathbb{Z}/|I| \smallsetminus \big(\Lambda_0 + B(R/|I|) \big)} |\Psi_\delta\big( \frac{\mathcal{F}_I g(\xi)}{|I|} \big)|^2  + \sum_{\xi \in \big(\Lambda_0 + B(R/|I|) \big) \smallsetminus \big( \Lambda_s + B(2R/|I|) \big)} |\Psi_\delta\big( \frac{\mathcal{F}_I g(\xi)}{|I|} \big)|^2;
\end{align*}

since $I$ is localized with respect to $\Lambda_0$, we may bound
\begin{align}
    \sum_{\xi \in \mathbb{Z}/|I| \smallsetminus \big(\Lambda_0 + B(R/|I|) \big)} |\Psi_\delta\big( \frac{\mathcal{F}_I g(\xi)}{|I|} \big)|^2 \ll t^2 \cdot |I|;
\end{align}  
the second term is of a simpler nature: since $R/|I| \geq 1/|I| \gg 2^{m_{s-1}}$, and
\[ |\Psi_\delta | \approx \delta,\]
see \eqref{e:Psidelta},
we may  contain
\begin{align*}
&\big(\Lambda_0 + B(R/|I|) \big) \smallsetminus \big( \Lambda_s + B(2R/|I|) \big) \\
& \qquad \subset \big( \Lambda_{s-1} + B(2^{m_{s-1}}) + B(R/|I|) \big) \smallsetminus \big( \Lambda_s + B(2R/|I|) \big) \\
& \qquad \qquad \subset \big( \Lambda_{s-1} + B(2R/|I|) \big) \smallsetminus \big( \Lambda_s + B(2R/|I|) \big) \\
& \qquad \qquad \qquad \subset \big( \Lambda_{s-1} \smallsetminus \Lambda_s \big) + B(2R/|I|) , \end{align*}
which leads to the estimate
\begin{align}\label{e:aboutdelta}
    &\sum_{\xi \in \big(\Lambda_0 + B(R/|I|) \big) \smallsetminus \big( \Lambda_s + B(2R/|I|) \big)} |\Psi_\delta\big( \frac{\mathcal{F}_I g(\xi)}{|I|} \big)|^2 \\
    & \qquad \qquad \qquad \lesssim \delta^2 |I| \cdot |\Lambda_{s-1} \smallsetminus \Lambda_s|  \\
    & \qquad \qquad \qquad = \delta^2 |I| \cdot ( |\Lambda_{s-1}| - |\Lambda_s|)  \\
    &\qquad \qquad \qquad \leq R/U \cdot |I| \ll t^2 \cdot |I|. \notag
\end{align}
\end{proof}

With these preparations in mind, we proceed to the proof of Theorem \ref{t:main}.

\section{The Opening: Quantifying Convergence and Transference}\label{s:trans}
Let $(X,\mu,T)$ be arbitrary, and fix $f,g \in L^\infty(X)$; we first reduce to the case where $g \in \mathcal{K}(T)^{\perp}:$

\begin{lemma}
    In proving Theorem \ref{t:main}, it suffices to assume that $g \in \mathcal{K}(T)^{\perp}$.
\end{lemma}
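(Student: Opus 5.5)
The plan is to decompose $g = g_{\mathcal K} + g^{\perp}$, where $g_{\mathcal K} := \mathbb{E}(g \mid \mathcal{K}(T))$ is the conditional expectation onto the Kronecker factor and $g^\perp := g - g_{\mathcal K} \in \mathcal{K}(T)^\perp$. Both pieces are in $L^\infty(X)$, with $\|g_{\mathcal K}\|_\infty \le \|g\|_\infty$. By bilinearity,
\[ B_N(f,g) = B_N(f,g_{\mathcal K}) + B_N(f,g^\perp), \]
so it suffices to prove almost sure convergence of $B_N(f,g_{\mathcal K})$ unconditionally; the second term is then covered by the assumed case.

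For the Kronecker part, I use that $L^2(\mathcal{K}(T))$ is the closed span of eigenfunctions of $T$. Given $\epsilon>0$, I choose a finite linear combination $h = \sum_{j} c_j \phi_j$ with $T\phi_j = e(\alpha_j)\phi_j$ and $\|g_{\mathcal K} - h\|_{L^2(X)} \le \epsilon$. Truncating the eigenfunctions, whose moduli are $T$-invariant, I may assume that $h$ is bounded by $C\|g\|_\infty$.

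For each eigenfunction,
\[ B_N(f,\phi_j)(x) = \phi_j(x) \cdot \frac{1}{N}\sum_{n\le N} w(n) e(-n\alpha_j) T^n f(x). \]
These are weighted single-function (Wiener--Wintner type) averages. Their almost sure convergence is the linear analogue of the theorem: it follows from the admissibility decomposition $w \approx \sum_{q\le M} \sum_{(a,q)=1} S(a/q) e(na/q)$, which reduces them to Birkhoff averages along modulated orbits, combined with the $U^s$-error control of Lemma~\ref{e:gowersUs} at $s=1$ (a $U^3$ bound) along lacunary times. In the linear setting this is exactly the Bourgain/Wierdl-type weighted ergodic theorem.

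It remains to pass from $h$ back to $g_{\mathcal K}$. For the difference I need a maximal estimate
\[ \Big\|\sup_N |B_N(f, g_{\mathcal K}-h)|\Big\|_{L^1} \lesssim \|f\|_\infty \, \|g_{\mathcal K}-h\|_{L^2}^{c}, \]
or at least one along lacunary sequences, with Lemma~\ref{l:lacred} bridging to all times. I would attempt this by bounding $|w(n)|$ through upper normalization, using Cauchy--Schwarz in $n$, and controlling the averages of $|w(n)|\,|T^{-n}(g_{\mathcal K}-h)|^2$ by a weighted maximal inequality.

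I expect this maximal/density step to be the main obstacle. Upper normalization gives only $\ell^1$ control of $w$, while the averages of $|w|\cdot|T^{-n}u|^2$ require a maximal inequality for $|w|$-weighted averages. That inequality is itself nontrivial for weights such as $\Lambda$, where one invokes Bourgain's prime maximal theorem. Failing that, I would instead use the structure of the Kronecker factor directly: represent the Kronecker factor as a rotation on a compact abelian group and treat $g_{\mathcal K}$ as a continuous-function approximant, for which uniform approximation gives pointwise control without any maximal function. Combining the two terms then gives the reduction.
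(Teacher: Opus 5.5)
\textbf{Eigenfunction step.} This part follows the paper's proof. The paper proves almost sure convergence of the linear averages \eqref{e:convlin}, run in a product system to absorb $e(-n\alpha_j)$. It uses Calder\'on transference with Lemma~\ref{e:gowersUs} for $\mathcal{E}_{w;N}$ along lacunary times, and Proposition~\ref{p:mfvarprop} for the major arcs. One caution: ``reduces to Birkhoff averages along modulated orbits'' is not enough on its own. Since $M(N)\to\infty$, termwise Wiener--Wintner plus the triangle inequality costs $\sum_{a/q\in\Gamma_Q}|S(a/q)|\approx Q^{1+o(1)}$ per dyadic block, which is not summable. The multi-frequency variational bound costs only $\log^2|\Gamma_Q|\cdot\mathbf{S}_Q\lesssim Q^{o(1)-1}$, which is.

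\textbf{The gap.} The real gap is the passage from eigenfunctions to all of $L^\infty(\mathcal{K}(T))$. You rightly flag it (the paper's proof treats only eigenfunctions explicitly), but neither of your routes closes it.
\begin{itemize}
\item Your first route needs a maximal inequality for $|w|$-weighted averages, and admissibility does not supply one. Upper normalization is only $\ell^1$ control, $|w|$ need not be admissible, and the paper obtains even an $L^p$ maximal bound for $w$ only under the extra hypothesis \eqref{e:FTapprox}.
\item Your fallback fails because $g_{\mathcal K}=\tilde g\circ\pi$ with $\tilde g$ merely in $L^\infty(Z)$. Continuous approximants converge to $\tilde g$ only in $L^p(Z)$, $p<\infty$, not uniformly. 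So the error term again requires the maximal inequality you were trying to avoid.
\end{itemize}

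\textbf{The missing idea.} No maximal inequality is needed. After the choice of $X_{\delta_0}$ in \eqref{e:Xdelta0}, the paper's argument never uses $g\perp\mathcal{K}(T)$ itself. It uses only the quantitative local Fourier uniformity encoded in $X_{\delta_0}$ and in the set $E$ of Proposition~\ref{p:key0}. For ergodic $T$, this condition survives $L^1$-small perturbations by the pointwise ergodic theorem:
\[ \sup_{\beta}\Big|\mathbb{E}_{n\in I}\, u(T^{-n}\omega)\, e(n\beta)\Big| \le \mathbb{E}_{n\in I}\, |u|(T^{-n}\omega) \longrightarrow \|u\|_{L^1(X)}. \]

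Given $\delta_0$, set $h:=(\tilde g*K)\circ\pi$, where $K\ge 0$ is a Fej\'er-type trigonometric-polynomial approximate identity on $Z$. Then $h$ is a finite combination of eigenfunctions with $\|h\|_\infty\le\|g\|_\infty$ and $\|g_{\mathcal K}-h\|_{L^1}\ll\delta_0$. Truncating eigenfunctions one at a time, as you propose, gives no uniform bound on $\sum_j c_j\phi_j$. You need such a bound so that normalizing $g-h$ does not cost a $\delta_0$-dependent factor.

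Now $g-h$ meets the $X_{\delta_0}$ condition: uniform Wiener--Wintner handles $g-g_{\mathcal K}$, and the display above handles $g_{\mathcal K}-h$. The main argument therefore gives $\limsup_N|B_N(f,g-h)|=o_{\delta_0\to0}(1)$ off a set of measure $o_{\delta_0\to0}(1)$. Since $B_N(f,h)$ converges, $B_N(f,g)$ is almost surely Cauchy after letting $\delta_0\to0$.

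In short, the correct reduction is to locally Fourier-uniform $g$. That class contains $\mathcal{K}(T)^{\perp}$ together with its $L^1$-small perturbations.
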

\begin{proof}
    We quickly verify that for each measure-preserving system, $(X,\mu,T)$, and each $f \in L^\infty(X)$, the weighted averages
    \begin{align}\label{e:convlin}
        \frac{1}{N} \sum_{n \leq N} w(n) T^n f
    \end{align}
    converge almost surely, as addressing the case where $g$ is an eigenfunction follows from establishing the convergence of \eqref{e:convlin} in a product system; as is standard, we can assume that our sequence of times is lacunarily increasing.

So, if $s$ is as in the definition of admissibility, then setting $g = \mathbf{1}_X$, applying Calder\'{o}n's Transference Principle \cite{C1} and Lemma \ref{e:gowersUs}, we deduce that the following function is integrable,
\begin{align}
    \sum_{N \text{ lacunary}} |\frac{1}{N} \sum_{n \leq N} \mathcal{E}_{w;N}(n) T^n f|^{2^s} \in L^1(X),
\end{align}
where 
\begin{align}\label{e:errorweight}
    \mathcal{E}_{w;N} := w - \sum_{Q \leq M} w_Q, \; \; \; M=M(N)
\end{align}
for $M$ as in the definition of admissibility. So, it suffices to show that, almost surely,
\begin{align}
    \frac{1}{N} \sum_{n \leq N} \sum_{Q \leq M} w_Q(n) T^n f 
\end{align}
converges. To do so, we just observe that whenever $w$ is admissible, and $D \geq 1$:
    \begin{align}
        \| \mathcal{V}^r\big(\frac{1}{N} \sum_{n \leq N} w_Q(n) T^n f : N \geq Q^{100}, \ N \in \lfloor 2^{m/D} \rfloor\big) \|_{L^2(X)} \lesssim_{D} (\frac{r}{r-2})^2 Q^{o(1)-1} \| f\|_{L^2(X)},
    \end{align}
by Calder\'{o}n's Transference Principle \cite{C1} and Proposition \ref{p:mfvarprop}.
\end{proof}

So, in what follows, it suffices to assume that $g \in \mathcal{K}(T)^{\perp}$ is orthogonal to the Kronecker factor, with the aim of showing that
\begin{align}
    \frac{1}{N} \sum_{n \leq N} w(n) T^n f \cdot T^{-n} g
\end{align}
converges to $0$ pointwise (as it already does so in norm); as is standard, below we will assume that all times $N$ derive from a lacunary sequence.

With this in mind, seeking a contradiction, suppose that there exists some $\mathbf{c} > 0$ so that
\begin{align}
    \int_X \limsup|\frac{1}{N} \sum_{n \leq N} w(n) T^n f \cdot T^{-n} g | \ d\mu \gg \mathbf{c} > 0;
\end{align}
since $w$ is $\ell^1$-normalized, the above implies that whenever
\[ 0 \leq \varphi \in \mathcal{C}_c^\infty([0,1]) \; \; \; \text{ with } \; \; \; \| \varphi - 1 \|_{L^1([0,1])} \ll \mathbf{c} \]
necessarily
\begin{align}
    \int_X \limsup|\sum_n \varphi_N(n) w(n) T^n f \cdot T^{-n} g | \ d\mu \gg \mathbf{c} > 0,
\end{align}
as well; by the ergodic decomposition we can assume that $T$ is ergodic. For the remainder of the paper, we will view $\mathbf{c}$ as fixed, and will allow all constants to depend implicitly on this value.

Choose 
\[ \delta_0 \ll \exp(-\mathbf{c}^{-100}), \; \; \;  N_0 \gg \exp(\delta_0^{-100})\] so that
\begin{align}\label{e:Xdelta0}
    X_{\delta_0} := \{ \omega : \sup_{|I| \geq N_0^{1/2}, \ 0 \in I} \, \sup_{\beta \in \mathbb{T}} \, |\mathbb{E}_{n \in I} g(T^{-n} \omega) e(n\beta)| \leq \delta_0 \}
\end{align}
has $\mu(X_{\delta_0}) \geq 1 - \delta_0$. By Egorov's theorem and the ergodicity of $T$, there exists \[ X_{\delta_0}' \subset X_{\delta_0} \; \; \; \text{  with } \; \;\;\mu(X_{\delta_0}') \geq 1 - 2 \delta_0, \; \; \;  J_0 < \infty\] so that for all $J \geq J_0$, and $\omega \in X_{\delta_0}'$
\begin{align}
    \mathbb{E}_{[J]} \mathbf{1}_{X_{\delta_0}}(T^j \omega) \geq 1 - 3 \delta_0.
\end{align}

This implies that there exists $X_{0}$ with \[ \mu(X_0) \geq 1 - 4\delta_0, \;\;\; N_1 \gg N_0, \; \;\;  J' \geq J_0\] so that for all $\omega \in X_0$, and all $J \geq J'$
\[ \mathbb{E}_{k \in [J]} \sup_{N_0 \leq N \leq N_1} |\sum_n \varphi_N(n) w(n) f(T^{n-k} \omega)  g(T^{-n-k} \omega)| \gg \mathbf{c};\] and, if 
    \[ E_\omega := \{ k \in [J] : T^{-k} \omega \in X_{\delta_0}\},\] 
    then 
    \[ |E_\omega| \geq J(1-5 \delta_0).\]

In particular, setting
\begin{align}
    F_\omega(m) := f(T^{-m} \omega), \; \; \; G_\omega(m) := g(T^{-m} \omega),
\end{align}
it suffices to show that for $J \gg N_1, J'$
\begin{align}\label{e:firstlowerbound}
    & \qquad |\{  k \in  E_\omega : \sup_{N_0 \leq N \leq N_1} |\sum_n \varphi_N(n) w(n) F_\omega(k-n) G_\omega(k+n)| \gg \log^{-1}(1/\delta_0) \}|  \\
    & \qquad \qquad  \ll \log^{-1}(1/\delta_0) J
\end{align}
(say).

With $\mathcal{E}_{w;N}$ as in \eqref{e:errorweight} above, and
\begin{align}
    \mathcal{E}_{w;N}' := \sum_{(\log N)^5 \leq Q \leq M} w_Q,
\end{align}
we bound 
\begin{align}
    &\text{LHS }\eqref{e:firstlowerbound} \\
    & \leq     |\{  k \in  E_\omega : \sup_{N_0 \leq N \leq N_1} |\sum_n \varphi_N(n) (w- \mathcal{E}_{w;N})(n) 
    F_\omega(k-n) G_\omega(k+n)| \gg \log^{-1}(1/\delta_0) \}| \\
    & + |\{k \in   E_\omega : \sup_{N_0 \leq N \leq N_1} |\sum_n \varphi_N(n)\mathcal{E}_{w;N}(n)  F_\omega(k-n) G_\omega(k+n)| \gg \log^{-1}(1/\delta_0) \}| \\
    & \leq \sum_{Q} |\{ k \in   E_\omega : \sup_{\substack{ N_0 \leq N \leq N_1 \\ N \geq 2^{Q^{1/5}}}} |\sum_n  \varphi_N(n) w_Q(n) F_\omega(k-n) G_\omega(k+n)| \gg \log^{-1}(1/\delta_0) \log^{-2} Q \}| \\
    & + \log^4(1/\delta_0) \sum_{N_0}^{N_1} \| \mathcal{E}_{w;N}' \|_{U^3([N])}^4 \cdot J \\
    & + \log^{2^{s}}(1/\delta_0) \sum_{N_0}^{N_1} \| \mathcal{E}_{w;N} \|_{U^{s+2}([N])}^{2^s} \cdot J.
\end{align}
By admissibility (namely: Lemma \ref{p:U3est}), 
\begin{align}
    \| \mathcal{E}_{w;N}' \|_{U^3([N])}^4 \lesssim \log^{-14/13} N
\end{align}
and
\begin{align}
    \| \mathcal{E}_{w;N} \|_{U^{s+2}([N])}^{2^s} \lesssim \log^{-\nu/4} N,
\end{align}
so there exists some absolute $c = c(\nu) > 0$ so that
\begin{align}
&    \log^4(1/\delta_0) \sum_{N_0}^{N_1} \| \mathcal{E}_{w;N}' \|_{U^3([N])}^4  +
\log^{2^{s}}(1/\delta_0) \sum_{N_0}^{N_1} \| \mathcal{E}_{w;N} \|_{U^{s+2} ([N])}^{2^s} \\
& \qquad \lesssim \log^{2^{s}}(1/\delta_0) \cdot (\log N_0)^{-c(\nu)}   \ll \delta_0^{90}. 
\end{align}
Thus, by Lemma \ref{l:lacred} and the decay
\begin{align}
    \mathbf{S}_Q \lesssim Q^{o(1) -1},
\end{align}
it suffices to show that for each $\omega \in X_0$, and each $Q \geq 1$, whenever
\begin{align}\label{e:N0}
N_0 \geq \max\{ \exp( \delta_0^{-100}), 2^{Q^{1/5}} \}
\end{align}
is as in \eqref{e:Xdelta0},
\begin{align}\label{e:goal00}
    & \qquad   |\{  k \in  E_\omega : \sup_{N_0 \leq N \leq J} |\sum_n \varphi_N(n) w_Q(n) F_\omega(k-n) G_\omega(k+n)| \gg \delta_0^{\tilde{\epsilon}/100} Q^{-\tilde{\epsilon}/100} \}| \\
    & \qquad \lesssim \delta_0^{\tilde{\epsilon}/10} Q^{-\tilde{\epsilon}/10} J
\end{align}
where in \eqref{e:goal00} and for the rest of the paper, we can and will assume that all times are of the form
\begin{align}
    K_0 2^{\mathbb{N}},
\end{align}
see \eqref{e:K0size}.


Explicitly, we will spend the remainder of the paper focused on proving the following proposition: it asserts that if a bounded sequence $G$ is locally Fourier-uniform, then the bilinear averages against a single major arc piece, $w_Q$, are pointwise small outside a quantitatively sparse exceptional set.

\begin{proposition}\label{p:key0}
Let $Q \geq 1$, and
\begin{align}
    w_Q(n) := \sum_{a/q \in \Gamma_Q} S(a/q) e(na/q), \; \; \; |S(a/q)| \lesssim q^{o(1) -1}.
\end{align}
Suppose that $0 < \delta_0 \ll 1$ is very small but fixed, and that $N_0$ is restricted by \eqref{e:N0}.

Then, whenever $|F|, |G| \leq 1$, the following estimate holds:
\begin{align}
   & |\{ x \in E : \sup_{N_0 \leq N \leq J} |\sum_n  \varphi_N(n) w_Q(n) F(x-n) G(x+n)| \gg \delta_0^{\tilde{\epsilon}/5} Q^{-\tilde{\epsilon}/5} \}| \\
   & \qquad \lesssim \delta_0^{\tilde{\epsilon}/5} Q^{-\tilde{\epsilon}/5} J,
\end{align}
where
    \begin{align}
    E := \{ k \in [J] : \sup_{I : k \in I, \ N_0^{1/2} \leq |I| \leq J}  \sup_{\beta} \,  |\mathbb{E}_{n \in I} G(n) e(n \beta)| \ll \delta_0 \}.
    \end{align}
\end{proposition}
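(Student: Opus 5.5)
Our plan is to prove Proposition \ref{p:key0} by pigeonholing $G$'s local spectral behavior at a dyadic level $\delta \le \delta_0$, and then splitting into the two regimes announced in the overview: $Q \geq \delta^{-1/1000}$ and $Q \leq \delta^{-1/1000}$. First, by Lemma \ref{l:lacred} we restrict all times to $K_0 2^{\mathbb{N}}$ with $\Delta \mid K_0$. We then pass to a shifted dyadic grid $\mathcal{D}_U^{\Delta,L}$, losing only a union bound over $O(\Delta^2)$ grids. By Lemma \ref{l:dyadicsmoothness}, this lets us replace $\varphi_N(n)$ centred at $x$ by $\phi_I$ for the dyadic interval $I \ni x$ with $|I| = N$, at cost $\Delta^{-1/4}$. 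We linearize the supremum by a selector $\mathcal{I}$ and decompose $G$ on each $I$ as $\sum_{\delta \le \delta_0}$ of the pieces $\Psi_\delta(\mathcal{F}_I G/|I|)$. Since $x \in E$, only $\delta \lesssim \delta_0$ occur, so the spectra $\text{Spec}_\delta(I)$ are controlled by Corollary \ref{c:sampling}, $|\text{Spec}_\delta(I)| \lesssim \delta^{-2}$. It then suffices to prove, for each fixed $\delta$ and $Q$, an exceptional-set bound of size $\lesssim (\delta Q)^{\tilde\epsilon/2} J$ at altitude $(\delta Q)^{\tilde\epsilon/4}$; the sum over dyadic $\delta$ converges geometrically.

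In the main regime $Q \geq \delta^{-1/1000}$, we run the energy pigeonholing of \S\ref{s:ENERGY}, inside each top interval of length $\approx J$, to build a $V$-forest. The exceptional shadow $\text{sh}(\mathcal{D}_V(I_0))$ has size $\lesssim t^{-2}V^{-1}J$, which is acceptable since $V = Q^{\rho/10}$ dominates $t^{-2}$. For each tree top, Lemma \ref{l:branches} with $U = Rt^{-2}$ prunes the tree into branches plus a boundary branch with only $O(RUV)$ scales. The boundary scales are handled crudely by Lemma \ref{l:gowersL2}, Lemma \ref{p:U3est} ($\|w_Q\|_{U^3} \lesssim Q^{o(1)-3/52}$) and Chebyshev, which gives a loss $RUV\cdot Q^{-3/26+o(1)}$ that is absorbed by \eqref{e:paramhier}. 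On a genuine branch $\mathcal{B}_s$, localization lets us replace $G$ by $\Pi_I[\Lambda]G$, the projection to $\Lambda_{j,s}+B(2R/|I|)$, with error $t^{1/2}$ by Lemma \ref{l:l2topointwise}. We then write $w_Q = \sum_i w_Q^{(i)}$ and note that the bilinear form only sees the Fourier support of $F$ near $\Lambda + \Gamma_Q^{(i)}$ at scale $N$. Projecting $F$ there, the averages become multi-frequency averages over frequencies in $m\Lambda + n\Gamma_Q^{(i)}$. Lemma \ref{l:maximal-multiplicity} bounds the multiplicities of these frequencies by $\overline\lambda$, and Lemma \ref{variation-bound-finalizing} supplies the $Q^{o(1)-2}\overline\lambda$ square-sum gain. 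Feeding these into Proposition \ref{p:mfvarprop}, Corollary \ref{c:mfproj} and Corollary \ref{c:sep}, via a bilinear entropy/metric-chaining argument as in \cite{B3}, should give a jump/variation bound of size $\log^2(V\delta^{-2})\,\overline\lambda^{1/2}Q^{o(1)-1}$. This is $\ll Q^{-1/4}$, since $\overline\lambda \le \min\{K_0Q, V\delta^{-2}\}$ and $\delta^{-2} \le Q^{2000}$ is compensated by the $K_0Q$ branch of the minimum. A union bound over the $\lesssim UV$ branches and $\le \log Q$ values of $i$ then closes this regime.

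In the complementary regime $Q \le \delta^{-1/1000}$, $w_Q$ has only $Q^{O(1)} \ll \delta^{-1/100}$ frequencies. Here we would follow \cite{B3} more directly: $\ell^1$ estimates on $\mathcal{F}_{\mathbb{Z}}$ of the relevant multipliers via Lemma \ref{l:sob}, together with the uniformity of $G$ on $E$ (all Fourier coefficients $\ll \delta_0$). These reduce the operator to Bourgain's double-recurrence multi-frequency maximal estimate with $|\Lambda| \lesssim \delta^{-2}$, and the gain comes from the smallness of $\delta$ rather than of $Q$.

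We expect the main obstacle to be the bilinear entropy step on a branch. There one must simultaneously control the covering numbers, in $\ell^2(\Lambda)$, of the vectors of localized averages of both $F$ (now projected near the sumset $\Gamma_Q^{(i)}+\Lambda$) and $\Pi_I[\Lambda]G$. This requires single-scale estimates along arithmetic progressions modulo $\mathcal{Q}_i$, together with the separation of $m\Lambda+n\Gamma_Q^{(i)}$ at scale $Q^{-2}M_0^{-1}$. Our first attempt would be to chain over scales using L\'epingle (Lemma \ref{l:LEP}) for each factor separately, and to glue the two with the Lipschitz bound for $\Psi_\delta$. If the numerology there fails, we would fall back on sacrificing powers of $Q^{\rho}$ through \eqref{e:paramhier}.
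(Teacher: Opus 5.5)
Your architecture matches the paper's: shifted grids with Lemma~\ref{l:dyadicsmoothness}, the $\Psi_\delta$ level-set decomposition, the $V$-forest and branch pruning, boundary branches via Lemmas~\ref{l:gowersL2} and~\ref{p:U3est}, projecting $F$ near $\Gamma_Q+\Lambda$, the split $w_Q=\sum_i w_Q^{(i)}$, and a Bourgain-style argument when $Q\le\delta^{-1/1000}$. There is, however, a genuine gap where you replace $G$ on a branch by a linear combination of characters.

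The branch property only makes the part of $g_{\delta,I}$ with frequencies \emph{outside} $\Lambda+B(2R/|I|)$ small. The $G$-side tools you then use (Corollary~\ref{c:sep}, the Lipschitz bound for $\Psi_\delta$, the product-of-jump-counts net bound) apply only to the exact-frequency piece $\Pi_I[\Lambda]g=\sum_{\theta\in\Lambda}\Psi_\delta(\mathbb{E}_I\text{Mod}_{-\theta}g)\,e(\theta x)\mathbf{1}_I$. Its coefficients are modulated martingale averages at \emph{fixed} frequencies.

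The annular remainder $g_{\delta,I;|I|}$ has frequencies $\theta+k/|I|$ with $1\le|k|\lesssim R$. It is not small at an individual scale (it may carry most of $\|g_{\delta,I}\|_{\ell^2}^2$), and its frequencies move with $|I|$. So neither Lemma~\ref{l:l2topointwise} nor a L\'epingle-type jump inequality removes it. If your $\Pi_I[\Lambda]$ is instead the projection onto the whole neighbourhood, the $t^{1/2}$ error is correct, but the resulting object is not one your entropy step can handle.

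The missing idea is the paper's Lemma~\ref{l:orth}. This is a Cotlar--Stein bound for the annular wave packets $|I|^{-1/2}\mathbf{1}_I(n)e(\xi n)$ across scales separated by $2^{O(R)}$. It uses $\Lambda\subset\mathbb{Z}/M_0$ essentially: annular frequencies at scale $N$ are $\theta+k/N$ with $\theta\in\mathbb{Z}/M_0$, which gives $\|P_NP_M\|_{2\to 2}\lesssim (N/M)^{1/2}$ and hence $\sum_I\|g_{I;|I|}\|_{\ell^2}^2\lesssim R\|g\|_{\ell^2}^2$. Combined with Lemma~\ref{l:gowersL2} and the $U^3$ decay of Lemma~\ref{p:U3est}, this is Proposition~\ref{p:orthog00}, which yields the exceptional set \eqref{e:treeestimate1}. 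Only after the trichotomy \eqref{e:trichotomy} is $G$ genuinely a fixed combination of characters along the branch.

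The closing step is also unjustified as written. The quantity $\overline{\lambda}^{1/2}Q^{o(1)-1}$ you reach is the square root of the bound of Lemma~\ref{variation-bound-finalizing}, so it controls only the variation of the $F$-side vectors. That bounds how many $\eta$-distinct values they take, not the size of the bilinear averages. The $G$-side has no small variation, only the $O(\eta^{-2})$ L\'epingle bound, so this cannot by itself make the supremum small.

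In the paper the smallness comes from single-scale estimates at the representatives of a net:
\begin{itemize}
\item One works on local intervals $P$ of size \eqref{e:Psize}, so that by \eqref{e:uncertaintyAP} the $F$-side averages are constant along progressions mod $\mathcal{Q}_i$.
\item Off the exceptional sets $\mathcal{O}_{I_v;1}$ (Corollary~\ref{c:sep}) and $\mathcal{O}_{I_v;2}$ (Proposition~\ref{p:mfvar} with Lemma~\ref{variation-bound-finalizing}), the net size is bounded by the product of the two jump counts (Lemma~\ref{l:size-net-jump-counting-functions}).
\item Each representative is then bounded by the progression estimate Lemma~\ref{l:1scale0}.
\end{itemize}
That estimate needs $|P|$ exponentially large in $Q^{1/2+\epsilon}$ or $Q^{1+\epsilon}$, since $\mathcal{Q}_i\le 2^i 3^{Q/2^i}$. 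So the $\lesssim Q^{1/2+3\epsilon}$ (resp.\ $Q^{1+3\epsilon}$) smaller scales must be discarded separately using the full-interval estimate Lemma~\ref{l:1scale}. The $2$-adic split $w_Q=\sum_i w_Q^{(i)}$, which your plan introduces but never uses, exists precisely to balance that number of scales against the multiplicity factor $\min\{2^iK_0,|\Lambda|\}$ in Lemma~\ref{l:1scale}.

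Also, your per-$\delta$ target must decay in $\delta/Q$, not $\delta Q$, or it will not sum to the bound claimed in the proposition.
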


In our proof of Proposition \ref{p:key0}, it will be convenient to proceed using tools from dyadic harmonic analysis; we make use of our dyadic grid machinery, see \S \ref{ss:dyadicgrids}.

Thus, regarding 
\[ \mathcal{D} = \mathcal{D}_U^{\Delta_0,L} \]
and
$Q$ as fixed, define
\begin{align}\label{e:AI}
    A_I(f,g)(x) &:= A_{I;\mathcal{D}}(f,g)(x) \\
    & \qquad := \sum_n \phi_I(n) f(2x-n) w_Q(n-x) g(n) \mathbf{1}_{I \cap \overline{\mathcal{D}}}(x).
\end{align}
By Lemma \ref{l:dyadicsmoothness}, we may reduce Proposition \ref{p:key0} to the following.

\begin{proposition}\label{p:key1}
The following estimate holds for each $\mathcal{D} = \mathcal{D}_U^{\Delta_0,L}$:
\begin{align}
    |\{ k \in E \cap \overline{\mathcal{D}} : \sup_{I \in \mathcal{D}} |A_I(f,g)(k)| \gg \delta_0^{\tilde{\epsilon}/2} Q^{-\tilde{\epsilon}/2}  \}| \ll \Delta_0^{-8} J.
\end{align} 
\end{proposition}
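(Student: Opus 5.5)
We plan to prove Proposition \ref{p:key1} by following the roadmap of the proof overview: first pigeonhole $g$'s spectral data along the dyadic grid, then reduce the supremum over $I \in \mathcal{D}$ to finitely many multi-frequency families, and finally control those families via bilinear entropy together with L\'{e}pingle-type variational estimates. Since $x \in E$, the linear uniformity hypothesis $\sup_\beta |\mathbb{E}_{I} G e(n\beta)| \ll \delta_0$ lets us decompose $g$ at each scale and location according to the spectral level $\delta \le \delta_0$. Using $\Psi_\delta$, we split
\[ \mathcal{F}_I g \cdot |I|^{-1} = \sum_{\delta \le \delta_0} \Psi_\delta\big(\mathcal{F}_I g/|I|\big). \]
Each level $\delta$ is then treated separately, with a loss of $\log(1/\delta)$ absorbed by the margin $\delta^{\kappa} Q^{-\kappa}$. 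The main case is $Q \ge \delta^{-1/1000}$; the complementary regime is deferred to the analogue of \cite{B3}.

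For fixed $(\delta,Q)$, we run the energy pigeonholing of \S\ref{s:ENERGY} with the linearizing selector $\mathcal{I}$. This produces a $V$-forest whose exceptional shadow satisfies $|\text{sh}(\mathcal{D}_V)| \lesssim t^{-2}V^{-1}J$, which is acceptable because $t^{-2} V^{-1} \le Q^{-\rho/20}$. We then prune each tree into branches via Lemma \ref{l:branches}. Boundary branches involve only $O(RUV)$ scales, so they are handled by Lemma \ref{e:gowersUs} combined with the $U^3$ decay of Lemma \ref{p:U3est}: a union bound over that many scales costs $RUV \cdot Q^{-3/26 + o(1)}$, which is still small by \eqref{e:paramhier}.

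On a genuine branch, localization together with Lemma \ref{l:l2topointwise} (applied to the part of $g$ off $\Lambda_{j,s}+B(2R/|I|)$, which has energy $\ll t^2|I|$) lets us replace $g$ by $\Pi_I[\Lambda]g$. Here $\Pi_I[\Lambda]$ is a smooth multi-frequency projection onto $\Lambda + B(2R/|I|)$, and the frequencies in $\Lambda$ are $2^{O(R)}/|I|$-separated. After this replacement, the product $w_Q(n-x)\,(\Pi_I[\Lambda]g)(n)$ has frequency support in $\Gamma_Q + \Lambda$, so $f$ may be replaced by $\Pi_N[\Lambda]f$ localized near $\Gamma_Q+\Lambda$. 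At this point we split $\Gamma_Q = \bigcup_i \Gamma_Q^{(i)}$.

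Next we control the supremum over each branch. Using Lemma \ref{l:maximal-multiplicity}, each point of the sumsets $m\Lambda + n\Gamma_Q^{(i)}$ has multiplicity at most $\overline{\lambda}$. We expand $A_I$ as a sum over pairs $(\theta, a/q)$ of modulated single-frequency averages. We then apply Proposition \ref{p:mfvarprop} and Corollary \ref{c:mfproj}, with the weight $\mathbf{v}$ supplied by Lemma \ref{variation-bound-finalizing}, which gives $\overline{\lambda} Q^{o(1)-2}$ in square norm. The goal is a jump or $r$-variation bound in $\ell^2(E)$ of size $\overline{\lambda}^{1/2} Q^{o(1)-1} \log^2|\Lambda|$, and we use Corollary \ref{c:sep} to handle the spatial localization. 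The main obstacle is the bilinear nature of the entropy: both $f$ and $g$ are vector-valued over $\Lambda$, and we must cover the set
\[ \{ (\mathbb{E}_{I}\,\text{Mod}_{-\theta}f\cdot \ldots) : I \} \]
by few balls at every scale and location. We would attempt this first by chaining in $g$ using the branch's frozen frequency set (Lemma \ref{l:LEP}), and then Cauchy--Schwarz in $f$ using the orthogonality \eqref{e:orthog} across residue classes modulo $\mathcal{Q}_i$, with the single-scale $U^3$ estimate anchoring the small entropy levels.

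The final step is numerology. We sum over $s \le UV$ branches, over $\le \log Q$ values of $i$, over the $\lesssim (\delta^{-2}V)^{O(1)}$ choices of grid data, and over dyadic $\delta$. We then apply Chebyshev at height $\delta_0^{\tilde\epsilon/2}Q^{-\tilde\epsilon/2}$. Combined with $\overline{\lambda} \le \min\{K_0Q, V\delta^{-2}\}$, this yields a bound of order $\Delta_0^{-8}J$, provided the gain $Q^{-1+o(1)}\overline{\lambda}^{1/2}$ beats every power of $t^{-1}, \Delta, R, V$, which is exactly what \eqref{e:paramhier} guarantees.
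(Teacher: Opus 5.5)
Your outline follows the paper: spectral levels $\delta\le\delta_0$, the $V$-forest, pruning into branches, boundary branches via $U^3$, then a multi-frequency/entropy analysis split along the $\Gamma_Q^{(i)}$. There is, however, a genuine gap at the step where you replace $g_{\delta,I}$ by $\Pi_I[\Lambda]g$.

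The branch property and Lemma \ref{l:l2topointwise} only remove the frequencies outside $\Lambda+B(2R/|I|)$. What remains is your ``smooth projection onto $\Lambda+B(2R/|I|)$'', and it still carries the annular frequencies $\theta+k/|I|$, $1\le |k|\lesssim R$. Their energy on a given $I$ need not be small; it can be comparable to $|I|$. Yet everything you do afterwards uses the exact form $\Pi_I[\Lambda]g=\sum_{\theta\in\Lambda}\Psi_\delta(\mathbb{E}_I\text{Mod}_{-\theta}g)\,e(\theta x)\mathbf{1}_I(x)$. This includes:
\begin{itemize}
\item the assertion that $w_Q(n-x)\Pi_I[\Lambda]g(n)$ has frequency support in $\Gamma_Q+\Lambda$;
\item the expansion into pairs $(\theta,a/q)$;
\item the jump counting for $(\mathbb{E}_I\text{Mod}_{-\theta}g)_{\theta\in\Lambda}$ over a frozen frequency set.
\end{itemize}
The annulus cannot be disposed of one scale at a time; the paper removes it on average over scales in \S\ref{s:tight}. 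There, Lemma \ref{l:orth} shows that the wave packets $|I|^{-1/2}\mathbf{1}_I e(\xi\,\cdot)$, $\xi\in\Lambda_{|I|}$, satisfy a Bessel inequality across $2^{O(R)}$-separated scales. This is a Cotlar--Stein argument with $\|P_NP_M\|\lesssim (N/M)^{1/2}$, and it exploits $\Lambda\subset\mathbb{Z}/M_0$, i.e.\ $\xi=\theta(\xi)+k(\xi)/M$ with $k(\xi)\neq 0$. Combined with Lemmas \ref{l:gowersL2} and \ref{p:U3est}, it gives Proposition \ref{p:orthog00} and the bound \eqref{e:treeestimate} on each tree. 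That is what justifies the trichotomy \eqref{e:trichotomy}, and it is missing from your plan.

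Your branch analysis is also stated only as an intention, and the ingredients you name would not close it as described. Lemma \ref{variation-bound-finalizing}, the passage to residue classes modulo $\mathcal{Q}_i$ (which is exponentially large in $Q/2^i$), and the local constancy \eqref{e:uncertaintyAP} are available only at scales $\ge 2^{Q^{1+3\epsilon}}M_0$, or $2^{Q^{1/2+3\epsilon}}M_0$ when $2^i\ge Q^{1/2}$. The $\lesssim Q^{1+3\epsilon}$ (resp.\ $Q^{1/2+3\epsilon}$) smaller scales of each branch must be discarded separately, by Chebyshev and the full-interval estimate of Lemma \ref{l:1scale}. Its factor $\min\{2^iK_0,|\Lambda|\}$ is the actual reason for splitting $\Gamma_Q$ into the $\Gamma_Q^{(i)}$.

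On the large scales, the single-scale input is not a $U^3$ bound. Lemma \ref{l:gowersL2} controls $A_I$ in $\ell^2$ over all of $I$, whereas the entropy step needs $\ell^2(P\cap(l+\mathcal{Q}_i\mathbb{Z}))$ bounds for representatives that depend on the local interval $P$. The paper supplies these through Lemma \ref{l:1scale0}, proved via Lemma \ref{l:maximal-multiplicity} and sampling. It combines this with the exceptional sets $\mathcal{O}_{I_v;1}$ (Corollary \ref{c:sep}) and $\mathcal{O}_{I_v;2}$ (Proposition \ref{p:mfvar} with Lemma \ref{variation-bound-finalizing}), and with the product bound on net sizes.

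Two smaller points:
\begin{itemize}
\item \eqref{e:orthog} is orthogonality in $\theta$, coming from the disjoint Fourier supports of the $f_\theta$, not orthogonality across residue classes.
\item The regime $Q\le\delta^{-1/1000}$ is not a citation; it is a separate rerun of the argument, carried out in \S\ref{s:remainder}.
\end{itemize}
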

Below, we will implicitly restrict the supremum over $I$ to elements of $\mathcal{D}$.

\medskip

We now decompose $g$ according to the level sets of its spectrum at each scale and location.

\subsection{Decomposing $g$}
For each interval, $I$, recall \eqref{e:Specdelta}:
\begin{align}
    \text{Spec}_\delta(I) := \{ \xi \in \mathbb{Z}/|I| : |\mathcal{F}_I g(\xi)| \approx \delta |I| \},
\end{align}
and note that on $E$, we may decompose
\begin{align}
\sup_I |A_I(f,g)| \leq \sum_{\delta \leq \delta_0} \sup_I |A_I^\delta(f,g)|
\end{align}
where
\begin{align}
    A_I^\delta(f,g)(x) := A_I(f,g_{\delta,I})(x)
\end{align}
for 
\[ g_{\delta,I}(x) := \sum_{\xi \in \mathbb{Z}/|I|} \Psi_\delta\big(\frac{\mathcal{F}_I g(\xi)}{|I|}\big) e(\xi x) \mathbf{1}_I(x),\]
and, as above, $\Psi_\delta$ is as in \eqref{e:Psidelta};\footnote{The presence of the cut-off $\mathbf{1}_I$ is for emphasis only, as the operator $A_I$ only detects the value of $g$ on $I$.}
note that
\begin{align}
    \sum_{\delta \leq \delta_0} \| g_{\delta,I} \|_{\ell^2(I)}^2 \leq |I|,
\end{align}
while $\Psi_\delta$ acts as a soft, regular threshold, selecting Fourier coefficients of magnitude $\approx \delta$, while preserving Lipschitz control.

For technical reasons, we will assume that 
\[ \text{Spec}_\delta(I) \subset P \subset \mathbb{T} \]
lives inside a single interval of length $1/10$\footnote{This can be accomplished by composing with the Fourier multipliers 
\[ \{ \psi(20 \beta -l) (1 - |\beta|)_+ : |l| \leq 100\}\]
for an appropriate bump function $\psi$ at unit scales} (say), and by modulating $f$ appropriately we can assume that $P=[0,1/10]$. We can also assume that $\mathcal{F}_{\mathbb{Z}}{f}$ is supported in a single interval of length $1/10$.

\medskip

\textbf{Thus, for the remainder of this paper, we will always assume that for each interval, $I \in \mathcal{D}$,
\[ \text{Spec}_\delta(I) \subset [0,1/10].\]}

\medskip

Thus, it suffices to prove the following proposition.

\begin{proposition}\label{p:key}
The following estimate holds for each $\mathcal{D} = \mathcal{D}_U^{\Delta,L}$:
\begin{align}
    |\{ k \in  [J] : \sup_{I \in \mathcal{D}} |A_I^\delta(f,g)(k)| \gg \delta^{\tilde{\epsilon}} Q^{-\tilde{\epsilon}} \}| \ll \Delta^{-10} J.
\end{align} 
\end{proposition}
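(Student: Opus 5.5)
Proposition \ref{p:key} is the paper's core maximal estimate. I would prove it by running the energy pigeon-holing machinery of \S\ref{s:ENERGY}, then the multi-frequency variational theory of \S\ref{s:MultiFreq}, in the regime $Q \geq \delta^{-1/1000}$. The complementary regime $Q \leq \delta^{-1/1000}$ would be handled separately, following \cite{B3}.

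\textbf{Step 1 (forest construction).} Fix $\delta$ and $Q$, and linearize the supremum by a selector $\mathcal{I}$. Build the $V$-forest of \S\ref{s:ENERGY}, using the localization proposition to discard $\text{sh}(X_V)$. This set has measure $\lesssim t^{-2}V^{-1}J$, which is $\ll \Delta^{-10}J$ by the parameter hierarchy \eqref{e:paramhier}, since $V = Q^{\rho/10}$ dominates any power of $t^{-1}$ and $\Delta$.

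\textbf{Step 2 (pruning into branches).} Prune each tree using Lemma \ref{l:branches} with $U \approx Rt^{-2}$. On the boundary branches only $O(RUV)$ scales occur. For each such scale, Lemma \ref{l:gowersL2} together with Lemma \ref{p:U3est} gives
\[ \|A_I^\delta\|_{\ell^2}^2 \lesssim Q^{o(1)-6/52}|I| . \]
Chebyshev and a union bound over these scales then give an exceptional set of measure $\lesssim RUV\, Q^{-1/10}\delta^{-2\tilde\epsilon}Q^{2\tilde\epsilon} J$, which is acceptable.

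\textbf{Step 3 (reduction on non-boundary branches).} On the non-boundary branches, $g_{\delta,I}$ is $t$-close in $\ell^2(I)$ to its Fourier projection onto $\Lambda_{j,s}(I_j)+B(2R/|I|)$. By Lemma \ref{l:l2topointwise}, the error contributes at most $t^{1/2} \ll \delta^{\tilde\epsilon}Q^{-\tilde\epsilon}$ pointwise. So we may replace $g$ by the multi-frequency projection $\Pi_I[\Lambda]g$, where $\Lambda=\Lambda_{j,s}(I_j)$ is widely separated relative to the scales in the branch and $|\Lambda| \lesssim V\delta^{-2}$. Expanding $w_Q$ into characters $e(na/q)$, $a/q \in \Gamma_Q$, the averages $A_I$ then only see $\hat f$ near the sumset $\Gamma_Q+\Lambda$, so we may further insert the projection $\Pi_N[\Lambda]$ on $f$ (cf.\ \eqref{e:chibump}). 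Splitting $w_Q=\sum_i w_Q^{(i)}$, Lemma \ref{l:maximal-multiplicity} controls the multiplicities of $m\Lambda+n\Gamma_Q^{(i)}$ by $\overline\lambda$. Lemma \ref{variation-bound-finalizing} then converts the weight into a $\overline\lambda\, Q^{o(1)-2}$-weighted square sum over frequencies.

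\textbf{Step 4 (bilinear entropy bound).} I would bound the jump counts in the $\ell^2(\Lambda)$-valued sense via a bilinear metric chaining argument. This combines Proposition \ref{p:mfvarprop}, Corollary \ref{c:mfproj} (L\'epingle-type variational bounds with $\log^2|\Lambda|$ loss) and Corollary \ref{c:sep}. Summing over branches costs a factor $UV$, and the total should gain a power $Q^{-c}$ because $\overline\lambda Q^{-2}\cdot Q^{o(1)} \ll Q^{-1+\rho}$.

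I expect the main obstacle to be this final bilinear entropy step. Both $f$ and $g$ are only controlled through projections depending on $\Lambda$ and on scale, and constructive interference in $m\Lambda+n\Gamma_Q$ threatens the $\ell^2$ orthogonality. The arithmetic input (dyadic rationality of $\Lambda$, the $2$-adic splitting $\Gamma_Q^{(i)}$, and times in $K_0 2^{\mathbb N}$) must be used carefully, so that the losses $\overline\lambda$, $|\Lambda|^{o(1)}$ and $UV$ stay below $Q^{1-2^{-100}}$.
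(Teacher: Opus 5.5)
Your outline (forest, pruning into branches, boundary branches via Lemma~\ref{l:gowersL2} and Lemma~\ref{p:U3est}, inserting $\Pi_N$, entropy on the remaining branches) follows the paper. However, Step~3 contains a step that fails, and Step~4 is not yet an argument.

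\textbf{Step 3: the annular component is dropped.} On a branch, the branch property only makes negligible the part of $g_{\delta,I}$ with frequencies outside $\Lambda+B(2R/|I|)$. It does not let you pass to $\Pi_I[\Lambda]g=\sum_{\theta\in\Lambda}\Psi_\delta(\mathbb{E}_I\text{Mod}_{-\theta}g)\,e(\theta x)\mathbf{1}_I$, which keeps only the exact frequencies $\theta\in\Lambda$. What remains is the annular piece $g_{\delta,I;|I|}$, carried by $\{\xi\in\mathbb{Z}/|I|: 0<\text{dist}(\xi,\Lambda)\le 10R/|I|\}$, and it need not be small at any single scale. For example, if all local Fourier coefficients of $g$ on $I$ of size $\approx\delta$ sit at frequencies $\theta+k/|I|$ with $\theta\in\Lambda$ and $1\le|k|\le 2R$, then $\Pi_I[\Lambda]g\equiv 0$ while $g_{\delta,I}$ is entirely annular. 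So Lemma~\ref{l:l2topointwise} does not apply. Since a non-boundary branch contains unboundedly many scales, a single-scale bound plus a union bound is unavailable too. This is what \S\ref{s:tight} is for. The paper splits $g_{\delta,I}$ as in \eqref{e:trichotomy} and controls the annular term only in aggregate over scales. It uses the Cotlar--Stein wave-packet estimate of Lemma~\ref{l:orth}, namely $\|P_NP_M\|_{2\to 2}\lesssim (N/M)^{1/2}$ for the packets $\eta_{I,\xi}$, $\xi\in\Lambda_{|I|}$, at $2^{O(R)}$-separated scales. It combines this with the $U^3$ gain $Q^{-1/20}$ from Lemma~\ref{p:U3est} through Lemma~\ref{l:gowersL2} (Proposition~\ref{p:orthog00}). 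This yields the square-function bound \eqref{e:treeestimate}, in which the $Q^{-1/20}$ absorbs the losses $t^{-2}$ from Chebyshev and $R$ from scale sparsification.

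\textbf{Step 4: the discretization mechanism is missing.} You rightly flag this as the crux, but it is a statement of intent, and it omits what makes the paper's entropy step work. For fixed $I$, the $f$-side coefficients $\sum_n\phi_I(n)\,\text{Mod}_{-\theta}\Pi_Nf(2x-n)\,w_Q^{(i)}(n-x)$ depend on $x$ through $w_Q^{(i)}(n-x)$. This factor oscillates at scale $\lesssim Q$, so these coefficients cannot be frozen across a spatial interval $P$, and a single net over $P$ is not available. The paper freezes only along $P\cap(l+\mathcal{Q}_i\mathbb{Z})$, where the $f$-side becomes the period-averaged $F^{(i)}_{\theta,N}$ and is essentially constant, see \eqref{e:uncertaintyAP}. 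By \eqref{eqn:size-of-Qi}, this forces $|P|$ to be as large as in \eqref{e:Psize}. Consequently:
\begin{enumerate}
\item The $\lesssim Q^{1/2+3\epsilon}$ (resp.\ $Q^{1+3\epsilon}$) scales below \eqref{e:largeI} must first be discarded by \eqref{e:fullintest1}, using the split $2^i\gtrless Q^{1/2}$ and $\overline{\lambda}\le V\delta^{-2}$.
\item On the remaining scales one removes $\mathcal{O}_{I_v;1}$, the points with many $g$-jumps, via Corollary~\ref{c:sep}.
\item One also removes $\mathcal{O}_{I_v;2}$, the points where the $\ell^2(\Lambda)$-variation of the $F^{(i)}_{\theta,N}$ is large. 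Its $\ell^2$ size $\overline{\lambda}^{1/2}Q^{o(1)-1}$ comes from Corollary~\ref{c:mfproj} and Lemma~\ref{variation-bound-finalizing}; this is where your $\overline{\lambda}Q^{o(1)-2}$ heuristic actually enters.
\item The $\ell^1(\Lambda)$-net is bounded by the product of the two jump counts (Lemma~\ref{l:size-net-jump-counting-functions}).
\item The argument closes with the single-scale estimate along progressions, Lemma~\ref{l:1scale0}. There the $\Gamma_Q^{(i)}$-oscillation disappears, and collisions are counted by the kernel of multiplication by $2\mathcal{Q}_i$ on $\mathbb{Z}/M_0$.
\end{enumerate}
None of these ingredients appears in your proposal, so as it stands the non-boundary branch bound $\mathbf{C}_{\text{Branch}}\lesssim Q^{-\rho/2}$ is asserted rather than proved.
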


\section{Preparations: Tightening the Fourier Projections}\label{s:tight}
To prove Proposition \ref{p:key}, we will make heavy use of our tree/branch construction, see \S \ref{s:ENERGY}. But, to do so, we will need to further localize our multipliers in Fourier space. We develop the necessary machinery to do so below:

\medskip

With $\Lambda \subset \mathbb{Z}/M_0$ for 
\[ 2^{Q^{1/5}} \leq M_0 \in K_0 2^{\mathbb{N}}, \]
$|\Lambda| \leq V \delta^{-2}$, and $M \geq 2^{O(R)} M_0$, define
\begin{align}\label{e:LAMBDAM}
    \Lambda_M := \{ \theta \in \mathbb{Z}/M : 0 < \text{dist}(\theta,\Lambda) \leq 10 R/M \},
\end{align}
and consider the Fourier multiplier
\begin{align}
    \Phi_M := \mathbf{1}_{\Lambda_M}.
\end{align}

With $M = |I|$, recall
\begin{align}
    {g}_{\delta,I}(x) := \sum_{\xi \in \mathbb{Z}/|I|} \Psi_\delta\big( \frac{\mathcal{F}_Ig(\xi)}{|I|} \big) e(\xi x) \mathbf{1}_I(x)
\end{align}
so that
\begin{align}
    \mathcal{F}_{\mathbb{Z}} g_{\delta,I} \equiv \mathcal{F}_I g_{\delta,I},
\end{align}
and define
\begin{align}
    g_{I;M}(x) := \mathbb{E}_{\xi \in \mathbb{Z}/|I|} \Phi_M(\xi) \mathcal{F}_Ig(\xi) e(\xi x) \mathbf{1}_I(x)
\end{align}
and
\begin{align}
g_{\delta,I;M}(x) := \sum_{\xi \in \mathbb{Z}/|I|} \Phi_M(\xi) \Psi_\delta\big( \frac{\mathcal{F}_Ig(\xi)}{|I|} \big) e(\xi x) \mathbf{1}_I(x);
\end{align}
these latter functions isolate the portion of the local Fourier support of $g$ lying in a thin annulus around the distinguished set $\Lambda$, but not directly in $\Lambda$. 

Our central object of study will be the operators
\begin{align}\label{e:Bw}
    B_{w,I}^\delta(f,g)(x) := \sum_{n} \phi_I(n) f(2x-n) w_Q(n-x){g_{\delta,I;M}}(n) \mathbf{1}_I(x),
\end{align}
and Proposition \ref{p:orthog00} will thus show that this ``annular" contribution is negligible, with the quantitative savings in $Q$ provided by Lemma \ref{p:U3est} and Lemma \ref{l:gowersL2}.

\begin{proposition}\label{p:orthog00}
The following bound holds:
\begin{align}
    \sum_{M \leq 2^{-O(R)} |I'|} \sum_{|I| = M, \ I \subset I'} \| B_{w,I}^\delta(f,g) \|_{\ell^2}^2 \lesssim R \cdot Q^{- 1/20 } |I'|.
\end{align}
\end{proposition}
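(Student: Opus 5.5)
Fix $I'$. The plan is to apply the single-scale estimate of Lemma~\ref{l:gowersL2} interval by interval, and then sum the resulting $\ell^2$ masses of the annular pieces using orthogonality across scales.

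\textbf{Step 1: single-interval bound.} For each $I\subset I'$ with $|I|=M$, Lemma~\ref{l:gowersL2} applies since $|f|\le 1$ and $\phi_I$ is adapted to $I$. It gives
\[
\| B^\delta_{w,I}(f,g)\|_{\ell^2}^2 \lesssim \|w_Q\|_{U^3([-M,2M])}^2\,\|g_{\delta,I;M}\|_{\ell^2(I)}^2 .
\]
Because $M\ge 2^{Q^{1/5}}\ge Q^{100}$, Lemma~\ref{p:U3est} yields $\|w_Q\|_{U^3}^2\lesssim Q^{o(1)-3/26}$, which is far better than $Q^{-1/20}$. So it suffices to prove
\[
\sum_{M\le 2^{-O(R)}|I'|}\ \sum_{|I|=M,\, I\subset I'} \|g_{\delta,I;M}\|_{\ell^2(I)}^2 \lesssim R\, Q^{o(1)}\,|I'| ,
\]
and any factor of $Q^{o(1)}$ can be absorbed.

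\textbf{Step 2: reduce to the unthresholded annular energy.} By Plancherel on $\mathbb{Z}/|I|$,
\[
\|g_{\delta,I;M}\|_{\ell^2(I)}^2 = \frac{1}{|I|}\sum_{\xi\in\Lambda_M}\Bigl|\Psi_\delta\bigl(\mathcal{F}_Ig(\xi)/|I|\bigr)\Bigr|^2 |I|^2 .
\]
Since $|\Psi_\delta(z)|\le |z|$, this is at most $\mathbb{E}_{\xi\in\mathbb{Z}/|I|}\Phi_M(\xi)|\mathcal{F}_Ig(\xi)|^2$. That quantity is the squared $\ell^2$ norm of $g_{I;M}$, the projection of $g\mathbf{1}_I$ onto the annulus $\{0<\mathrm{dist}(\xi,\Lambda)\le 10R/M\}$.

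\textbf{Step 3: sum over scales.} I would run the same splitting/orthogonality mechanism as in the energy pigeon-holing. Apply Lemma~\ref{l:polysplit} at a coarser scale $I_+\supset I$ with $|I_+|\approx 2^{O(1)}|I|$, in reverse: compare the energy of $g$ on $I$ localized to $\Lambda+B(10R/M)$ with the energy on the children localized to $\Lambda+B(R/|I_{\text{child}}|)$. The goal is to show that the quantity $E(M):=\sum_{|I|=M,\,I\subset I'}\mathbb{E}_{\mathbb{Z}/M}\mathbf{1}_{\Lambda+B(10R/M)}|\mathcal{F}_Ig|^2$ behaves like an almost-monotone energy in $M$. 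The annular piece at scale $M$ is then essentially $E(M)-E'(M)$, where $E'$ is the portion concentrated within $O(1/M)$ of $\Lambda$, that is, on $\Lambda$ itself, since $\Lambda\subset\mathbb{Z}/M_0$ and $M$ is a multiple of $M_0$.

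A cleaner alternative is to view $\Lambda$ as $\ge 2^{O(R)}/M$-separated, which holds because $M\ge 2^{O(R)}M_0$. Then for each $\theta$, demodulate by $\theta$, so that the annulus becomes $\{0<|\xi|\le 10R/M\}$. On dyadic intervals, the projection onto frequencies $1/M\lesssim|\xi|\lesssim R/M$, with the zero frequency removed, is morally a martingale-difference-type piece spread over $O(\log R)$ consecutive scales. A Littlewood--Paley/square-function estimate then gives $\sum_M\|\cdot\|^2\lesssim R\,\|\mathrm{Mod}_{-\theta}g\,\mathbf{1}_{I'}\text{ localized near }\theta\|^2$. Summing over $\theta$ by the orthogonality \eqref{e:orthog} gives $\lesssim R|I'|$, with the errors from Lemma~\ref{l:polysplit} of size $O(R^{-1/2}+R^{-A}|\Lambda|)|I'|$ per scale.

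\textbf{Main obstacle.} The difficult step is Step 3. Exact orthogonality of $\mathcal{F}_I g$ restricted to $\Lambda_M$ across different $I$ fails because the annuli at consecutive scales overlap: each frequency band at distance $\approx 2^{-k}$ from $\Lambda$ is counted for $O(\log R)$ values of $M$, which is the source of the factor $R$. In addition, the per-scale errors from Lemma~\ref{l:polysplit} have to be summed over all $M\le 2^{-O(R)}|I'|$. I would handle the second issue by taking $A$ large, so that these errors are summable against the lacunary scale structure. I would handle the first by bounding the overlap multiplicity directly, using the separation of $\Lambda$ at scales $M\ge 2^{O(R)}M_0$.
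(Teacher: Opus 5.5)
Your Steps 1 and 2 coincide with the paper's reduction (Lemma~\ref{l:gowersL2} with Lemma~\ref{p:U3est}, then $|\Psi_\delta(z)|\le |z|$ to pass from $g_{\delta,I;M}$ to $g_{I;M}$). The gap is Step 3. It is the entire content of the proposition (the paper's Lemma~\ref{l:orth}), and you leave it at the level of heuristics.

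The one concrete error-control mechanism you propose fails. The error in Lemma~\ref{l:polysplit} contains a term $O(R^{-1/2})|I|$, which comes from the boundary layer with $\gamma=R^{-1/2}$ and does not improve as $A$ grows. Summed over $|I|=M$, $I\subset I'$, it is $R^{-1/2}|I'|$ at every scale, with no decay in $M$. There are about $\log_2(|I'|/M_0)$ scales, an unbounded number, so no choice of $A$ makes the total $O(R|I'|)$. (In the paper that lemma is iterated only across at most $V$ generations of tree tops.)

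Your square-function route asserts the cross-scale orthogonality rather than proving it. Its decoupling in $\theta$ through \eqref{e:orthog} is also not directly available. On a sharp interval, $\mathcal{F}_I g(\xi)$ sees every $\mathbb{Z}$-frequency of $g$ with only $\|\beta-\xi\|^{-1}$ decay. So replacing $g$ by a piece localized near $\theta$ creates leakage, and you would have to estimate it.

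The missing idea is the exact arithmetic orthogonality behind the paper's Cotlar--Stein argument. Write $\xi\in\Lambda_M$ as $\theta(\xi)+k(\xi)/M$ with $\theta(\xi)\in\Lambda\subset\mathbb{Z}/M_0$ and $1\le |k(\xi)|\le 10R$. Take $J\subset I_0$ with $|J|=N$ and $|I_0|=M\ge 2^{O(R)}N$.
\begin{itemize}
\item Restricted to $J$, the packet $e(\xi n)$ is the exact mode $e(\theta(\xi)n)$ times a phase varying by $O(RN/M)$.
\item Because $M_0\mid N$, we have $\theta(\xi)\in\mathbb{Z}/N$. Because $\Lambda_N$ omits $\Lambda$, that exact mode is orthogonal to every $\eta_{J,\zeta}$ with $\zeta\in\Lambda_N$.
\item This is the paper's observation $N(\xi-\zeta)\equiv O(RN/M) \bmod 1$. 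It gives $\|P_N\eta_{I_0,\xi}\|_{\ell^2}^2\lesssim R^3N^2/M^2$.
\item After the factor $R|\Lambda|$ is absorbed into the scale gap $2^{O(R)}$, this yields $\|P_NP_M\|_{2\to 2}\lesssim (N/M)^{1/2}$.
\item Splitting the scales into $O(R)$ classes with mutual ratios $\ge 2^{O(R)}$, and applying Cotlar--Stein in each class, gives the factor $R$.
\end{itemize}

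Your martingale-difference heuristic (annular packets have mean zero after demodulating by $\theta$) points at exactly this. But both ingredients must be used explicitly. If $\Lambda$ were not removed from the annulus, or if $\theta\notin\mathbb{Z}/N$, then $e(\theta n)\mathbf{1}_{I'}$ would put energy $\gtrsim |I'|$ into the annular packets at every scale, and the estimate would be false.
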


To prove this, we proceed by orthogonality, with the main tool being the following lemma.

\begin{lemma}\label{l:orth}
The following bound holds
\begin{align}
    \sum_{M \leq 2^{-O(R)} |I'|} \ \sum_{|I| =M, \ I \subset I'} \|g_{I;M}\|_{\ell^2}^2 \lesssim R \|g \|_{\ell^2}^2.
\end{align}
\end{lemma}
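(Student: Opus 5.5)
The plan is to prove Lemma \ref{l:orth} by an almost-orthogonality argument in phase space. Each $g_{I;M}$ is a localization of $g$ to the tile $I \times \Lambda_M$, and the annuli $\Lambda_M$ around the fixed set $\Lambda$ overlap only for $O(R)$ consecutive dyadic scales $M$. First, for fixed $M$ and $I$, Plancherel on $\mathbb{Z}/|I|$ gives
\begin{align}
\|g_{I;M}\|_{\ell^2}^2 = \| \mathcal{F}_I g \cdot \Phi_M \|_{\ell^2(\mathbb{Z}/|I|)}^2 = \sup_{\|h\|=1, \ \text{supp}\, h \subset \Lambda_M} \big|\sum_{n \in I} g(n) (\mathcal{F}_I^{-1} h)(n)\big|^2 .
\end{align}
The task is therefore to control the $\ell^2$ norm of the coefficients $\langle g, \varphi_{I,\xi}\rangle$, where $\varphi_{I,\xi}(n) := |I|^{-1/2} e(\xi n)\mathbf{1}_I(n)$ and $\xi \in \Lambda_M$. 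These wave packets have spatial support $I$ and frequency "support" $\xi + O(1/|I|)$, with $\xi$ at distance between $1/M$ and $10R/M$ from $\Lambda$.

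Second, I would sparsify scales. I would split the dyadic values of $M$ into $O(R)$ residue classes modulo $C\log R$ (or simply $O(R)$ classes). Within one class, consecutive scales differ by a factor $\geq 2^{C\log R} \gg R$. Then for $M' \geq R^{C} M$ the annuli $\Lambda_M + B(1/M)$ and $\Lambda_{M'} + B(1/M')$ sit at distances $\approx [1,10R]/M$ and $\approx [1,10R]/M'$ from $\Lambda$, so they are disjoint near each $\theta \in \Lambda$. This accounts for the factor $R$. Within a single class the claim becomes a Bessel-type inequality
\begin{align}
\sum_{M}\sum_{|I|=M} \sum_{\xi \in \Lambda_M} |\langle g, \varphi_{I,\xi}\rangle|^2 \lesssim \|g\|_{\ell^2}^2 .
\end{align}

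Third, I would prove this by $TT^*$ (Cotlar--Stein), which I expect to be the main obstacle. The sharp cutoffs $\mathbf{1}_I$ give Fourier tails that decay only like $(|I| \cdot \text{dist})^{-1}$, so the interactions $\langle \varphi_{I,\xi}, \varphi_{I',\xi'}\rangle$ with $I \subsetneq I'$ are not Schwartz-small. I would handle this exactly as in the proof of Lemma \ref{l:polysplit}. Replace $\mathbf{1}_I$ by a smooth $\varphi_I$ adapted to the interior $\{\text{dist}(n,I^c) \geq \gamma|I|\}$, with $\gamma = R^{-1/2}$. The boundary pieces then contribute $O(\gamma^{1/2})\|g\|^2$ per scale class, and this is acceptable after a further $\log$ loss. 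It may be safer to absorb this by restricting $M \leq 2^{-O(R)}|I'|$ and summing the geometric errors.

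For the smoothed packets, take $I \subset I'$ with $|I| = M < M' = |I'|$. Then $|\langle \varphi_{I,\xi}, \varphi_{I',\xi'}\rangle| \lesssim (M/M')^{1/2}(1+ \gamma M\|\xi-\xi'\|)^{-A}$. The frequency separation $\|\xi - \xi'\| \gtrsim 1/M$ can fail only if $\xi$ and $\xi'$ lie near the same $\theta \in \Lambda$ at comparable radii, which sparsification rules out. So the off-diagonal rows sum to $O(\sum_{M'>M} (M/M')^{1/2} R^{-A/2}|\Lambda|)$, which is small by the Schwartz gain. The same-scale terms are orthogonal, since the packets have disjoint spatial supports and frequencies on the grid $\mathbb{Z}/M$. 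Schur's test then gives the Bessel bound.

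Summing over the $O(R)$ classes yields $\lesssim R\|g\|_{\ell^2}^2$, with the range $M \leq 2^{-O(R)}|I'|$ and $I \subset I'$ ensuring everything is localized to $I'$.
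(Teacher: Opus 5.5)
Your proposal has a genuine gap. It uses only the geometry of the annuli (disjointness after sparsification, separation $\gtrsim 1/M$). It never uses the arithmetic facts that $\Lambda\subset\mathbb{Z}/M_0$ and that $M_0$ divides every scale, and without them the lemma is false.

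Take $\Lambda=\{1/(3K_0)\}$ and $g=e(\cdot/(3K_0))\mathbf{1}_{I'}$. For every $M\in K_0 2^{\mathbb{N}}$, the point of $\mathbb{Z}/M$ nearest $1/(3K_0)$ is at distance $1/(3M)$. So it lies in $\Lambda_M$ and captures $\gtrsim M$ of the energy of $g$ on each $I$. Every scale then contributes $\gtrsim\|g\|_{\ell^2}^2$, and the left side grows like $\log|I'|\cdot\|g\|_{\ell^2}^2$.

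The first step that breaks is your claimed Schwartz gain. The annulus $\Lambda_M$ begins at distance $1/M$ from $\Lambda$, i.e.\ inside the frequency uncertainty of any packet adapted to an interval of length $M$. For $\xi=\theta+k/M$ and $\xi'\in\Lambda_{M'}$ with $M'\gg M$, one has $\|\xi-\xi'\|\approx |k|/M$. A packet smoothed at spatial scale $\gamma M$ only yields the factor $(1+\gamma|k|)^{-A}$, which is $\approx 1$ for the roughly $\gamma^{-1}$ admissible values $|k|\lesssim\gamma^{-1}$.

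Schur's test also fails in the other direction. The row of a large packet $(I',\xi')$ meets $M'/M$ small intervals at each smaller scale. The entrywise bound $(M/M')^{1/2}$ therefore leaves a factor $(M'/M)^{1/2}$, which diverges.

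The smoothing is fatal rather than a $\log$ loss. Let $\theta\in\Lambda$ lie on the grid and $g=e(\theta\cdot)\mathbf{1}_{I'}$. Then the true coefficients $\langle g,\eta_{I,\xi}\rangle$ vanish identically. Yet your smooth part and your boundary part are each of size $\approx\gamma|I|^{1/2}$ for $|k|\lesssim\gamma^{-1}$. Each therefore contributes $\approx\gamma\|g\|_{\ell^2}^2$ at every scale, and a scale class contains unboundedly many scales as $|I'|\to\infty$.

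The missing idea is exact grid orthogonality, which requires keeping the sharp packets $\eta_{I,\xi}=|I|^{-1/2}\mathbf{1}_I e(\xi\cdot)$; these are exactly orthonormal on each $I$. The paper sparsifies into $O(R)$ classes with scale ratio $2^{O(R)}$. It sets $P_Ng:=\sum_{|I|=N}\sum_{\zeta\in\Lambda_N}\langle g,\eta_{I,\zeta}\rangle\eta_{I,\zeta}$ and runs Cotlar--Stein through $\|P_NP_M\|^2\le\sup_{I_0}\sum_{\xi\in\Lambda_M}\|P_N\eta_{I_0,\xi}\|^2$, where
$$\|P_N\eta_{I_0,\xi}\|^2=N^{-2}\sum_{\zeta\in\Lambda_N}\frac{|e(N(\xi-\zeta))-1|^2}{|e(\xi-\zeta)-1|^2}.$$

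Write $\xi=\theta(\xi)+k(\xi)/M$ with $\theta(\xi)\in\mathbb{Z}/M_0$, and use $M_0\mid N$ and $\zeta\in\mathbb{Z}/N$. Then $N(\xi-\zeta)\in\mathbb{Z}+O(RN/M)$, so the Dirichlet numerator is only $O(RN/M)$. In words: on a short interval the long packet is almost exactly the grid character $e(\theta\,\cdot)$, which the short annulus excludes.

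This gives $\|P_N\eta_{I_0,\xi}\|^2\lesssim R^3N^2/M^2$. Hence $\|P_NP_M\|\lesssim (N/M)^{1/2}$, since $M\geq 2^{O(R)}N$ absorbs the factor $R|\Lambda|$ from the $\xi$-sum. No choice of $\gamma$ in your smoothing can substitute for this numerator gain.
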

\begin{proof}
We sparsify our scales into $O(R)$ many subclasses and estimate the contribution from each class independently; henceforth all of our scales will satisfy
\[ |I_1| < |I_2| \Rightarrow |I_1| \leq 2^{-O(R)} |I_2|, \]
as well as the absolute constraint $|I| \leq 2^{-O(R)} |I'|$.

We proceed by orthogonality methods, viewing our desired estimate through the lens of Bessel-type inequalities for families of wave-packets, with the arithmetic structure of $\Lambda_M$ used precisly to guarantee almost orthogonality between wave-packets at widely separated scales.

In particular, we define the $\ell^2$-normalized family of functions,
\begin{align}
    \eta_{I,\xi}(n) := |I|^{-1/2} \mathbf{1}_I(n) e(\xi n),
\end{align}
which form an orthonormal basis for functions supported on $I$, and allows us to express
\begin{align}
    g_{I;M} = \sum_{\xi \in \Lambda_M} \langle g, \eta_{I,\xi} \rangle \eta_{I,\xi}.
\end{align}
So, our job is to prove 
\begin{align}
    \sum_M \sum_{|I| = M} \sum_{\xi \in \Lambda_M} |\langle g, \eta_{I,\xi} \rangle|^2 \lesssim \|g \|_{\ell^2}^2,
\end{align}
namely a Bessel inequality for the collection of wave packets
\begin{align}
    \big\{ \eta_{I,\xi} : I \subset I', \ \xi \in \Lambda_{|I|} \big\};
\end{align}
the key point in the below analysis will be the representation:
\begin{align}
\xi \in \Lambda_M \Rightarrow \xi =: \theta(\xi) + \frac{k(\xi)}{M}, \; \; \; \theta(\xi) \in \mathbb{Z}/M_0, \ 1 \leq |k(\xi)| \leq 10R.
\end{align}

We use a Cotlar-Stein orthogonality argument, setting
\begin{align}
    P_N g := \sum_{|I| = N} \sum_{\xi \in \Lambda_N} \langle g,\eta_{I,\xi} \rangle \eta_{I,\xi};
\end{align}
since our intervals are disjoint, we may trivially bound
\begin{align}
    \| P_N g \|_{\ell^2}^2 \leq \|g \|_{\ell^2}^2,
\end{align}
and since $\{ P_N \}$ are self-adjoint, it suffices to prove that, whenever $M \geq 2^{O(R)} N$
\begin{align}
    \| P_N P_M \|_{2 \to 2} \lesssim (N/M)^{1/2}.
\end{align}

To see this, let $I_0$ be an arbitrary interval of length $M$; since we may bound
\begin{align}
    \| P_N P_M \|_{2 \to 2}^2 &\leq \sup_{I_0} \sum_{\xi \in \Lambda_M} \| P_N \eta_{I_0, \xi} \|_{\ell^2}^2 \\
    & \lesssim R|\Lambda| \cdot \sup_{I_0} \max_{\xi \in \Lambda_M} \, \| P_N \eta_{I_0, \xi} \|_{\ell^2}^2,
\end{align}
it suffices to estimate
\[  \sup_{I_0}  \max_{\xi \in \Lambda_M} \, \| P_N \eta_{I_0, \xi} \|_{\ell^2}^2 \lesssim 2^{-O(R)} (N/M). \]
But, we compute
\begin{align}\label{e:wavepacketcoeff}
    \| P_N \eta_{I_0,\xi} \|_{\ell^2}^2  &= \sum_{I \subset I_0, \ \zeta \in \Lambda_N} |\langle \eta_{I_0,\xi}, \eta_{I,\zeta} \rangle|^2 \\
    & = \frac{M}{N} \sum_{1 \leq |k| \leq 10R} \ \sum_{\zeta \in \Lambda_N, \ k(\zeta) = k} |\langle \eta_{I_0,\xi}, \eta_{I,\zeta} \rangle|^2 \\
    & = \frac{1}{N^2} \cdot \sum_{1 \leq |k| \leq 10R} \ \sum_{\zeta \in \Lambda_N, \ k(\zeta) = k} \big| \sum_{n \leq N} e(n (\xi - \zeta) )\big|^2 \\
    & = \frac{1}{N^2} \cdot \sum_{1 \leq |k| \leq 10R} \ \sum_{\zeta \in \Lambda_N, \ k(\zeta) = k} \big| \frac{e(N(\xi - \zeta)) - 1}{e(\xi - \zeta) - 1} \big|^2,
\end{align}
and since
\begin{align}
    N(\xi - \zeta) = N( \theta(\xi) - \zeta) + O( RN/M) \equiv O(RN/M) \mod 1,
\end{align}
we can bound
\begin{align}
    \eqref{e:wavepacketcoeff} &\lesssim \frac{R^2}{M^2} \sum_{1 \leq |k| \leq 10 R} \ \sum_{\zeta \in \Lambda_N, \ k(\zeta) = k}  \frac{1}{\| \xi - \zeta\|^2} \\
    & \lesssim \frac{R^2}{M^2} \sum_{1 \leq |k| \leq 10 R} \ \sum_{\zeta \in \Lambda_N, \ k(\zeta) = k} \Big( \frac{1}{\| \theta(\xi) - \theta(\zeta)\|^2} \cdot \mathbf{1}_{\theta(\xi) \neq \theta(\zeta)}  + (\frac{N}{k})^2 \cdot \mathbf{1}_{\theta(\xi) = \theta(\zeta)} \Big) \\
    & \lesssim \frac{R^3 M_0^2}{M^2} + \frac{R^2 N^2}{M^2} \\
    & \lesssim \frac{R^3 N^2}{M^2} \\
    & \lesssim 2^{-O(R)} \frac{N}{M},
\end{align}
as desired.
\end{proof}

With Lemma \ref{l:orth} in hand, we can quickly establish Proposition \ref{p:orthog00}.

\begin{proof}
By admissibility, namely Lemma \ref{p:U3est}, and Lemma \ref{l:gowersL2}, we may bound
\begin{align}
\| B_{w,I}^\delta(f,g) \|_{\ell^2}^2  \lesssim Q^{-1/20} \| g_{\delta,I;M} \|_{\ell^2}^2  \leq Q^{-1/20} \| g_{I;M} \|_{\ell^2}^2 
\end{align}
where $M = |I|$. Then we sparsify our scales $M$ into $O(R)$ many sub-families, and apply Lemma \ref{l:orth} above.
\end{proof}

Below, for a fixed $\Lambda$ coming from the tree/branch construction, we will split 
\begin{align}\label{e:trichotomy}
{g}_{\delta,I}(x) &= \mathcal{E}_I + g_{\delta,I;|I|}(x) + \sum_{\theta \in \Lambda} \Psi_\delta\big( \mathbb{E}_{n \in I} e(-\theta n) g(n) \big) e(\theta x) \mathbf{1}_I(x) \\
& =: \mathcal{E}_I + g_{\delta,I;|I|}(x) + \Pi_I[\Lambda] g,
\end{align}
into
\begin{itemize}
    \item a negligible error, $\mathcal{E}_I$, coming from frequencies away from $\Lambda$, which satisfies
\[ \|\mathcal{E}_I \|_{\ell^2}^2 \ll t^2 |I|, \]
and so can be ignored by Lemma \ref{l:l2topointwise};
\item An annular component, $g_{\delta,I;|I|}$, controlled by orthogonality, namely a square function argument and Proposition \ref{p:orthog00}; and
\item The main structured component, $\Pi_I[\Lambda] g$, which behaves like a finite linear combination of characters, which we address using entropy methods.
\end{itemize}

More precisely, on any tree we may bound
\begin{align}\label{e:treeestimate}
    |\{ x \in I_v : \sup_{I \in \mathcal{D}_v(I_v)} |B_{w,I}^\delta(f,g)(x)| \gg t\}| \lesssim t^{-2} R Q^{-1/20} |I_v| \lesssim Q^{-1/25} |I_v|,
\end{align}
so summing over $v \leq V$, 
\begin{align}\label{e:treeestimate1}
\sum_{v \leq V} \sum_{I_v \in T_v^{\text{max}}} |\{ x \in I_v : \sup_{I \in \mathcal{D}_v(I_v)} |B_{w,I}^\delta(f,g)(x)| \gg t\}| \lesssim Q^{-1/30} J.
\end{align}
The upshot is that, on the branch, $\mathcal{B}_s(I_v)$, we may replace 
\begin{align}
    g \longrightarrow \Pi_I[\Lambda] g, \; \; \; \Lambda = \Lambda_{v,s}(I_v),
\end{align}
thereby reducing to the case where $g$ ``looks like" a linear combination of characters at all relevant scales and locations, completing the program described in our proof overview. Informally -- and ignoring arithmetic issues -- we have reduced our analysis to a ``variable-coefficient" variant of \cite{B2}.

\bigskip

With this reduction in mind, our next section will begin setting up the machinery needed for our entropy argument, the crucial mechanism behind our proof.
\section{Entropic Preliminaries}\label{s:entprelim}
We begin this section by introducing a few classes of multi-frequency ``projections;" throughout this section and the remainder of the main argument, we regard $Q,\ \delta$ as fixed.

For finite subsets $\Lambda \subset \mathbb{Z}/M_0$, and $|I| \geq 2^{O(R)} M_0$ -- we think of $\Lambda$ as deriving from our tree/branch construction -- we define the projection to the third term of \eqref{e:trichotomy} above,
\begin{align}
    \Pi_I g(x) := \Pi_I[\Lambda] g(x) := \sum_{\theta \in \Lambda} \Psi_\delta(\mathbb{E}_I \text{Mod}_{-\theta} g) e(\theta x) \mathbf{1}_{I}(x),
\end{align}
see \eqref{e:Psidelta}: this replaces $g$ on $I$ by a linear combination of characters, with amplitudes given by local Fourier averages, all approximately $\delta$ in magnitude. Indeed, when $\Lambda$ derives from our tree/branch construction, we will replace

\begin{align}
    A_I^\delta(f,g)(x) := A_I(f,g_{\delta,I})(x) \longrightarrow A_I(f,\Pi_I g),
\end{align}
as per \eqref{e:trichotomy} above. We note the trivial bound
\begin{align}
\sup_I |\Pi_I g| \lesssim |\Lambda|^{1/2}    
\end{align}
since $|g| \leq 1$, see Lemma \ref{l:sampling}.

With $\chi^N$ as in \eqref{e:chibump}, define
\begin{align}
    \Pi_N = \Pi_{N;(\Lambda,Q)}
\end{align}
by
\begin{align}
    \mathcal{F}_{\mathbb{Z}}(\Pi_N f) := \chi^N \cdot \mathcal{F}_{\mathbb{Z}}{f}.
\end{align}

A key point is that we may freely apply $\Pi_N$ to $f$: the essential Fourier support of $f$ can be taken to be suitably arithmetically constrained. Indeed,
\begin{align}\label{e:replacement}
    A_I(f,\Pi_I g) &= A_I(\Pi_N f,\Pi_I g) +O_A(\Delta^{-A}) \\
    &= A_I(\Pi_N (f \cdot \mathbf{1}_{\Delta I}),\Pi_I g) +O(Q^{-10}), \; \; \; |I| = N,
\end{align}
using the regularity of $\chi^N$. To see this, it suffices to show that by Fourier inversion
 \begin{align}
      &  \sum_{\substack{a/q \in \Gamma_Q \\ \theta \in \Lambda}} S(\frac{a}{q}) \Psi_{\delta}(\EE_I \text{Mod}_{-\theta} g) \\
      & \qquad \times \int_\TT \mathcal{F}_{\mathbb{Z}}(f \cdot \mathbf{1}_{\Delta I})(\beta) e(2x \beta) \cdot (1 - \chi^N)(\beta) \Bigl(\sum_{n \in \Z} \phi_I(n) e(n(\theta + a/q - \beta)) \Bigr) d \beta  \\
      & = O(Q^{-10}).
    \end{align}
But, applying Poisson summation yields the bound 
\begin{align}
   &|(1 - \chi^N )(\beta)| \cdot |\sum_{n \in \Z} \phi_I(n) e(n(\theta + a/q - \beta))|\\
   & \qquad \lesssim_A    |(1 - \chi^N )(\beta)| \cdot \frac{1}{\Delta^A N \|\theta + a/q - \beta \|},
\end{align} 
so a dyadic decomposition in Fourier space, along with the normalization
\[ \| f \cdot \mathbf{1}_{\Delta I} \|_{\ell^2}^2 \lesssim \Delta N \]
yields the result.

Consolidating Proposition \ref{p:mfvarprop} and Corollary \ref{c:mfproj}, we arrive at the following estimate:
\begin{proposition}\label{p:mfvar}
    The following bound holds for each $r > 2$:
    \begin{align}
      &  \| \mathcal{V}^r( \Pi_I g : x \in I, \ |I| \geq 2^{O(R)} M_0 )(x) \|_{\ell^2} \lesssim (\frac{r}{r-2})^2 \log^2 Q \| g \|_{\ell^2} \; \; \; \text{ and} \\
      &  \| \mathcal{V}^r( \Pi_N f : N \geq 2^{O(R)} M_0) \|_{\ell^2} \lesssim (\frac{r}{r-2})^2 \log^2 Q \|f \|_{\ell^2}.
    \end{align}
\end{proposition}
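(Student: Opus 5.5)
The plan is to derive both estimates from the multi-frequency variational machinery of \S\ref{s:MultiFreq}: Proposition \ref{p:mfvarprop} (with its transfer Lemma \ref{l:transfreely} and Corollary \ref{c:varschwartz}) for the first bound, and Corollary \ref{c:mfproj} for the second. In both cases the frequency set has cardinality $K \lesssim Q^{O(1)}$, so the $\log^2 K$ losses become $\log^2 Q$.

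\emph{First estimate.} Since $|\Lambda| \le V\delta^{-2}$ and $Q \ge \delta^{-1/1000}$, we have $\log|\Lambda| \lesssim \log Q$. Fix $x$, and note that $\Pi_I g(x)$ has the form
\[
\sum_{\theta\in\Lambda} e(\theta x)\,\Psi_\delta\big(\mathbb{E}_{n\in I}\text{Mod}_{-\theta} g\big),
\]
where $I$ ranges over the dyadic intervals containing $x$. This is exactly the shape $V^r_{\mathbf r}$ with $v \equiv 1$ and $\Psi=\Psi_\delta$. Here $\Psi_\delta$ is Lipschitz uniformly in $\delta$ by \eqref{e:Lip} and satisfies $|\Psi_\delta(z)|\lesssim |z|$. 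The intervals are $I = I_k + \mathbf r(x)$ with $L_k = K_0 2^k$, and $\mathbf r(x)$ records the offset of $x$ inside its dyadic parent; dyadicity is harmless because Proposition \ref{p:mfvarprop} is uniform in $\mathbf r$.

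Two points need checking. The first is separation: $\Lambda \subset \mathbb{Z}/M_0$ is $M_0^{-1}$-separated, so $2^{l_0} \approx M_0$, and the restriction $|I| \ge 2^{O(R)}M_0$ gives $k \gg l_1 = l_0 + 100\log K$, since $2^{O(R)} \gg K^{100}$. The second is that the shift $\mathbf r$ must lie in $[-2^{l_0},2^{l_0}]$, whereas the dyadic offset can be as large as $|I|$. To handle this, I would re-center by writing $\mathbb{E}_{I}$ as an average over $x - z$ with $z \in [0,|I|)$ shifted by $\mathbf r_k(x)$. The scale-dependent offset can then be absorbed by the argument of Lemma \ref{l:transfreely}, where the error is controlled by $|\Lambda|^{-10} M_{\text{HL}} g$ summed geometrically in $k$. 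Alternatively, one can split into $\Delta$ residue-shifted grids and use \eqref{e:dyadsmooth}. I expect this bookkeeping of the dyadic offsets to be the main nuisance.

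\emph{Second estimate.} By \eqref{e:chibump}, $\chi^N$ is a smooth bump adapted to $\Gamma_Q + \Lambda + B(O(\Delta^{1/2}/N))$. The frequency set $\Gamma_Q + \Lambda$ has cardinality at most $Q^2 V\delta^{-2} \le Q^{O(1)}$, so its logarithm is $O(\log Q)$. Its elements are $\gtrsim (Q^2M_0)^{-1}$-separated, by the separation argument in the proof of Lemma \ref{variation-bound-finalizing}; taking $N \ge 2^{O(R)}M_0$ puts us in the regime where the neighbourhoods are disjoint, modulo the dilation factor $\Delta^{1/2}$, which is absorbed into the lacunarity.

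Corollary \ref{c:mfproj} is stated for sharp projections onto $\Lambda + B(L_k^{-1})$. I would pass to smooth multipliers as follows: write $\chi^N$ on each component as a modulated smooth bump $\sum_{\xi}\widehat{\varphi}(N(\beta-\xi)/\Delta^{1/2})$, so that $\Pi_N f = \sum_\xi e(\xi x)\,\varphi_{N/\Delta^{1/2}} * \text{Mod}_{-\xi} f$ after smoothing. Corollary \ref{c:varschwartz}, with $v\equiv1$ and $\|(1+|t|)\varphi'\|_{L^1}\lesssim 1$, then yields the bound $(\tfrac{r}{r-2})^2\log^2 Q\,\|f\|_{\ell^2}$. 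The only subtlety is the initial range of scales, where components of $\Gamma_Q+\Lambda+B$ merge; the cutoff $N\ge 2^{O(R)}M_0$ excludes this range. If that cutoff proves insufficient, the remaining $O(\log Q)$ transitional scales can be handled by the Rademacher–Menshov estimate \eqref{e:V2}, exactly as in the proof of Corollary \ref{c:mfproj}.
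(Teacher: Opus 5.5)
Your second bound follows the paper's route, and your handling of it is fine. The paper simply consolidates Proposition \ref{p:mfvarprop} with Corollary \ref{c:mfproj}, applied to the frequency set $\Gamma_Q+\Lambda$, which has $Q^{O(1)}$ elements and is $\gtrsim (Q^2M_0)^{-1}$-separated; you pass between smooth and sharp cut-offs and treat the merging scales via \eqref{e:V2}. The first bound, however, has a genuine gap at exactly the step you called bookkeeping. For $x\in I\in\mathcal D$ one has $\mathbb{E}_{n\in I}\text{Mod}_{-\theta}g(n)=\mathbb{E}_{z\in I_k+\mathbf r_k(x)}\text{Mod}_{-\theta}g(x-z)$ with $\mathbf r_k(x)=x-c_I-|I|+1$. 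This shift depends on the scale and is typically of size $\approx |I|$. Proposition \ref{p:mfvarprop}, by contrast, allows a single shift $\mathbf r(x)$, independent of $k$ and bounded by $2^{l_0}$, so it is not true that dyadicity is harmless because the proposition is uniform in $\mathbf r$. Nor can Lemma \ref{l:transfreely} absorb the discrepancy. Its error is small only because $|\mathbf r|\le 2^{l_0}$ while $L_k\ge 2^{l_1}$, which gives relative error $\lesssim 2^{l_0}/L_k$. An offset comparable to $L_k$ instead changes the average by $O(M_{\text{HL}}g(x))$ at \emph{every} scale, with no geometric decay to sum: dyadic conditional expectations are not translates of one convolution average. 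Your alternative via \eqref{e:dyadsmooth} fails for the same reason. It applies only to $x\in\overline{\mathcal D}$, and it leaves an $O(\Delta^{-1}M_{\text{HL}}g)$ discrepancy at each scale, which is not summable in $\mathcal V^r$ over unboundedly many scales.

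The repair is to rerun the proof of Proposition \ref{p:mfvarprop} with martingale inputs rather than invoke it as a black box; in the dyadic setting this is actually simpler. Subdivide each grid interval of the smallest admissible scale into pieces $J$ with $|J|\approx 2^{l_0}$. Every $x\in J$ has the same chain of grid intervals $I\ni x$ with $|I|\ge 2^{O(R)}M_0$, so the coefficient vectors $(\Psi_\delta(\mathbb{E}_I\text{Mod}_{-\theta}g))_{\theta\in\Lambda}$ are exactly constant on $J$. This replaces Lemma \ref{l:transfreely} with no error at all. The chaining of Lemma \ref{l:localized} then goes through verbatim, using local orthogonality of $\{e(\theta x)\}_{\theta\in\Lambda}$ on $J$ and $|\mathcal X_u|\le \vec N_{2^{-u}}$, and it produces the $(\frac{r}{r-2})^2\log^2|\Lambda|$ loss. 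The vector-valued maximal, jump and variational inputs for $(\mathbb{E}_I\text{Mod}_{-\theta}g)_\theta$ come from Lemma \ref{l:LEP} applied to the orthogonal family $(\text{Mod}_{-\theta}g_\theta)_\theta$, whose norm is $\lesssim\|g\|_{\ell^2}$. The remainders $\text{Mod}_{-\theta}(g-g_\theta)$ have Fourier support outside $B(c2^{-l_0})$, and $\|\mathbb{E}_k h\|_{\ell^2}^2\lesssim (2^j/L_k)\|h\|_{\ell^2}^2$ when $\mathcal F_{\mathbb Z}h$ lives where $|\beta|\approx 2^{-j}$ (the $\ell^2$ form of the mechanism behind Corollary \ref{c:sep}). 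Hence their dyadic averages have total square sum $\lesssim|\Lambda|^{-100}\|g\|_{\ell^2}^2$ over scales $L_k\ge|\Lambda|^{100}2^{l_0}$. That bound survives both the sum over $\theta\in\Lambda$ and the Cauchy--Schwarz loss of $|\Lambda|^{1/2}$.
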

This proposition allows us to control the oscillation across scales for both families of projections. In particular, it implies that, generically, neither family exhibits significant variation, so that -- away from small exceptional sets -- the collections
\begin{align}\label{e:mfmults00}
    \{ \Pi_N f(x) : N \}, \; \; \; \{ \Pi_I g(x) : x \in I \}
\end{align}
may be efficiently approximated by finite subfamilies: for most $x$, the functions \eqref{e:mfmults00} change appreciably at a highly controlled number of scales. This reduction is the key input in passing from multi-scale analysis to single-scale estimates, which we develop below.

\subsection{Single Scale Estimates}
At the core of the entropic method are sufficiently strong single scale estimates. Taking into account the arithmetic context of our work, we will require two versions, over full intervals and over arithmetic progressions.

To consolidate notation, regarding $\delta$ as fixed, we define
\begin{align}
    A_I^{(i)} := A_I^{\delta,(i)}
\end{align}
to be like $A_I^\delta$, but with $w_Q$ replaced by $w_Q^{(i)}$, thus
\begin{align}\label{e:A_Idelta}
    A_I^{(i)}(f,g)(x) := \sum_n \phi_I(n) f(2x-n) w_Q^{(i)}(n-x) g_{\delta,I}(n) \cdot \mathbf{1}_{I \cap \overline{\mathcal{D}}}(x);
\end{align}
note the analogue of \eqref{e:replacement} holds as well for 
\[ A_I^{(i)}(f, \Pi_I[\Lambda] g), \]
i.e.\
\begin{align}\label{e:replacementi}
    A_I^{(i)}(f,\Pi_I g) &= A_I^{(i)}(\Pi_N f,\Pi_I g) +O_A(\Delta^{-A}) \\
    &= A_I^{(i)}(\Pi_N (f \cdot \mathbf{1}_{\Delta I}),\Pi_I g) +O(Q^{-10}).
\end{align}

To facilitate our argument, we begin with the following elementary consequence of Taylor expansion, which we record as a lemma; we use this to separate spatial dependence (in $x$) from frequency dependence, to reduce the above to finite sums of simpler expressions involving linear combinations of characters.

\begin{lemma}[Taylor Expansion Lemma]\label{l:easytaylorexpansion}
Suppose that $\phi \in \mathcal{C}_c^{\infty}([0,1])$ and that $\mathcal{D}$ is a shifted dyadic grid as in \S \ref{ss:dyadicgrids}, with parameter $\Delta$ as in \eqref{e:DELTA}.  Consider an operator
\begin{align}
        D_I F(x) := \sum_n \phi_I(n) F(2x-n) \mathbf{1}_{I \cap \overline{\mathcal{D}}}(x) 
    \end{align}
where 
\begin{align}
    \phi_I(n) := \frac{1}{|I|} \phi(\frac{n-c_I}{|I|}).
\end{align}
If $\kappa \lesssim \epsilon_0 \ll \kappa$ is sufficiently small, and
\begin{align}
    \phi^{(j)}_I(n) := \frac{1}{j!} \frac{1}{|I|} (\partial^j \phi)(\frac{n - c_I}{|I|}) \; \; \; \text{ and } \; \; \; 
    \chi_I(x) := \frac{x-c_I}{|I|} \cdot \mathbf{1}_{I \cap \overline{\mathcal{D}}}(x),
\end{align}
then we may Taylor expand
\begin{align}
    D_I F(x) = \sum_{j \leq \epsilon_0^{-1} A} \big( \sum_n \phi^{(j)}_I(n) F(n) \big) \cdot \chi_I(x)^j + O(\delta^A Q^{-A} \| F \|_{\ell^\infty}).
\end{align}
\end{lemma}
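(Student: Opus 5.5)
The plan is to substitute $n = 2x - m$, which puts the spatial variable $x$ inside the smooth bump, and then Taylor expand $\phi$ about the point $(m - c_I)/|I|$. Concretely,
\[ D_I F(x) = \sum_m F(m)\, \frac{1}{|I|}\phi\Big(\frac{2x-m-c_I}{|I|}\Big)\mathbf{1}_{I\cap\overline{\mathcal D}}(x). \]
The quantity to expand in is $h := \frac{2x-m-c_I}{|I|} - \frac{m'-c_I}{|I|}$ for a suitable reference point $m'$. The natural choice is the reflection of $m$ about $x$; with that choice the displacement is of the form $\pm 2(x - c_I)/|I| = \pm 2\chi_I(x)$. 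The constant factors $2^j$ and signs can be absorbed into the definition of $\phi^{(j)}_I$, or equivalently into a relabelling of $\phi$ up to reflection. So the expansion to aim for is
\[ \phi(u + \chi) = \sum_{j < J} \frac{1}{j!}(\partial^j\phi)(u)\,\chi^j + O\Big(\frac{\|\partial^J\phi\|_\infty}{J!}|\chi|^J\Big), \qquad J \approx \epsilon_0^{-1}A. \]
After reindexing the summation in $m$, the main terms are exactly $\big(\sum_n \phi^{(j)}_I(n)F(n)\big)\chi_I(x)^j$.

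The key point is the size of $\chi_I$. The displacement is small precisely because of the cutoff $\mathbf{1}_{I\cap\overline{\mathcal D}}(x)$. By \eqref{e:grid0}, a point $x \in \overline{\mathcal D}$ lying in $I$ sits in the leftmost $1/\Delta$ fraction of $I$. So if $c_I$ is taken to be the left endpoint (as in $I = [c_I, c_I + |I|)$), then
\[ |\chi_I(x)| \le \Delta^{-1} \approx t^{10} = \delta^{10\kappa}Q^{-10\kappa}, \]
using \eqref{e:DELTA}. The remainder term is then bounded by
\[ \frac{C_J}{J!}\,\Delta^{-J}\sum_m \frac{1}{|I|}\,\mathbf{1}_{\mathrm{supp}}\,|F(m)| \lesssim_J (\delta Q^{-1})^{10\kappa J}\|F\|_{\ell^\infty}. \]
Since $J \gtrsim \epsilon_0^{-1}A \gtrsim \kappa^{-1}A$, this is $O(\delta^A Q^{-A}\|F\|_{\ell^\infty})$.

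The main obstacle is bookkeeping rather than analysis. First, the derivative constants grow with $j$, since $\|\partial^J\phi\|_\infty$ can be large; but $J = O_A(\kappa^{-1})$ is a fixed number depending only on $A$ and $\kappa$, so $\phi$ fixed gives an $O_{A}(1)$ constant. This is acceptable because the statement's implicit constants may depend on $A$ and $\phi$. Second, one has to match the normalization conventions in $\phi^{(j)}_I$ and $\chi_I$, namely the factor $2^j$ and the reflection. If a literal match is required, I would replace $\phi$ by its appropriately dilated and reflected version before expanding, which changes nothing in the estimate.
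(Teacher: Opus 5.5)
Your proof is correct and is the elementary Taylor expansion the paper has in mind; the paper records this lemma without writing out a proof. After $m=2x-n$ you expand $\phi$ about $(c_I-m)/|I|$ in the increment $2\chi_I(x)\in[0,2\Delta^{-1})$. Note that this means reflecting $m$ about $c_I$, not about $x$ as you wrote: reflecting about $x$ gives zero displacement and leaves the reference point $x$-dependent. The remainder is supported on $O(|I|)$ values of $m$, so it is $O_{A,\phi}(\Delta^{-J}\|F\|_{\ell^\infty})=O(\delta^AQ^{-A}\|F\|_{\ell^\infty})$ once $10\kappa J\ge A$. You are also right that the displayed formula holds only after a harmless renormalization, namely $\phi^{(j)}_I(n)=\frac{2^j}{j!}\frac{1}{|I|}(\partial^j\phi)\big(\frac{c_I-n}{|I|}\big)$. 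This does not affect the later single-scale estimates, which only use that $\phi^{(j)}_I$ is an $L^1$-normalized bump at scale $|I|$ and that $|\chi_I|^j\le\Delta^{-j}$.
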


With this in mind, we present our first single scale estimate, where we sum over full intervals. 

\begin{lemma}[Single Scale Estimate]\label{l:1scale}
    Suppose that $|P| \geq 2^R M_0$, and $0 \leq i \leq \log_2 Q$. Then for any $I \supset P$
\begin{align}
    &\| \sum_n \phi_I(n) (\Pi_N f)(2x-n) w_Q^{(i)}(n-x) \Pi_I[\Lambda]g(n) \cdot \mathbf{1}_{I \cap \overline{\mathcal{D}}}(x) \|_{\ell^2(P)}^2 \\
    & \lesssim |P| \cdot Q^{\epsilon/2} \overline{\lambda}  \min\{ 2^i K_0, |\Lambda|\} \cdot (\delta/Q)^2,
\end{align}
and thus
\begin{align}\label{e:fullintest1}
   & \| \sum_n \phi_I(n) (\Pi_N f)(2x-n) w_Q^{(i)}(n-x) \Pi_I[\Lambda]g(n) \cdot \mathbf{1}_{I \cap \overline{\mathcal{D}}}(x)\|_{\ell^2}^2 \\
   & \qquad \lesssim |I| \cdot Q^{\epsilon/2} \overline{\lambda}  \min\{ 2^i K_0, |\Lambda|\} \cdot (\delta/Q)^2.
\end{align}
In particular, summing over all integers $i \leq \log_2 Q$, one bounds:
\begin{align}
    &\| \sum_n \phi_I(n) (\Pi_N f)(2x-n) w_Q(n-x) \Pi_I[\Lambda]g(n) \cdot  \mathbf{1}_{I \cap \overline{\mathcal{D}}}(x) \|_{\ell^2(P)}^2 \\
    & \qquad \lesssim |P| \cdot Q^{\epsilon} \overline{\lambda}^2 (\delta/Q)^2,
\end{align}
and analogously
\begin{align}\label{e:fullintest}
   & \| \sum_n \phi_I(n) (\Pi_N f)(2x-n) w_Q(n-x) \Pi_I[\Lambda]g(n) \cdot \mathbf{1}_{I \cap \overline{\mathcal{D}}}(x)\|_{\ell^2}^2 \\
   & \qquad \lesssim |I| \cdot Q^{\epsilon} \overline{\lambda}^2  (\delta/Q)^2.
\end{align}
\end{lemma}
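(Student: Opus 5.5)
The plan is to expand everything in Fourier space and reduce the $\ell^2(P)$ norm to a sum over frequencies of squared coefficients times a multiplicity count, using the arithmetic of \S\ref{s:AE}.

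\textbf{Step 1: Taylor expansion.} Apply Lemma \ref{l:easytaylorexpansion} to separate the spatial variable $x$ from the summation variable. Write $w_Q^{(i)}(n-x)=\sum_{a/q\in\Gamma_Q^{(i)}}S(a/q)e(na/q)e(-xa/q)$ and $\Pi_I[\Lambda]g(n)=\sum_{\theta\in\Lambda}c_\theta e(\theta n)$, where $c_\theta:=\Psi_\delta(\mathbb{E}_I\text{Mod}_{-\theta}g)$ satisfies $|c_\theta|\lesssim\delta$. Also write $(\Pi_Nf)(2x-n)=\int \chi^N(\beta)\mathcal{F}_{\mathbb{Z}}f(\beta)e(\beta(2x-n))\,d\beta$. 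The sum over $n$ then becomes, up to the $O(\delta^AQ^{-A})$ Taylor tails,
\begin{align}
\sum_{j}\chi_I(x)^j\sum_{\theta,a/q}c_\theta S(a/q)e(-xa/q)\int\chi^N(\beta)\mathcal{F}_{\mathbb{Z}}f(\beta)e(2x\beta)\,\widehat{\phi^{(j)}_I}(\beta-\theta-a/q)\,d\beta.
\end{align}
The factor $\widehat{\phi_I^{(j)}}$ localizes $\beta$ to within $O(\Delta^{1/2}/N)$ of $\theta+a/q$, consistent with the support of $\chi^N$.

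\textbf{Step 2: reorganize as an exponential sum in $x$.} For each $\xi=\theta+a/q$ collect the term
\begin{align}
e(x(2\beta-a/q))\approx e(x(2\xi-a/q))=e(x(2\theta+a/q)).
\end{align}
The frequencies in $x$ therefore range over the sumset $2\Lambda+\Gamma_Q^{(i)}$, smeared by $O(\Delta^{1/2}/N)$. On $P$, with $|P|\ge 2^RM_0\gg Q^2M_0$, these frequencies are separated by $\gtrsim 1/(Q^2M_0)\gg 1/|P|$ (the separation argument from the proof of Lemma \ref{variation-bound-finalizing}). So by the large-sieve/orthogonality inequality on $P$ (as in the proof of Lemma \ref{l:localized}), the $\ell^2(P)^2$ norm is bounded by $|P|$ times the sum over $\eta\in 2\Lambda+\Gamma_Q^{(i)}$ of the squared total coefficient at $\eta$. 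The smearing is absorbed by an inner $\ell^2$ average of $\mathcal{F}f$ over the $O(1/N)$-window.

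\textbf{Step 3: count multiplicities.} The coefficient at $\eta$ is a sum over at most $D^{(i)}_{(2,1);(\Lambda,Q)}(\eta)\lesssim\min\{2^iK_0,|\Lambda|\}$ representations, by Lemma \ref{l:maximal-multiplicity}. Cauchy--Schwarz gains this factor, leaving
\begin{align}
\sum_{\theta,a/q}|c_\theta|^2|S(a/q)|^2\cdot\big(\text{local average of }|\mathcal{F}f|^2\text{ near }\theta+a/q\big).
\end{align}
Use $|c_\theta|^2\lesssim\delta^2$ and $|S|^2\lesssim Q^{\epsilon/2-2}$, since $q\approx Q$. The remaining sum of local $|\mathcal{F}f|^2$-averages over $\xi\in\Lambda+\Gamma_Q^{(i)}$ is handled by the other multiplicity count, $D_{(1,1)}\lesssim\overline{\lambda}$, which bounds how often each $\xi$ is repeated. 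The $1/N$-separation of distinct $\xi$ then gives a Plancherel-type bound $\lesssim\|f\mathbf{1}_{\Delta I}\|^2/N\lesssim 1$ per normalized unit, via Lemma \ref{l:sampling} applied to $f$ cut to $\Delta I$. This yields the factor $\overline{\lambda}\min\{2^iK_0,|\Lambda|\}(\delta/Q)^2Q^{\epsilon/2}$.

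\textbf{Step 4: consequences.} The $\ell^2$ bound over all of $I$ follows by tiling $I$ by intervals $P$. The $w_Q$ versions follow from the triangle inequality over the $O(\log Q)$ values of $i$, together with $\min\{\cdot\}\le\overline{\lambda}$ and absorbing $\log^2 Q$ into $Q^{\epsilon/2}$.

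\textbf{Main obstacle.} The delicate step is Step 3: the two Cauchy--Schwarz/multiplicity counts must be balanced so that only one factor of $\min\{2^iK_0,|\Lambda|\}$ and one factor of $\overline{\lambda}$ appear, rather than $|\Lambda|$ or $|\Gamma_Q|$. Achieving this relies essentially on the dyadic ($\mathbb{Z}/M_0$) structure of $\Lambda$ used in Lemma \ref{l:maximal-multiplicity}.
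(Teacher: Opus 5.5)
This is correct and essentially the paper's proof. As in the paper, you expand $w_Q^{(i)}$ and $\Pi_I[\Lambda]g$ into characters and use that distinct points of $2\Lambda\pm\Gamma_Q^{(i)}$ are $\gtrsim (Q^2M_0)^{-1}\gg|P|^{-1}$ apart; AM--GM with Lemma \ref{l:maximal-multiplicity} then costs only $\min\{2^iK_0,|\Lambda|\}$, you insert $|S(a/q)|^2|\Psi_\delta|^2\lesssim Q^{o(1)-2}\delta^2$, and you close with the $\overline{\lambda}$-multiplicity of $\Lambda+\Gamma_Q^{(i)}$ and Lemma \ref{l:sampling}.

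One small slip: your Step 1 display keeps both $\chi_I(x)^j$ and $e(2x\beta)$. After Lemma \ref{l:easytaylorexpansion} the $x$-dependence is exactly $e\big(x(2\theta+a/q)\big)\chi_I(x)^j$, so no smearing arises; alternatively, if you skip the Taylor step and keep only $\phi_I$, your Step 2 smearing argument works as written, since the $O(\Delta^{1/2}/N)$ spread is far below the $(Q^2M_0)^{-1}$ separation.
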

\begin{proof}
We expand
\begin{align}
    w_Q(n-x), \ \Pi_I[\Lambda]g(n)
\end{align}
into linear combinations of characters (so frequencies lie in $\Gamma_Q^{(i)} - \Lambda$), apply the previous Taylor expansion Lemma, Lemma \ref{l:easytaylorexpansion}, and estimate the contribution of each $j$ independently (the error term is certainly of an acceptable size). Thus, with $j \leq \epsilon_0^{-1} A$, observe that 
\[ F(n) := (\Pi_N f)(n) \]
is essentially supported on an interval of length $Q^{\epsilon/4}N$, in that the tail decays pointwise like $O_A(Q^{-A})$ using the Schwartz decay of $\chi^N$. Then, if $\mathbf{F}_P$ a Schwartz function with compactly supported Fourier transform that is $\geq 1$ on $10 P$, we estimate
\begin{align}
    &\| \sum_{a/q \in \Gamma_Q^{(i)}, \ \theta \in \Lambda} \big( \sum_n \phi^{(j)}_I(n) (\text{Mod}_{a/q - \theta} F)(n) \big) \cdot \big( S(a/q) \Psi_\delta(\mathbb{E}_I \text{Mod}_{-\theta} g) \big) \\
        & \qquad \times e(2 \theta x - a/q x) \cdot \chi_I(x)^j  \mathbf{F}_P(x) \|_{\ell^2}^2 \\
        & \lesssim (\Delta)^{-j} Q^{o(1)-2} \delta^2 |P| \sum_{a/q \in \Gamma_Q^{(i)}, \ \theta \in \Lambda} |\sum_n \phi^{(j)}_I(n) (\text{Mod}_{a/q - \theta} F)(n) |^2 \\\
        & \qquad \times \sum_{a'/q' \in \Gamma_Q^{(i)}, \ \theta' \in \Lambda} \mathbf{1}_{\| (2\theta - a/q) - (2\theta' - a'/q') \| \leq 1/|P|} \\
        &\lesssim (\Delta)^{-j} \cdot |P| Q^{o(1)-2} \delta^2 \min\{ |\Lambda|, 2^i K_0 \} \sum_{\xi \in \Gamma_Q^{(i)} - \Lambda} |(\mathcal{F}_{\mathbb{Z}}{\phi^{(j)}_I})*(\mathcal{F}_{\mathbb{Z}} {F})(\xi)|^2 \\
    & \lesssim (\Delta)^{-j/2} |P| \overline{\lambda} Q^{\epsilon/2-2} \delta^2 \min\{ |\Lambda|, 2^i K_0 \};
\end{align}
the steps are as follows: we appealed to the AM-GM inequality, then Lemma \ref{l:maximal-multiplicity}, and finally sampling, see Lemma \ref{l:sampling}, using the above localization of $F$ to an interval of length $Q^{\epsilon/4} N$.

Summing over $0 \leq j \leq \epsilon_0^{-1} A$ completes the point.
\end{proof}

We next address the single-scale behavior along arithmetic progressions; this is a refinement of the single-scale estimate, which annihilates the oscillation coming from 
\[ \{ a/q \in \Gamma_Q^{(i)} \};\]
it will be crucial for entropy arguments later.

\begin{lemma}[Single Scale Estimate Along Arithmetic Progressions]\label{l:1scale0}
 Take any $i \leq \log_2 Q$ and suppose that 
\begin{align}
    |P| \geq \begin{cases} 2^{Q^{1/2+\epsilon}} M_0 & \text{ if } Q^{1/2} \leq 2^i \leq Q \\
    2^{Q^{1+\epsilon}} M_0 & \text{ if } 2^i < Q^{1/2}.
    \end{cases}
\end{align}
Then for any $l \in [\mathcal{Q}_i]$ and interval $|I| \geq 2^{O(R)} |P|$:
\begin{align}
    &\| \sum_n \phi_I(n) \Pi_N f(2x+2l-n) w_Q^{(i)}(n-x-l) \Pi_I[\Lambda]g(n) \|_{\ell^2(P \cap \mathcal{Q}_i\mathbb{Z})}^2 \\
    & \qquad \lesssim K_0 \overline{\lambda}Q^{\epsilon} \delta^{2} |P|/\mathcal{Q}_i
\end{align}
\end{lemma}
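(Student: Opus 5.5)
The point of restricting to $x \in P \cap \mathcal{Q}_i\mathbb{Z}$ is that $w_Q^{(i)}$ is $\mathcal{Q}_i$-periodic. So for $x = \mathcal{Q}_i y$ we have $w_Q^{(i)}(n-x-l) = w_Q^{(i)}(n-l)$, and the only spatial dependence on $x$ left in the summand sits in $\Pi_N f(2x+2l-n)$ and in the cut-off. I would therefore exchange the weight's oscillation for a Fourier-side restriction and run the argument of Lemma \ref{l:1scale} with $\mathcal{Q}_i$-dilated phases.

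\textbf{Step 1: Fourier expansion.} Expand both $\Pi_N f$ and $\Pi_I[\Lambda] g$ in characters. Writing $\Pi_N f(2x+2l-n) = \int \chi^N(\beta)\mathcal{F}_{\mathbb{Z}}f(\beta) e(\beta(2x+2l-n))\,d\beta$ and inserting $\Pi_I g = \sum_\theta \Psi_\delta(\mathbb{E}_I \mathrm{Mod}_{-\theta} g) e(\theta n)$, the inner sum in $n$ factors. Splitting $n = \mathcal{Q}_i m + r$ with $r\in[\mathcal{Q}_i]$ and using periodicity, it becomes approximately
\[ \sum_\theta \Psi_\delta(\cdot)\, |I|^{-1}\!\!\sum_m \phi\big(\tfrac{\mathcal{Q}_i m - c_I}{|I|}\big)\, e(\mathcal{Q}_i m(\theta-\beta)) \cdot \mathcal{Q}_i \widehat{w_Q^{(i)}}(\beta-\theta)\, e(\cdots). \]
This approximation replaces $\phi$ on each block of length $\mathcal{Q}_i$ by its value at the block start. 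It is legitimate because $|I| \geq 2^{O(R)}|P| \gg \mathcal{Q}_i$, and \eqref{eqn:size-of-Qi} together with the lower bound on $|P|$ (the two regimes $2^i \gtrless Q^{1/2}$) gives $\mathcal{Q}_i \le 2^{Q^{1/2+\epsilon}}$ or $2^{Q^{1+\epsilon}}$ respectively. By Poisson summation, the $m$-sum is $\approx (\mathcal{F}_{\mathbb{Z}}\varphi_{M_0'})(\mathcal{Q}_i(\beta-\theta))$ up to Schwartz tails. This produces exactly the product appearing in Lemma \ref{variation-bound-finalizing}.

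\textbf{Step 2: square sum over $x$.} Take the $\ell^2$ norm over $x \in P\cap\mathcal{Q}_i\mathbb{Z}$, a set of size $|P|/\mathcal{Q}_i$. As in Lemma \ref{l:1scale}, I would Taylor expand (Lemma \ref{l:easytaylorexpansion}) to decouple $x$ from $n$. Then Cauchy--Schwarz in $\theta$ bounds the $x$-sampled quantity by
\[ \frac{|P|}{\mathcal{Q}_i}\, \delta^2 \sup_\beta \chi^{M_0}(\beta)\sum_\theta |(\mathcal{F}_{\mathbb{Z}}\varphi_{M_0'})(\mathcal{Q}_i(\beta-\theta))|^2 |\widehat{w_Q^{(i)}}(\beta-\theta)|^2 \]
times an $L^2$ mass of $\Pi_N f$. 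The $\delta^2$ comes from $|\Psi_\delta|\approx\delta$. The mass of $\Pi_N f$ is controlled via Lemma \ref{l:sampling} by $O(K_0 Q^{o(1)})$: there are at most $K_0$ residues, and $f$ is essentially localized to $Q^{\epsilon/4}N$. Lemma \ref{variation-bound-finalizing} bounds the supremum by $\overline\lambda Q^{o(1)-2}$, giving the claim, and even with spare $Q^{-2}$ savings. The expected loss of $K_0$ arises because distinct $\theta$ modulo $\mathcal{Q}_i$-aliasing can coincide up to multiplicity $\lesssim K_0 2^i$, which is Lemma \ref{l:maximal-multiplicity}.

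\textbf{Main obstacle.} The delicate step is Step 1's aliasing. After dilation by $\mathcal{Q}_i$, the frequencies $\beta-\theta$ are only determined modulo $\mathcal{Q}_i^{-1}$. So I must check that the Cauchy--Schwarz over $\theta$ does not lose the full $|\Lambda|$, but only the multiplicity $\overline\lambda$ already built into Lemma \ref{variation-bound-finalizing}. I would handle this by following that lemma's argument, namely the separation of $m\Lambda+n\Gamma_Q^{(i)}$ at scale $(Q^2M_0)^{-1}$, rather than re-deriving it.
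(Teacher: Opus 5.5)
There is a genuine gap in Step 2, and your remark about ``spare $Q^{-2}$ savings'' is the symptom: the bound your argument would produce, $K_0\overline{\lambda}Q^{o(1)-2}\delta^2|P|/\mathcal{Q}_i$, is false.

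\textbf{Where Step 2 breaks.} Bounding the $\ell^2(P\cap\mathcal{Q}_i\mathbb{Z})$ norm by $\frac{|P|}{\mathcal{Q}_i}\delta^2\sup_\beta(\cdots)$ times a mass of $\Pi_N f$ is a full-lattice Plancherel bound divided by $\mathcal{Q}_i$. It therefore presumes the output is equidistributed over residue classes mod $\mathcal{Q}_i$, and on the sublattice that fails. For $x\in\mathcal{Q}_i\mathbb{Z}$ and $\beta$ near $\theta+a/q$, the phase $e(2\beta x)$ is $e(2\theta x)$ times a factor essentially constant on $P$, because $2\mathcal{Q}_i\cdot a/q\in\mathbb{Z}$. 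So for each $\theta$ the whole coset $\theta+\Gamma_Q^{(i)}$ aliases to one frequency, and the $|\Gamma_Q^{(i)}|\approx Q^2/2^i$ terms $S(a/q)(\cdots)$ add coherently.

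Lemma \ref{variation-bound-finalizing} only sees the diagonal: one $a/q$ per $\theta$ at each $\beta$. The separation of $m\Lambda+n\Gamma_Q^{(i)}$ that you invoke holds in $\mathbb{T}$, not modulo $\frac{1}{2\mathcal{Q}_i}$. The dangerous multiplicity is in $a/q$, not in $\theta$.

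\textbf{A counterexample to your bound.} Take $i=0$, $S(a/q)=\mu(q)/\phi(q)$, $\Lambda=\{0\}$, $g\equiv\delta$, and $f(m):=\overline{\operatorname{sgn} w_Q^{(0)}(l-m)}$, where $\operatorname{sgn} z:=z/|z|$. This $f$ is $\mathcal{Q}_0$-periodic, and its Fourier coefficients on $\Gamma_Q$ are untouched by $\Pi_N$. For every $x\in P\cap\mathcal{Q}_0\mathbb{Z}$ the average is then $\approx\delta\,\mathbb{E}_{[\mathcal{Q}_0]}|w_Q^{(0)}|$. By H\"older,
\[ \delta\,\mathbb{E}_{[\mathcal{Q}_0]}|w_Q^{(0)}|\geq\delta\,(\mathbb{E}|w_Q^{(0)}|^2)^{3/2}(\mathbb{E}|w_Q^{(0)}|^4)^{-1/2}\gtrsim\delta Q^{-o(1)}, \]
using $\mathbb{E}|w_Q^{(0)}|^2=\sum_{Q/2<q\leq Q,\ q \text{ odd}}\mu^2(q)/\phi(q)\gtrsim 1$ and \eqref{eqn:heath-brown-moments-w_Q^i}. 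Your bound would allow at most $K_0^2Q^{o(1)-1}\delta^2$ per point, since $\overline{\lambda}\leq K_0Q$.

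\textbf{What is missing: an explicit count of this aliasing.} This is the paper's argument.
\begin{itemize}
\item After Taylor expansion, at $x\in\mathcal{Q}_i\mathbb{Z}$ the operator equals $\sum_{\theta\in\Lambda}e(2\theta x)c_\theta$, with
\[ c_\theta=\Psi_\delta(\mathbb{E}_I\text{Mod}_{-\theta}g)\sum_{a/q\in\Gamma_Q^{(i)}}S(a/q)\,e((2\theta-a/q)l)\,G_{\theta,a/q}, \qquad G_{\theta,a/q}:=\sum_n\phi_I^{(j)}(n)(\text{Mod}_{a/q-\theta}\Pi_Nf)(n), \]
and $c_\theta$ is essentially constant on $P$.
\item The $\ell^2(P\cap\mathcal{Q}_i\mathbb{Z})$ norm couples $\theta,\theta'$ only when $\|2\mathcal{Q}_i(\theta-\theta')\|\leq\mathcal{Q}_i/(10|P|)$. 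Nonzero values are $\geq 1/M_0$, and $|P|\gg\mathcal{Q}_iM_0$ by \eqref{eqn:size-of-Qi}, so this forces $2\mathcal{Q}_i(\theta-\theta')\equiv 0$. This is the essential use of the hypothesis on $|P|$.
\item The kernel of multiplication by $2\mathcal{Q}_i$ on $\mathbb{Z}/M_0$ has size $\lesssim\gcd(M_0,2\mathcal{Q}_i)\lesssim K_02^i$. This, not Lemma \ref{l:maximal-multiplicity} or a ``mass of $\Pi_N f$'', is where $K_0$ comes from.
\item Cauchy--Schwarz over $a/q$ then costs $|\Gamma_Q^{(i)}|\leq Q^2/2^i$, which cancels both the $2^i$ and $\mathbf{S}_Q^2\lesssim Q^{o(1)-2}$.
\item Finally, $\sum_{\theta,a/q}|G_{\theta,a/q}|^2\lesssim\overline{\lambda}Q^{\epsilon/2}$ by Lemma \ref{l:maximal-multiplicity} and Lemma \ref{l:sampling}.
\end{itemize}
The result is $K_0\overline{\lambda}Q^{\epsilon}\delta^2|P|/\mathcal{Q}_i$, with no power of $Q$ saved; the example above shows none can be.
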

\begin{proof}
Arguing as above, and maintaining the notation introduced there, we estimate
\begin{align}\label{e:settinguptheest}
    &\| \sum_{a/q \in \Gamma_Q^{(i)}, \ \theta \in \Lambda} \big( \sum_n \phi^{(j)}_I(n) (\text{Mod}_{a/q - \theta} F)(n) \big) \cdot \big(e(2 \theta l - a/q l) S(a/q) \Psi_\delta(\mathbb{E}_I \text{Mod}_{-\theta} g) \big) \\
    & \qquad \times e(2 \theta x - a/q x) \cdot \chi_I(\mathcal{Q}_i x)^j \mathbf{F}_P(\mathcal{Q}_i x) \|_{\ell^2}^2 \\
    & \lesssim \Delta^{-j} \cdot |P|/\mathcal{Q}_i \cdot Q^{o(1)-2} \delta^2 |\Gamma_Q^{(i)}| \cdot \sum_{a/q \in \Gamma_Q^{(i)}, \ \theta \in \Lambda} |(\mathcal{F}_{\mathbb{Z}}{\phi^{(j)}_I})*(\mathcal{F}_{\mathbb{Z}} {F})(a/q-\theta )|^2  \\
    & \qquad \times  \sum_{\theta' \in \Lambda} \mathbf{1}_{\| 2\mathcal{Q}_i(\theta - \theta') \| \leq \frac{\mathcal{Q}_i}{10|P|}}.
\end{align}    
   Since
   \begin{align}
       \| 2 \mathcal{Q}_i (\theta - \theta') \| > 0 \Rightarrow \| 2 \mathcal{Q}_i (\theta - \theta') \| \geq  \frac{1}{M_0},
   \end{align}
 by our (sufficiently large) choice of $|P|$ and \eqref{eqn:size-of-Qi},
   we see that whenever 
\[ \| 2 \mathcal{Q}_i(\theta - \theta') \| \leq \frac{\mathcal{Q}_i}{10 |P|} \Rightarrow 2 \mathcal{Q}_i(\theta - \theta') \equiv 0 \mod 1;\] 
since the kernel of multiplication by $2 \mathcal{Q}_i$ acting on $\mathbb{Z}/M_0$ has cardinality 
\begin{align}
    \leq \text{gcd}(M_0,\mathcal{Q}_i) \lesssim K_0 2^i,
\end{align}
we may bound
\begin{align}
    \sum_{\theta' \in \Lambda} \mathbf{1}_{\| 2\mathcal{Q}_i(\theta - \theta') \| \leq \frac{\mathcal{Q}_i}{10|P|}} \lesssim K_0 2^i,
    \end{align}
and thus majorize the above by
\begin{align}
    \eqref{e:settinguptheest} \leq &\Delta^{-j/2} \cdot |P|/\mathcal{Q}_i \cdot  Q^{o(1)-2} \delta^2 |\Gamma_Q^{(i)}| \cdot (K_0 2^i \overline{\lambda} Q^{\epsilon/2})  \\
    & \qquad \lesssim \Delta^{-j/2} \cdot |P|/\mathcal{Q}_i \cdot Q^{\epsilon} K_0 \delta^2 \overline{\lambda}, 
\end{align}
and a final sum over $0 \leq j \leq \epsilon_0^{-1} A$ concludes the proof.
\end{proof}

\bigskip

In the following section, we will apply entropy-based arguments to appropriately discretize our infinite suprema, reducing matters to single-scale analysis; the above estimates will be used to close the argument.

\section{Branch Analysis}\label{s:nonboundary}
We begin this section by addressing the case of boundary branches. This argument follows from a straightforward application of Lemma \ref{l:gowersL2} and Lemma \ref{p:U3est}; we specialize $U = O(t^{-2} R)$ in the setting of Lemma \ref{l:branches}.

\begin{lemma}[Contribution from Boundary Branches]
The following estimate holds:
\begin{align}
    &\sum_{v \leq V} \sum_{I_v \in T_v^{\text{max}}} |\{ x \in I_v \smallsetminus \text{sh}(X_{v+1}) : \sup_{I \in \mathcal{B}_\infty(I_v)} |A^\delta_I(f,g)(x)| \gg t \}| \\
    & \lesssim Q^{-1/25} J.
\end{align}
\end{lemma}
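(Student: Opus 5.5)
The plan is to bound the maximal function over boundary intervals by a square function, since the boundary branch involves only few scales, and then control each scale by the $U^3$ smallness of $w_Q$. Fix $v \le V$ and $I_v \in T_v^{\text{max}}$. By Lemma \ref{l:branches} with $U = O(t^{-2}R)$, the set of lengths $\{|I| : I \in \mathcal{B}_\infty(I_v)\}$ has cardinality $\lesssim RUV \lesssim t^{-2}R^2 V$. For each such length $M$, the intervals in $\mathcal{B}_\infty(I_v)$ of length $M$ are disjoint dyadic subintervals of $I_v$. We therefore dominate pointwise
\begin{align}
\sup_{I \in \mathcal{B}_\infty(I_v)} |A_I^\delta(f,g)(x)|^2 \le \sum_{M} \sum_{\substack{|I| = M \\ I \in \mathcal{B}_\infty(I_v)}} |A_I^\delta(f,g)(x)|^2 ,
\end{align}
and then apply Chebyshev at height $t$:
\begin{align}
|\{x \in I_v : \sup_{I \in \mathcal{B}_\infty(I_v)} |A^\delta_I(f,g)(x)| \gg t\}| \lesssim t^{-2} \sum_M \sum_{|I| = M} \|A_I^\delta(f,g)\|_{\ell^2}^2 .
\end{align}

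For each single interval we use Lemma \ref{l:gowersL2} with $g$ replaced by $g_{\delta,I}$ (which is legitimate since $|f|\le 1$ and $\phi_I$ is adapted to $I$), followed by Lemma \ref{p:U3est}, valid because $|I| \ge 2^{Q^{1/5}} \ge Q^{100}$. This gives
\begin{align}
\|A_I^\delta(f,g)\|_{\ell^2}^2 \lesssim \|w_Q\|_{U^3}^2 \|g_{\delta,I}\|_{\ell^2(I)}^2 \lesssim Q^{o(1)-3/26} |I| ,
\end{align}
using $\|g_{\delta,I}\|_{\ell^2(I)}^2 \le |I|$ from Plancherel and $|g|\le1$. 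Summing over the disjoint intervals of a fixed length inside $I_v$ gives $\lesssim Q^{o(1)-3/26}|I_v|$, and summing over the $\lesssim t^{-2}R^2V$ lengths yields
\begin{align}
\lesssim t^{-4} R^2 V Q^{o(1)-3/26} |I_v| .
\end{align}
By the parameter hierarchy \eqref{e:paramhier}, $t^{-4}R^2V \le Q^{2^{-100}}$, so the whole expression is $\lesssim Q^{-1/10}|I_v|$.

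Finally we sum over $I_v \in T_v^{\text{max}}$, which are disjoint with total length $\le J$, and then over $v \le V$, which costs one further factor $V = Q^{\rho/10}$. This is again absorbed, giving $\lesssim Q^{-1/25}J$. The restriction to $x \notin \text{sh}(X_{v+1})$ is only used to attribute each point to its own tree, so it causes no loss.

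The step that needs care is the single-scale $\ell^2$ bound. Lemma \ref{l:gowersL2} is stated on $\ell^2(I)$ with $w$ measured in $U^3([-N,2N])$, so we must check that Lemma \ref{p:U3est} applies on that window, which follows by translation and doubling. If the $U^3$ input turned out to be weaker, the only remaining margin is the power $Q^{-3/52}$ against the polynomial-in-$t^{-1},R,V$ losses. That margin is comfortable by \eqref{e:paramhier}.
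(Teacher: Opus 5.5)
Your proposal is correct and is essentially the paper's argument. You dominate the supremum over the $O(t^{-2}R^2V)$ boundary scales by a square sum, apply Chebyshev, and bound each single-scale term via Lemma \ref{l:gowersL2} and Lemma \ref{p:U3est}; the factor $t^{-4}R^2V$, and the final factor $V$ from summing over trees, are then absorbed into a small power of $Q$. The paper records the intermediate bound as $Q^{-1/20}|I_v|$ rather than your $Q^{-1/10}|I_v|$, which makes no difference.
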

\begin{proof}
We bound
\begin{align}
&    |\{ x \in I_v \smallsetminus \text{sh}(X_{v+1}) : \sup_{I \in \mathcal{B}_\infty(I_v)} |A^\delta_I(f,g)(x)| \gg t \}| \\
& \leq t^{-4} R^2 V \max_{N} \, \sum_{|I| = N, \ I \subset I_v} \| A^\delta_I(f,g) \|_{\ell^2}^2 \\
& \lesssim Q^{-1/20} |I_v|,
\end{align}
by Lemmas \ref{l:gowersL2} and \ref{p:U3est}, 
so 
\begin{align}
&\sum_{v \leq V} \sum_{I_v \in T_v^{\text{max}}} |\{ x \in I_v \smallsetminus \text{sh}(X_{v+1}) : \sup_{I \in \mathcal{B}_\infty(I_v)} |A^\delta_I(f,g)(x)| \gg t \}| \\
& \lesssim V Q^{-1/20} J \\
& \ll Q^{-1/25} J,
\end{align}
as desired.
\end{proof}

With this lemma in mind, we are finally in a position to implement the approach described in our proof overview: we have reduced our argument to studying single branches, provided we are suitably efficient.

Below, let 
\begin{align}
    \mathbf{C}_{\text{Branch}} :=     \mathbf{C}_{\text{Branch}}(\delta,Q)
\end{align}
denote the best constant so that the inequality is satisfied uniformly for each integer $0 \leq i \leq \log_2 Q$, each $s \leq U = O(t^{-2} R)$ and each $v \leq V$:
\begin{align}
    |\{ x \in I_v \smallsetminus \text{sh}(X_{v+1}) : \sup_{I \in \mathcal{B}_s(I_v)} |A_I^{(i)}( \Pi_N f,\Pi_I g)(x)| \gg t/\log Q \}| \leq \mathbf{C}_{\text{Branch}} |I_v|,
\end{align}
where
\[ \Lambda = \Lambda_{v,s}(I_v)\]
is fixed along the branch $\mathcal{B}_s(I_v)$,
and 
\[ \Pi_I = \Pi_I[\Lambda], \; \; \;  \Pi_N = \Pi_{N;(\Lambda,Q)}\]
depends on this choice of $\Lambda$.

Then we may bound the contribution from the non-boundary branches in a single dyadic grid by
\begin{align}
    &(t^{-2} V^{-1}  + Vt^{-2} R \cdot \log Q \cdot \mathbf{C}_{\text{Branch}})J \\
    & \lesssim Q^{-\rho/20} + Q^{\rho/5} \mathbf{C}_{\text{Branch}},
\end{align}
where the first term derives from the exceptional set estimate,
\begin{align}
    |\text{sh}(X_V)| \lesssim t^{-2} V^{-1}.
\end{align}
Thus, to establish Proposition \ref{p:key}, and with it, Theorem \ref{t:main} in the case where $Q \geq \delta^{-1/1000}$, it suffices to bound e.g.\ 
\[ \mathbf{C}_{\text{Branch}} \lesssim Q^{-\rho/2}.\]

\medskip

Below, we regard $\mathcal{B}_s(I_v), \ s \leq U = O(t^{-2} R), v \leq V$ as arbitrary but fixed. And, we are only interested in the case where $I \in \mathcal{B}_s(I_v)$ satisfy
\[ |I| \ll 2^{-O(R)} |I_v|\]
and
\[ |I|^{-1} \ll 2^{-O(R)} \min_{\theta \neq \theta' \in \Lambda} |\theta - \theta'|\]
if $\Lambda = \Lambda_{v,s}(I_v)$.

\subsection{Discarding Small Intervals}
We define $\mathcal{B}_s^{\geq}(I_v)$ as the set of intervals $\{ I \} \subset \mathcal{B}_s(I_v)$ whose lengths satisfy
\begin{align}\label{e:largeI}
    |I| \geq \begin{cases} 2^{Q^{1/2+3\epsilon}} M_0 & \text{ if } Q^{1/2} \leq 2^i \leq Q \\
    2^{Q^{1+3\epsilon}} M_0 & \text{ if } 2^i < Q^{1/2},
    \end{cases}
\end{align}
and let $\mathcal{B}_s^{\leq}(I_v)$ denote the complementary set; note that the number of scales appearing in $\mathcal{B}_s^{\leq}(I_v)$,
\[ \mathbf{I}(Q;i) := \mathbf{I}_{s,v}(Q;i) \]
is bounded, uniformly in $s,v$, by
\begin{align}
\mathbf{I}(Q;i) \lesssim \begin{cases} {Q^{1/2+3\epsilon}} & \text{ if } Q^{1/2} \leq 2^i \leq Q \\
    {Q^{1+3\epsilon}} & \text{ if } 2^i < Q^{1/2}.
    \end{cases}
\end{align}
In particular, there are few such scales, so crude $L^2$ bounds suffice to address their contribution: applying \eqref{e:fullintest1}, we bound 
\begin{align}
    &|\{ x \in I_v \smallsetminus \text{sh}(X_{v+1}) : \sup_{I \in \mathcal{B}^{\leq}_s(I_v)} |A_I^{(i)}( \Pi_N f,\Pi_I  g)(x)| \gg t/\log Q \}| \\
    & \lesssim t^{-3} \sum_{I \in \mathcal{B}^{\leq}_s(I_v)} \| A_I^{(i)}( \Pi_N f,\Pi_I g) \|_{\ell^2}^2 \\
    & \lesssim t^{-3} \cdot \mathbf{I}(Q;i) \cdot  Q^{\epsilon} \overline{\lambda}   \cdot \min\{ 2^i K_0, |\Lambda|\} \cdot (\delta/Q)^2 \cdot |I_v| \\
    & \lesssim t^{-3} \cdot Q^{\epsilon}  \overline{\lambda}   \cdot \min\{ 2^i K_0, |\Lambda|\} \cdot (\delta/Q)^2 \cdot \big( Q^{1/2+3\epsilon} \cdot \mathbf{1}_{Q^{1/2} \leq 2^i \leq Q} + Q^{1+3\epsilon} \cdot \mathbf{1}_{2^i < Q^{1/2}} \big) \cdot |I_v| \\
& \lesssim Q^{-1/5} \cdot |I_v|.
\end{align}

\medskip

It remains to establish the following Proposition.
\begin{proposition}\label{p:nearly}
For each $0 \leq i \leq \log_2 Q, \ s \lesssim t^{-2} R, \ v \leq V$
\begin{align}
|\{ x \in I_v \smallsetminus \text{sh}(X_{v+1}) : \sup_{I \in \mathcal{B}_s^{\geq}(I_v)} |A_I^{(i)}(\Pi_N f,\Pi_I g)(x)| \gg t/\log Q \}| \lesssim Q^{-\rho/2} |I_v|
\end{align}
where $\Lambda = \Lambda_{v,s}(I_v)$, and $\Pi_N, \Pi_I$ are defined in terms of $\Lambda = \Lambda_{v,s}(I_v)$ as above.
\end{proposition}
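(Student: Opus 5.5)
We prove Proposition \ref{p:nearly} by a bilinear entropy/metric-chaining argument in the spirit of \cite{B3} and \S\ref{s:MultiFreq}, anchored by the single scale estimate along arithmetic progressions, Lemma \ref{l:1scale0}. Fix the branch $\mathcal{B}_s^{\geq}(I_v)$ with $\Lambda = \Lambda_{v,s}(I_v)$, and fix $i$. Since every $I$ in the branch satisfies \eqref{e:largeI}, we may choose $|P| \approx 2^{-O(R)} \min |I|$ meeting the hypotheses of Lemma \ref{l:1scale0}.

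\textbf{Step 1 (freezing the input functions).} Partition $I_v$ into intervals $P$ of this length, and on each $P$ split the point set according to residues $x \equiv l \bmod \mathcal{Q}_i$. Using Proposition \ref{p:mfvar} together with the trivial jump-counting domination (Corollary \ref{c:LEP} in vector form), discard an exceptional set of $x$ of relative measure $\lesssim Q^{-\rho}$. Away from this set, the families $\{\Pi_N f(x)\}_N$ and $\{\Pi_I g\}_{I \ni x}$ undergo only $\lesssim t^{-O(1)} Q^{\rho/10}$ many $t^{O(1)}$-jumps in $\ell^2$-sense. The vector-valued structure of $\Pi_I[\Lambda]g$ is exactly that of $\S\ref{s:MultiFreq}$, since $\Lambda$ is widely separated relative to the scales of the branch.

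\textbf{Step 2 (entropy discretization).} Following Lemma \ref{l:localized}, perform chaining on the bilinear set of coefficient vectors
\[ \big( \Psi_\delta(\mathbb{E}_I \text{Mod}_{-\theta} g),\ \text{local Fourier data of } \Pi_N f \text{ at } a/q-\theta \big)_{\theta \in \Lambda,\, a/q \in \Gamma_Q^{(i)}}. \]
At each resolution $2^{-u}$, the entropy is bounded by the jump counts from Step 1. Taylor expanding the spatial dependence via Lemma \ref{l:easytaylorexpansion} then reduces each chaining increment to a finite sum of expressions of the form appearing in Lemma \ref{l:1scale0}. The point of restricting to $x \in P \cap (l + \mathcal{Q}_i\mathbb{Z})$ is that this kills the oscillation $e(-a x/q)$ coming from $\Gamma_Q^{(i)}$, since $\mathcal{Q}_i x \equiv 0$ for all denominators. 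Consequently the only remaining dependence on $x$ is through $e(2\theta x)$ and the scale-dependent coefficients. Each increment is then bounded in $\ell^2(P \cap \mathcal{Q}_i\mathbb{Z})$ by $2^{-u} \cdot (K_0 \overline{\lambda} Q^{\epsilon}\delta^2 |P|/\mathcal{Q}_i)^{1/2}$ times the square root of the entropy count.

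\textbf{Step 3 (summing).} Summing over $u$ as in the conclusion of Lemma \ref{l:localized}, then over residues $l$ and over $P$, and finally applying Chebyshev at height $t/\log Q$, gives an exceptional measure of roughly
\[ t^{-O(1)} Q^{O(\rho)} K_0\, \overline{\lambda}\, Q^{\epsilon} \delta^2 \, |I_v|. \]
Using $\overline{\lambda} \le V\delta^{-2}$ from \eqref{e:lambda} would make this bound useless, so the single scale gain must come instead from the factor $\mathbf{S}_Q^2 |\Gamma_Q^{(i)}| \lesssim Q^{o(1)} 2^{-i}$. Combined with the multiplicity bound $\min\{2^iK_0,|\Lambda|\}$ of Lemma \ref{l:maximal-multiplicity}, this should yield a genuine power saving $Q^{-c}$ via the parameter hierarchy \eqref{e:paramhier}.

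\textbf{Main obstacle.} I expect the crux to be Step 2, specifically bounding the \emph{bilinear} entropy. The chaining must be run simultaneously in $f$ and $g$, and the sumsets $2\Lambda - \Gamma_Q^{(i)}$ produce constructive interference, so the number of effective frequencies can be as large as $\overline{\lambda}|\Gamma_Q^{(i)}|$ rather than $|\Lambda|$. My first attempt would be to control this through Lemma \ref{variation-bound-finalizing}, applying it at scale $M_0'$ to show that the $f$-side variation, restricted to the progression $\mathcal{Q}_i\mathbb{Z}$, has square-sum bounded by $\overline{\lambda} Q^{o(1)-2}$. This would let the $f$-jumps be counted with the same $Q^{-2}$ gain that the single scale estimate provides.
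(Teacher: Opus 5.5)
There is a genuine gap: your argument does not close, and the rescue you propose in Step 3 cannot work. You rightly note that the bound $t^{-O(1)}Q^{O(\rho)}K_0\overline{\lambda}Q^{\epsilon}\delta^2|I_v|$ is useless. However, the factors $\mathbf{S}_Q^2|\Gamma_Q^{(i)}|\lesssim Q^{o(1)}2^{-i}$ and $\min\{2^iK_0,|\Lambda|\}$ that you hope to exploit are exactly what Lemma \ref{l:1scale0} has already consumed. On $\mathcal{Q}_i\mathbb{Z}$ the characters $e(-ax/q)$ collapse. Cauchy--Schwarz over $\Gamma_Q^{(i)}$ then costs $|\Gamma_Q^{(i)}|\le Q^2/2^i$, which cancels $\mathbf{S}_Q^2$. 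The $\theta'$-count coming from the kernel of multiplication by $2\mathcal{Q}_i$ on $\mathbb{Z}/M_0$ costs $K_02^i$, which cancels $2^{-i}$. What remains, $K_0\overline{\lambda}Q^{\epsilon}\delta^2|P|/\mathcal{Q}_i\le K_0VQ^{\epsilon}|P|/\mathcal{Q}_i$, is essentially the trivial bound. So the arithmetic single-scale estimate carries no power saving in $Q$. The $(\delta/Q)^2$ of Lemma \ref{l:1scale} is only available on full intervals, where the $x$-dependence through $e(-ax/q)$ is not frozen. Your remark that the $f$-jumps should carry ``the same $Q^{-2}$ gain that the single scale estimate provides'' therefore has it backwards: all of the decay must come from the entropy side. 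Your Steps 1--2 cannot produce it. Jump counts of $\Pi_Nf$ and $\Pi_Ig$ obtained from Proposition \ref{p:mfvar} and L\'{e}pingle-type bounds are of size $Q^{O(\rho)}$ with no decay in $Q$. Chaining over data indexed by $\Lambda\times\Gamma_Q^{(i)}$ only reintroduces $|\Gamma_Q^{(i)}|$ into the entropy.

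The missing idea is the construction in the proof of Proposition \ref{p:overlap}.
\begin{enumerate}
\item Collapse the $\Gamma_Q^{(i)}$-index by averaging against $w_Q^{(i)}$ over a full period, $F^{(i)}_{\theta,N}(x)=\mathbb{E}_{r\in[\mathcal{Q}_i]}w_Q^{(i)}(r)\,\text{Mod}_{-\theta}\Pi_Nf(x-r)$. On each residue class $l$ the $f$-side coefficient is then the $\Lambda$-indexed vector $\big(\mathcal{Q}_i\sum_p\varphi_N(p\mathcal{Q}_i)F^{(i)}_{\theta,N}(x_P-l-p\mathcal{Q}_i)\big)_{\theta\in\Lambda}$.
\item Corollary \ref{c:mfproj} combined with Lemma \ref{variation-bound-finalizing} (this is where $M_0'$ and \eqref{e:largeI} are needed) bounds the $\ell^2$-norm of its $\ell^2(\Lambda)$-valued $r$-variation by $\lesssim_r\overline{\lambda}^{1/2}Q^{o(1)-1}|I_v|^{1/2}$. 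Hence, off a set $\mathcal{O}_{I_v;2}$ of measure $\lesssim_r Q^{o(1)-2\rho}|I_v|$, this variation is at most $Q^{2\rho}\overline{\lambda}^{1/2}Q^{-1}\le Q^{2\rho}(K_0/Q)^{1/2}$. This is a bound with genuine decay in $Q$, which no count of jumps of $\Pi_Nf$ alone can provide.
\item Off the bad intervals $\mathcal{O}_{I_v;1}$, the $g$-side has $\lesssim Q^{6\rho}$ jumps (Corollary \ref{c:sep}). Together with step 2, Lemma \ref{l:size-net-jump-counting-functions} and \eqref{e:vbound0} give nets with $|B_\eta(P,l)|\lesssim Q^{O(\rho)}\overline{\lambda}^{r/2}Q^{-r}\le Q^{O(\rho)}\overline{\lambda}Q^{-2}$.
\item Only now is the gainless single-scale bound multiplied in. Summing over $l\in[\mathcal{Q}_i]$ yields $Q^{O(\rho)}\overline{\lambda}^2\delta^2Q^{-2}|P|$.
\item The saving appears by using \emph{both} halves of \eqref{e:lambda}, namely $\overline{\lambda}^2\delta^2\le K_0Q\cdot V\delta^{-2}\cdot\delta^2$, which gives $Q^{O(\rho)-1}|P|$.
\end{enumerate}
Your final paragraph does name Lemma \ref{variation-bound-finalizing}. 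But the proposal never forms the collapsed family, never makes the decay of its variation the source of the saving, and never splits $\overline{\lambda}^2$ between $K_0Q$ and $V\delta^{-2}$. As written, it does not reach the $Q^{-\rho/2}$ bound.
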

We emphasize that for the rest of this section, all time parameters will satisfy \eqref{e:largeI}.

\subsection{Conclusion: the Entropic Method}\label{ss:entropymethod}
We now apply the entropic method to prove Proposition \ref{p:nearly}. We do so via the following proposition. 

\begin{proposition}\label{p:overlap}
For each $I_v$ and each $i \leq \log_2 Q$, there exists a set
\begin{align}
\mathcal{O}_{I_v} := \mathcal{O}_{I_v}^{(i)} \subset I_v
\end{align}
with $|\mathcal{O}_{I_v}| \lesssim Q^{-\rho} |I_v|$ so that
    \begin{align}\label{e:overlap}
        &\| \sup_{I \in \mathcal{B}_s^{\geq}(I_v) } |\sum_n \phi_I(n) \Pi_N f(2x-n) w_Q^{(i)}(n-x) \Pi_I[\Lambda]g(n)| \mathbf{1}_I(x) \|_{\ell^2(\mathcal{O}_{I_v}^c)}^2 \\
        &\lesssim Q^{-2 \rho} |I_v|.
    \end{align}
    \end{proposition}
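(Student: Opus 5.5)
The plan is a bilinear entropy (metric chaining) argument in the spirit of Bourgain's \cite{B2,B3}, anchored by the single scale estimates of Lemmas \ref{l:1scale} and \ref{l:1scale0}. The variational control comes from Proposition \ref{p:mfvar}.

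\textbf{Step 1: the exceptional set.} Fix $r = 2 + 1/\log Q$, say, so that $(\frac{r}{r-2})^2 \log^2 Q \lesssim Q^{o(1)}$. By Proposition \ref{p:mfvar} and Chebyshev, together with the bounds $\|\Pi_N (f\mathbf{1}_{\Delta I_v})\|_{\ell^2}^2 \lesssim \Delta|I_v|$ and $\|g\|_{\ell^2(I_v)}^2 \le |I_v|$, I define
\[ \mathcal{O}_{I_v} := \{ x \in I_v : \mathcal{V}^r(\Pi_N f(x)) + \mathcal{V}^r(\Pi_I g(x) : x\in I) \geq Q^{\rho/2} \} \cup \{ M_{\text{HL}}(\cdot) \text{ large} \}. \]
This has measure $\lesssim Q^{-\rho}|I_v|$.

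\textbf{Step 2: reduce to finitely many scales per point.} Off $\mathcal{O}_{I_v}$, for each dyadic altitude $2^{-u}$ the jump-counting functions of the two families $\{\Pi_N f(2x-\cdot)\}$ and $\{\Pi_I g\}$ are at most $Q^{\rho r/2}2^{ur}$. Using Lemma \ref{l:easytaylorexpansion}, I write $A_I^{(i)}(\Pi_N f,\Pi_I g)(x)$ as a finite sum over $j$ of products of a bounded spatial factor $\chi_I(x)^j$ and a bilinear expression in the local Fourier coefficients $\{\Psi_\delta(\mathbb{E}_I\text{Mod}_{-\theta}g)\}_{\theta\in\Lambda}$ and in $\Pi_N f$. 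Here $\Lambda$ is widely separated relative to the scales in $\mathcal{B}_s^{\geq}(I_v)$, by the branch construction, so the multi-frequency machinery of \S\ref{s:MultiFreq} applies.

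\textbf{Step 3: chaining.} I then run metric chaining over the product set of pairs $(\Pi_N f, \Pi_I g)$ at each $x$, as in Lemma \ref{l:localized}. I telescope along predecessors in a $2^{-u}$-net and bound each increment with the single scale estimates. The supremum is then controlled by
\[ \sum_u \big(\#\text{net}_u\big)^{1/2} \sup_{\text{single scale}} \| \cdot \|. \]
Each single scale term carries the gain $(\delta/Q)^2\overline{\lambda}^2Q^{\epsilon}$, which is $\lesssim Q^{-1+\rho/5+o(1)}$ since $\overline{\lambda}\le \delta^{-2}Q^{\rho/10}$. This gain easily absorbs the entropy count $Q^{\rho r}$ and the losses $t^{-O(1)},\Delta,R$, by \eqref{e:paramhier}.

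\textbf{Main obstacle.} Nets are chosen pointwise, yet the single scale bounds are $\ell^2$ statements over intervals. To freeze the net on a block, I localize to intervals $P$ of length comparable to the coarsest relevant scale, as in the $c_I$ trick of Lemma \ref{l:localized}. The translation freedom of Lemma \ref{l:transfreely} lets me freeze the data at a point. The oscillation of $w_Q^{(i)}(n-x)$ in $x$ is not smooth at scale $|P|$, since its frequencies have denominators up to $\mathcal{Q}_i$. I therefore split $x$ into residue classes $x\equiv l \pmod{\mathcal{Q}_i}$. On each class the $a/q$-oscillation is annihilated, and Lemma \ref{l:1scale0} supplies the bound $K_0\overline{\lambda}Q^\epsilon\delta^2|P|/\mathcal{Q}_i$; this is exactly why the scales must satisfy \eqref{e:largeI}. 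Summing over the $\mathcal{Q}_i$ classes and the $\lesssim Q^{\rho}$ net elements then yields \eqref{e:overlap}. The delicate bookkeeping is confirming that the factor $K_0$ lost in Lemma \ref{l:1scale0}, relative to Lemma \ref{l:1scale}, is still compensated. I would check this first using $K_0\approx \delta^{-12\kappa}Q^{12\kappa}$ and $Q\ge\delta^{-1/1000}$.
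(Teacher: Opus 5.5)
There is a genuine gap, and it sits exactly where you defer to ``bookkeeping.'' On a residue class $x \equiv l \pmod{\mathcal{Q}_i}$ the phases $e(ax/q)$ freeze, so there is no orthogonality among the $a/q \in \Gamma_Q^{(i)}$. In the proof of Lemma \ref{l:1scale0}, the factor $\mathbf{S}_Q^2 = Q^{o(1)-2}$ is exactly cancelled by $|\Gamma_Q^{(i)}| \cdot 2^i \le Q^2$. Summing that lemma over the $\mathcal{Q}_i$ classes therefore gives $K_0 \overline{\lambda} \delta^2 Q^{\epsilon} |P|$, with no decay in $Q$. The loss relative to Lemma \ref{l:1scale} is thus not a factor $K_0$ but $K_0 Q^2/\overline{\lambda} \ge Q$. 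Since $\overline{\lambda}\delta^2 = \min\{K_0 Q\delta^2, V\}$, this quantity equals $V = Q^{\rho/10}$ when, say, $\delta$ is comparable to $\delta_0$ and $Q$ is large. In that case your count (``$Q^{\rho}$ net elements times Lemma \ref{l:1scale0}'') gives roughly $K_0 V Q^{\rho+\epsilon}|P|$, which is worse than trivial rather than $Q^{-2\rho}|P|$. The gain $(\delta/Q)^2\overline{\lambda}^2 Q^{\epsilon}$ you invoke in Step 3 is precisely what is destroyed once you pass to residue classes.

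The missing idea is that the decay must come from the net count, not from the single scale estimate. On a residue class, averaging $w_Q^{(i)}$ over a full period turns the $f$-side into the family $\big(\mathcal{Q}_i \sum_p \varphi_N(p\mathcal{Q}_i) F_{\theta,N}^{(i)}(x - p\mathcal{Q}_i)\big)_{\theta\in\Lambda}$, where $F_{\theta,N}^{(i)}(x) = \mathbb{E}_{r\in[\mathcal{Q}_i]} w_Q^{(i)}(r)\, \text{Mod}_{-\theta}\Pi_N f(x-r)$. Corollary \ref{c:mfproj} together with Lemma \ref{variation-bound-finalizing} shows that the $r$-variation of this weighted family has squared $\ell^2$ norm $\lesssim \overline{\lambda} Q^{o(1)-2}|I_v|$. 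The paper's exceptional set $\mathcal{O}_{I_v;2}$ consists of the points where this weighted variation exceeds $Q^{2\rho}\overline{\lambda}^{1/2}Q^{-1}$. The $g$-side bad intervals $\mathcal{O}_{I_v;1}$ are controlled by Corollary \ref{c:sep}. Off these sets, Lemma \ref{l:size-net-jump-counting-functions} bounds each $\ell^1(\Lambda)$-net by a product of jump counts. The $f$-side jump count at altitude $Q^{-2\rho}$ is $\lesssim Q^{O(\rho)} (\overline{\lambda}^{1/2}Q^{-1})^{r}$, and this is the factor that pays for Lemma \ref{l:1scale0}. Your exceptional set is built from the unweighted variations of $\Pi_N f$ and $\Pi_I g$. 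It never sees $w_Q^{(i)}$, so it carries no $Q^{-1}$ gain. Moreover, it does not control the jumps of the vectors that actually have to be netted on a residue class.
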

We will use the arithmetic single-scale estimate, Lemma \ref{l:1scale0}, crucially; it will be complemented by an entropy estimate. In particular, we will show that  outside a small exceptional set, each branchwise family can be discretized on each short progression into a net of controlled cardinality, and that each representative enjoys a favorable arithmetic single-scale estimate.

Below, we emphasize that all $I$ satisfy \eqref{e:largeI}, i.e.\ all times will be extremely large relative to \eqref{e:Psize} below. We now fix an $0 \leq i \leq \log_2 Q$ and describe $\mathcal{O}_{I_v} = \mathcal{O}_{I_v}^{(i)}$ now; it is made up of the union of two sets.

\medskip

We will say that interval $P \subset I_v$ with 
\begin{align}\label{e:Psize}
    |P| = \begin{cases} 2^{Q^{1/2+\epsilon}} M_0 & \text{ if } Q^{1/2} \leq 2^i \leq Q \\
    2^{Q^{1+\epsilon}} M_0 & \text{ if } 2^i < Q^{1/2}
    \end{cases}
\end{align}
is \emph{bad} if the modulated averages of $g$ fluctuate too much across it:
\begin{align}
\vec{N}_{c_0Q^{-2 \rho}} ( \big( \mathbb{E}_{I} \text{Mod}_{-\theta} g \big)_{\theta \in \Lambda} : I \supset P) \geq Q^{6 \rho},
\end{align}
where $0 < c_0 \leq 1$ is an absolute, but inessential, constant, depending only on the implicit constant in \eqref{e:Lip}. 

Collect
\begin{align}
    \mathcal{O}_{I_v;1} := \bigcup_{P \subset I_v \text{ bad}} P
\end{align}
so that 
\[ |\mathcal{O}_{I_v;1}| \lesssim Q^{-2 \rho} |I_v|\]
by our consequence of L\'{e}pingle's inequality, Corollary \ref{c:sep}.

\medskip

Describing the second component of our exceptional set takes a little more work:

First, restricting $f$ as we may to $\Delta I_v$, see \eqref{e:replacementi}, let
\begin{align}
    F_{\theta,N}^{(i)}(x) &:= \mathbb{E}_{[\mathcal{Q}_i]} w_Q^{(i)}(r) \text{Mod}_{-\theta} \Pi_{N} f(x-r)  \\
    & = \mathbb{E}_{[\mathcal{Q}_i]} w_Q^{(i)}(r) \Pi_{N}^{\theta} (\text{Mod}_{-\theta} f)(x-r)
\end{align}
where
\begin{align}
    \mathcal{F}_{\mathbb{Z}}(\Pi_N^{\theta} f)(\beta) := {\chi^N}(\beta+\theta) (\mathcal{F}_{\mathbb{Z}}{f})(\beta).
\end{align}

We define
    \begin{align}
        \mathcal{O}_{I_v;2} := \{ x \in I_v : \mathcal{V}^r\big( \big( \mathcal{Q}_i \sum_p \varphi_N(p \mathcal{Q}_i) F_{\theta,N}^{(i)}(x-p \mathcal{Q}_i) \big)_{\theta \in \Lambda} : N \big) \geq Q^{2\rho} \overline{\lambda}^{1/2} Q^{-1} \};
    \end{align}
    this captures the points where the pertaining $f$-component exhibits large variation across scales.

Note that since our times $N$ are so large relative to $|P|$, by the regularity of 
\[ \{\varphi_N : N\},\]
whenever $x \in \mathcal{O}_{I_v;2}$, necessarily 
\[ x + \mathcal{Q}_i \mathbb{Z} \cap P\]
lives inside
\begin{align}
    \{ x \in I_v : \mathcal{V}^r\big( \big( \mathcal{Q}_i \sum_p \varphi_N(p \mathcal{Q}_i) F_{\theta,N}^{(i)}(x-p \mathcal{Q}_i) \big)_{\theta \in \Lambda} : N\big) \geq \frac{1}{10} \cdot Q^{2\rho} \overline{\lambda}^{1/2} Q^{-1} \}:
\end{align}
the above variation operator is essentially constant along short arithmetic progressions with gap size $\mathcal{Q}_i$. We quantify this as follows: if we define the fluctuation
\begin{align}
    &\Omega_{f;i;N;\theta}(x,y) \\
    & := \big| \mathcal{Q}_i \sum_p \varphi_N(p\mathcal{Q}_i) \mathbb{E}_{r\in [\mathcal{Q}_i]} w_Q^{(i)}(r) (\text{Mod}_{-\theta } \Pi_N f)(x-r-p\mathcal{Q}_i) \\
    & \qquad \qquad  - 
\mathcal{Q}_i \sum_p \varphi_N(p\mathcal{Q}_i) \mathbb{E}_{r\in [\mathcal{Q}_i]} w_Q^{(i)}(r) (\text{Mod}_{-\theta } \Pi_N f)(y-r-p\mathcal{Q}_i) \big| \\
& = \big| \mathcal{Q}_i \sum_p \varphi_N(p\mathcal{Q}_i) F_{\theta,N}^{(i)}(x-p \mathcal{Q}_i) - \mathcal{Q}_i \sum_p \varphi_N(p\mathcal{Q}_i) F_{\theta,N}^{(i)}(y-p \mathcal{Q}_i) \big|,
\end{align}
then whenever $P$ satisfies \eqref{e:Psize}, and the sum runs over times $N$ satisfying \eqref{e:largeI}
\begin{align}\label{e:uncertaintyAP}
    &\sup_{\substack{ x_P \equiv y_P \mod \mathcal{Q}_i \\ x_P, y_P \in P}} \ \sum_{N} \sum_{\theta \in \Lambda} \Omega_{f;i;N;\theta}(x_P,y_P) \lesssim_A Q^{-A}.
\end{align}

In particular, after adjusting implicit constants, we will assume that whenever $\mathcal{P}$ is an arithmetic progression with gap size $\mathcal{Q}_i$
\begin{align}
(\mathcal{P} \cap P) \cap \mathcal{O}_{I_v;2} \neq \emptyset \Rightarrow (\mathcal{P} \cap P) \subset \mathcal{O}_{I_v;2}.
\end{align}
We now define
\[ \mathcal{O}_{I_v} := \mathcal{O}_{I_v;1} \cup  \mathcal{O}_{I_v;2}.\]

We will prove the following:
\begin{lemma}
The following estimate holds for each $2 < r \leq 2 + 2^{-1000}$:
\begin{align}
    |\mathcal{O}_{I_v;2}| \lesssim (\frac{r}{r-2})^4 Q^{o(1)-2 \rho} |I_v|.
\end{align}
\end{lemma}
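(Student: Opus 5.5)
The natural route is Chebyshev plus an $\ell^2$ variational estimate. By definition of $\mathcal{O}_{I_v;2}$,
\[ |\mathcal{O}_{I_v;2}| \le Q^{-4\rho}\overline{\lambda}^{-1}Q^{2}\,\big\| \mathcal{V}^r\big( ( \mathcal{Q}_i \textstyle\sum_p \varphi_N(p \mathcal{Q}_i) F_{\theta,N}^{(i)}(x-p \mathcal{Q}_i) )_{\theta \in \Lambda} : N \big)\big\|_{\ell^2(I_v)}^2 , \]
so it suffices to prove
\[ \big\|\mathcal{V}^r(\cdots)\big\|_{\ell^2}^2 \lesssim (\tfrac{r}{r-2})^4 Q^{o(1)+2\rho}\,\overline{\lambda}\, Q^{-2}|I_v|, \]
with $f$ restricted to $\Delta I_v$ and $\|f\mathbf{1}_{\Delta I_v}\|_{\ell^2}^2\lesssim \Delta |I_v|$. (The loss of $\Delta$ is absorbed by $Q^{o(1)}$ via \eqref{e:paramhier}; in fact the target only needs $Q^{2\rho}$ to spare.)

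\emph{Step 1: Fourier representation.} Write the vector-valued operator on the Fourier side. The sampled average
\[ \mathcal{Q}_i\sum_p\varphi_N(p\mathcal{Q}_i)\,h(x-p\mathcal{Q}_i) \]
has multiplier $\sum_{k}\widehat{\varphi}(N(\beta-k/\mathcal{Q}_i))$, i.e.\ $(\mathcal{F}_{\mathbb{Z}}\varphi_{N/\mathcal{Q}_i})(\mathcal{Q}_i\beta)$. Averaging against $w_Q^{(i)}(r)$ over $r\in[\mathcal{Q}_i]$ contributes the factor $\widehat{w_Q^{(i)}}(\beta)$ from \eqref{e:FTw_Q}. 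After the modulation by $-\theta$, the $\theta$-component therefore has multiplier
\[ m_{N,\theta}(\beta)=\chi^N(\beta)\,(\mathcal{F}_{\mathbb{Z}}\varphi_{N/\mathcal{Q}_i})(\mathcal{Q}_i(\beta-\theta))\,\widehat{w_Q^{(i)}}(\beta-\theta), \]
evaluated at the original frequency $\beta$ of $f$.

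\emph{Step 2: splitting off the $N$-independent pointwise factor.} Since every $N$ satisfies \eqref{e:largeI}, we have $N/\mathcal{Q}_i\ge M_0'$ in the sense of \eqref{e:M_0'}, using \eqref{eqn:size-of-Qi}. Factor
\[ m_{N,\theta}=a_\theta(\beta)\cdot b_N(\mathcal{Q}_i(\beta-\theta)), \qquad a_\theta(\beta)=\chi^{M_0}(\beta)\,(\mathcal{F}_{\mathbb{Z}}\varphi_{M_0'})(\mathcal{Q}_i(\beta-\theta))\,\widehat{w_Q^{(i)}}(\beta-\theta), \]
up to negligible errors. This uses that $\chi^N\chi^{M_0}\approx\chi^N$ and that $\mathcal{F}\varphi_{M_0'}\equiv1$ on the support of $\mathcal{F}\varphi_{N/\mathcal{Q}_i}$ (after a harmless choice of bump with $\widehat\varphi$ compactly supported). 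Lemma~\ref{variation-bound-finalizing} then gives, pointwise in $\beta$,
\[ \sum_\theta |a_\theta(\beta)|^2\lesssim \overline{\lambda}Q^{o(1)-2}. \]
Writing $Tf=(a_\theta(D)f)_\theta$, Plancherel yields
\[ \|(\textstyle\sum_\theta|a_\theta(D) f|^2)^{1/2}\|_{\ell^2}^2\lesssim \overline{\lambda}Q^{o(1)-2}\|f\|_{\ell^2}^2 . \]

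\emph{Step 3: the variational estimate (main obstacle).} It remains to show that the variation in $N$ of $\big(b_N(\mathcal{Q}_i(D-\theta))\,a_\theta(D)f\big)_\theta$ in $\ell^2(\Lambda)$ is bounded by $(\frac{r}{r-2})^2Q^{o(1)}$ times the square function $(\sum_\theta|a_\theta(D)f|^2)^{1/2}$ in $\ell^2$.

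The difficulty is that the multipliers $b_N(\mathcal{Q}_i(\beta-\theta))$ are periodic with period $1/\mathcal{Q}_i$, so they are \emph{not} localized near a single frequency. The plan is to undo this periodicity by sampling. Restricting to residue classes $x\equiv x_0 \bmod \mathcal{Q}_i$ turns the operator into Schwartz averages at scale $N/\mathcal{Q}_i$ on $\mathbb{Z}$, acting on the modulated functions $\mathrm{Mod}_{-\mathcal{Q}_i\theta}$. The relevant frequencies are $\mathcal{Q}_i\theta$, at most $|\Lambda|$ of them, and they are separated at scale $\gtrsim 1/M_0$ modulo collisions. Collisions (the kernel of multiplication by $\mathcal{Q}_i$) are grouped together; this costs only $K_02^i\le\overline{\lambda}$-type multiplicity, which Step 2 has already accounted for. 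Since $N/\mathcal{Q}_i\gg M_0$, we are in the large-scale regime $k\gg l_1$, so Corollary~\ref{c:varschwartz} applies with $\Psi$ the identity and $\mathbf{v}=1$ (in its vector-valued form via Proposition~\ref{p:mfvarprop}). This gives the $(\frac{r}{r-2})^2\log^2|\Lambda|\le(\frac{r}{r-2})^2Q^{o(1)}$ bound on each residue class; summing over classes recovers the full $\ell^2$ norm.

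I expect the bookkeeping of collisions among the frequencies $\mathcal{Q}_i\theta$, combined with the periodic aliasing, to be the delicate point. If needed, I would instead first group $\Lambda$ into classes modulo $\ker(\mathcal{Q}_i\cdot)$ and apply Minkowski. Squaring and inserting into the Chebyshev bound gives $(\frac{r}{r-2})^4Q^{o(1)-4\rho+2\rho}|I_v|$, which is at least as good as the claimed bound.
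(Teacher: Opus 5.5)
Your proof works, but it takes a different route from the paper at the key step. Both arguments share the same skeleton: Chebyshev, then control of the $N$-independent factor $a_\theta(\beta)=\chi^{M_0}(\beta)(\mathcal{F}_{\mathbb{Z}}\varphi_{M_0'})(\mathcal{Q}_i(\beta-\theta))\widehat{w_Q^{(i)}}(\beta-\theta)$ pointwise by Lemma \ref{variation-bound-finalizing} and Plancherel.

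\emph{How the paper handles the $N$-dependent part.} It keeps $\chi^N$ inside $\Xi_{N,\theta}$. The factor $(\mathcal{F}_{\mathbb{Z}}\varphi_{N/\mathcal{Q}_i})(\mathcal{Q}_i\,\cdot)$ is $1/\mathcal{Q}_i$-periodic and on its own has $\mathcal{Q}_i$ distinguished frequencies, far too many for a $\log^2$ loss. The paper uses $\chi^N$ to cut it down to a multi-frequency projection at $Q^{O(1)}$ frequencies. It then applies Corollary \ref{c:mfproj} for each $\theta$, after a square-function swap of smooth for sharp cut-offs, paying $(\frac{r}{r-2})^2\log^2 Q$.

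\emph{How you handle it.} You discard $\chi^N$ and remove the periodicity by restricting to residue classes mod $\mathcal{Q}_i$. There everything becomes an ordinary smooth average at scale $N/\mathcal{Q}_i$. This is more elementary and gives a better constant.

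\emph{Justifying the removal of $\chi^N$.} Discarding $\chi^N$ is what makes your reduction possible, and your stated reason ($\chi^N\chi^{M_0}\approx\chi^N$) only justifies \emph{inserting} $\chi^{M_0}$. The correct reason is that $w_Q^{(i)}$ is $\mathcal{Q}_i$-periodic with spectrum $\Gamma_Q^{(i)}$. Hence
$\widehat{w_Q^{(i)}}(k/\mathcal{Q}_i+\eta)=S(k/\mathcal{Q}_i)\mathbf{1}_{\Gamma_Q^{(i)}}(k/\mathcal{Q}_i)+O(Q^{2}\mathcal{Q}_i|\eta|)$.
So $b_N(\mathcal{Q}_i(\beta-\theta))\widehat{w_Q^{(i)}}(\beta-\theta)$ is concentrated on $\theta+\Gamma_Q^{(i)}+B(\Delta^{1/2}/N)$, where $\chi^N\equiv 1$. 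The remainders decay geometrically in $N$, so they are absorbed using $\mathcal{V}^r\le 2\|\cdot\|_{\ell^2_N}$ and Plancherel.

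\emph{Step 3 is simpler than you make it.} On the class $x=x_0+m\mathcal{Q}_i$, the $\theta$-coordinate is an $N$-independent unimodular factor times $\varphi_{N/\mathcal{Q}_i}*G_{\theta,x_0}(m)$, where
$G_{\theta,x_0}(m):=(\text{Mod}_{-\theta}\,a_\theta(D)f)(x_0+m\mathcal{Q}_i)$.
Each coordinate is therefore a zero-frequency average of its \emph{own} function. The frequencies $\mathcal{Q}_i\theta$, their separation, and their collisions play no role. Corollary \ref{c:LEP} with $\mathcal{H}=\ell^2(\Lambda)$ (or Minkowski plus the scalar case) gives the bound $\frac{r}{r-2}\big(\sum_\theta\|G_{\theta,x_0}\|_{\ell^2}^2\big)^{1/2}$. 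Summing over $x_0\in[\mathcal{Q}_i]$ returns $\sum_\theta\|a_\theta(D)f\|_{\ell^2}^2$, which is all you need.

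\emph{What to delete from Step 3.} Corollary \ref{c:varschwartz} is not the relevant statement: it controls the scalar sum $\sum_\theta e(\theta x)v(\theta)\varphi_k*\text{Mod}_{-\theta}f$ built from a single $f$. Your claim that a collision loss of size $K_02^i$ would be ``already accounted for'' by Step 2 is also false. The $\overline{\lambda}$ in Lemma \ref{variation-bound-finalizing} counts representations of a frequency in $\Lambda+\Gamma_Q^{(i)}$, via Lemma \ref{l:maximal-multiplicity}. An extra factor $K_02^i$, which can be as large as $Q^{1+O(\kappa)}$, could not be absorbed at the threshold $Q^{2\rho}\overline{\lambda}^{1/2}Q^{-1}$. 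Fortunately no such loss arises, so that discussion should simply be deleted.
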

\begin{proof}
    By Minkowski's inequality, it suffices to prove 
    \begin{align}
        &\| \| \mathcal{V}^r\big( \mathcal{Q}_i \sum_p \varphi_N(p \mathcal{Q}_i) F_{\theta,N}^{(i)}(x-p \mathcal{Q}_i) :N \big) \|_{\ell^2(\theta \in \Lambda)} \|_{\ell^2}^2 \\
        & \lesssim (\frac{r}{r-2})^4 \overline{\lambda} Q^{o(1)-2} |I_v|.
    \end{align}
The argument goes by way of Fourier analysis, with the principal tool being Proposition \ref{p:mfvar}: 

Noting that for each $\theta$ the multipliers
\begin{align}
    \{ \Xi_{N,\theta}(\beta) := (\mathcal{F}_{\mathbb{Z}}{\varphi_{N/\mathcal{Q}_i}})( \mathcal{Q}_i\beta) \chi^{N}(\beta+\theta) : N \}
\end{align}
are like $\{ \chi^N \}$ with at most $O(\Delta^{1/2})$ times the number of distinguished frequency points, by convexity we may bound
\begin{align}
    &\mathcal{V}^r\big( \mathcal{Q}_i \sum_p \varphi_N(p \mathcal{Q}_i) F_{\theta,N}^{(i)}(x-p \mathcal{Q}_i) :N \big) \\
    & \leq \mathcal{V}^r\big( \mathcal{Q}_i \sum_p \tilde{\varphi}_{M_0'}*_{\mathcal{Q}_i}\varphi_N(p \mathcal{Q}_i) F_{\theta,N}^{(i)}(x-p \mathcal{Q}_i) :N \big) + 2^{-O(R)} M_{\text{HL}}f(x),
\end{align}
see \eqref{e:M_0'}, where $\tilde{\varphi}$ is Schwartz with compact support in Fourier space, and the convolution occurs $\mod \mathcal{Q}_i$, 
we use the Fourier transform to express
\begin{align}
     &\mathcal{V}^r\big( \mathcal{Q}_i \sum_p \varphi_N(p \mathcal{Q}_i) F_{\theta,N}^{(i)}(x-p \mathcal{Q}_i) :N \big) \\
& \leq \mathcal{V}^r\big( \mathcal{Q}_i \sum_p \tilde{\varphi}_{M_0'}*_{\mathcal{Q}_i}\varphi_N(p \mathcal{Q}_i) F_{\theta,N}^{(i)}(x-p \mathcal{Q}_i) :N \big) + 2^{-O(R)} M_{\text{HL}}f(x) \\
& = \mathcal{V}^r\Big( \int \Xi_{N,\theta}(\beta) \cdot \big( (\mathcal{F}_{\mathbb{Z}}{\varphi_{M_0'}})( \mathcal{Q}_i \beta) \widehat{w_Q^{(i)}}(\beta) \chi^{M_0}(\beta+\theta) \cdot (\mathcal{F}_{\mathbb{Z}}{f})(\beta+\theta) \big) e(\beta x) : N \Big) \\
& \qquad + 2^{-O(R)} M_{\text{HL}}f(x),
\end{align}
where we here abbreviate
\begin{align}
    \widehat{w_Q^{(i)}}(\beta) := \mathbb{E}_{r\in [\mathcal{Q}_i]} w_Q^{(i)}(r) e(-\beta r)
\end{align}
as in \eqref{e:FTw_Q}.

By Corollary \ref{c:mfproj} and a square function argument to replace the smooth cut-offs
\begin{align}
    \mathcal{F}_{\mathbb{Z}}{\varphi_{N/\mathcal{Q}_i}} \longrightarrow \mathbf{1}_{[-\mathcal{Q}_i/N,\mathcal{Q}_i/N]},
\end{align}
this function has $\ell^2$ norm bounded by
\begin{align}
    (\frac{r}{r-2})^2 \log^2 Q \cdot \| (\mathcal{F}_{\mathbb{Z}}{\varphi_{ M_0'}})( \mathcal{Q}_i(\beta - \theta)) \widehat{w_Q^{(i)}}(\beta-\theta) \chi^{M_0}(\beta) \cdot  \mathcal{F}_{\mathbb{Z}}{f}(\beta) \|_{\ell^2}
\end{align}
so square summing in $\theta \in \Lambda$ produces an $\ell^2$ bound that's the square root of
\begin{align}
    &\lesssim (\frac{r}{r-2})^4 Q^{o(1)} |I_v| \cdot \sup_\beta \chi^{M_0}(\beta) \sum_{\theta \in \Lambda} |(\mathcal{F}_{\mathbb{Z}}{\varphi_{M_0'}})( \mathcal{Q}_i(\beta - \theta)) \widehat{w_Q^{(i)}}(\beta-\theta) |^2 \\
    & \lesssim (\frac{r}{r-2})^4 \overline{\lambda} Q^{o(1)-2} |I_v| 
    \end{align}
by Lemma \ref{variation-bound-finalizing}.
\end{proof}

We continue with the Proof of Proposition \ref{p:overlap}: our job is to establish \eqref{e:overlap}.

With this in mind, we begin as follows; below, we let $I_N(x)$ denote the unique interval of length $|I_N(x)| = N$ so that $I_N(x) \ni x$.

\begin{lemma} \label{l:size-net-jump-counting-functions}
Let $\eta = Q^{-\rho}$, $l \in [\mathcal{Q}_i]$, and suppose that $B_{\eta}(P,l)$ is an $\ell^1$-net in
\begin{align}
&    \Big\{ \big( \mathcal{Q}_i \sum_p \varphi_N(p\mathcal{Q}_i) \mathbb{E}_{r\in [\mathcal{Q}_i]} w_Q^{(i)}(r) (\text{Mod}_{-\theta } \Pi_N f)(x_P-l-r-p\mathcal{Q}_i) \cdot \Psi_\delta(\mathbb{E}_{I_N(x_P)} \text{Mod}_{-\theta} g) \big)_{\theta \in \Lambda} \\
& \qquad \qquad \qquad : |I| = N  \Big\} \\
& =  \Big\{ \big( \mathcal{Q}_i \sum_p \varphi_N(p\mathcal{Q}_i ) ( F_{\theta,N}^{(i)} \cdot \mathbf{1}_{-l \mod \mathcal{Q}_i}) (x_P-l-p\mathcal{Q}_i)  \cdot \Psi_\delta(\mathbb{E}_{I_N(x_P)} \text{Mod}_{-\theta} g) \big)_{\theta \in \Lambda} \\
& \qquad \qquad \qquad : |I| = N  \Big\}
\end{align}
where $x_P \in P \cap \mathcal{Q}_i \mathbb{Z}$ is arbitrary. 
Then, for each $l \in [\mathcal{Q}_i]$
\begin{align}\label{e:l2tol1net}
    &|B_\eta(P,l)| \\
    &\leq \vec{N}_{Q^{-2 \rho}}( \big( \mathcal{Q}_i \sum_p \varphi_N(p\mathcal{Q}_i) F_{\theta,N}^{(i)}(x_P-l-p\mathcal{Q}_i) \big)_{\theta \in \Lambda} : N) \\
    & \qquad \times \vec{N}_{c_0 Q^{-2 \rho}} ( \big(  \mathbb{E}_{I_N(x_P)} \text{Mod}_{-\theta} g  \bigr)_{\theta \in \Lambda} : |I| = N).
\end{align}
\end{lemma}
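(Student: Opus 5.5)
The natural approach is a product-net construction. The set to be covered is the image of the scale parameter $N$ under the map $N \mapsto (a_\theta(N) b_\theta(N))_{\theta \in \Lambda}$, where
\[ a_\theta(N) := \mathcal{Q}_i \sum_p \varphi_N(p\mathcal{Q}_i) F^{(i)}_{\theta,N}(x_P-l-p\mathcal{Q}_i), \qquad b_\theta(N) := \Psi_\delta(\mathbb{E}_{I_N(x_P)} \text{Mod}_{-\theta} g). \]
Each factor is handled separately via the relation, recorded in the preliminaries, between covering numbers and jump-counting functions: the $\lambda/2$-covering number of a sequence is dominated by $N_\lambda$ of that sequence. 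Covering the $\theta$-indexed families in $\ell^2(\Lambda)$ then produces a net for the products whose cardinality is at most the product of the two jump-counting functions.

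First I partition the scales greedily according to the $a$-family. Starting at the smallest $N$, I open a new block whenever $\|a(N)-a(N_{\mathrm{start}})\|_{\ell^2(\Lambda)} \geq Q^{-2\rho}$. The number of blocks is at most $\vec N_{Q^{-2\rho}}(a)+1$. Inside each block I run the same greedy procedure on the $b$-family at threshold $c_0Q^{-2\rho}$, and I take as representatives the scales where a new sub-block begins. Every scale then lies within $Q^{-2\rho}$ of its representative in the $a$-coordinates and within $c_0Q^{-2\rho}$ in the $b$-coordinates. To count the representatives, I note that the sub-block starting points inside a single $a$-block form a jump chain for $b$. Concatenating these chains across blocks gives at most $\vec N_{c_0Q^{-2\rho}}(b)$ $b$-jumps in total, plus one per block. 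This yields the product bound, with the additive $+1$ terms absorbed after slightly adjusting $c_0$.

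The remaining task is to check that this covering really has $\ell^1$-radius $\eta=Q^{-\rho}$. For two scales $N,N'$ in the same cell, I write $a b-a'b' = (a-a')b + a'(b-b')$ and apply Cauchy--Schwarz in $\theta$:
\[ \|ab-a'b'\|_{\ell^1(\Lambda)} \le \|a-a'\|_{\ell^2}\|b\|_{\ell^2}+\|a'\|_{\ell^2}\|b-b'\|_{\ell^2}. \]
By the Lipschitz bound \eqref{e:Lip}, $\|b-b'\|_{\ell^2} \lesssim$ the jump in the averages $\mathbb{E}_{I_N}\text{Mod}_{-\theta}g$, which is where $c_0$ is used. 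It then remains to bound $\|b\|_{\ell^2(\Lambda)}$ and $\|a'\|_{\ell^2(\Lambda)}$ by $O(Q^{\rho})$. For $b$, I use $|\Psi_\delta|\lesssim\delta$ together with the sampling bound Corollary \ref{c:sampling} (or Lemma \ref{l:sampling}), which gives $\sum_\theta|\mathbb{E}_{I}\text{Mod}_{-\theta}g|^2\lesssim1$ since $\Lambda$ is separated at scale $|I|^{-1}$ on the branch. For $a$, I use the analogous $\ell^2(\Lambda)$ maximal control from Lemma \ref{variation-bound-finalizing} together with the size of $\mathbf S_Q$; away from $\mathcal{O}_{I_v;2}$ this gives $Q^{o(1)}\overline\lambda^{1/2}Q^{-1}\le Q^{\rho}$ by the parameter hierarchy \eqref{e:lambdabound}.

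The main obstacle is this last normalization. The lemma as stated compares $\ell^1$-nets with $\ell^2$ jump counts, so any loss of order $|\Lambda|^{1/2}$ in passing from $\ell^2$ to $\ell^1$ has to be absorbed by the smallness of one of the two factors. If the crude bounds turn out too weak, I would modify the net definition: I would measure the $a$-family in $\ell^2$ weighted by $|b_\theta|\lesssim\delta$, using $\delta|\Lambda|^{1/2}\lesssim V^{1/2}$. I would then absorb $V^{1/2}$ into $Q^{\rho}$ via \eqref{e:paramhier}, since the precise radius of the net only needs to be compatible with the later application.
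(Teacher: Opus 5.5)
Your architecture matches the paper's: a product net built from the two jump-counting functions, the splitting $ab-a'b'=(a-a')b+a'(b-b')$ with Cauchy--Schwarz in $\theta$, the Lipschitz bound \eqref{e:Lip} for $\Psi_\delta$, and sampling (Lemma \ref{l:sampling}, using the separation of $\Lambda$ on the branch) to get $\|b\|_{\ell^2(\Lambda)}\lesssim 1$.

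The gap is in the only non-formal input: a bound $\sup_N\|a(N)\|_{\ell^2(\Lambda)}\lesssim Q^{\rho}$ at the point $x_P-l$.
\begin{itemize}
\item Lemma \ref{variation-bound-finalizing} is a pointwise bound on a Fourier multiplier. Through Plancherel and Proposition \ref{p:mfvar} it yields only $\ell^2_x$-averaged control, so it says nothing at a prescribed point.
\item Restricting to points off $\mathcal{O}_{I_v;2}$ imports a hypothesis the lemma does not have: $x_P\in P\cap\mathcal{Q}_i\mathbb{Z}$ and $l$ are arbitrary.
\item Even off $\mathcal{O}_{I_v;2}$, that set controls the \emph{variation} of $N\mapsto a(N)$, not its size. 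You would still need an extra step, e.g.\ that $a(N)\to0$ as $N\to\infty$ because $f$ is restricted to $\Delta I_v$. Only then do you get $\sup_N\|a(N)\|\le\mathcal{V}^r\lesssim Q^{2\rho}\overline{\lambda}^{1/2}Q^{-1}$, and the factor there is $Q^{2\rho}$, not $Q^{o(1)}$.
\end{itemize}
So at best you prove a conditional version of the lemma. That is enough under \eqref{e:emptyint}, but it is not the statement. Your fallback of reweighting the net changes the statement.

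The missing idea is that this bound is deterministic. Up to a unimodular factor and an $O(Q^{-100})$ error from the periodicity of $w_Q^{(i)}$, $a_\theta(N)$ equals $\sum_n e(\theta n)\varphi_N(n)w_Q^{(i)}(n)\Pi_Nf(x_P-l-n)$. Since $\Lambda$ is $1/N$-separated on the branch, Lemma \ref{l:sampling} gives
\[
\Big\|\sum_n e(\theta n)\varphi_N(n)w_Q^{(i)}(n)\Pi_Nf(x_P-l-n)\Big\|_{\ell^2(\theta\in\Lambda)}^2\lesssim\frac1N\sum_n(1+|n|/N)^{-A}|w_Q^{(i)}(n)|^2\,|\Pi_Nf(x_P-l-n)|^2 .
\]
Now apply H\"{o}lder on a window of length $\approx\Delta N$. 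Use the moment bounds of Lemma \ref{heath-brown-moments} for $w_Q^{(i)}$ (available since $N\ge 2^{Q^{1/5}}$), paired with $\|\Pi_Nf\|_{\ell^\infty}\lesssim Q^2|\Lambda|$ at an exponent close to $1$. Here $\delta\ge Q^{-1000}$, so $|\Lambda|$ is polynomial in $Q$. The result is $\lesssim Q^{\rho/10}$ for every $x_P$, $l$, $N$.

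With $\sup\|a\|\lesssim Q^{\rho/20}$ and $\sup\|b\|\lesssim 1$, the radii needed for the two factors are $\approx Q^{-\rho}$ and $\gtrsim Q^{-21\rho/20}$. Both exceed $Q^{-2\rho}$, with the Lipschitz constant absorbed into $c_0$, and monotonicity of $\vec N_\lambda$ in $\lambda$ finishes the proof.

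Separately, your count has a slip. Jump counts are not superadditive over consecutive blocks: scalar values $0,1$ in one $a$-block and $0.5,1.5$ in the next, at threshold $1$, give one jump in each block but only one jump overall. So concatenating chains does not bound $\sum_k m_k$ by $\vec N(b)$. The product bound still holds, because each of the at most $\vec N(a)+1$ blocks has at most $\vec N(b)+1$ sub-blocks. The $+1$'s cannot be absorbed by changing $c_0$, but they are harmless downstream.
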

\begin{proof}
The basic inequality we use is that whenever 
\[ \{ a_N(\theta) : N \}, \ \{ c_N(\theta) : N \} \subset \ell^2(\Lambda),\]
with
\begin{align}
    \sup_N \| \vec{a}_N \|_{\ell^2} \leq A_0, \; \; \;     \sup_N \| \vec{c}_N \|_{\ell^2} \leq C_0,
\end{align}
then the fewest number of $\ell^1(\Lambda)$-balls of radius $\eta$ needed to cover
\begin{align}
    \{ a_N(\theta) c_M(\theta) : N,M \} 
\end{align}
-- and thus the fewest number of $\ell^1(\Lambda)$-balls of radius $\eta$ needed to cover
\begin{align}
    \{ a_N(\theta) c_N(\theta) : N \} 
\end{align}
-- is bounded by the product
\begin{align}
    \vec{N}_{\frac{\eta}{10 C_0}}( \vec{a}_N : N ) \cdot \vec{N}_{\frac{\eta}{10 A_0}}( \vec{c}_M : M),
\end{align}
where the jump functions are taken with respect to the $\ell^2(\Lambda)$ norm; this just follows from the triangle inequality, Cauchy-Schwarz, and the fact that the minimal number of $\ell^2$-balls of radius $\lambda$ that it costs to cover $\{ \vec{a}_N \}$ is bounded by
\begin{align}
    \vec{N}_{\lambda}(\vec{a}_N :N),
\end{align}
and similarly for $\{ \vec{c}_M \}$.
With this in mind, \eqref{e:l2tol1net} follows directly from the Lipschitz nature of $\Psi_\delta$, and the estimate, valid for all $N \geq 2^{Q^{1/5}}$:
\begin{align}\label{e:entest00}
& \| \mathcal{Q}_i \sum_p \varphi_{N}(p \mathcal{Q}_i) \mathbb{E}_{r\in [\mathcal{Q}_i]} w_Q^{(i)}(r) \text{Mod}_{-\theta} \Pi_{N}f(x_P -p\mathcal{Q}_i-l-r) \|_{\ell^2(\theta \in \Lambda)}^2 \\
&\lesssim Q^{\rho/10}.
\end{align}
Indeed, we compute,
    \begin{align}
\eqref{e:entest00} 
& = \| \sum_n e(\theta n) \varphi_N(n) w_Q^{(i)}(n) \Pi_N f(x_P-l-n) \|_{\ell^2(\theta \in \Lambda)}^2 + O(Q^{-100}) \\
& \lesssim \sum_n \frac{1}{N} (1 + |n/N|)^{-A}  |w_Q^{(i)}(n)|^2 |\Pi_N f(x_P-l-n)|^2 
\end{align}
by sampling, namely Lemma \ref{l:sampling} above, so by the Schwartz nature of $\chi^N$, we may bound the foregoing by
\begin{align} 
&\lesssim \frac{1}{N} \sum_{|n-x_P| \lesssim \Delta N} |w_Q^{(i)}(n)|^2 |\Pi_N f(x_P-n)|^2 + O(\delta^A Q^{-A})\\
        & \lesssim \Delta \cdot (\frac{1}{\Delta N} \sum_{|n-x_P| \lesssim \Delta N} |w_Q^{(i)}(n)|^{2k} )^{1/k} (\frac{1}{\Delta N} \sum_{|n-x_P| \lesssim \Delta N} |\Pi_N f(x_P -n)|^{2k'})^{1/k'}+ O(\delta^A Q^{-A})\\
        &\lesssim \Delta^2 \ll Q^{\rho/100},
    \end{align}
    provided we choose $k = k(\kappa)$ sufficiently large, which we are free to do since $N \geq 2^{Q^{1/5}}$; the key point is the moment estimate from \eqref{eqn:heath-brown-moments-w_Q^i} and the bound
    \begin{align}
        \|\Pi_N f\|_{\ell^\infty} \lesssim Q^2 |\Lambda|.
    \end{align}
\end{proof}

In particular, whenever
\begin{align}\label{e:emptyint}
    \big (P \cap (l + \mathcal{Q}_i \mathbb{Z}) \big) \smallsetminus \mathcal{O}_{I_v} \neq \emptyset,
\end{align}
by applying \eqref{e:uncertaintyAP}, we see that for any 
\[ x_P \in P \cap (l + \mathcal{Q}_i \mathbb{Z}),\]
we may bound
\begin{align}\label{e:jumpbound0}
&\vec{N}_{Q^{-2 \rho}} ( \big( \Psi_\delta ( \mathbb{E}_{I_N(x_P)} \text{Mod}_{-\theta} g ) \bigr)_{\theta \in \Lambda} : |I| = N) \\
& \leq\vec{N}_{c_0 Q^{-2 \rho}} ( \big(  \mathbb{E}_{I_N(x_P)} \text{Mod}_{-\theta} g  \bigr)_{\theta \in \Lambda} : |I| = N) \lesssim Q^{6 \rho},
\end{align}
see \eqref{e:Lip},
and
\begin{align}\label{e:vbound0}
    &\vec{N}_{Q^{-2 \rho}}( \big( \mathcal{Q}_i \sum_p \varphi_N(p\mathcal{Q}_i) F_{\theta,N}^{(i)}(x_P-l-p\mathcal{Q}_i) \big)_{\theta \in \Lambda} : N) \\
    & \lesssim Q^{2 r \rho} \mathcal{V}^r\big( \mathcal{Q}_i \sum_p \varphi_N(p \mathcal{Q}_i) F_{\theta,N}^{(i)}(x_P-l-p \mathcal{Q}_i) \big)_{\theta \in \Lambda} : N)^r \\
    & \lesssim Q^{4r \rho} \overline{\lambda}^{r/2} Q^{-r},
\end{align}
so for such $P,l$, provided $r > 2$ is sufficiently close to $2$, depending on $\rho$,
\begin{align}
    |B_{\eta}(P,l)| \lesssim Q^{15 \rho} \overline{\lambda}^{r/2} Q^{-r} \lesssim Q^{20 \rho} \overline{\lambda} Q^{-1}.
\end{align}
Combining \eqref{e:jumpbound0} and \eqref{e:vbound0}, we see that for $P,l$ so that \eqref{e:emptyint} is satisfied, we may bound
\begin{align}\label{e:boundforent}
 |B_{\eta}(P,l)| |P \cap (l + \mathcal{Q}_i \mathbb{Z})| \lesssim Q^{20 \rho} \overline{\lambda} Q^{-1} \frac{|P|}{\mathcal{Q}_i}.
\end{align}

With \eqref{e:boundforent} in hand, we are at last ready to prove Proposition \ref{p:overlap}. We do so by showing that outside our small exceptional set, $\mathcal{O}_{I_v}$, the supremum can be discretized into finitely many representatives, which can be each individually addressed using our single-scale estimates.

\begin{proof}[Proof of Proposition \ref{p:overlap}]
Let $\eta = Q^{-\rho}$, and for $x, x_P \in P \smallsetminus \mathcal{O}_{I_v}$ with $x-x_P \in \mathcal{Q}_i \mathbb{Z}$, by \eqref{e:uncertaintyAP}, we may generously estimate
\begin{align}
    &\sup_{\substack{I \in \mathcal{B}_s^{\geq}(I_v) \\ x \in I}} |\sum_n \phi_I(n) \Pi_N f(2x-n) w_Q^{(i)}(n-x) \Pi_I[\Lambda]g(n)|^2 \\
    & \qquad = \sup_{\substack{I \in \mathcal{B}_s^{\geq}(I_v) \\ x \in I}} |\sum_n \phi_I(n) \Pi_N f(2x_P-n) w_Q^{(i)}(n-x) \Pi_I[\Lambda]g(n)|^2 + O(\eta^{10})
\end{align}
since $I \in \mathcal{B}_s^{\geq}(I_v)$ are so large relative to $|P|$. Consequently, we may bound
\begin{align}
    &\sup_{\substack{I \in \mathcal{B}_s^{\geq}(I_v) \\ x \in I}} |\sum_n \phi_I(n) \Pi_N f(2x-n) w_Q^{(i)}(n-x) \Pi_I[\Lambda]g(n)|^2 \\
    &\lesssim \eta^2 + \sum_{l \in [\mathcal{Q}_i]} \sum_{I \in B_\eta(P,l)} |\sum_n \phi_I(n) \Pi_N f(2x_P-n) w_Q^{(i)}(n-x) \Pi_I[\Lambda]g(n)|^2 \cdot \mathbf{1}_{-l \mod \mathcal{Q}_i} \\
    & = \eta^2 + \sum_{l \in [\mathcal{Q}_i]} \sum_{I \in B_\eta(P,l)}  |\sum_{\theta \in \Lambda} e(2 \theta x) \cdot \Psi_\delta(\mathbb{E}_I \text{Mod}_{-\theta} g) \\
    & \qquad \qquad \qquad \qquad \qquad \times \sum_n \phi_I(n) \text{Mod}_{-\theta} \Pi_N f(2x_P-n) w_Q^{(i)}(n-x) |^2 \cdot \mathbf{1}_{-l \mod \mathcal{Q}_i};
\end{align}
above, we have indexed elements of $B_\eta(P,l)$ be representative intervals $I$, and then summed over these representatives. By Lemma \ref{l:1scale0}, if we sum over $x \notin \mathcal{O}_{I_v}$, we may bound the above by
\begin{align}
&\eta^2 |P| \\
&+ K_0 \overline{\lambda}Q^{\epsilon} \delta^{2} \sum_{l \mod \mathcal{Q}_i : (P \cap (l \mod \mathcal{Q}_i) ) \cap \mathcal{O}_{I_v} = \emptyset} |B_\eta(P,l)| \cdot |P|/\mathcal{Q}_i \\
& \lesssim \eta^2 |P| + Q^{25 \rho} \delta^2 \overline{\lambda}^2 Q^{-2} |P| \\
& \leq  (Q^{-2 \rho} + Q^{30 \rho-1} ) |P|;
\end{align}
applying \eqref{e:boundforent} now completes the proof of Proposition \ref{p:overlap}.
\end{proof}

This concludes Theorem \ref{t:main} in the case where $Q \geq \delta^{-1/1000}$. We now address the converse case; the argument below is of a simpler nature, more closely resembling \cite{B3}.

\section{The Remaining Case: $Q \leq \delta^{-1/1000}$}\label{s:remainder}
It remains to reduce to the case where
\[ Q \leq \delta^{-1/1000};\]
in this case, arithmetic structure is weak and a simpler convexity argument suffices.
Indeed, using that
\begin{align}
    \sum_{a/q \in \Gamma_Q} |S(a/q)| \lesssim Q^{1+o(1)} \ll \delta^{-1/990},
\end{align}
it suffices to prove that
\begin{align}
    |\{ x \in [J] \cap \overline{\mathcal{D}}: \sup_I |A_I(f,g_{\delta,I})(x)| \gg \delta^{1/800} \}| \lesssim \delta^{1/15} J,
\end{align}
in the special case where $w \equiv \mathbf{1}$,
where we \emph{reset} parameters 
\[ t := \delta^{1/700}, \ \Delta := \delta^{-1/300}, \ R = \delta^{-1/25}, \ V = \delta^{-1/12},\] 
maintaining the notation above in our tree/branch constructions. The arguments now all essentially derive from \cite{B3}, though we more closely follow the approach of \cite{BKMod}.

Here and below we let
\begin{align}
    {g}_{\delta,I}(x) := \sum_{\xi \in \mathbb{Z}/|I|} \Psi_\delta\big( \frac{\mathcal{F}_I {g}(\xi)}{|I|} \big) e(\xi x);
\end{align}
note that while there is no $\mathbf{1}_I$ cutoff, this makes no difference to the definition of $A_I(f,g_{\delta,I})$.

Note that whenever $|I|^{1/2} \leq \delta^{1/500-1}$,
\begin{align}
    \| A_I^\delta(f,g_{\delta,I}) \|_\infty \lesssim |I|^{1/2} \delta \lesssim \delta^{1/500},
\end{align}
so we will always assume that our intervals are sufficiently large, namely
\[ |I| \geq \delta^{1/200-2}.\]


Below, we define
\begin{align}
    \Omega_{\delta,I}(\beta) := \sum_{\xi \in \mathbb{Z}/|I| } (\mathcal{F}_{\mathbb{Z}}{\phi_I})(\beta-\xi)\Psi_\delta\big( \frac{\mathcal{F}_Ig(\xi)}{|I|} \big)
\end{align}
and
\begin{align}
    \Omega_{\delta,I}^{(j)}(\beta) := 2^{-A_0j} \sum_{\xi \in \mathbb{Z}/|I| } (\mathcal{F}_{\mathbb{Z}}{\phi_I^{(j)}})(\beta-\xi)\Psi_\delta\big( \frac{\mathcal{F}_Ig(\xi)}{|I|} \big)
\end{align}
where 

\begin{align}
    \sum_{j} 2^{-A_0 j} \phi_I^{(j)} = \phi_I
\end{align}
with $\{ \phi_I^{(j)} \}$ here defined to be $L^1$-normalized bump functions localized to $I$, with
\begin{align}
    \mathcal{F}_{\mathbb{R}}\phi_I^{(j)}(\xi),
\end{align}
supported in $\{ |\xi| \leq 2^j |I|^{-1}\}$; the constant $A_0$ depends only on e.g.\ the $O(A_0)$th Schwartz semi-norm of $\phi$, and we will be free to adjust it upwards as we wish. We emphasize that $\mathcal{F}_{\mathbb{Z}}^{-1} \Omega_{\delta,I}$ is supported in $I$, but the remaining inverse Fourier transforms are \emph{not} compactly supported. 

Then, implicitly restricting to 
\[ x \in I \cap \overline{\mathcal{D}},\]
we may express
\begin{align}
    A_I(f,g_{\delta,I})(x)   &= 
    \int (\mathcal{F}_{\mathbb{Z}}( f\cdot \mathbf{1}_{3I}))(\beta) \Omega_{\delta,I}(\beta) e(2 \beta x) \\
    & = \sum_{j} \int (\mathcal{F}_{\mathbb{Z}}(f \cdot \mathbf{1}_{3I}))(\beta) \Omega_{\delta,I}^{(j)}(\beta) e(2 \beta x) \ d\beta,
\end{align}
and we focus on the contribution of each $\Omega_{\delta,I}^{(j)}$ individually.

Note that 
\begin{align}
    |\Omega_{\delta,I}^{(j)}| \lesssim 2^{-A_0j} \delta
\end{align}
and
\begin{align}
    \| \Omega_{\delta,I}^{(j)} \|_{\ell^2} \lesssim 2^{-A_0 j} |I|^{-1/2},
\end{align}
so we may restrict our sum to $j = o(\log(1/\delta))$. In particular,
\begin{align}
   \{ \Omega_{\delta,I}^{(j)} \}
\end{align}
will all be supported in the $\delta^{-\epsilon}|I|^{-1}$ neighborhood of $\text{Spec}_\delta(I)$ inside $\mathbb{Z}/|I|$; call this set
\begin{align}
    \text{Spec}_\delta'(I),
\end{align}

We will implement the tree/branch selection argument, with 
\begin{align}
    \Sigma(I) := \text{Spec}_\delta'(I)
\end{align}
replacing the slightly smaller sets.

And, regarding $\delta > 0$ as fixed below, express
\begin{align}
    A_{I}^{(j)}(f,g)(x) := \int \mathcal{F}_{\mathbb{Z}} f (\beta) \Omega_{\delta,I}^{(j)}(\beta) e(2 \beta x) \ d\beta.
\end{align}

\subsection{Localizing to Branches}

We pass to branches as above; the contribution from the boundary branches is negligible, since the single scale contribution from the $j$th multiplier has 
\begin{align}
\| \int (\mathcal{F}_{\mathbb{Z}}(f \cdot \mathbf{1}_{L I}))(\beta) \Omega_{\delta,I}^{(j)}(\beta) e(2 \beta x) \ d\beta \|_2^2 \lesssim 2^{-2 A_0j} \delta^{2} L |I|;
\end{align}
we will of course only need the above when $L \leq \delta^{-1/10}$ (say).
\medskip

Once again, let $\Lambda \subset \mathbb{Z}/M_0$ and for $M \geq 2^{O(R)} M_0$, let $\varphi$ be a smooth function satisfying  
\[ \mathbf{1}_{[-1/4,1/4]} \leq \varphi \leq \mathbf{1}_{[-1/2,1/2]}, \]
and consider the (re-defined) Fourier multiplier
\begin{align}
    \Phi_{\Lambda,M}(\beta) := \sum_{\theta \in \Lambda} \big( \varphi(\frac{M}{2R}(\beta - \theta)) - \varphi(RM(\beta - \theta)) \big).
\end{align}
Notice that
\[ \Phi_{\Lambda,M} \cdot \mathbf{1}_{\mathbb{Z}/M} \]
is supported on
\[ \Lambda + B(2R/M) \smallsetminus \Lambda \subset \mathbb{Z}/M.\]

Then, if we here set
\begin{align}
    B_I^{(j)}(f,g)(x) := A_I^{(j)} ( \mathcal{F}_{\mathbb{Z}}^{-1}\Phi_{\Lambda,M}*f,g_{\delta,I})(x) \cdot \mathbf{1}_{I \cap \overline{\mathcal{D}}}(x),
\end{align}
we may express
\begin{align}
     B_I^{(j)}(f,g)(x) &= A_I^{(j)} ( (\mathcal{F}_{\mathbb{Z}}^{-1}\Phi_{\Lambda,M}*f) \cdot \mathbf{1}_{R^3I},g_{\delta,I})(x) \cdot \mathbf{1}_{I \cap \overline{\mathcal{D}}}(x) + O(2^{-A_0j} R^{-100}) \\
     & =: A_I^{(j)}(f_I,g_{\delta,I})(x) \cdot \mathbf{1}_{I \cap \overline{\mathcal{D}}}(x) + O(2^{-A_0j} R^{-100}).
\end{align}
By orthogonality in Fourier space, and the bounded overlap of $\{ C I : |I| = K \}$, we may estimate
\begin{align}
    \| (\sum_{I \in \mathcal{B}_s(I_v)} |f_I|^2)^{1/2} \|_{\ell^2(I_v)}^2 &= \sum_{N \leq 2^{-O(R)} |I_v|} \ \sum_{|I| = N, \ I \subset I_v } \| f_I \|_{\ell^2}^2 \\
    & \lesssim R^3 \sum_N \| \Phi_{\Lambda,N} \mathcal{F}_{\mathbb{Z}} f \|_{L^2(\mathbb{T})}^2 \\
    & \lesssim R^4 |I_v|,
\end{align}
since in evaluating the above, we may assume that $f$ is supported on $3I_v$ since our intervals in $\mathcal{B}_s(I_v)$ are so small. We may therefore estimate:
\begin{align}
    |\{ x \in I_v : \sup_{I \in \mathcal{B}_s(I_v)} |B_{I}^{(j)}(f,g)(x)| \gg t j^{-2}\}| \lesssim j^4 R^5 2^{-2A_0 j} \delta^2 |I_v| \ll 2^{-2A_0 j} \delta^{8/5} |I_v|.
\end{align}
Consequently, if we here define
\begin{align}
    m_{\Lambda,I}(\beta) := \sum_{\theta \in \Lambda} \varphi(R|I|(\beta - \theta))
\end{align}
with $\Lambda := \Lambda_{v,s}(I_v)$, and
\begin{align}
C_I^{(j)}(f,g)(x) := A_I^{(j)} ((\mathcal{F}_{\mathbb{Z}}^{-1} m_{\Lambda,I})*f,g_{\delta,I})(x) \mathbf{1}_{I \cap \overline{\mathcal{D}}}(x),
\end{align}
it suffices to show that the best constant
\begin{align}
    \mathbf{C}_{\text{Branch},j}
\end{align}
in the inequality
\begin{align}
    |\{ x \in I_v : \sup_{I \in \mathcal{B}_s(I_v)} |C_I^{(j)}(f,g)(x)| \gg t j^{-2} \}| \leq \mathbf{C}_{\text{Branch},j} |I_v|
\end{align}
satisfies
\begin{align}
    \mathbf{C}_{\text{Branch},j} \leq \delta^{1/6},
\end{align}
say.

We will again appeal to entropy methods; first, we tighten our Fourier projections.

\subsection{Tightening Fourier Projections}
For $x \in I \cap \overline{\mathcal{D}}$, express
\begin{align}
   &C_I^{(j)}(f,g)(x) \\
   & = \int (\mathcal{F}_{\mathbb{Z}}f)(\beta) e(\beta x) \sum_{\theta \in \Lambda} \varphi(R |I| (\beta - \theta)) \big( 2^{-A_0j} \sum_n \phi_I^{(j)}(n) g_{\delta,I}(n) e(\beta (x-n)) \big) \ d\beta \cdot \mathbf{1}_{I \cap \overline{\mathcal{D}}}(x)
\end{align}
and define
\begin{align}
    &C_I^{(j),\text{Loc}}(f,g)(x) \\
    &:= \int (\mathcal{F}_{\mathbb{Z}}f)(\beta) e(\beta x) \sum_{\theta \in \Lambda} \varphi(R |I| (\beta - \theta)) \big( 2^{-A_0j} \sum_n \phi_I^{(j)}(n) g_{\delta,I}(n) e(\theta (x-n)) \big) \ d\beta \cdot \mathbf{1}_{I \cap \overline{\mathcal{D}}}(x);
\end{align}
we claim that 
\begin{align}
    |C_I^{(j)} - C_I^{(j),\text{Loc}}| \ll 2^{-j} t^2 
\end{align}
pointwise.

To see this, it suffices to bound the $A(\mathbb{T})$ norm
of
\begin{align}
    \mathcal{E}_{\Lambda,I}(\beta) := \sum_{\theta \in \Lambda} \varphi(R |I| (\beta - \theta)) \Big( 2^{-A_0j} \sum_n \phi_I^{(j)}(n) g_{\delta,I}(n) \big( e(\theta (x-n)) - e(\beta (x-n)) \big) \Big)
\end{align}
by
\begin{align}
    \| \mathcal{E}_{\Lambda,I}(\beta) \|_{A(\mathbb{T})} \ll 2^{-j} t^2,
\end{align}
say.

By the bound of Lemma \ref{l:sob}, 
\begin{align}
    \| m \|_{A(\mathbb{T})} \lesssim \min_k \big( |m^{\vee}(k)| + \| m \|_{L^2(\mathbb{T})}^{1/2} \| \partial ( e(\cdot k) m ) \|_{L^2(\mathbb{T})}^{1/2} \big),
\end{align}
and the generous bound
\begin{align}
    \| \partial_\beta \mathcal{E}_{\Lambda,I}(\beta) \|_{L^2(\mathbb{T})} \ll 2^{-A_0/2 j} |I|^{1/2},
\end{align}
it suffices to prove that 
\begin{align}
    \| \mathcal{E}_{\Lambda,I}(\beta) \|_{L^2(\mathbb{T})} \ll t^{10} |I|^{-1/2}.
\end{align}

To do so, we set
\[ G_I(t) := G_I^{(j)}(t) := 2^{-A_0 j} \sum_{n} \phi_I^{(j)}(n) g_{\delta,I}(n) e(-(n-x)t), \]
so
\begin{align}
    |G_I(t)| \lesssim 2^{-A_0j};
\end{align}
we are interested in showing that 
\begin{align*}
    \sum_{\theta \in \Lambda_{v,s}(I_v)} \int_{|\beta - \theta| \leq R^{-1} |I|^{-1}} |G_I(\beta) - G_I(\theta)|^2 \ d\beta \ll t^{20} |I|^{-1}.
\end{align*}
By pointwise considerations, we may assume that $\phi_I^{(j)}$ is supported on $\delta^{-\rho} I$, and $\ell^1$-normalized, so that if $V_I$ is a degree $O(\delta^{-\rho} |I|)$ trigonometric polynomial, with 
\begin{align}\label{e:VJ} \mathbf{1}_{[-5\delta^{-\rho} |I|,5\delta^{-\rho} |I|]} \leq \mathcal{F}_{\mathbb{Z}}^{-1} V_I \leq \mathbf{1}_{[-10\delta^{-\rho} |I|,10\delta^{-\rho} |I|]} \end{align}
satisfying the natural derivative estimates up to a suitably high order,
then we can split
\begin{align*}
    |G_I(\beta) - G_I(\theta)| &\lesssim \| (\partial V_I) * G_I(t) \|_{L^{\infty}(\theta + B(1/R|I|))} \cdot \frac{1}{R|I|} \\
    & \qquad \leq \| (\partial V_I) * (G_I \cdot \mathbf{1}_{\theta + B(R^{1/2}/|I|)}) \|_{L^{\infty}(\theta + B(1/R|I|))} \cdot \frac{1}{R|I|} \\
    & \qquad \qquad + \| (\partial V_I) * (G_I \cdot \mathbf{1}_{(\theta + B(R^{1/2}/|I|) )^c}) \|_{L^{\infty}(\theta + B(1/R|I|))}  \cdot \frac{1}{R|I|};
\end{align*}
we will prove
\begin{align*}
    |G_I(\beta) - G_I(\theta)| \lesssim_A \delta^{-2\rho} \frac{|I|^{1/2}}{R}  \cdot \| G_I \cdot \mathbf{1}_{\theta + B(R^{1/2}/|I|)} \|_{L^2(\mathbb{T})} + R^{-A}.
\end{align*}
The local contribution is the main term, which we estimate
\begin{align*}
   & \| (\partial V_I) * (G_I \cdot \mathbf{1}_{\theta + B(R^{1/2}/|I|)}) \|_{L^{\infty}(\theta + B(1/R|I|))}  \cdot \frac{1}{R|I|} \\
& \qquad \lesssim  \| \partial V_I \|_{L^2(\mathbb{T})} \cdot \| G_I \cdot \mathbf{1}_{\theta + B(R^{1/2}/|I|)} \|_{L^2(\mathbb{T})}  \cdot \frac{1}{R|I|} \\
    & \qquad \qquad \qquad \lesssim \delta^{-2\rho} \frac{|I|^{1/2}}{R}  \cdot \| G_I \cdot \mathbf{1}_{\theta + B(R^{1/2}/|I|)} \|_{L^2(\mathbb{T})}.
\end{align*}
For the global contribution, by \eqref{e:VJ} and reproducing, we may bound 
\[ |\partial^i V_I(\beta)| \lesssim_A \delta^{-2\rho} |I|^{1+i} \cdot ( 1 + \delta^{-\rho} |I| \cdot  \|\beta \| )^{-A}, \; \; \; i=0,1 \]
for sufficiently large $A$, and thus whenever $\beta \in \theta + B(1/R|I|)$, we may bound
\begin{align*}
    |\int \partial V_I(\beta - t) \cdot (G_I \cdot \mathbf{1}_{(\theta + B(R^{1/2}/|I|) )^c})(t) \ dt| &\lesssim \int_{(\theta + B(R^{1/2}/|I|) )^c} |\partial V_I(\beta - t)| \ dt  \\
    & \qquad \lesssim_A |I| \cdot R^{-A},
\end{align*}
so that
\[ \| (\partial V_I) * (G_I \cdot \mathbf{1}_{(\theta + B(R^{1/2}/|I|) )^c}) \|_{L^{\infty}(\theta + B(1/R|I|))} \cdot \frac{1}{R|I|} \lesssim_A R^{-A}.\]
The total contribution, is therefore
\begin{align*}
    &\sum_{\theta \in \Lambda} \int_{|\beta - \theta| \leq R^{-1} |I|^{-1}} |G_I(\beta) - G_I(\theta)|^2 \ d\beta \\
    & \lesssim \delta^{-4\rho} \frac{|I|}{R^2}  \cdot  \sum_{\theta \in \Lambda_{v,s}(I_v)} \int_{|\beta - \theta|\leq R^{-1} |I|^{-1}} \| G_I \|_{L^2(\theta + B(R^{1/2}/|I|))}^2 \ d\beta  \\
    & \qquad + \sum_{\theta \in\Lambda_{v,s}(I_v)} \int_{|\beta - \theta| \leq R^{-1} |I|^{-1}} O_A(R^{-A}) \ d\beta\\
    & \lesssim \delta^{-4 \rho} \frac{1}{R^3} \cdot \sum_{\theta \in \Lambda} \| G_I \|_{L^2(\theta + B(R^{1/2}/|I|))}^2 +O_A(R^{-A}  \cdot |I|^{-1})  \\
    & \lesssim R^{-5/2} \cdot \| G_I \|_{L^2(\mathbb{T})}^2 +O_A(R^{-A} \cdot |I|^{-1}) \\
    &  \lesssim R^{-5/2} \cdot |I|^{-1}
\end{align*}
since 
\[ \min_{\theta \neq \theta' \in \Lambda} |\theta - \theta'| \gg 2^R  \cdot |I|^{-1} \]
for $I \in \mathcal{B}_s(I_v)$.

Thus, it suffices to redefine $\mathbf{C}_{\text{Branch},j}$ to be the best constant in
\begin{align}
    |\{ x \in I_v : \sup_{I \in \mathcal{B}_s(I_v)} |C_I^{(j),\text{Loc}}(f,g)(x)| \gg t j^{-2} \}| \leq \mathbf{C}_{\text{Branch},j} |I_v|;
\end{align}
note the representation
\begin{align}
    C_I^{(j),\text{Loc}}(f,g)(x) := \sum_{\theta \in \Lambda} e(2 \theta x) \varphi_{R|I|} * f_\theta(x) \big( 2^{-A_0 j} \sum_n \phi_I^{(j)}(n) g_{\delta,I}(n) e(-n \theta) \big)
\end{align}
where
\begin{align}
    \mathcal{F}_{\mathbb{Z}}f_\theta(\beta) := \varphi(2^R M_0 \beta) \mathcal{F}_{\mathbb{Z}} f(\beta + \theta),
\end{align}
where $\Lambda \subset \mathbb{Z}/M_0$, and $\varphi$ is Schwartz with Fourier transform smoothly approximating $\mathbf{1}_{|\xi| \leq 1/2}$.

\medskip

We can now quickly conclude our argument; we proceed similarly to \S \ref{ss:entropymethod}, though our current task is much less involved!


\subsection{Entropy, Again}
We repeat the entropy strategy in a simplified setting without arithmetic weights:

\medskip

For $I \in \mathcal{B}_s(I_v)$, let
\[ A_{I,j}(x) := 2^{-A_0 j} \sum_{\theta \in \Lambda} e(2 x \theta) \cdot \sum_n \phi_I^{(j)}(n) (\text{Mod}_{-\theta} f)(2x-n) \cdot     \Psi_\delta( \mathbb{E}_{I} \text{Mod}_{-\theta} g) \mathbf{1}_I(x). \]
We prove that
\begin{align}
    |\{ x \in I_v : \sup_{I \in \mathcal{B}_s(I_v)} |A_{I,j}(x)| \gtrsim j^{-2} t \}| \lesssim \delta^{1/6} |I_v|.
\end{align}

To do so, we work locally on intervals $P \subset I_v$, where
\[ |P|^{-1} = 2^{-R} \cdot \min_{\theta \neq \theta' \in \Lambda} |\theta - \theta'|, \]
so that all $I \in \mathcal{B}_s(I_v)$ satisfy $|I| \geq 2^{O(R)} |P|$.

For each $P$, and $x_P \in P$, define
\begin{align}
    A_{I,j,x_P}(x) := 2^{-A_0j} \sum_{\theta \in \Lambda} e(2 x \theta) \cdot \sum_n \phi_I^{(j)}(n) (\text{Mod}_{-\theta} f)(2x_P-n) \cdot     \Psi_\delta( \mathbb{E}_{I} \text{Mod}_{-\theta} g)  \mathbf{1}_I(x).
\end{align}
Note that
\begin{align}
|A_{I,j,x_P}(x) - A_{I,j}(x)| &\leq 2^{-A_0 j} \delta \sum_{\theta \in \Lambda} \frac{1}{|I|} \sum_{(x_P - 2^j I) \triangle (x - 2^j I)} |\text{Mod}_{-\theta}f(m)| \\
&\lesssim 2^{-O(R)} 2^{-A_0/2 j} (\frac{|P|}{|I|})^{1/2},
\end{align}
so that
\begin{align}
    \sup_{I \in \mathcal{B}_s(I_v)} |A_{I,j,x_P}(x) - A_{I,j}(x)| \lesssim 2^{-O(R)} 2^{-j}
\end{align}
and we can thus work with $A_{I,j,x_P}$. We will select a particular $x_P$ later in the argument.

So, fix some $P$ and estimate
\begin{align}
    \| \sup_{I \supset P} |A_{I,j,x_P}| \|_{\ell^2(P)}^2.
\end{align}

With 
\[ \mathcal{B}_{\epsilon_0}(P) := \mathcal{B}_{\epsilon_0,j}(P)\] an $\epsilon_0$-net for
\[ \{ \big( \sum_n \phi_I^{(j)}(n) \text{Mod}_{-\theta} f(2x_P - n) \cdot     \Psi_\delta( \mathbb{E}_{I} \text{Mod}_{-\theta} g) \big)_{\theta \in \Lambda} : P \subset I \in \mathcal{B}_s(I_v) \} \]
with respect to $\ell^1(\Lambda)$, bound 
\begin{align}
    \sup_{I \supset P} |A_{I,j,x_P}| \lesssim \epsilon_0 + ( \sum_{I \in \mathcal{B}_{\epsilon_0}(P)} |A_{I,j,x_P}|^2 )^{1/2},
\end{align}
where we identify elements of $\mathcal{B}_{\epsilon_0}(P)$ according to the interval that indexes them, and note that
\begin{itemize}
    \item $\| A_{I,j,x_P} \|_{\ell^2(P)}^2 \lesssim 2^{-2 A_0j} \delta^{2-1/500} |P|$; and
    \item $|\mathcal{B}_{\epsilon_0}(P)| \leq |\mathcal{F}_{{\epsilon_0}/10}(P)| \cdot |\mathcal{G}_{{\epsilon_0}/10}(P)|$ where $|\mathcal{F}_{{\epsilon_0}/10}(P)|$ is an ${\epsilon_0}/10$ net inside
    \[ \{ \big( \sum_n \phi_I^{(j)}(n) \text{Mod}_{-\theta} f(2x_P - n) \big)_{\theta \in \Lambda} : P \subset I \in \mathcal{B}_s(I_v) \} \]
    with respect to $\ell^2(\Lambda)$, and 
    $|\mathcal{G}_{{\epsilon_0}/10}(P)|$ is an ${\epsilon_0}/10$ net inside
    \[ \{ \big(      \mathbb{E}_{I} \text{Mod}_{-\theta} g )_{\theta \in \Lambda} : P \subset I \in \mathcal{B}_s(I_v) \}, \]
    also with respect to $\ell^2(\Lambda)$.
\end{itemize}
So, we can bound
\begin{align}
    \| \sup_{I \supset P} |A_{I,j,x_P}| \|_{\ell^2(P)}^2 \lesssim \epsilon_0^2 |P| + \delta^{2-1/500} \cdot \epsilon_0^{-4} \cdot ( \epsilon_0^2 |\mathcal{F}_{{\epsilon_0}/10}(P)|) \cdot (\epsilon_0^2 \cdot |\mathcal{G}_{{\epsilon_0}/10}(P)|) \cdot |P|.
\end{align}
By choosing $x_P$ to minimize the quantity $|\mathcal{F}_{{\epsilon_0}/10}(P)|$, and using vector-valued jump-counting inequalities, in particular Corollary \ref{c:sep}, we can bound
\begin{align}
    \sum_{P \subset I_v} \epsilon_0^2 |\mathcal{F}_{{\epsilon_0}/10}(P)| \lesssim |I_v|,
\end{align}
since we can assume that $f$ is supported on $3I_v$,
and similarly for $\mathcal{G}_{{\epsilon_0}/10}(P)$, by Lemma \ref{l:LEP}.


So, if we let
\begin{align}
    X := \bigcup_{P : \epsilon_0^2 |\mathcal{F}_{{\epsilon_0}/10}(P)| \geq \epsilon_0^{-2}} P \cup \bigcup_{P : \epsilon_0^2 |\mathcal{G}_{{\epsilon_0}/10}(P)| \geq \epsilon_0^{-2}} P,
\end{align}
with the choice of $x_P$ determined as above, then
\[ |X| \lesssim \epsilon_0^2 |I_v|.\]
So, we bound
\begin{align}
    &|\{ x \in I_v : \sup_{I \in \mathcal{B}_s(I_v)} |A_{I,j}(x)| \gtrsim t j^{-2} \}| \\
    & \lesssim |\{x \in  I_v : \sup_{I \in \mathcal{B}_s(I_v)} |A_{I,j}(x)| \gtrsim \delta^{1/600} \}| \\
    &\lesssim |\{ x \in I_v : \sup_{I \in \mathcal{B}_s(I_v)} |A_{I,j,x_P}(x)| \gtrsim \delta^{1/600} \}| \\
    &  \lesssim |X| + \delta^{-1/300} \sum_{P \not \subset X} \| \sup_{I \supset P} |A_{I,j,x_P}| \|_{\ell^2(P)}^2 \\
    &  \lesssim \epsilon_0^2 |I_v| + \delta^{-1/300} \sum_{P \not \subset X} \epsilon_0^2 |P| + \delta^{2-1/500} \epsilon_0^{-8}|P| \\
    &  \lesssim \delta^{-1/300} \epsilon_0^2 |I_v| + \delta^{2-1/500} \epsilon_0^{-8} |I_v| \lesssim \delta^{1/6}|I_v|
    \end{align}
after setting ${\epsilon_0}:= \delta^{1/10}$.

\bigskip

This (at last!) concludes the proof of Theorem \ref{t:main}.

\end{document}